\documentclass{amsart}[12pt]
\usepackage{amsthm, bbold, amssymb}
\DeclareMathOperator\Gal{Gal}

\DeclareMathOperator\Pic{Pic}
\DeclareMathOperator\Aut{Aut}

\DeclareMathOperator{\NS}{\mathbf{NS}}

\DeclareMathOperator\Hom{Hom}
\DeclareMathOperator\End{End}

\usepackage{color}
\usepackage[dvipsnames]{xcolor}
\usepackage{mathrsfs} 
\usepackage{algorithm}
\usepackage{algorithmic}
\usepackage{tikz}
\usepackage{enumitem}
\usepackage[multiple]{footmisc}

\newcommand{\authnote}[2][]{\noindent {\if!#1!  {\bf TODO} \else {\small \bf #1} \fi: #2}}

\newcommand{\bennote}[1]{{\authnote[Ben]{\color{Purple} \textbf{\small #1}}}}

\newcommand{\isom}{\cong}
\newcommand{\ra}{\rightarrow}

\def\1{\mathbb{1}}

\def\cL{\mathcal{L}}

\def\cO{\mathcal{O}}

\def\cM{\mathcal{M}}

\def\lO{\mathfrak{o}} 
\def\ll{{(\ell,\ell)}}
\def\pr{\mathrm{pr}}
\def\Cl{\mathrm{Cl}}

\def\ff{\mathfrak{f}}

\def\A{\mathbb{A}}
\def\F{\mathbb{F}}
\def\Z{\mathbb{Z}}

\def\Q{\mathbb{Q}}

\def\GL{\text{GL}}
\def\SL{\text{SL}}

\theoremstyle{plain} 

\newtheorem{theorem}{Theorem}
\newtheorem{maincounter}{maincounter}[section]
\newtheorem{proposition}[maincounter]{Proposition}
\newtheorem{lemma}[maincounter]{Lemma}

\newtheorem{definition}[maincounter]{Definition}
\newtheorem{notation}[maincounter]{Notation}

\theoremstyle{remark}
\newtheorem{remark}[maincounter]{Remark}
\newtheorem{example}[maincounter]{Example}

\usepackage[all]{xy}
\usepackage{tikz-cd}

\title{Isogeny graphs of ordinary abelian varieties}
\author[E. Hunter Brooks, Dimitar Jetchev]{Ernest Hunter Brooks, Dimitar Jetchev}
\address{\'Ecole Polytechnique F\'ed\'erale de Lausanne, EPFL SB MATHGEOM GR-JET, Switzerland}
\email{ernest.brooks@epfl.ch}
\email{dimitar.jetchev@epfl.ch}
\author{Benjamin Wesolowski}
\address{\'Ecole Polytechnique F\'ed\'erale de Lausanne, EPFL IC LACAL, Switzerland}
\email{benjamin.wesolowski@epfl.ch}

\begin{document}

\date{\today.}

\keywords{Isogeny graphs, $(\ell,\ell)$-isogenies, principally polarised abelian varieties, Jacobians of hyperelliptic curves, lattices in symplectic spaces, orders in CM-fields}

\begin{abstract}
Fix a prime number $\ell$. Graphs of isogenies of degree a power of $\ell$ are well-understood for elliptic curves, but not for higher-dimensional abelian varieties. We study the case of absolutely simple ordinary abelian varieties over a finite field. We analyse graphs of so-called $\mathfrak l$-isogenies, resolving that they are (almost) volcanoes in any dimension. Specializing to the case of principally polarizable abelian surfaces, we then exploit this structure to describe graphs of a particular class of isogenies known as \emph{$(\ell, \ell)$}-isogenies: those whose kernels are maximal isotropic subgroups of the $\ell$-torsion for the Weil pairing.
We use these two results to write an algorithm giving a path of computable isogenies from an arbitrary absolutely simple ordinary abelian surface towards one with maximal endomorphism ring, which has immediate consequences for the CM-method in genus 2, for computing explicit isogenies, and for the random self-reducibility of the discrete logarithm problem in genus 2 cryptography.
\end{abstract}
\maketitle

\section{Introduction}

\subsection{Background}
Graphs of isogenies of principally polarized abelian varieties of dimension $g$ have been an extensive object of study in both number theory and mathematical cryptology. When $g = 1$, Kohel \cite{kohel:thesis} gave a description of the structure of such graphs and used it to compute the endomorphism ring of an elliptic curve over a finite field. This description has subsequently been utilized in a variety of cryptographic applications such as point counting on elliptic curves \cite{fouquet-morain}, random self-reducibility of the elliptic curve discrete logarithm problem in isogeny classes \cite{jmv:asiacrypt, jmv:jnt}, generating elliptic curves with a prescribed number of points via the CM method based on the Chinese Remainder Theorem~\cite{sutherland}, as well as computing modular polynomials \cite{broker-lauter-sutherland}. 

When $g > 1$, the problem of describing the structure of these graphs becomes harder. The literature has seen a number of attempts to generalize Kohel's thesis, yet the structure of these isogeny graphs has not been studied systematically. For $g = 2$, Br\"oker, Gruenewald and Lauter~\cite{broker-gruenewald-lauter} proved that graphs of $\ll$-isogenies of abelian surfaces are not volcanoes.

In~\cite{lauter-robert}, Lauter and Robert observed that from a random abelian surface, it might not always be possible to reach an isogenous one with maximal endomorphism ring (locally at $\ell$) using only $\ll$-isogenies. 
Following the footsteps of Kohel, Bisson~\cite[Ch.5]{bisson} sketched the relation between isogeny graphs and the lattice of orders in the endomorphism algebra for abelian varieties of higher dimension. This provides a first approximation of the global structure of the graphs, but allows no fine-grained analysis.

It was also unclear whether the notion of $\ll$-isogenies is the right one to generalize the structure of isogeny graphs.
Ionica and Thom\'e \cite{ionica-thome} observed that the subgraph of $\ll$-isogenies restricted to surfaces with maximal real order in $K_0$ (globally) could be studied through what they called $\mathfrak l$-isogenies, where $\mathfrak l$ is a prime ideal in $K_0$ above $\ell$. They suggest that the $\mathfrak l$-isogeny graphs should be volcanoes, under certain assumptions\footnote{The proof of \cite[Prop.15]{ionica-thome} gives a count of the number of points at each level of the graph, but does not allow a conclusive statement on the edge structure, and thus does not appear to prove that the graph is a volcano.}.
When $\mathfrak l$ is principal, of prime norm, generated by a real, totally positive endomorphism $\beta$, then $\mathfrak l$-isogenies coincide with the cyclic $\beta$-isogenies from \cite{dudeanu-jetchev-robert} --- an important notion, since these are the cyclic isogenies preserving principal polarizability. 

Our main contributions include a full description of graphs of $\mathfrak l$-isogenies for any $g \geq 1$. This proves the claims of~\cite{ionica-thome} and extends them to a much more general setting. For $g = 2$, we exploit this $\mathfrak l$-structure to provide a complete description of graphs of $\ll$-isogenies preserving the maximal real multiplication locally at~$\ell$. We also explore the structure of $\ll$-isogenies when the real multiplication is not necessarily locally maximal. As an application of these results, we build an algorithm that, given as input a principally polarized abelian surface, finds a path of computable isogenies leading to a surface with maximal endomorphism ring. This was a missing --- yet crutial --- building block for the CRT-based CM-method in dimension 2, for computing explicit isogenies between two given surfaces, and for the random self-reducibility of the discrete logarithm problem in genus 2 cryptography. Applications are discussed more thoroughly in Section~\ref{subsec:introApplications}.

This structure of $\mathfrak l$-isogenies, when one assumes that $\mathfrak{l}$ is of prime norm and trivial in the narrow class group of $K_0$, implies in particular that graphs of cyclic $\beta$-isogenies are volcanoes. 
In parallel to the present work, Chloe Martindale has recently announced a similar result on cyclic $\beta$-isogenies.
It will be found in her forthcoming Ph.D. thesis, as part of a larger project aimed at computing Hilbert class polynomials and modular polynomials in genus 2. Her results, which are proven in a complex-analytic setting different from our $\ell$-adic methods, yield the same description of the graph in this particular case.

\subsection{Setting}\label{subsec:setting}
For a given ordinary, absolutely simple abelian variety $\mathscr A$ over a finite field $k = \F_q$, the associated endomorphism algebra $\End (\mathscr A) \otimes_\Z \Q$ is isomorphic to a CM-field $K$, i.e., a totally imaginary quadratic extension of a totally real number field $K_0$. Moreover, the dimension $g$ of $\mathscr A$ equals the degree $[K_0 : \Q$]. The endomorphism ring $\End(\mathscr A)$ identifies with an order $\cO$ in $K$. The Frobenius endomorphism $\pi$ of $\mathscr A$ generates the endomorphism algebra $K = \Q(\pi)$, and its characteristic polynomial determines its $k$-isogeny class, by Tate's isogeny theorem~\cite{Tate1966}. In particular, since $\End_k(\mathscr A) = \End_{\overline k}(\mathscr A)$ (see~\cite[Thm.7.2.]{Waterhouse1969}), all isogenous varieties (over $\overline k$) share the same CM-field $K$, and their endomorphism rings all correspond to orders in $K$. Thus, the structure of isogeny graphs is related to the structure of the lattice of orders of the field $K$.

The choice of an isomorphism $\End(\mathscr A) \otimes_\Z \Q \cong K$ naturally induces an embedding $\imath_\mathscr B : \End(\mathscr B) \ra K$ for any variety $\mathscr B$ that is isogenous to $\mathscr A$, and it does not depend on the choice of an isogeny. We can then unambiguously denote by $\cO(\mathscr B)$ the order in $K$ corresponding to the endomorphism ring of any $\mathscr B$. 
Define the suborder $\cO_0(\mathscr A) = \cO(\mathscr A) \cap K_0$; the variety $\mathscr A$ is said to have \emph{real multiplication} (RM) by $\cO_0(\mathscr A)$. Recall the conductor $\ff$ of an order $\cO$ in a number field $L$ is defined as
$$\ff = \{x \in L\ |\ x\cO_L \subseteq \cO\}.$$
Equivalently, it is the largest subset of $L$ which is an ideal in both $\cO_L$ and $\cO$.

Fix once and for all a prime number $\ell$ different from the characteristic of the finite field $k$, and write $\lO(\mathscr A) = \cO(\mathscr A) \otimes_{\Z} \Z_\ell$, the \emph{local order} of $\mathscr A$. It is an order in the algebra $K_\ell = K \otimes_{\Q} \Q_\ell$. Also, $\lO_K = \cO_K \otimes_{\Z} \Z_\ell$ is the maximal order in $K_\ell$.
Finally, write $\lO_0(\mathscr  A)$ for the \emph{local real order} $\cO_0(\mathscr  A) \otimes_{\Z} \Z_\ell$, which is an order in the algebra $K_{0,\ell} = K_0 \otimes_{\Q} \Q_\ell$, and let $\lO_0 = \cO_{K_0} \otimes_{\Z} \Z_\ell$.

\subsection{Main results}
When $\mathscr A$ is an elliptic curve, the lattice of orders is simple: $K$ being a quadratic number field (i.e. $K_0 = \Q$), all the orders in $K$ are of the form $\cO_c = \Z + c\cO_K$, with $c \in \Z$ generating the conductor of $\mathcal{O}_c$. Locally at a prime number $\ell$, the lattice of orders in $K_\ell$ is simply the chain $\lO_{K} \supset \Z_\ell + \ell \lO_{K} \supset \Z_{\ell} + \ell^2 \lO_{K} \supset...$.
The (local) structure of the lattice of orders of a CM-field $K$ is in general not as simple as the linear structure arising in the case of an imaginary quadratic field. This constitutes the main difficulty in generalizing the structural results to $g > 1$.
For the rest of the paper, we let $g > 1$, and fix an isogeny class whose endomorphism algebra is the CM-field $K$.

\subsubsection{Isogeny graphs preserving the real multiplication} \label{subsubsec:theorem1}
In the case of quadratic number fields, the inclusion of orders corresponds to the divisibility relation of conductors.  Neither the one-to-one correspondence between orders and conductors, nor the relationship between inclusion and divisibility holds in higher degree. 
We can, however, prove that such a correspondence between orders and conductors, and inclusion and divisibility still holds if we restrict to orders with maximal real multiplication, i.e., $\cO_{K_0} \subset \cO$. More than that, it even holds locally, i.e., for the orders of $K_\ell$ containing $\lO_0$. More precisely, we show in Section~\ref{sec:orders}, Theorem~\ref{thm:classificationOrdersMaxRM}, that any order in $K$ (respectively $K_\ell$) with maximal real multiplication is of the form $\cO_{K_0} + \mathfrak f \cO_K$ (respectively $\lO_{0} + \mathfrak f \lO_K$) for some ideal $\mathfrak f$ in $\cO_{K_0}$. Our first results use this classification to provide a complete description of graphs of isogenies preserving the maximal real multiplication locally at $\ell$.
The main building block for isogenies preserving the real multiplication is the notion of $\mathfrak l$-isogeny.

\begin{definition}[$\mathfrak l$-isogeny]\label{def:frakLIso}
Let $\mathfrak l$ be a prime above $\ell$ in $K_0$, and $\mathscr A$ a variety in the fixed isogeny class. Suppose $\mathfrak l$ is coprime to the conductor of $\cO_0(\mathscr A)$.
An $\mathfrak l$-isogeny from $\mathscr A$ is an isogeny whose kernel is a proper, $\cO_0(\mathscr A)$-stable subgroup of\footnote{By abuse of notation, we write $\mathscr A[\mathfrak l]$ in place of $\mathscr A[\mathfrak l \cap \cO(\mathscr A)]$.} $\mathscr A[\mathfrak l]$.
\end{definition}

\begin{remark}
The degree of an $\mathfrak l$-isogeny is $N\mathfrak l$. 
\end{remark}

We will therefore study the structure of the graph $\mathscr W_\mathfrak l$ whose vertices are the isomorphism classes of abelian varieties $\mathscr A$ in the fixed isogeny class, which have maximal real multiplication locally at $\ell$ (i.e., $\lO_0 \subset \lO(\mathscr A)$), and there is an edge of multiplicity $m$ from such a vertex with representative $\mathscr A$ to a vertex $\mathscr B$ if there are $m$ distinct subgroups $\kappa\subset \mathscr A$ that are kernels of $\mathfrak l$-isogenies such that $\mathscr A/\kappa \cong \mathscr B$ (of course, the multiplicity $m$ does not depend on the choice of the representative $\mathscr A$).

\begin{remark}
When $\mathfrak l$ is trivial in the narrow class group of $K_0$, then $\mathfrak l$-isogenies preserve principal polarizability.
The graph $\mathscr W_\mathfrak l$ does not account for polarizations, but it is actually easy to add polarizations back to graphs of unpolarized varieties, as will be discussed in Section~\ref{sec:Polarizations}.
\end{remark}

Each vertex $\mathscr A$ of this graph $\mathscr W_\mathfrak l$ has a level, given by the valuation $v_\mathfrak l(\mathscr A)$ at~$\mathfrak l$ of the conductor of $\cO(\mathscr A)$.
Our first result, Theorem~\ref{thm:lisogenyvolcanoes}, completely describes the structure of the connected components of $\mathscr W_\mathfrak l$, which turns out to be closely related to the volcanoes observed for cyclic isogenies of elliptic curves. It is proven in Subsection~\ref{subsec:frakLIsogenyGraphs}.

\begin{theorem}\label{thm:lisogenyvolcanoes}
Let $\mathscr V$ be any connected component of the leveled $\mathfrak l$-isogeny graph $(\mathscr W_\mathfrak l, v_\mathfrak l)$.
For each $i \geq 0$, let $\mathscr V_i$ be the subgraph of $\mathscr V$ at level $i$.
We have:
\begin{enumerate}[label=(\roman*)]
\item For each $i \geq 0$, the varieties in $\mathscr V_i$ share a common endomorphism ring $\cO_i$. The order $\cO_0$ can be any order with locally maximal real multiplication at $\ell$, whose conductor is not divisible by $\mathfrak l$;
\item The level $\mathscr V_0$ is isomorphic to the Cayley graph of the subgroup of $\Pic(\cO_0)$ with generators the prime ideals above $\mathfrak l$; fixing $\mathscr A \in \mathscr V_0$, an isomorphism is given by sending any ideal class $[\mathfrak a]$ to the isomorphism class of $\mathscr A/\mathscr A[\mathfrak a]$;
\item For any $\mathscr A \in \mathscr V_0$, there are $\left(N(\mathfrak l)-\left(\frac{K}{\mathfrak l}\right)\right)/[\cO_{0}^\times : \cO_{1}^\times]$ edges of multiplicity $[\cO_{0}^\times : \cO_{1}^\times]$ from $\mathscr A$ to distinct vertices of~$\mathscr V_{1}$ (where $\left(\frac{K}{\mathfrak l}\right)$ is $-1$, $0$ or $1$ if $\mathfrak l$ is inert, ramified, or split in $K$);
\item For each $i > 0$, and any $\mathscr A \in \mathscr V_i$, there is one simple edge from $\mathscr A$ to a vertex of $\mathscr V_{i-1}$, and $N(\mathfrak l)/[\cO_{i}^\times : \cO_{i+1}^\times]$ edges of multiplicity $[\cO_{i}^\times : \cO_{i+1}^\times]$ to distinct vertices of $\mathscr V_{i+1}$, and there is no other edge from $\mathscr A$;
\item For each path $\mathscr A \rightarrow \mathscr B \rightarrow \mathscr C$ where the first edge is descending, and the second ascending, we have $\mathscr C \cong \mathscr A / \mathscr A[\mathfrak l]$;
\item\label{ascendingImpliesDescending} For each ascending edge $\mathscr B \rightarrow \mathscr C$, there is a descending edge $\mathscr C \rightarrow \mathscr B / \mathscr B[\mathfrak l]$.
\end{enumerate}
In particular, the graph $\mathscr V$ is an $N(\mathfrak l)$-volcano if and only if $\cO_0^\times \subset K_0$ and $\mathfrak l$ is principal in ${\cO_0 \cap K_0}$.

Also, if $\mathscr V$ contains a variety defined over the finite field $k$, the subgraph containing only the varieties defined over $k$ consists of the subgraph of the first $v$ levels, where $v$ is the valuation at $\mathfrak l$ of the conductor of $\cO_{K_0}[\pi] = \cO_{K_0}[\pi, \pi^\dagger]$.
\end{theorem}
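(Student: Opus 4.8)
The plan is to translate the whole statement into the local theory of lattices at $\mathfrak l$, read off the volcano shape from the Bruhat--Tits tree, and then do the arithmetic bookkeeping.

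\emph{Reduction to lattices.} Since the kernel of an $\mathfrak l$-isogeny lies in the $\mathfrak l$-torsion, such an isogeny changes the endomorphism ring and the Tate module only at $\mathfrak l$; hence within a connected component the prime-to-$\mathfrak l$ part of every endomorphism ring is constant, and one may fix the isogeny class, a base point $\mathscr A\in\mathscr V$, and work with the $\mathfrak l$-part $V_\mathfrak l$ of $V_\ell:=T_\ell(\mathscr A)\otimes\Q_\ell$. This $V_\mathfrak l$ is free of rank one over $K_\mathfrak l:=K\otimes_{K_0}(K_0)_\mathfrak l$, hence free of rank two over the local field $(K_0)_\mathfrak l$; write $\lO_{0,\mathfrak l}\subset\lO_{K,\mathfrak l}$ for the maximal orders of $(K_0)_\mathfrak l$ and $K_\mathfrak l$. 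By the description of ordinary abelian varieties over finite fields (Deligne/Serre--Tate/Waterhouse), the varieties in the isogeny class lying in the component of $\mathscr A$ correspond to the $\lO_{0,\mathfrak l}$-stable lattices in $V_\mathfrak l$ modulo homothety by $K^\times$ (in effect by the $\{\mathfrak l\}$-units of the level-$0$ order), with $\End$-ring at $\mathfrak l$ equal to the multiplier ring of the lattice, and an $\mathfrak l$-isogeny is exactly an inclusion of lattices whose quotient is simple and $\mathfrak l$-torsion. Thus the $\mathfrak l$-isogeny edges at a vertex are precisely the $N(\mathfrak l)+1$ edges at the corresponding vertex of the Bruhat--Tits tree $\mathcal T$ of $\mathrm{PGL}_2\big((K_0)_\mathfrak l\big)$, an $(N(\mathfrak l)+1)$-regular tree.

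\emph{Levels and the volcano shape.} Applying Theorem~\ref{thm:classificationOrdersMaxRM} $\mathfrak l$-adically, the orders of $K_\mathfrak l$ containing $\lO_{0,\mathfrak l}$ form a single chain $\lO_{K,\mathfrak l}=:R_0\supsetneq R_1\supsetneq\cdots$ with $R_j=\lO_{0,\mathfrak l}+\mathfrak l^j\lO_{K,\mathfrak l}$ and $[R_{j-1}:R_j]=N(\mathfrak l)$; I identify $\mathscr V_i$ with the lattices of multiplier ring $R_i$, which gives (i). The $R_0$-stable lattices (fractional $R_0$-ideals) form inside $\mathcal T$ a single vertex, a single edge, or a doubly infinite line, according as $K_\mathfrak l/(K_0)_\mathfrak l$ is unramified, ramified, or split. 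A direct computation of the $R_0$-submodules of $\mathfrak l^{-1}\Lambda/\Lambda$ shows a level-$0$ lattice has exactly $1+\left(\tfrac{K}{\mathfrak l}\right)$ of its tree-neighbours still at level $0$, and an elementary computation with the $R_j$ shows that for $i\ge 1$ a level-$i$ lattice has exactly one tree-neighbour at level $i-1$ and $N(\mathfrak l)$ at level $i+1$: this is the volcano geometry. Items (v) and (vi) are then immediate: from $\Lambda$, a descending step followed by an ascending step (or vice versa) lands on $\mathfrak l^{-1}\Lambda$, i.e.\ on $\mathscr A/\mathscr A[\mathfrak l]$.

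\emph{Multiplicities and the rim.} Passing from $\mathcal T$ to $\mathscr V$ requires accounting for $K^\times$-homothety (equivalently, the $\{\mathfrak l\}$-unit action on $\mathcal T$, which is subtler than a graph quotient because a local scalar at $\mathfrak l$ need not be global). The units of the level-$i$ order $\cO_i$ fix $[\Lambda]$ and act on the $N(\mathfrak l)$ descending directions with stabiliser $\cO_{i+1}^\times$ (the preimage of a descending line is preserved precisely by $R_{i+1}^\times$), so descending edges of $\mathscr V$ come in orbits of size $[\cO_i^\times:\cO_{i+1}^\times]$, giving the counts $N(\mathfrak l)/[\cO_i^\times:\cO_{i+1}^\times]$ for $i\ge 1$ and $\big(N(\mathfrak l)-(\tfrac{K}{\mathfrak l})\big)/[\cO_0^\times:\cO_1^\times]$ for $i=0$, while the unique ascending direction, already fixed by $\cO_i^\times$, gives a simple edge --- this is (iii),(iv). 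For the rim, translating homothety into ideal classes identifies the $\mathfrak l$-isogeny $[\mathfrak a]\mapsto[\mathfrak a\mathfrak p]$, for $\mathfrak p$ a prime of $\cO_0$ above $\mathfrak l$ of residue degree one (the only primes affording $\mathfrak l$-isogenies), with an edge of $\mathscr V_0$, whence $\mathscr V_0\cong\mathrm{Cay}(H,S)$ with $S$ the classes of such primes and $H=\langle S\rangle\subseteq\Pic(\cO_0)$, proving (ii). Finally $\mathscr V$ is a genuine $N(\mathfrak l)$-volcano iff all multiplicities are $1$ and the rim is a point or a single cycle: $[\cO_0^\times:\cO_1^\times]=1$ iff $\cO_0^\times\subseteq K_0$ (a short computation, using $\cO_{K_0}^\times\subseteq\cO_i^\times\subseteq\cO_0^\times$), and $H$ is cyclic iff $\mathfrak l$ is principal in $\cO_0\cap K_0$ (then the $\le 2$ classes in $S$ are mutually inverse).

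\emph{Varieties over $k$, and the main obstacle.} A variety $\mathscr B$ in the isogeny class is defined over $k$ iff its $q$-Frobenius is an endomorphism, i.e.\ $\pi\in\cO(\mathscr B)$; since $\pi+\pi^\dagger\in\cO_{K_0}$ and $\pi\pi^\dagger=q$, this is equivalent to $\cO_{K_0}[\pi]=\cO_{K_0}[\pi,\pi^\dagger]\subseteq\cO(\mathscr B)$. By the inclusion-versus-(reverse)-divisibility of conductors for orders with maximal real multiplication, and since the prime-to-$\mathfrak l$ part of the conductor is constant on $\mathscr V$ --- so it already divides that of $\cO_{K_0}[\pi]$ once $\mathscr V$ contains one variety over $k$ --- this holds iff $v_\mathfrak l(\mathscr B)\le v:=v_\mathfrak l\big(\mathfrak f_{\cO_{K_0}[\pi]}\big)$, i.e.\ iff $\mathscr B$ lies in the first $v$ levels. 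The conceptual skeleton (tree $\Rightarrow$ volcano) is clean; I expect the real work --- and the main obstacle --- to be the bookkeeping behind (ii)--(iv): the precise multiplicities $[\cO_i^\times:\cO_{i+1}^\times]$ and the exact Cayley structure of the rim, which demand a careful, case-by-case (inert/ramified/split) analysis of how the $\{\mathfrak l\}$-unit/homothety action sits on $\mathcal T$ relative to the local scalars (so that $\mathscr V$ is not literally a graph-quotient of $\mathcal T$), together with invoking the Deligne/Waterhouse dictionary in exactly the generality needed here (arbitrary $\mathfrak l$, not necessarily principal nor of prime norm).
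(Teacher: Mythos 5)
Your local analysis---identifying $\mathfrak l$-neighbours with edges of the Bruhat--Tits tree, the chain of orders $R_j=\lO_0+\mathfrak l^j\lO_K$, the count of $1+\left(\frac{K}{\mathfrak l}\right)$ horizontal directions at level $0$ and of one ascending plus $N(\mathfrak l)$ descending directions at level $i\geq 1$, and the derivation of (v)--(vi)---is sound and is essentially the paper's own local step (Propositions~\ref{prop:genericNeighbors} and~\ref{prop:frakLStructure}). The genuine gap is in the passage from the tree to $\mathscr V$, i.e.\ exactly in the quantitative claims (ii)--(iv). Your unit-orbit computation shows that the global units $\cO_i^\times$ act on the descending kernels with orbits of size $[\cO_i^\times:\cO_{i+1}^\times]$ and that kernels in one orbit have isomorphic targets; but this only gives a \emph{lower} bound for the multiplicity of each descending edge and an \emph{upper} bound for the number of distinct targets. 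The theorem asserts equality, which amounts to showing that kernels in distinct $\cO_i^\times$-orbits have non-isomorphic quotients, equivalently that the level-$(i+1)$ vertices under a given component form a single $\Pic(\cO_{i+1})$-orbit of the expected size. Nothing in your proposal establishes this: the assertion that $\mathscr V$ is ``the tree modulo homothety by the $\{\mathfrak l\}$-units of the level-$0$ order'' is precisely what needs proof, since whether two tree vertices get identified is governed by the existence of a global element of $K^\times$ with prescribed behaviour at all other places---class-group data invisible in the local picture---and you yourself defer this as the ``main obstacle.''

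For comparison, the paper closes this gap not by a finer analysis of the unit action on the tree, but by a global counting argument: it enlarges each $\mathscr V_i$ to the full Picard orbit $\mathscr U_i$, counts the edges from $\mathscr U_i$ to $\mathscr U_{i+1}$ locally as $\left[(\cO_i\otimes\Z_\ell)^\times:(\cO_{i+1}\otimes\Z_\ell)^\times\right]\cdot|\mathscr U_i|$, shows every vertex of $\mathscr U_{i+1}$ is reachable from $\mathscr U_i$ and has in-degree at least $[\cO_i^\times:\cO_{i+1}^\times]$ (Lemma~\ref{lemma:distinctFrakLKernels}), and compares with the class-number relation of Lemma~\ref{lemma:classNumberRelation}; freeness of the CM action then forces $\mathscr U_{i+1}$ to be a single orbit and all inequalities to be equalities, which is what delivers the exact multiplicities in (iii)--(iv) and, inductively, (i)--(ii). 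Some global input of this kind is unavoidable; without it your edge counts remain one-sided. (A minor repair elsewhere: $\pi\in\cO(\mathscr B)$ is equivalent to $\cO_{K_0}[\pi]\subseteq\cO(\mathscr B)$ only locally at $\ell$, since orders in $\mathscr W_\mathfrak l$ contain $\cO_{K_0}$ only locally; this suffices for the field-of-definition statement once one notes, as you do, that the conditions away from $\mathfrak l$ are constant on the component.)
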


\subsubsection{Graphs of $(\ell, \ell)$-isogenies}
The following results focus on the case $g = 2$. In contrast to the case of elliptic curves, where a principal polarization always exists, the property of being principally polarizable is not even invariant under cyclic isogeny in genus $2$. In addition, basic algorithms for computing isogenies of elliptic curves from a given kernel (such as V\'elu's formulae \cite{velu}) are difficult to generalize and the only known methods \cite{robert,cosset-robert,lubicz-robert,dudeanu-jetchev-robert} assume certain hypotheses and thus do not apply for general isogenies of cyclic kernels. On the other hand, $\ll$-isogenies always preserve principal polarizability, and are computable with the most efficient of these algorithms~\cite{cosset-robert}. These $\ll$-isogenies are therefore an important notion, and we are interested in understanding the structure of the underlying graphs.

\begin{definition}[$\ll$-isogeny]\label{def:ll}
Let $(\mathscr A, \xi_{\mathscr{A}})$ be a principally polarized abelian surface. We call an isogeny $\varphi \colon \mathscr A \rightarrow \mathscr B$ an $\ll$-isogeny (with respect to $\xi_{\mathscr{A}}$) if $\ker (\varphi)$ is a maximal isotropic subgroup of $\mathscr A[\ell]$ with respect to the Weil pairing on $\mathscr A[\ell]$ induced by the polarization isomorphism corresponding to $\xi_\mathscr A$. 
\end{definition}

One knows that if $\varphi \colon \mathscr A \ra \mathscr B$ is an $(\ell, \ell)$-isogeny, then there is a unique principal polarization $\xi_{\mathscr{B}}$ on $\mathscr B$ such that $\varphi^* \xi_{\mathscr{B}} = \xi_{\mathscr{A}}^{\ell}$ (this is a consequence of Grothendieck descent \cite[pp.290--291]{mumford:eq1}; see also \cite[Prop. 2.4.7]{drobert:thesis}). This allows us to view an isogeny of \emph{a priori} non-polarized abelian varieties $\varphi$ as an isogeny of polarized abelian varieties $\varphi \colon (\mathscr A, \cL^\ell) \ra (\mathscr B, \cM)$.\\

First, we restrict our attention to abelian surfaces with maximal real multiplication at $\ell$. The description of $\mathfrak l$-isogeny graphs provided by Theorem~\ref{thm:lisogenyvolcanoes} leads to a complete understanding of graphs of $\ll$-isogenies preserving the maximal real order locally at $\ell$, via the next theorem.
More precisely, we study the structure of the graph $\mathscr G_{\ell,\ell}$ whose vertices are the isomorphism classes of principally polarizable surfaces $\mathscr A$ in the fixed isogeny class, which have maximal real multiplication locally at $\ell$ (i.e., $\lO_0 \subset \lO(\mathscr A)$), with an edge of multiplicity $m$ from such a vertex $\mathscr A$ to a vertex $\mathscr B$ if there are $m$ distinct subgroups $\kappa\subset \mathscr A$ that are kernels of $\ll$-isogenies such that $\mathscr A/\kappa \cong \mathscr B$. This definition will be justified by the fact that the kernels of $\ll$-isogenies preserving the maximal real multiplication locally at $\ell$ do not depend on the choice of a principal polarization on the source (see Remark~\ref{rem:RMPreservingEllEllDoNotDependOnPol}).
The following theorem is proven in Subsection~\ref{subsec:ellellMaxRM}, where its consequences are discussed in details.

\begin{theorem}\label{thm:ellellLCombinations}
Suppose that $\mathscr A$ has maximal real multiplication locally at $\ell$.
Let $\xi$ be any principal polarization on $\mathscr A$.
There is a total of $\ell^3 + \ell^2 + \ell + 1$ kernels of $\ll$-isogenies from~$\mathscr A$ with respect to $\xi$.
Among these, the kernels whose target also has maximal local real order do not depend on $\xi$, and are:
\begin{enumerate}[label=(\roman*)]
\item the $\ell^2+1$ kernels of $\ell\cO_{K_0}$-isogenies if $\ell$ is inert in $K_0$,
\item the $\ell^2 + 2\ell + 1$ kernels of compositions of an $\mathfrak l_1$-isogeny with an $\mathfrak l_2$-isogeny if $\ell$ splits as $\mathfrak l_1\mathfrak l_2$ in $K_0$,
\item the $\ell^2 + \ell + 1$ kernels of compositions of two $\mathfrak l$-isogenies if $\ell$ ramifies as $\mathfrak l^2$ in $K_0$.
\end{enumerate}
The other $\ll$-isogenies have targets with real multiplication by $\lO_1 = \Z_\ell + \ell \lO_0$.
\end{theorem}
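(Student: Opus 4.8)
The plan is to translate the whole statement into linear algebra over the Artinian $\F_\ell$-algebra $B:=\lO_0/\ell\lO_0$ acting on $M:=\mathscr A[\ell]$. Since $\mathscr A$ is simple and ordinary, $V_\ell\mathscr A$ is free of rank one over $K_\ell$, hence free of rank two over $K_{0,\ell}$; as $\lO_0=\cO_{K_0}\otimes\Z_\ell$ is a product of discrete valuation rings, $T_\ell\mathscr A$ is a free $\lO_0$-module of rank two, so $M\cong B^2$. For an isogeny $\varphi\colon\mathscr A\to\mathscr B=\mathscr A/\kappa$ with $\kappa\subseteq M$ we have $T_\ell\mathscr A\subseteq T_\ell\mathscr B\subseteq\tfrac1\ell T_\ell\mathscr A$, so $\ell\lO_0\subseteq\lO(\mathscr B)$ in all cases, and (since $\lO_0$ is the maximal order of $K_{0,\ell}$) the variety $\mathscr B$ has locally maximal real multiplication --- equivalently $\lO_0(\mathscr B)=\lO_0$ --- if and only if $T_\ell\mathscr B$ is $\lO_0$-stable, i.e.\ if and only if $\kappa$ is a $B$-submodule of $M$. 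Because $K_{0,\ell}$ is a quadratic algebra over $\Q_\ell$, its $\Z_\ell$-orders form a single chain $\lO_0\supsetneq\lO_1\supsetneq\cdots$ with $\lO_n=\Z_\ell+\ell^n\lO_0$, so when $\kappa$ is not a $B$-submodule the order $\lO_0(\mathscr B)$ is trapped between $\lO_1$ and $\lO_0$ and therefore equals $\lO_1$; this already proves the last sentence of the theorem. It remains to describe the $B$-submodules $\kappa\subseteq M$ with $\#\kappa=\ell^2$ that are kernels of $\ll$-isogenies, to see that this description is independent of $\xi$, and to identify them with the isogenies named in the statement.

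A principal polarization $\xi$ gives the Weil pairing $e^\xi\colon M\times M\to\F_\ell$, alternating and nondegenerate; because the Rosati involution attached to $\xi$ restricts to the identity on $K_0$, every element of $\lO_0$ is $e^\xi$-self-adjoint, and $e^\xi$ is the trace of a perfect $\lO_0$-bilinear \emph{alternating} pairing $E$ on $T_\ell\mathscr A$ (valued in a twist of the inverse different), a standard feature of polarized abelian varieties with real multiplication. Now enumerate according to how $\ell$ behaves in $K_0$. If $\ell$ is inert, $B=\F_{\ell^2}$ is a field, the $B$-submodules of $M\cong B^2$ of the right order are its $\ell^2+1$ lines, and each is totally isotropic because $E$ is $B$-bilinear and alternating (so $E$, hence $e^\xi$, vanishes on any $B$-line); these are the kernels of $\ell\cO_{K_0}$-isogenies, as $\ell\cO_{K_0}$ is prime and $\mathscr A[\ell\cO_{K_0}]=M$. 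If $\ell=\mathfrak l^2$ ramifies, $B\cong\F_\ell[\epsilon]/(\epsilon^2)$, the submodules in question are $\epsilon M=\mathscr A[\mathfrak l]$ together with the $\ell^2+\ell$ free rank-one submodules $Bv$ ($v\notin\epsilon M$), and again all are totally isotropic ($E(\epsilon x,\epsilon y)=\epsilon^2E(x,y)=0$, and $E$ vanishes on any $B$-line), for a total of $\ell^2+\ell+1$. If $\ell=\mathfrak l_1\mathfrak l_2$ splits, $M=M_1\oplus M_2$ (an orthogonal direct sum, the idempotents of $B$ being self-adjoint) with $M_i:=\mathscr A[\mathfrak l_i]$ a two-dimensional space on which $E$ restricts to a perfect alternating form; a $B$-submodule of order $\ell^2$ is $\kappa_1\oplus\kappa_2$ with $\kappa_i\subseteq M_i$, and here isotropy genuinely cuts down the list: $\mathscr A[\mathfrak l_1]$ and $\mathscr A[\mathfrak l_2]$ are $B$-submodules that are not isotropic, whereas $\kappa_1\oplus\kappa_2$ with both $\kappa_i$ lines always is, giving $(\ell+1)^2$ kernels. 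In every case the resulting collection of kernels is read off from the $B$-module $M$ alone, hence is independent of $\xi$.

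To identify these with the stated isogenies, observe first that an $\mathfrak l$-isogeny out of a variety with locally maximal real order again has locally maximal real order (its kernel and $\mathscr A[\mathfrak l]$ are $\cO_{K_0}$-stable, so $T_\ell$ of the target is), which is part of Theorem~\ref{thm:lisogenyvolcanoes}; hence a composite of two $\mathfrak l$-isogenies --- or a single $\ell\cO_{K_0}$-isogeny in the inert case --- has degree $\ell^2$, preserves the local real order, and has $\cO_{K_0}$-stable kernel, so is an $\ll$-isogeny of exactly the above type. Conversely, given a $B$-submodule $\kappa$ of order $\ell^2$ one factors $\mathscr A\to\mathscr A/\kappa$ by splitting off an $\mathfrak l$-isogeny with kernel $\kappa_1\subseteq\kappa\cap\mathscr A[\mathfrak l]$ (in the split case, a line of $\mathscr A[\mathfrak l_1]$ inside $\kappa$) and checking that $\kappa/\kappa_1$ lands in the $\mathfrak l$-torsion, resp.\ the $\mathfrak l_2$-torsion, of $\mathscr A/\kappa_1$ --- which comes down to $\mathfrak l\kappa\subseteq\kappa_1$, resp.\ to the coprimality of $\mathfrak l_1$ and $\mathfrak l_2$. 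Since the three counts $\ell^2+1$, $(\ell+1)^2$, $\ell^2+\ell+1$ match on both sides, the identification is complete. Finally, the total number of $\ll$-kernels for $\xi$ is the number of Lagrangian subspaces of the four-dimensional symplectic space $(M,e^\xi)$, namely $(\ell+1)(\ell^2+1)=\ell^3+\ell^2+\ell+1$.

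The step I expect to be the main obstacle is the split case of the enumeration, where ``$B$-submodule of order $\ell^2$'' is strictly weaker than ``kernel of an $\ll$-isogeny'' and one has to pin down the isotropy condition so that the count is $(\ell+1)^2$ and not $(\ell+1)^2+2$, together with the clean statement --- needed for independence of $\xi$ --- that a piece $\mathscr A[\mathfrak l_i]$ can never be isotropic. A secondary technical point is the existence and, especially in residue characteristic $2$, the \emph{alternating} property of the $\lO_0$-bilinear refinement $E$ of $e^\xi$, without which some free $B$-lines in the inert and ramified cases could fail to be isotropic; the remaining bookkeeping with composites of $\mathfrak l$-isogenies then rests on Theorem~\ref{thm:lisogenyvolcanoes}.
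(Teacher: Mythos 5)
Your argument is correct and is essentially the paper's own proof: you reduce to classifying the $\lO_0$-stable maximal isotropic submodules of $\mathscr A[\ell]\cong(\lO_0/\ell\lO_0)^2$ and treat the inert, split and ramified cases exactly as in Propositions~\ref{prop:L0LambdaInert}, \ref{prop:L0LambdaSplit} and \ref{prop:L0LambdaRamif} (the paper phrases this on $\Lambda/\ell\Lambda$ for the corresponding lattice and transfers it back through Remark~\ref{rem:correspEllEllAndEllEll}), with the same orthogonal decomposition in the split case, the same dual-number analysis in the ramified case, and the same chain-of-orders argument for the targets with real order $\lO_1$. The only place you deviate --- invoking an $\lO_0$-bilinear alternating refinement $E$ of the Weil pairing, whose alternating property for $\ell=2$ you leave as an unchecked ``secondary technical point'' --- is unnecessary: as in Lemma~\ref{lemma:realOrbitIsotropic}, isotropy of every $\lO_0$-orbit follows directly from the $\Z_\ell$-valued form on the lattice, where $\langle\alpha\lambda,\beta\lambda\rangle$ is simultaneously symmetric (real elements are self-adjoint) and antisymmetric, hence zero even when $\ell=2$, and one then reduces modulo $\ell$.
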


Second, we look at $\ll$-isogenies when the real multiplication is not maximal at $\ell$.
Note that since $g = 2$, even though the lattice of orders in $K$ is much more intricate than in the quadratic case, there still is some linearity when looking at the suborders $\cO_0(\mathscr A) = \cO(\mathscr A) \cap K_0$, since $K_0$ is a quadratic number field. 
For any variety $\mathscr A$ in the fixed isogeny class, there is an integer $f$, the conductor of $\cO_0(\mathscr A)$, such that 
$\cO_0(\mathscr A) = \Z + f\cO_{K_0}$.
The local order $\lO_0(\mathscr  A)$
is exactly the order $\lO_n = \Z_\ell + \ell^n\lO_0$ in $K_{0,\ell}$, where $n  =  v_\ell(f)$ is the valuation of $f$ at the prime $\ell$.

The next result describes how $(\ell, \ell)$-isogenies can navigate between these ``levels'' of real multiplication. 
Let $\varphi \colon \mathscr A \ra \mathscr B$ be an $\ll$-isogeny with respect to a polarization $\xi$ on $\mathscr A$. If $\lO_0(\mathscr A) \subset \lO_0(\mathscr B)$, we refer to $\varphi$ as an \emph{RM-ascending} isogeny; if $\lO_0(\mathscr B) \subset \lO_0(\mathscr A)$, we call $\varphi$ \emph{RM-descending}; otherwise, if $\lO_0(\mathscr A) = \lO_0(\mathscr B)$, $\varphi$ is called \emph{RM-horizontal}.  
Note that we start by considering $\ll$-isogenies defined over the algebraic closure of the finite field $k$; in virtue of Remark~\ref{rem:fieldOfDefIsogenies}, it is then easy to deduce the results on isogenies defined over $k$. The following assumes $n > 0$ since the case $n = 0$ is taken care of by Theorem~\ref{thm:ellellLCombinations}.

\begin{theorem}\label{RMupIso}
Suppose $\lO_0(\mathscr A) = \lO_n$ with $n > 0$. For any principal polarization $\xi$ on $\mathscr A$, the kernels of $\ll$-isogenies from $(\mathscr A, \xi)$ are:
\begin{enumerate}[label=(\roman*)]
\item A unique RM-ascending one, 
whose target has local order $\lO_{n-1}\cdot\lO(\mathscr A)$ (in particular, its local real order is $\lO_{n-1}$, and the kernel is defined over the same field as $\mathscr A$),
\item $\ell^2 + \ell$ RM-horizontal ones, and
\item $\ell^3$ RM-descending isogenies, whose targets have local real order $\lO_{n+1}$.
\end{enumerate}
\end{theorem}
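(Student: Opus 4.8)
The plan is to work locally at $\ell$ and translate the question about $(\ell,\ell)$-isogenies into a question about maximal isotropic subgroups of $\mathscr A[\ell]$, then sort these subgroups according to how they change the local real order. Fix $\xi$, giving the Weil pairing $e_\ell$ on $\mathscr A[\ell]$, and recall that since $g=2$ every maximal isotropic subgroup $\kappa \subset \mathscr A[\ell]$ has order $\ell^2$. The key structural input is that $\lO_0(\mathscr A) = \lO_n = \Z_\ell + \ell^n \lO_0$ with $n>0$, so $\mathscr A$ has \emph{non-maximal} real multiplication; write $\lO_0 = \cO_{K_0}\otimes \Z_\ell$. The heart of the argument is to understand, for a maximal isotropic $\kappa$, the real order $\lO_0(\mathscr A/\kappa)$ in terms of $\kappa$. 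Since $K_0$ is quadratic and $\lO_0$ is the maximal local real order, the chain of real orders containing $\lO_0(\mathscr A)$ or contained in it is linear: $\dots \subset \lO_{n+1} \subset \lO_n \subset \lO_{n-1} \subset \dots \subset \lO_0$, with $[\lO_{i}:\lO_{i+1}] = \ell$ when $\ell \nmid f$ considerations are handled, more precisely $\lO_0/\ell\lO_0$ is a $2$-dimensional $\F_\ell$-algebra and $\lO_n/\lO_{n+1}$ is $1$-dimensional over $\F_\ell$. So "RM-ascending'' means $\lO_0(\mathscr A/\kappa) \supseteq \lO_{n-1}$, "RM-descending'' means $\lO_0(\mathscr A/\kappa)\subseteq \lO_{n+1}$, and "RM-horizontal'' means it equals $\lO_n$; these three cases partition all $\ell^3+\ell^2+\ell+1$ kernels (this total being the number of maximal isotropic subspaces of a $4$-dimensional symplectic $\F_\ell$-space, or equivalently cited from Theorem~\ref{thm:ellellLCombinations}).

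First I would set up the RM-ascending case. Let $\mathscr A' = \mathscr A/\mathscr A[\mathfrak a]$ or rather consider the unique abelian variety $\mathscr A^\uparrow$ with $\lO(\mathscr A^\uparrow) = \lO_{n-1}\cdot \lO(\mathscr A)$ lying "above'' $\mathscr A$ in the sense of having strictly larger real order by one step; concretely $\lO_{n-1}\cdot\lO(\mathscr A)$ is an order because $\lO_{n-1}\supset\lO_n$ and $\lO(\mathscr A)\supset \lO_n$, and $\lO(\mathscr A^\uparrow)\supseteq \lO(\mathscr A)$ of index $\ell$ (here one checks $\lO_{n-1}\cdot\lO(\mathscr A)$ has index $\ell$ in... no --- index: $\lO(\mathscr A)$ has index $\ell$ in it, by a length computation in $K_\ell$, using that multiplication by $\ell^{n-1}$ times the relevant generator gives the extension). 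There is then a canonical isogeny $\mathscr A \to \mathscr A^\uparrow$ whose kernel $\kappa^\uparrow$ is the $\lO(\mathscr A)$-submodule of $\mathscr A[\ell]$ cut out by this order inclusion; one verifies $\kappa^\uparrow$ has order $\ell^2$ and is isotropic for $e_\ell$ (isotropy because the Rosati involution acts trivially on $K_0$, so $\kappa^\uparrow$, being stable under $\lO_0$ and in fact "defined over $K_0$'', is its own orthogonal complement --- this is the same mechanism that makes $\mathfrak l$-isogeny kernels isotropic). Uniqueness: any RM-ascending $\kappa$ forces $\lO(\mathscr A/\kappa)\supseteq \lO_{n-1}$, and since $\mathscr A/\kappa$ is then determined up to the $\mathfrak l$-isogeny structure of Theorem~\ref{thm:lisogenyvolcanoes} at the bottom level, and the isogeny $\mathscr A\to\mathscr A/\kappa$ has degree $\ell^2$, there is exactly one such $\kappa$; I would phrase this via the dual isogeny $\widehat{\mathscr A/\kappa \to \mathscr A}$ landing in the volcano $\mathscr W$ picture. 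The field-of-definition claim follows because $\lO_{n-1}\cdot\lO(\mathscr A)$ is a canonical construction, hence Galois-stable, hence $\kappa^\uparrow$ is rational over the base.

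For the count of RM-horizontal and RM-descending kernels I would argue as follows. An $(\ell,\ell)$-isogeny $\varphi$ can be RM-horizontal only if $\ker\varphi$ is stable under $\lO_0(\mathscr A) = \lO_n$ --- but more is true: factoring through the real multiplication, RM-horizontal $(\ell,\ell)$-isogenies at a variety with non-maximal RM should correspond to the "$\ell$-isogenies of the real order'' and these are counted by looking at $\mathscr A[\ell]$ as a module over $\lO_n/\ell \simeq \F_\ell[\epsilon]/\epsilon^2$-type algebra; I expect exactly $\ell^2+\ell$ isotropic $\kappa$ that are $\lO_n$-stable, do not change the real order, and are distinct from $\kappa^\uparrow$ --- I would get this by a direct linear-algebra count over $\F_\ell$ of the isotropic planes in $\mathscr A[\ell]\simeq (\F_\ell[\epsilon]/\epsilon^2)^2$ that are submodules projecting onto the whole thing modulo $\epsilon$, minus the one ascending plane. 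Then RM-descending is everything else: $(\ell^3+\ell^2+\ell+1) - 1 - (\ell^2+\ell) = \ell^3$, and for each such $\kappa$ one checks $\lO_0(\mathscr A/\kappa) = \lO_{n+1}$ (it cannot descend more than one step since the isogeny has degree $\ell^2$ and the index $[\lO_n:\lO_{n+1}]=\ell$, a two-step descent would need degree divisible by $\ell^4$ after accounting for the complementary direction).

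The main obstacle I anticipate is the RM-horizontal count: one must carefully identify $\mathscr A[\ell]$ as a symplectic $\lO_n/\ell\lO_n$-module --- in particular pin down its isomorphism type (is it free? what is the pairing?) using that $\mathfrak l$ or $\ell$ is coprime to the conductor of $\lO(\mathscr A)$ over $\lO_0(\mathscr A)$ --- and then enumerate isotropic submodules of the right "shape,'' being careful that horizontal means the real order is \emph{exactly} $\lO_n$ rather than merely containing it. I would handle this by reducing mod $\epsilon$ (where $\epsilon$ is a uniformizer of $\lO_n$ inside $\lO_0$, so $\epsilon^2 \in \ell\lO_n$): a maximal isotropic plane is RM-ascending iff it is killed by $\epsilon$, RM-horizontal iff $\epsilon$ maps it isomorphically to the kernel-of-$\epsilon$ plane in a nondegenerate way, and RM-descending otherwise; counting each class is then elementary linear algebra over $\F_\ell$. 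Finally, the passage from $\overline k$ to $k$ in the statement is immediate from Remark~\ref{rem:fieldOfDefIsogenies} together with the observation that the ascending kernel is canonical and hence rational, which I have already noted.
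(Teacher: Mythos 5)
Your overall strategy is the paper's: pass to the Tate module, view $\mathscr A[\ell]\cong T/\ell T$ as a symplectic module over $\lO_n/\ell\lO_n\cong\F_\ell[\epsilon]$, and sort the $\ell^3+\ell^2+\ell+1$ maximal isotropic planes by how they interact with $\epsilon$ (this is exactly Proposition~\ref{prop:neighborsLevelsRM} together with Remark~\ref{rem:correspEllEllAndEllEll}). But as written the proposal has genuine gaps, precisely at the points you flag as ``obstacles''. First, the whole count hinges on knowing that the self-dual lattice $\Lambda=T_\ell\mathscr A$ is a \emph{free} $\lO_n$-module of rank $2$, so that $\Lambda/\ell\Lambda\cong\F_\ell[\epsilon]^2$. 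This is not automatic for an $\lO_n$-lattice (e.g.\ $\lO_n e_1\oplus\lO_0 e_2$ has order $\lO_n$ but is not free), and it is not a consequence of any coprimality of $\ell$ with the conductor of $\lO(\mathscr A)$ over $\lO_0(\mathscr A)$, which is the ingredient you suggest; what forces freeness is the self-duality of $T$ coming from the principal polarization, combined with Bass's structure theory for modules over the Gorenstein order $\lO_n$ (Lemma~\ref{lemma:splittingOnLattice}, via Lemma~\ref{lemma:quadraticImpliesGorenstein}). You leave this as an open question, but without it none of the three counts follows.

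Second, your sorting criterion is partly wrong and your uniqueness argument for the ascending kernel does not apply. A plane $H$ that ``$\epsilon$ maps isomorphically onto the kernel-of-$\epsilon$ plane'' satisfies $H\cap\epsilon R=0$, hence is not $\F_\ell[\epsilon]$-stable and gives an RM-\emph{descending} isogeny, not a horizontal one. The correct dichotomy (the content of Lemma~\ref{lemma:subspacesEpsilon}) is: ascending corresponds to the single plane $\epsilon R$ (equivalently $\Gamma=\ell\lO_{n-1}\Lambda$), horizontal to the $\ell^2+\ell$ remaining $\epsilon$-stable planes, all of the form $\F_\ell[\epsilon]\cdot g$ with $g\notin\epsilon R$, and descending to the $\ell^3$ non-stable ones. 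Even granting this, two verifications you omit are needed: that every $\epsilon$-stable plane is in fact isotropic (Lemma~\ref{lemma:realOrbitIsotropic}, plus the direct check for $\epsilon R$), so that stable planes are not lost to the isotropy condition; and that $\epsilon R$, and only $\epsilon R$, lifts to an $\lO_{n-1}$-stable lattice, with order exactly $\lO_{n-1}\cdot\lO(\Lambda)$ and not larger --- the paper does this by an explicit computation in the free basis. Finally, your appeal to the volcano structure of Theorem~\ref{thm:lisogenyvolcanoes} to get uniqueness of the ascending kernel is inapplicable: that theorem concerns varieties with locally maximal real multiplication, whereas here $\lO_0(\mathscr A)=\lO_n$ with $n>0$, and in any case knowing the target up to isomorphism would not bound the number of kernels (compare Lemma~\ref{lemma:distinctFrakLKernels}). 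The uniqueness must come from the mod-$\epsilon$ classification itself, once the freeness and stability statements above are established.
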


The proof of this theorem is the matter of Section~\ref{sec:levelsRM}.

\subsection{Lattices in $\ell$-adic symplectic spaces}
The theorems stated above are proven using a different approach from the currently available analyses of the structure of $\ell$-power isogeny graphs. Rather than working with complex tori, we attach to an $\ell$-isogeny of abelian varieties a pair of lattices in an $\ell$-adic symplectic space, whose relative position is determined by the kernel of the isogeny, following the proof of Tate's isogeny theorem \cite{Tate1966}.

Inspired by ~\cite[\textsection 6]{Cornut04}, where the theory of Hecke operators on $\GL_2$ is used to understand the CM elliptic curves isogenous to a fixed curve, we analyze the possible local endomorphism rings (in $K_\ell = K \otimes_{\Q} \Q_\ell$) for an analogous notion of ``neighboring'' lattices. This method gives a precise count of the horizontal isogenies as well as the vertical isogenies that increase or decrease the local endomorphism ring at $\ell$.

\subsection{``Going up'' and applications}\label{subsec:introApplications}
One of the applications of the above structural results is an algorithm that, given as input a principally polarized abelian surface, finds a path of computable isogenies leading to a surface with maximal endomorphism ring, when it is possible (and we can charaterize when it is possible). This algorithm is built and analysed in Section~\ref{sec:goingUp}.

Such an algorithm has various applications. One of them is in generating (hyperelliptic) curves of genus 2 over finite fields with 
suitable security parameters via the CM method. The method is based on first computing invariants for the curve 
(Igusa invariants) and then using a method of Mestre \cite{mestre:genus2} (see also \cite{cardona-quer}) to reconstruct the equation of the curve. 
The computation of the invariants is expensive and there are three different ways to compute their minimal polynomials (the Igusa class polynomials): 1) complex analytic techniques \cite{vanwamelen:genus2, weng:genus2, Streng10}; 2) $p$-adic lifting techniques \cite{carls-kohel-lubicz, carls-lubicz, ghkr:2adic}; 3) a technique based on the Chinese Remainder Theorem \cite{eisentraeger-lauter, freeman-lauter, broker-gruenewald-lauter} (the \emph{CRT method}).

Even if 3) is currently the least efficient method, it is also the least understood and deserves more attention: its analog for elliptic curves holds the records for time and space complexity and for the size of the computed examples \cite{enge-sutherland, sutherland:crt}. 

The CRT method of \cite{broker-gruenewald-lauter} requires one to find an ordinary abelian surface $\mathscr A / \F_q$ whose endomorphism ring is the maximal order $\cO_K$ of the quartic CM field $K$ isomorphic to the 
endomorphism algebra $\End(\mathscr A) \otimes_{\Z} \Q$. This is obtained by trying random hyperelliptic curves $\mathscr C$ in the isogeny class and using the maximal endomorphism test of \cite{freeman-lauter}, thus making the algorithm quite inefficient. 

In \cite{lauter-robert}, the authors propose a different method based on $(\ell, \ell)$-isogenies that does not require the endomorphism ring to be maximal (generalizing the method of Sutherland \cite{sutherland:crt} for elliptic curves). Starting from an arbitrary abelian surface in the isogeny class, the method is based on a probabilistic algorithm for ``going up" to an abelian surface with maximal endomorphism ring. Although the authors cannot prove that the going-up algorithm succeeds with any fixed probability, the improvement is practical and heuristically, it reduces the running time of the CRT method in genus 2 from $\cO(q^3)$ to $\cO(q^{3/2})$. Our algorithm for going up takes inspiration from~\cite{lauter-robert}, but exploits our new structural results on isogeny graphs.

A second application is in the computation of an explicit isogeny between any two given principally polarized abelian surfaces in the same isogeny class. An algorithm is given in~\cite{JW15} to find an isogeny between two such surfaces with maximal endomorphism ring. 
This can be extended to other pairs of isogenous principally polarized abelian surfaces, by first computing paths of isogenies to reach the maximal endomorphism ring, then applying the method of~\cite{JW15}. 

Similarly, this ``going up'' algorithm can also extend results about the random self-reducibility of the discrete logarithm problem in genus 2 cryptography. The results of~\cite{JW15} imply that if the discrete logarithm problem is efficiently solvable on a non-negligible proportion of the Jacobians with maximal endomorphism ring within an isogeny class, then it is efficiently solvable for all isogenous Jacobians with maximal endomorphism ring.
For this to hold on any other Jacobian in the isogeny class, it only remains to compute a path of isogenies reaching the level of the maximal endomorphism ring.

Finally, we note that the ``going-up" algorithm can also be applied in the computation of endomorphism rings of abelian surfaces over finite fields, thus extending the work of Bisson \cite{bisson}. This will be the subject of a forthcoming paper.

\section{Orders} \label{sec:orders}

\subsection{Global and local orders}\label{subsec:localglobalorders}
An order in a number field is a full rank $\Z$-lattice which is also a subring. If  $\ell$ is a prime, and $L$ is a finite extension of $\Q_\ell$ or a finite product of finite extensions of $\Q_\ell$, an order in $L$ is a full rank $\Z_\ell$-lattice which is also a subring. If $K$ is a number field, write $K_\ell = K \otimes_\Q \Q_\ell$. In this section, if $\cO$ an order in $K$, write $\cO_\ell = \cO \otimes_\Z \Z_\ell$; then $\cO_\ell$ is an order in $K_\ell$. 

\begin{lemma}
Given a number field $K$ and a sequence $R(\ell)$ of orders in $K_\ell$, such that $R(\ell)$ is the maximal order in $K_\ell$ for almost all $\ell$, there exists a unique order $\cO$ in $K$ such that $\cO_\ell = R(\ell)$ for all $\ell$. This order $\cO$ is the intersection $\bigcap_{\ell} (R(\ell)\cap K)$.
\end{lemma}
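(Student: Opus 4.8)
The plan is to establish the correspondence $\cO \leftrightarrow (\cO_\ell)_\ell$ by exhibiting an explicit inverse to the localization map, namely $\cO := \bigcap_\ell (R(\ell) \cap K)$, and then verifying that this $\cO$ is an order with $\cO_\ell = R(\ell)$ for every $\ell$, and that it is the only such order.

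First I would check that $\cO = \bigcap_\ell (R(\ell) \cap K)$ is a subring of $K$: each $R(\ell) \cap K$ is a subring of $K$ (intersection of the subring $R(\ell)$ of $K_\ell$ with the subring $K$, via the canonical embedding $K \hookrightarrow K_\ell$), and an arbitrary intersection of subrings is a subring. The content is showing $\cO$ is a full-rank $\Z$-lattice. For the full-rank part, fix the maximal order $\cO_K$; since $R(\ell)$ is an order in $K_\ell$ it contains $N \cO_{K,\ell}$ for some positive integer $N_\ell$, and $N_\ell$ can be taken to be $1$ for all but finitely many $\ell$ (where $R(\ell) = \cO_{K,\ell}$ by hypothesis). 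Letting $M = \prod_\ell N_\ell$ (a finite product), one gets $M\cO_K \subseteq \cO$, so $\cO$ has full rank. For the ``lattice'' (finitely generated, hence discrete) part, dually one shows $\cO \subseteq \cO_K$ up to a denominator: for each $\ell$, $R(\ell) \subseteq \frac{1}{\ell^{a_\ell}}\cO_{K,\ell}$ for some $a_\ell \geq 0$ with $a_\ell = 0$ for almost all $\ell$, so $\cO \subseteq \bigcap_\ell \left(\frac{1}{\ell^{a_\ell}}\cO_{K,\ell} \cap K\right) = \frac{1}{D}\cO_K$ where $D = \prod_\ell \ell^{a_\ell}$ is a well-defined positive integer. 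Thus $M\cO_K \subseteq \cO \subseteq \frac{1}{D}\cO_K$, so $\cO$ is sandwiched between two full-rank lattices and is itself a full-rank $\Z$-lattice; being also a subring, it is an order.

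Next I would verify $\cO_\ell = R(\ell)$ for each prime $\ell$. The inclusion $\cO_\ell \subseteq R(\ell)$ is immediate since $\cO \subseteq R(\ell) \cap K \subseteq R(\ell)$ and $R(\ell)$ is already a $\Z_\ell$-module. For the reverse inclusion, I would use a strong approximation / CRT argument: $\cO/M\cO_K$ injects into $\prod_p (R(p) \cap K)/(M\cO_K)$-type data, but cleaner is to argue prime-by-prime using that $\cO_K/M\cO_K \cong \prod_{p \mid M} \cO_{K,p}/M\cO_{K,p}$ decomposes, and that the conditions ``$x \in R(p)$'' for distinct primes $p$ are independent. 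Concretely, given $y \in R(\ell)$, since $M y \in M \cO_{K,\ell} \subseteq \cO_{K,\ell}$ one approximates $My$ by an element of $\cO_K$ that lies in $R(p)$ for all $p$ (using that $\cO_K$ is dense in $\cO_{K,p}$ and that $R(p) = \cO_{K,p}$ for almost all $p$, handling the finitely many bad $p$ by CRT so as not to disturb the $\ell$-adic approximation), dividing back by $M$ to land in $\cO$ and recover $y$ in the limit; since $R(\ell)$ is closed, $y \in \cO_\ell$.

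The main obstacle I expect is this last approximation step: one must simultaneously satisfy an $\ell$-adic congruence (to hit $y$) and membership conditions at the finitely many other bad primes, while staying inside $K$ — this is exactly where one invokes the Chinese Remainder Theorem for $\cO_K$ together with the fact that $R(p)$ contains a full-rank sublattice of $\cO_{K,p}$, so that a suitable congruence class modulo a large enough integer forces membership in $R(p)$. Finally, uniqueness is easy: if $\cO'$ is another order with $\cO'_\ell = R(\ell)$ for all $\ell$, then $\cO' = \cO' \cap K = \bigcap_\ell (\cO'_\ell \cap K) = \bigcap_\ell (R(\ell) \cap K) = \cO$, where the second equality is the standard fact that an order (indeed any full-rank $\Z$-lattice) equals the intersection over all $\ell$ of its localizations intersected with $K$; this itself follows from $\Z = \bigcap_\ell \Z_\ell$ inside $\Q$ applied coordinatewise in a $\Z$-basis.
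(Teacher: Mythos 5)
Your proof is correct in substance, but it takes a genuinely different route from the paper. The paper translates the problem into linear algebra: it fixes a $\Z$-basis of $\cO_K$, encodes global and local lattices by matrices modulo $\GL_n(\Z)$ and $\GL_n(\Z_\ell)$ respectively, and deduces existence from the adelic decomposition $\GL_n(\A_{\mathrm{fin}}) = \GL_n(\Q)\cdot\prod_\ell \GL_n(\Z_\ell)$, i.e.\ from strong approximation for $\SL_n$ plus surjectivity of the determinant; the characterization of $\cO$ as $\bigcap_\ell(R(\ell)\cap K)$ is then read off at the end. You instead take the intersection as the definition and verify directly, by sandwiching it between full-rank lattices and by a Chinese-Remainder/approximation argument at the finitely many bad primes, that it is an order with the prescribed localizations; uniqueness follows for you (as for the paper, implicitly) from the standard fact that a lattice is the intersection of its localizations. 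Your approach is more elementary and self-contained (no adelic input), at the cost of more bookkeeping; the paper's is shorter given the quoted $\GL_n$ fact and is really a statement about arbitrary lattices, the ring structure being automatic.

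Two small points of bookkeeping in your argument deserve attention, though neither is a genuine gap. First, since every element of an order in $K_\ell$ is integral over $\Z_\ell$, one always has $R(\ell)\subseteq \cO_K\otimes_\Z\Z_\ell$, so your denominator $D$ can be taken to be $1$; this is also what justifies your claim that $My\in\cO_K\otimes_\Z\Z_\ell$, which does not follow from the bound $R(\ell)\subseteq \ell^{-a_\ell}(\cO_K\otimes_\Z\Z_\ell)$ alone. Second, ``dividing back by $M$'' does not preserve membership in $R(p)$: if $w\in\cO_K$ merely lies in $R(p)$, then $w/M$ need not. You must impose the stronger congruence $w\equiv 0 \pmod{MN_p\cO_K}$ at each bad prime $p\neq\ell$ (which is exactly the ``congruence class modulo a large enough integer'' you allude to, using $N_p\cO_{K,p}\subseteq R(p)$), together with $w\equiv My$ modulo a high power of $\ell$; then $w/M$ lies in every $R(p)$ and is $\ell$-adically close to $y$, and the argument closes as you say since $\cO\otimes_\Z\Z_\ell$ is closed in $K_\ell$.
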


\begin{proof}
This is well-known, but we include a proof for completeness. Let $n = [K:\Q]$ and pick a $\mathbb{Z}$-basis for the maximal order $\cO_K$ of $K$. With this choice, a lattice $\Lambda$ in $\cO_K$ may be described by a matrix in $M_n(\Z) \cap \GL_n(\Q)$ whose column vectors are a basis; this matrix is well-defined up to the left action of $\GL_n(\Z)$. Similarly, a local lattice $\Lambda_\ell$ in $K_\ell$ may be described by a matrix in $M_n(\Z_\ell) \cap \GL_n(\Q_\ell)$, well-defined up to the left-action of $\GL_n(\Z_\ell)$. It thus suffices to prove that, given matrices $M_\ell \in M_n(\Z_\ell) \cap \GL_n(\Q_\ell)$, almost all of which are in $\GL_n(\Z_\ell)$, there exists an $N \in M_n(\Z) \cap \GL_n(\Q)$ such that $NM_\ell^{-1} \in \GL_n(\Z_\ell)$. This follows from
$$
\GL_n(\A_\mathrm{fin}) = \GL_n(\Q) \cdot \prod_\ell \GL_n(\Z_\ell),
$$
a consequence of strong approximation for $\SL_n$ and the surjectivity of the determinant map $\GL_n(\Z_\ell) \to \Z_\ell^\times$ (see the argument in \cite[p. 52]{Gelbart}, which generalizes in an obvious way when $n > 2$). Finally, the identity $\cO = \bigcap_{\ell} (\cO_\ell\cap K)$ follows from the fact that $\tilde\cO = \bigcap_{\ell} (\cO_\ell\cap K)$ is an order in $K$ such that $\tilde\cO_\ell = \cO_\ell$ for all $\ell$.
\end{proof}

\subsection{Orders with maximal real multiplication}

Suppose that $K_0$ is a number field or finite product of extensions of $\Q_p$, and let $K$ a quadratic extension of $K_0$ (i.e., an algebra of the form $K_0[x]/f(x)$, where $f$ is a separable quadratic polynomial). The non-trivial element of $\Aut(K/K_0)$ will be denoted $\dagger$. In the case that $K$ is a CM-field and $K_0$ its maximally real subfield, Goren and Lauter~\cite{GorenLauter09} proved that if $K_0$ has a trivial class group, the orders with maximal real multiplication, i.e., the orders containing $\cO_{K_0}$, are characterized by their conductor --- under the assumption that ideals of $\cO_K$ fixed by $\Gal(K/K_0)$ are ideals of $\cO_{K_0}$ augmented to $\cO_K$, which is rather restrictive, since it implies that no finite prime of $K_0$ ramifies in $K$. In that case, these orders are exactly the orders $\cO_{K_0} + \ff_0 \cO_K$, for any ideal $\ff_0$ in $\cO_{K_0}$. We generalize this result to an arbitrary quadratic extension; abusing language, we will continue to say an order of $K$ has ``maximal real multiplication'' if it contains $\cO_{K_0}$.

\begin{theorem}\label{thm:classificationOrdersMaxRM}
The map $\ff_0 \mapsto \cO_{K_0} + \ff_0\cO_K$ is a bijection between the set of ideals in $\cO_{K_0}$ and the set of orders in $K$ containing $\cO_{K_0}$. More precisely,
\begin{enumerate}[label=(\roman*)]
\item \label{thmClassificationMaxRM1} for any ideal $\ff_0$ in $\cO_{K_0}$, the conductor of $\cO_{K_0} + \ff_0\cO_K$ is $\ff_0\cO_K$, and
\item \label{thmClassificationMaxRM2} for any order $\cO$ in $K$ with maximal real multiplication and conductor $\ff$, one has $\cO = \cO_{K_0} + (\ff \cap \cO_{K_0}) \cO_K$.
\end{enumerate}
\end{theorem}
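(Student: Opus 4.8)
The plan is to verify the two explicit assertions (i) and (ii) and observe that together they give the claimed bijectivity: (i) shows that the map $\ff_0 \mapsto \cO_{K_0} + \ff_0 \cO_K$ followed by ``take the conductor'' recovers $\ff_0 \cO_K$, hence the map is injective (since $\ff_0$ is recovered from $\ff_0\cO_K$ by intersecting with $\cO_{K_0}$, using that $\cO_{K_0}$ is a Dedekind domain so that $\ff_0 \cO_K \cap \cO_{K_0} = \ff_0$); and (ii) shows every order with maximal real multiplication is in the image, with explicit preimage $\ff \cap \cO_{K_0}$. So the work is entirely in (i) and (ii). Since everything in sight is a statement about finitely generated modules over a Dedekind domain, I would first reduce to the local (or semilocal) case: an order of $K$ containing $\cO_{K_0}$, its conductor, and the operations involved all localize at primes of $\cO_{K_0}$, so it suffices to treat the case where $\cO_{K_0}$ is a DVR (or a finite product of DVRs), where every ideal is principal, generated by a power of a uniformizer.

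For part (i): set $\cO = \cO_{K_0} + \ff_0 \cO_K$. One inclusion of the conductor statement is immediate: $\ff_0 \cO_K \cdot \cO_K = \ff_0 \cO_K \subseteq \cO$, so $\ff_0 \cO_K$ is contained in the conductor $\ff$. For the reverse, I would work locally and pick $\alpha \in \cO_K$ with $\cO_K = \cO_{K_0} \oplus \cO_{K_0}\alpha$ as a $\cO_{K_0}$-module (possible after localizing, perhaps after adjusting $\alpha$ so that $1,\alpha$ is a basis; note $K = K_0 \oplus K_0\alpha$). Then $\cO = \cO_{K_0} \oplus \ff_0 \alpha$. An element $x = a + b\alpha \in K$ (with $a \in K_0$, $b\in K_0$) satisfies $x \cO_K \subseteq \cO$ iff $x, x\alpha \in \cO$; writing $\alpha^2 = u + v\alpha$ with $u,v \in \cO_{K_0}$, the condition $x\alpha = b u + (a + bv)\alpha \in \cO_{K_0} \oplus \ff_0\alpha$ forces $a + bv \in \ff_0$ and $x \in \cO$ forces $b \in \ff_0$, hence $a \in \ff_0$ as well; so $x \in \ff_0 \cO_K$. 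This gives $\ff = \ff_0 \cO_K$.

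For part (ii): let $\cO$ be an order with $\cO_{K_0} \subseteq \cO \subseteq \cO_K$ and conductor $\ff$. Again localize, so $\cO_{K_0}$ is a DVR with uniformizer $\pi$ and $\cO_K = \cO_{K_0} \oplus \cO_{K_0}\alpha$. Since $\cO$ is an $\cO_{K_0}$-submodule of $\cO_K$ containing $\cO_{K_0}$ and of the same rank, structure theory over the DVR gives $\cO = \cO_{K_0} \oplus \ff_0 \alpha$ for a uniquely determined ideal $\ff_0 = \pi^m \cO_{K_0}$ (the module of ``second coordinates'' of elements of $\cO$, which is an ideal because $\alpha^2 \in \cO$ forces it to be closed under multiplication by the relevant elements — more directly, $\cO$ being a ring containing $\cO_{K_0}$ makes its image under the projection to the $\alpha$-coordinate an $\cO_{K_0}$-submodule of $K_0$, i.e.\ a fractional ideal, contained in $\cO_{K_0}$). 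Thus $\cO = \cO_{K_0} + \ff_0 \cO_K$ with this $\ff_0$, and part (i) identifies $\ff = \ff_0 \cO_K$, whence $\ff \cap \cO_{K_0} = \ff_0$ and $\cO = \cO_{K_0} + (\ff \cap \cO_{K_0})\cO_K$. Finally I would reassemble the local statements into the global one using the previous lemma (an order is determined by its localizations, and an ideal of $\cO_{K_0}$ by its localizations).

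The main obstacle I anticipate is purely bookkeeping: making the reduction to the DVR case clean, in particular checking that a local $\cO_{K_0}$-module basis $1, \alpha$ of $\cO_K$ exists (which uses that $\cO_K$ is $\cO_{K_0}$-projective hence free locally, and that one can choose the basis to include $1$ — a short argument since $\cO_K / \cO_{K_0}$ is a rank-one projective, hence free locally) and tracking the multiplication constants $u, v$ through the computation without assuming $K$ is a field (we only assumed $K = K_0[x]/f(x)$ with $f$ separable quadratic). None of the individual computations are hard; the care is in handling the semilocal ``finite product of DVRs'' case uniformly, where ``ideal'' means a product of powers-of-uniformizers ideals and the rank-one module structure theory still applies componentwise.
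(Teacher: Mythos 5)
Your proposal is correct, but it takes a genuinely different route from the paper. The paper argues globally: it first proves (via $\dagger$-stability and a counting argument on quotients, Lemmas~\ref{lemma_rosatiStable}--\ref{maxRM_rosatiStable}) that the conductor of any order with maximal real multiplication is extended from $\cO_{K_0}$ --- the delicate point being to rule out a conductor equal to an \emph{odd} power of a ramified prime of $K$, which is excluded by comparing cardinalities through Lemma~\ref{bijfrak}; it then gets (i) by a norm count and (ii) by decomposing $\cO_K/\ff \cong R \oplus R\alpha$ (Lemma~\ref{lemmaModuleQuotient}) and showing the $\alpha$-component of $\cO/\ff$ must vanish. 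You instead localize at primes of $\cO_{K_0}$, choose a local module basis $\{1,\alpha\}$ of $\cO_K$ (justified, as you indicate, by the quotient $\cO_K/\cO_{K_0}$ being torsion-free of rank one, hence locally free --- essentially the same splitting idea as Lemma~\ref{lemmaModuleQuotient}, used locally rather than modulo $\ff_0$), and compute both the conductor of $\cO_{K_0}+\ff_0\cO_K$ and the shape of an arbitrary maximal-RM order by a direct $2\times 2$ calculation with $\alpha^2=u+v\alpha$. This makes the ramified-prime parity issue invisible: your local computation shows directly that the conductor is extended from $\cO_{K_0}$, uniformly in the splitting behaviour, and your observation that $\ff_0\cO_K\cap\cO_{K_0}=\ff_0$ (flatness/integral closedness over the Dedekind base) correctly yields injectivity. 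The trade-off is the local--global bookkeeping you flag (conductors, sums and intersections commuting with localization, and gluing the local ideals into a global $\ff_0$, which is standard since $\cO$ agrees with $\cO_K$ at almost all primes), whereas the paper's argument stays global and produces the $\dagger$-stability of maximal-RM orders as a by-product; your route is arguably more elementary and transparent. One small remark: the ``previous lemma'' you invoke for reassembly is stated in the paper for completions at rational primes, not localizations at primes of $\cO_{K_0}$, so you should either adapt it or simply cite the standard fact that a lattice over a Dedekind domain is determined by its localizations.
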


\begin{lemma}\label{lemma_rosatiStable}
An order $\cO$ in $K$ is stable under $\dagger$ if and only if $\cO \cap K_0 = (\cO + \cO^\dagger) \cap K_0$.
\end{lemma}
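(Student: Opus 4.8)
The plan is to prove both implications directly from the definitions, using the fact that $K/K_0$ is a quadratic extension with nontrivial automorphism $\dagger$, so that every element $x \in K$ decomposes via its "trace" $x + x^\dagger \in K_0$ and "norm" $xx^\dagger \in K_0$.

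For the forward direction, suppose $\cO$ is stable under $\dagger$, i.e., $\cO^\dagger = \cO$. Then $\cO + \cO^\dagger = \cO$, so trivially $(\cO + \cO^\dagger) \cap K_0 = \cO \cap K_0$. This direction is immediate and requires no work.

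For the reverse direction, suppose $\cO \cap K_0 = (\cO + \cO^\dagger) \cap K_0$; I want to deduce $\cO^\dagger = \cO$, equivalently (by symmetry of the hypothesis under $\dagger$, since $(\cO^\dagger)^\dagger = \cO$) it suffices to show $\cO^\dagger \subseteq \cO$. Take $x \in \cO$; then $x + x^\dagger \in \cO + \cO^\dagger$, and since $x + x^\dagger \in K_0$, the hypothesis gives $x + x^\dagger \in \cO \cap K_0 \subseteq \cO$. Therefore $x^\dagger = (x + x^\dagger) - x \in \cO$. Since $x$ was arbitrary, $\cO^\dagger \subseteq \cO$, and applying $\dagger$ gives $\cO \subseteq \cO^\dagger$, hence equality.

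There is no real obstacle here: the only point worth noting is that $x + x^\dagger$ genuinely lies in $K_0$ because it is fixed by $\dagger$, and that $\cO \cap K_0$ is contained in $\cO$ by definition of intersection, so the containment $x^\dagger \in \cO$ follows from $\cO$ being closed under subtraction. The lemma is essentially a formal consequence of the quadratic structure; I expect the author's proof to be one or two lines along exactly these lines.
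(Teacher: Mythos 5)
Your argument is correct and coincides with the paper's proof: the forward direction is immediate, and for the converse one takes $x \in \cO$, notes $x + x^\dagger \in (\cO + \cO^\dagger) \cap K_0 = \cO \cap K_0 \subset \cO$, and subtracts $x$ to get $x^\dagger \in \cO$. No differences worth noting.
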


\begin{proof}
The direct implication is obvious. For the other direction, suppose $\cO \cap K_0 = (\cO + \cO^\dagger) \cap K_0$ and let $x \in \cO$. Then, $x + x^\dagger \in (\cO + \cO^\dagger) \cap K_0 = \cO \cap K_0 \subset \cO$, which proves that $x^\dagger \in \cO$.
\end{proof}

\begin{lemma}\label{bijfrak}
Let $\ff$ and $\mathfrak g$ be two ideals in $\cO_{K}$, such that $\mathfrak g$ divides $\ff$. Let $\pi : \cO_K \rightarrow \cO_K/\ff$ be the natural projection. The canonical isomorphism between $(\cO_{K_0} + \ff) / \ff$ and $\cO_{K_0}/(\cO_{K_0} \cap \ff)$ induces a bijection between $\pi(\cO_{K_0}) \cap \pi(\mathfrak g)$ and $(\cO_{K_0} \cap \mathfrak g)/(\cO_{K_0} \cap \ff)$.
\end{lemma}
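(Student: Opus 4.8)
The plan is a routine diagram chase using the second isomorphism theorem; the only subtlety is to invoke the divisibility hypothesis $\mathfrak g \mid \ff$ in the form $\ff \subseteq \mathfrak g$, which is legitimate because $\cO_K$ is a Dedekind ring (or, in the $\ell$-adic setting of Section~\ref{subsec:localglobalorders}, a finite product of such), so ``to contain is to divide''. In particular $\pi(\mathfrak g) = \mathfrak g/\ff$ is an honest subgroup of $\cO_K/\ff$.

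First I would make the canonical isomorphism explicit. The composite $\cO_{K_0}\hookrightarrow \cO_K \xrightarrow{\pi} \cO_K/\ff$ has image $\pi(\cO_{K_0}) = (\cO_{K_0}+\ff)/\ff$ and kernel $\cO_{K_0}\cap\ff$, so the second isomorphism theorem yields an isomorphism $\psi\colon \pi(\cO_{K_0}) \xrightarrow{\ \sim\ } \cO_{K_0}/(\cO_{K_0}\cap\ff)$ with $\psi(a+\ff) = a + (\cO_{K_0}\cap\ff)$ for $a\in\cO_{K_0}$. Since $\ff\subseteq\mathfrak g$ gives $\cO_{K_0}\cap\ff \subseteq \cO_{K_0}\cap\mathfrak g$, the target set $(\cO_{K_0}\cap\mathfrak g)/(\cO_{K_0}\cap\ff)$ is a well-defined subgroup of $\cO_{K_0}/(\cO_{K_0}\cap\ff)$, so the statement to prove is that $\psi$ carries $\pi(\cO_{K_0})\cap\pi(\mathfrak g)$ onto it.

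Next I would identify $\pi(\cO_{K_0})\cap\pi(\mathfrak g)$ with $\pi(\cO_{K_0}\cap\mathfrak g)$. The inclusion $\supseteq$ is immediate. For $\subseteq$, take $a\in\cO_{K_0}$ with $\pi(a)=\pi(b)$ for some $b\in\mathfrak g$; then $a-b\in\ff\subseteq\mathfrak g$, hence $a = b+(a-b)\in\mathfrak g$, so $a\in\cO_{K_0}\cap\mathfrak g$ --- this is precisely where $\mathfrak g\mid\ff$ is used. Finally, applying $\psi$ gives $\psi\big(\pi(\cO_{K_0}\cap\mathfrak g)\big) = \{\, a+(\cO_{K_0}\cap\ff) : a\in\cO_{K_0}\cap\mathfrak g \,\} = (\cO_{K_0}\cap\mathfrak g)/(\cO_{K_0}\cap\ff)$, and since $\psi$ is a bijection on all of $\pi(\cO_{K_0})$ it restricts to a bijection between the two subsets in question.

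There is no genuine obstacle; the proof is a short exercise in the isomorphism theorems. The only point that merits care is that the hypothesis ``$\mathfrak g$ divides $\ff$'' is used twice: once so that $\pi(\mathfrak g)$ is literally $\mathfrak g/\ff$, and once to pass from $a-b\in\ff$ to $a\in\mathfrak g$ in the identification of the intersection.
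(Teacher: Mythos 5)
Your proof is correct and follows the same route as the paper: both arguments show $\pi(\cO_{K_0})\cap\pi(\mathfrak g)=\pi(\cO_{K_0}\cap\mathfrak g)$ using $\ff\subseteq\mathfrak g$ (via $x-y\in\ff\subseteq\mathfrak g$) and then transport this set through the canonical isomorphism $\pi(\cO_{K_0})\cong\cO_{K_0}/(\cO_{K_0}\cap\ff)$. Your write-up merely makes the isomorphism-theorem bookkeeping more explicit than the paper does.
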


\begin{proof}
Any element in $ \pi(\cO_{K_0}) \cap \pi(\mathfrak g)$ can be written as $\pi(x) = \pi(y)$ for some $x \in  \cO_{K_0}$ and $y \in \mathfrak g$. Then, $x-y \in \ff \subset \mathfrak g$, so $x = (x-y) + y \in \mathfrak g$. So 
$$\pi(\mathfrak g) \cap \pi(\cO_{K_0}) = \pi(\mathfrak g \cap \cO_{K_0}) \cong (\mathfrak g \cap \cO_{K_0})/(\ff \cap \cO_{K_0}),$$
where the last relation comes from the canonical isomorphism between the rings $(\cO_{K_0} + \ff) / \ff$ and $\cO_{K_0}/(\cO_{K_0} \cap \ff)$.
\end{proof}

\begin{lemma}\label{maxRM_rosatiStable}
Let $\cO$ be an order in $K$ of conductor $\ff$ with maximal real multiplication. Then, $\cO$ is stable under $\dagger$ and $\ff$ comes from an ideal of $\cO_{K_0}$, i.e., $\ff = \ff_0\cO_{K}$, where $\ff_0$ is the $\cO_{K_0}$-ideal $\ff \cap \cO_{K}$.
\end{lemma}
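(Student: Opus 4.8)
The plan is to prove the two assertions in sequence, with the stability under $\dagger$ coming first and then feeding into the description of the conductor.

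For stability under $\dagger$: by Lemma~\ref{lemma_rosatiStable}, it suffices to show $\cO \cap K_0 = (\cO + \cO^\dagger) \cap K_0$. The inclusion $\subseteq$ is trivial. For $\supseteq$, note that since $\cO$ has maximal real multiplication we have $\cO_{K_0} \subseteq \cO \cap K_0$, and of course $\cO_{K_0} \subseteq \cO^\dagger$ as well (because $\cO_{K_0}$ is fixed by $\dagger$), so $\cO_{K_0} \subseteq (\cO + \cO^\dagger) \cap K_0$. But $(\cO + \cO^\dagger) \cap K_0$ is a subring of $K_0$ sandwiched between $\cO_{K_0}$ and $K_0$... wait, that is not quite enough since there can be infinitely many orders. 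Let me reconsider.

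Let me reconsider the argument...
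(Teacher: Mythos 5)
Your proposal is incomplete: it stalls on the first assertion and never touches the second. The place where you got stuck has an easy fix, and it is exactly the point the paper's proof leaves implicit. You do not need to classify rings between $\cO_{K_0}$ and $K_0$; the correct upper bound is $\cO_{K_0}$ itself, not $K_0$. Indeed $\cO$ and $\cO^\dagger$ both lie in $\cO_K$, and a sum of algebraic integers is an algebraic integer, so $\cO + \cO^\dagger \subseteq \cO_K$ and hence $(\cO + \cO^\dagger)\cap K_0 \subseteq \cO_K \cap K_0 = \cO_{K_0}$. Combined with the trivial inclusions $\cO_{K_0} \subseteq \cO \cap K_0 \subseteq (\cO + \cO^\dagger)\cap K_0$, all three sets equal $\cO_{K_0}$, and Lemma~\ref{lemma_rosatiStable} then gives $\dagger$-stability. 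This is the paper's argument (stated there in one sentence as ``obvious'').

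The substantive half of the lemma --- that $\ff = \ff_0\cO_K$ with $\ff_0 = \ff\cap\cO_{K_0}$ --- is entirely absent from your proposal, and it is where the real work lies. The paper proceeds by writing $\ff = \prod_{\mathfrak p_0}\ff_{\mathfrak p_0}$, grouping the prime factors of $\ff$ lying over each prime $\mathfrak p_0$ of $K_0$; each factor is $\dagger$-stable. For $\mathfrak p_0$ inert or split in $K$ this immediately forces $\ff_{\mathfrak p_0}$ to come from $\cO_{K_0}$. The delicate case is $\mathfrak p_0\cO_K = \mathfrak p^2$ ramified, where $\ff_{\mathfrak p_0} = \mathfrak p^\alpha$ and one must exclude odd $\alpha$: assuming $\alpha = 2\beta+1$, the index computation of Lemma~\ref{bijfrak} shows $\left|\pi(\cO_{K_0})\cap\pi(\mathfrak p^{\alpha-1}\mathfrak g)\right| = N(\mathfrak p) = \left|\pi(\mathfrak p^{\alpha-1}\mathfrak g)\right|$, whence $\mathfrak p^{\alpha-1}\mathfrak g \subseteq \cO$, contradicting the maximality of $\ff$ among $\cO_K$-ideals contained in $\cO$. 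Without some argument of this kind (the ramified case genuinely can fail for $\dagger$-stable ideals in general, so stability alone does not suffice), the conductor statement is unproved.
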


\begin{proof}
From Lemma~\ref{lemma_rosatiStable}, it is obvious that any order with maximal real multiplication is stable under $\dagger$. Its conductor $\ff$ is thereby a $\dagger$-stable ideal of $\cO_{K}$. For any prime ideal $\mathfrak p_0$ in $\cO_{K_0}$, let $\ff_{\mathfrak p_0}$ be the part of the factorization of $\ff$ that consists in prime ideals above $\mathfrak p_0$. Then, $\ff = \prod_{\mathfrak p_0} \ff_{\mathfrak p_0}$, and each $\ff_{\mathfrak p_0}$ is $\dagger$-stable. 
It is easy to see that each  $\ff_{\mathfrak p_0}$ comes from an ideal of $\cO_{K_0}$ when $\mathfrak p_0$ is inert or splits in $\cO_K$.
Now suppose it ramifies as $\mathfrak p_0\cO_K = \mathfrak p^2$. Then $\ff_{\mathfrak p_0}$ is of the form $\mathfrak p^\alpha$. If $\alpha$ is even, $\ff_{\mathfrak p_0} = \mathfrak p_0^{\alpha/2}\cO_K$. We now need to prove that $\alpha$ cannot be odd.

By contradiction, suppose $\alpha = 2\beta + 1$ for some integer $\beta$. Let $\pi : \cO_K \rightarrow \cO_K / \ff$ be the canonical projection. The ring $\pi(\cO)$ contains $\pi(\cO_{K_0}) = (\cO_{K_0} + \ff)/\ff$. Write $\ff = \mathfrak p^\alpha \mathfrak g$ and let us prove that $\pi(\mathfrak p^{\alpha - 1}\mathfrak g) \subset \pi(\cO_{K_0})$.
From Lemma~\ref{bijfrak}, 
$$\left| \pi(\cO_{K_0}) \cap \pi(\mathfrak p^{\alpha - 1}\mathfrak g) \right| = |\mathfrak  p_0^{\beta}\mathfrak g_0/\mathfrak p_0^{\beta+1}\mathfrak g_0| = N(\mathfrak p_0) = N(\mathfrak p) = |\pi(\mathfrak p^{\alpha - 1}\mathfrak g)|,$$
where $N$ denotes the absolute norm, so $\pi(\mathfrak p^{\alpha - 1}\mathfrak g) \subset \pi(\cO_{K_0}) \subset \pi(\cO)$. Finally,
$$\mathfrak p^{\alpha - 1}\mathfrak g = \pi^{-1}(\pi(\mathfrak p^{\alpha - 1}\mathfrak g)) \subset \pi^{-1}(\pi(\cO)) = \cO,$$
which contradicts the fact that $\ff$ is the biggest ideal of $\cO_K$ contained in $\cO$.
\end{proof}

\begin{lemma}\label{lemmaModuleQuotient}
Let $\ff_0$ be an ideal in $\cO_{K_0}$, and $R = \cO_{K_0}/\ff_0$. There is an element $\alpha \in \cO_K$ such that $\cO_K/\ff_0\cO_{K} = R \oplus R \alpha$.
\end{lemma}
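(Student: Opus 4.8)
The plan is to show that $M := \cO_K/\ff_0\cO_K$ is a free $R$-module of rank $2$ in which the image $\bar 1$ of $1 \in \cO_K$ belongs to a basis; one then lets $\alpha \in \cO_K$ be any preimage of the second basis vector. Two easy reductions will do most of the work. First, since $\ff_0 \neq 0$ the ring $R = \cO_{K_0}/\ff_0$ is Artinian, hence (Chinese Remainder Theorem, applied to the factorisation of $\ff_0$ into prime powers) a finite product of local rings; over such a ring a finitely generated projective module of constant rank is free. Second, a direct-sum decomposition of $\cO_{K_0}$-modules remains one after applying $-\otimes_{\cO_{K_0}}R$. So it suffices to produce a decomposition $\cO_K = \cO_{K_0}\cdot 1 \oplus Q$ of $\cO_{K_0}$-modules with $Q$ projective: tensoring with $R$ gives $M = R\cdot\bar 1 \oplus (Q\otimes_{\cO_{K_0}}R)$, and since $\cO_K$ has $\cO_{K_0}$-rank $2$ the module $Q$, and with it $Q\otimes_{\cO_{K_0}}R$, has rank $1$, hence is free of rank $1$ over $R$; a generator $\bar\alpha$ of it lifts to the desired $\alpha$, and then $\cO_K/\ff_0\cO_K = R \oplus R\alpha$.

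So the crux is the splitting $\cO_K = \cO_{K_0}\cdot 1 \oplus Q$, i.e. the splitting of the exact sequence $0 \to \cO_{K_0}\to\cO_K\to\cO_K/\cO_{K_0}\to 0$ of finitely generated $\cO_{K_0}$-modules, which holds as soon as $\cO_K/\cO_{K_0}$ is projective. Projectivity can be checked $\mathfrak p$-locally for primes $\mathfrak p$ of $\cO_{K_0}$, where $\cO_{K_0,\mathfrak p}$ is a DVR and it is equivalent to torsion-freeness of $\cO_{K,\mathfrak p}/\cO_{K_0,\mathfrak p}$. This is exactly where the quadratic nature of $K/K_0$ enters through the involution $\dagger$: if $c\in\cO_{K_0,\mathfrak p}$ is nonzero and $x\in\cO_{K,\mathfrak p}$ with $cx\in\cO_{K_0,\mathfrak p}$, then $cx = (cx)^\dagger = c\,x^\dagger$, so $c(x-x^\dagger)=0$; as $c$ is a non-zero-divisor this forces $x = x^\dagger$, i.e. $x\in K_0$, whence $x\in\cO_{K_0,\mathfrak p}$. (When $K\cong K_0\times K_0$ one sees directly that $\cO_K/\cO_{K_0}\cong\cO_{K_0}$, which is free.)

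The step I expect to be the only genuine obstacle is precisely this splitting $\cO_K = \cO_{K_0}\cdot 1\oplus Q$: since $\cO_K$ need not be a free $\cO_{K_0}$-module — its Steinitz class over $\cO_{K_0}$ may be nontrivial — one cannot simply write down a global basis containing $1$, and it is the involution $\dagger$ (equivalently, the nondegeneracy and integral good behaviour of the trace pairing) that forces the line $\cO_{K_0}\cdot 1$ to split off. If one prefers to avoid structure theory for modules over Dedekind domains, the same conclusion can be reached by a direct local argument: over the DVR $\cO_{K_0,\mathfrak p}$ the module $\cO_{K,\mathfrak p}$ is free of rank $2$, the image of $1$ is nonzero modulo $\mathfrak p$ so Nakayama's lemma completes it to a basis, and then reducing modulo $\ff_0$ and glueing over the finitely many $\mathfrak p\mid\ff_0$ via the Chinese Remainder Theorem yields $\alpha$.
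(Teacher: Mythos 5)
Your proof is correct and follows essentially the same route as the paper's: both split $\cO_{K_0}$ off as a direct summand of the projective $\cO_{K_0}$-module $\cO_K$ and then show that the rank-one complement becomes free over $R$ after reduction modulo $\ff_0$ (the paper by identifying it with an ideal $\mathfrak a$ and using $\mathfrak a/\ff_0\mathfrak a \cong R$, you by noting a rank-one projective over the Artinian ring $R$ is free). If anything, your write-up is more complete, since you actually justify the splitting — projectivity of $\cO_K/\cO_{K_0}$ via torsion-freeness and the involution $\dagger$ — where the paper only invokes regularity of $\cO_{K_0}$.
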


\begin{proof}
The order $\cO_K$ is a module over $\cO_{K_0}$. It is locally free, and finitely generated, thus it is projective. Since $\cO_{K_0}$ is a regular ring, the submodule $\cO_{K_0}$ in $\cO_{K}$ is a direct summand, i.e., there is an $\cO_{K_0}$-submodule $M$ of $\cO_K$ such that $\cO_{K}~=~\cO_{K_0}~\oplus~M$. Then,
$\cO_{K}/\ff_0\cO_{K} = R \oplus M/\ff_0M.$
Let $A$ be $\Z$ if $K$ is a number field and $\Z_p$ if it is a finite product of extensions of $\Q_p$. In the former case write $n$ for $[K:\Q]$ and in the latter for the dimension of $K_p$ as a $\Q_p$-vector space. As modules over $A$, $\cO_K$ is of rank $2n$ and $\cO_{K_0}$ of rank $n$, hence $M$ must be of rank $n$. Therefore, as an $\cO_{K_0}$-module, $M$ is isomorphic to an ideal $\mathfrak a$ in $\cO_{K_0}$, so $M/\ff_0M \cong \mathfrak a/\ff_0\mathfrak a \cong R$. So there is an element $\alpha \in M$ such that $M/\ff_0M = R\alpha$.
\end{proof}

\subsection*{Proof of Theorem~\ref{thm:classificationOrdersMaxRM}}
For~\ref{thmClassificationMaxRM1}, let $\ff_0$ be an ideal in $\cO_{K_0}$, and write $\ff = \ff_0\cO_K$. Let $\mathfrak c$ be the conductor of $\cO_{K_0} + \ff$. From Lemma~\ref{maxRM_rosatiStable}, $\mathfrak c$ is of the form $\mathfrak c_0 \cO_K$ where $\mathfrak c_0 = \cO_{K_0} \cap \mathfrak c$. Clearly $\ff \subset \mathfrak c$, so $\mathfrak c_0 \mid \ff_0$ and we can write $\ff_0 = \mathfrak c_0 \mathfrak g_0$. Let $\pi : \cO_K \rightarrow~\cO_K/\ff$ be the canonical projection.
Since $\mathfrak c \subset \cO_{K_0} + \ff$, we have $\pi(\mathfrak c) \subset \pi(\cO_{K_0})$. From Lemma~\ref{bijfrak},
$$\left|\pi(\mathfrak c) \right| = \left| \pi(\cO_{K_0}) \cap \pi(\mathfrak c) \right| = |\mathfrak  c_0/\ff_0| = N(\mathfrak g_0).$$
On the other hand,
$\left| \pi(\mathfrak c) \right| = |\mathfrak  c/\ff| = N(\mathfrak g_0\cO_K) = N(\mathfrak g_0)^2,$
so $N(\mathfrak g_0)=1$, hence $\mathfrak c = \ff$.
To prove~\ref{thmClassificationMaxRM2}, let $\cO$ be an order in $K$ with maximal real multiplication and conductor $\ff$. From Lemma~\ref{maxRM_rosatiStable}, $\cO$ is $\dagger$-stable and $\ff = \ff_0\cO_K$, where $\ff_0 = \ff~\cap~\cO_{K_0}$.
We claim that if $x \in \cO$ then $x \in \cO_{K_0} + \ff$. Let $R = \cO_{K_0}/\ff_0$. By Lemma~\ref{lemmaModuleQuotient}, $\cO_K/\ff = R \oplus R \alpha$. The quotient $\cO/\ff$ is an $R$-submodule of $\cO_K/\ff$.

There are two elements $y,z \in R$ such that $x + \ff = y + z\alpha$. Then, $z \alpha \in \cO/\ff$, and 
we obtain that $(zR)\alpha \subset \cO/\ff$. There exists an ideal $\mathfrak g_0$ dividing $\ff_0$ such that $zR = \mathfrak g_0 / \ff_0$. Therefore $(\mathfrak g_0 / \ff_0)\alpha \subset \cO/\ff$. Then,
$$ \mathfrak g / \ff \subset R +  (\mathfrak g_0/ \ff_0) \alpha \subset \cO/\ff,$$
where $\mathfrak g = \mathfrak g_0 \cO_K$, which implies that $\mathfrak g \subset \cO$. But $\mathfrak g$ divides $\ff$, and $\ff$ is the largest $\cO_K$-ideal in $\cO$, so $\mathfrak g = \ff$. Hence $z \in \ff$, and $x \in \cO_{K_0} + \ff$.\qed

\section{From abelian surfaces to lattices, and vice-versa}\label{sec:correspondence}

\subsection{Tate modules and isogenies} 
Consider again the setting introduced in Subsection~\ref{subsec:setting}, with $\mathscr A$ an abelian variety over the finite field $k$ in the fixed isogeny class --- ordinary, absolutely simple, and of dimension $g$.
Write $T = T_\ell \mathscr A$ for the $\ell$-adic Tate module of $\mathscr A$, and $V$ for $T \otimes_{\Z_\ell} \Q_\ell$. Then $V$ is a $2g$-dimensional $\mathbb{Q}_\ell$-vector space with an action of the algebra $K_\ell$, over which it has rank one, and $T$ is similarly of rank one over the ring $\lO(\mathscr A) = \cO(\mathscr A) \otimes_{\Z} \Z_\ell$. Write $\pi$ for the Frobenius endomorphism of $\mathscr A$, viewed as an element of $\mathcal{O}(\mathscr A)$.

The elements of $T$ are the sequences $(Q_n)_{n \geq 0}$ with $Q_n \in \mathscr A[\ell^n]$, $\ell Q_n = Q_{n-1}$ for all $n \geq 1$. 
An element of $V$ identifies with a sequence $(P_n)_{n \geq 0}$ with $P_n \in \mathscr A[\ell^\infty]$ and $\ell P_n = P_{n-1}$ for $n \geq 1$ as follows:
$$
(Q_n)_{n \geq 0} \otimes \ell^{-m} \longmapsto (Q_{n+m})_{n \geq 0},
$$
and under this identification, $T$ is the subgroup of $V$ where $P_0 = 0 \in \mathscr A[\ell^\infty]$. The projection to the zeroth coordinate then yields a canonical identification
\begin{equation}\label{eq:identification}
V/T \stackrel{\sim}{\longrightarrow} \mathscr A[\ell^\infty](\overline{k}),
\end{equation}
under which the action of $\pi$ on the left-hand side corresponds to the action of the arithmetic Frobenius element in $\Gal(\overline{k}/k)$ on the right-hand side.

We are now ready to state the main correspondence between lattices in $V$ containing the Tate module $T$ and $\ell$-power isogenies from $\mathscr A$.

\begin{proposition}\label{prop:correspondence}
There is a one-to-one correspondence
$$
\left\{\text{Lattices in } V \text{ containing } T \right\} \cong
\left\{\mbox{finite subgroups of $\mathscr A[\ell^\infty]$}\right\}, 
$$ 
where a lattice $\Gamma$ is sent to the subgroup $\Gamma/T$, through the identification \eqref{eq:identification}. Under this correspondence,
\begin{enumerate}[label=(\roman*)]
\item A lattice is stable under $\pi^n$ if and only if the corresponding subgroup is defined over the degree $n$ extension $\F_{q^n}$ of $k$.
\item If a subgroup $\kappa \subset \mathscr A[\ell^\infty]$ corresponds to a lattice $\Gamma$, then the order of $K_\ell$ of elements stabilizing $\Gamma$ is $\lO(\mathscr A/\kappa)$.
\end{enumerate}
\end{proposition}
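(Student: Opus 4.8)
The plan is to establish the bijection first, then verify the two additional properties.

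\medskip

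\noindent\textbf{The bijection.} First I would recall that $\mathscr A[\ell^\infty](\overline k) = \bigcup_n \mathscr A[\ell^n](\overline k)$, and that $\mathscr A[\ell^n] \cong (\Z/\ell^n)^{2g}$. Under the identification \eqref{eq:identification}, $V/T \cong \mathscr A[\ell^\infty](\overline k)$ and the submodule $\ell^{-n}T/T$ maps isomorphically onto $\mathscr A[\ell^n]$. A lattice $\Gamma \supseteq T$ in $V$ is, by the theory of modules over the PID $\Z_\ell$, sandwiched between $T$ and $\ell^{-n}T$ for some $n$ (since $\Gamma/T$ is finitely generated torsion), so $\Gamma/T$ is a finite subgroup of $V/T$, hence of $\mathscr A[\ell^\infty](\overline k)$. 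Conversely, given a finite subgroup $\kappa \subseteq \mathscr A[\ell^\infty](\overline k)$, its preimage under $V \twoheadrightarrow V/T$ is a $\Z_\ell$-submodule $\Gamma$ of $V$ containing $T$; since $\kappa$ is finite, $\Gamma$ is again a full-rank $\Z_\ell$-lattice. The maps $\Gamma \mapsto \Gamma/T$ and $\kappa \mapsto \pr^{-1}(\kappa)$ are mutually inverse, giving the bijection. None of this is hard; it is the standard lattice--subgroup dictionary, the same one used in the proof of Tate's isogeny theorem.

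\medskip

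\noindent\textbf{Property (i).} The action of $\pi$ on $V/T$ corresponds to the arithmetic Frobenius action on $\mathscr A[\ell^\infty](\overline k)$, as recalled just before the proposition. A finite subgroup $\kappa$ is defined over $\F_{q^n}$ exactly when it is stable under $\Gal(\overline k / \F_{q^n})$, which is topologically generated by the $q^n$-power Frobenius, i.e.\ by $\pi^n$. Thus $\kappa$ is $\F_{q^n}$-rational iff $\kappa$ is $\pi^n$-stable in $V/T$; and since $T$ is itself $\pi$-stable (the Tate module carries a Galois action), stability of $\kappa = \Gamma/T$ under $\pi^n$ is equivalent to stability of $\Gamma$ under $\pi^n$. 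This gives (i) immediately.

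\medskip

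\noindent\textbf{Property (ii).} This is the substantive point. Let $\varphi\colon \mathscr A \to \mathscr A/\kappa =: \mathscr B$ be the isogeny with kernel $\kappa$. I would compare Tate modules: the induced map $T_\ell\varphi\colon T_\ell\mathscr A \to T_\ell\mathscr B$ is injective with finite cokernel, and becomes an isomorphism $V_\ell\mathscr A \xrightarrow{\sim} V_\ell\mathscr B$ after inverting $\ell$; I want to argue that under this identification $T_\ell\mathscr B$ corresponds precisely to $\Gamma = \pr^{-1}(\kappa)$ inside $V = V_\ell\mathscr A$. Concretely, $\ker(\mathscr A[\ell^\infty] \to \mathscr B[\ell^\infty])$ is $\kappa$, and chasing the identification \eqref{eq:identification} for both $\mathscr A$ and $\mathscr B$ shows $T_\ell\mathscr B / T_\ell \mathscr A \cong \kappa = \Gamma/T$ compatibly, so indeed $T_\ell\mathscr B = \Gamma$ as lattices in $V$. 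Now, by Tate's theorem (invoked in the setup), $\End(\mathscr B)\otimes\Z_\ell = \End_{K_\ell}(V_\ell \mathscr B)\cap(\text{those preserving } T_\ell\mathscr B)$; more precisely, $\lO(\mathscr B)$ is exactly the set of elements of $K_\ell$ (acting on $V$, since $V$ has rank one over $K_\ell$ and all endomorphisms in $K_\ell$ act $K_\ell$-linearly) that carry $T_\ell\mathscr B = \Gamma$ into itself. An element $x \in K_\ell$ lies in $\lO(\mathscr B) = \End(\mathscr B)\otimes\Z_\ell$ iff the corresponding rational endomorphism $x \in K = \End^0(\mathscr B)$ is a genuine endomorphism of $\mathscr B$, which by faithfulness of the Tate module functor for the action on $T_\ell\mathscr B$ happens iff $x(\Gamma) \subseteq \Gamma$. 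Hence the stabilizer of $\Gamma$ in $K_\ell$ is exactly $\lO(\mathscr B) = \lO(\mathscr A/\kappa)$, which is (ii).

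\medskip

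\noindent The main obstacle is the careful bookkeeping in property (ii): one must track the identification \eqref{eq:identification} through the isogeny $\varphi$ to see that $T_\ell\mathscr B$ sits inside $V = V_\ell\mathscr A$ precisely as $\pr^{-1}(\kappa)$, and then correctly cite the form of Tate's theorem that identifies $\lO(\mathscr B)$ with the subring of $K_\ell$ preserving that lattice (using that $V$ has $K_\ell$-rank one, so every $K_\ell$-linear self-map of $V$ is multiplication by a scalar in $K_\ell$). Everything else is the routine $\Z_\ell$-lattice dictionary.
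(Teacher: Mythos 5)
Your proposal is correct and follows essentially the same route as the paper: the bijection via the explicit inverse (preimage of $\kappa$ under $V \to V/T$), property (i) from the Frobenius equivariance of \eqref{eq:identification}, and property (ii) by identifying $T_\ell(\mathscr A/\kappa)$ with $\Gamma$ inside $V$ and applying Tate's isogeny theorem to $\End(\mathscr A/\kappa)$. The paper states these steps more tersely, but your added bookkeeping (the rank-one structure of $V$ over $K_\ell$, the lattice comparison through the isogeny) is exactly the content being invoked.
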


\begin{proof}
A lattice $\Gamma$ in $V$ is sent to the subgroup $\kappa$ of $\mathscr A[\ell^\infty]$ corresponding to $\Gamma$ under~\eqref{eq:identification}. Conversely, give a subgroup $\kappa \subset \mathscr A[\ell^\infty]$, let $\Gamma$ be the set of sequences in $V$ whose zeroth coordinate is in $\kappa$. It follows that the subgroup of $\mathscr A[\ell^\infty]$ corresponding to this lattice under \eqref{eq:identification} is $\kappa$, so that this process is indeed bijective. 

The claim about fields of definition follows from the previously-discussed Frobenius equivariance of \eqref{eq:identification}. The claim about endomorphism rings is Tate's isogeny theorem applied to $\Hom(\mathscr A/\kappa, \mathscr A/\kappa)$.
\end{proof}

\begin{remark}\label{rem:kernelVsIso}
Observe that given a subgroup $\kappa \subset \mathscr A[\ell^\infty]$, any two isogenies of kernel $\kappa$ differ only by an isomorphism between the targets.
Therefore if $\varphi : \mathscr A \rightarrow \mathscr B$ is any isogeny of kernel $\kappa$, then $\lO(\mathscr A/\kappa) = \lO(\mathscr B)$.
\end{remark}

\begin{remark}\label{rem:fieldOfDefIsogenies}
Recall that all varieties and morphisms are considered over $\overline k$. We are however also interested in the structures arising when restricting to varieties and morphisms defined over $k$, in the sense of Subsection~\ref{subsec:setting}. 
To this end, the most important fact (which is special to the case of simple, ordinary abelian varieties) is that if a variety $\mathscr B$ is $k$-isogenous to $\mathscr A$, then any isogeny $\mathscr A \ra \mathscr B$ is defined over $k$ 
(this is an easy consequence of~\cite[Thm.7.2.]{Waterhouse1969}: if $\mathscr B$ is defined over $k$, and $\varphi, \psi : \mathscr A \ra \mathscr B$ are two isogenies  then $\varphi \circ \psi^{-1}$ is an element of $\End(\mathscr B)\otimes_\Z\Q$, hence defined over $k$, so $\varphi$ is defined over $k$ if and only if $\psi$ is).
Similarly to Remark~\ref{rem:kernelVsIso}, if $\kappa$ is defined over $k$, any two $k$-isogenies of kernel $\kappa$ differ by a $k$-isomorphism between the targets. From Proposition~\ref{prop:correspondence}(ii), if $\pi \in \lO(\mathscr A/\kappa)$, then $\kappa$ is defined over $k$, and is thereby the kernel of a $k$-isogeny\footnote{Note that in general, if $\mathscr B$ is $\overline k$-isogenous to $\mathscr A$ and $\pi \in \cO(\mathscr B)$, then $\pi$ does not necessarily correspond to the $k$-Frobenius of $\mathscr B$ unless $\mathscr B$ is actually $k$-isogenous to $\mathscr A$.}. We obtain a correspondence between subgroups $\kappa$ defined over $k$ and $\Gamma$ lattices stabilized by $\pi$.
\end{remark}

The following proposition justifies the strategy of working locally at $\ell$, as it guarantees that $\ell$-power isogenies do not affect endomorphism rings at primes $\ell' \neq \ell$.
\begin{proposition}\label{prop:EllDoesNotChangeP}
Let $\varphi: \mathscr A \to \mathscr B$ be an isogeny of abelian varieties of $\ell$-power degree. Then for any prime $\ell' \neq \ell$ of $\mathscr A$, one has $\cO(\mathscr A) \otimes_\Z \Z_{\ell'} = \cO(\mathscr B) \otimes_\Z \Z_{\ell'}$.
\end{proposition}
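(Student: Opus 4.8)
The plan is to reduce the statement to a purely local claim about Tate modules at the prime $\ell'$, and then observe that an isogeny of $\ell$-power degree induces an isomorphism on $\ell'$-adic Tate modules. First I would recall that for any prime $\ell'$ (different from the characteristic of $k$), the endomorphism ring localized at $\ell'$ can be read off from the action on the $\ell'$-adic Tate module: by Tate's isogeny theorem, $\cO(\mathscr A) \otimes_\Z \Z_{\ell'}$ is the subring of $K_{\ell'} = K \otimes_\Q \Q_{\ell'}$ consisting of elements that stabilize $T_{\ell'}\mathscr A \subset V_{\ell'}\mathscr A := T_{\ell'}\mathscr A \otimes \Q_{\ell'}$. (This is exactly the same input used in Proposition~\ref{prop:correspondence}(ii), applied now at $\ell'$ rather than $\ell$.)

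Next I would use that $\varphi$ induces, functorially, a map $\varphi_* \colon T_{\ell'}\mathscr A \to T_{\ell'}\mathscr B$, and that since $\ker\varphi$ has order prime to $\ell'$, this map is an isomorphism of $\Z_{\ell'}$-modules; likewise $\varphi$ induces an isomorphism $V_{\ell'}\mathscr A \xrightarrow{\sim} V_{\ell'}\mathscr B$ of $K_{\ell'}$-modules (both are free of rank one over $K_{\ell'}$, and $\varphi$ is $K$-equivariant via the canonical embeddings $\imath_{\mathscr A}$, $\imath_{\mathscr B}$ fixed in Subsection~\ref{subsec:setting}). Under the identification $V_{\ell'}\mathscr A \cong V_{\ell'}\mathscr B$ carrying $T_{\ell'}\mathscr A$ onto $T_{\ell'}\mathscr B$, an element of $K_{\ell'}$ stabilizes the first lattice if and only if it stabilizes the second. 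Combining with the previous paragraph gives $\cO(\mathscr A)\otimes_\Z\Z_{\ell'} = \cO(\mathscr B)\otimes_\Z\Z_{\ell'}$ inside $K_{\ell'}$, which is the claim.

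The only point requiring a little care — and the one I would flag as the main (minor) obstacle — is checking that the isomorphism $V_{\ell'}\mathscr A \cong V_{\ell'}\mathscr B$ genuinely carries $T_{\ell'}\mathscr A$ \emph{onto} $T_{\ell'}\mathscr B$, rather than merely into it: a priori $\varphi_*(T_{\ell'}\mathscr A)$ is a finite-index sublattice of $T_{\ell'}\mathscr B$ of index dividing $\deg\varphi$. Since $\deg\varphi$ is a power of $\ell$ and hence a unit in $\Z_{\ell'}$, the index is trivial, so $\varphi_*$ is an isomorphism of Tate modules. (Alternatively one can argue with the dual isogeny $\hat\varphi$, whose degree is again an $\ell$-power, to see $\hat\varphi_* \varphi_* = [\deg\varphi]$ is an automorphism of $T_{\ell'}\mathscr A$.) Once this is in place the rest is formal.

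Finally, I note this statement is of the same nature as Proposition~\ref{prop:correspondence} but ``away from $\ell$'', and indeed it is what licenses the entire local-at-$\ell$ strategy: the global order $\cO(\mathscr A)$ is determined by the collection of its localizations $\cO(\mathscr A)\otimes\Z_{\ell'}$ over all primes $\ell'$ (by the Lemma of Subsection~\ref{subsec:localglobalorders}), and an $\ell$-power isogeny can only move the $\ell$-component, leaving all other components fixed.
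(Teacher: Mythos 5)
Your argument is correct for every prime $\ell'$ different from both $\ell$ and the characteristic $p$ of $k$, and it is a genuinely different route from the paper's. The paper never touches Tate modules here: it passes to the category whose morphisms are $\Hom(-,-)\otimes_\Z \Z_{\ell'}$, observes that some isogeny $\hat\varphi$ satisfies $\hat\varphi\circ\varphi=[\ell^n]$ with $\ell^n$ a unit in $\Z_{\ell'}$, so that $\varphi$ becomes an isomorphism in this localized category, and concludes at once that the two localized endomorphism rings are identified. Your proof exploits the same basic point (an $\ell$-power degree is a unit away from $\ell$), but concretely on $T_{\ell'}$, and then needs Tate's isogeny theorem together with the rank-one freeness of $V_{\ell'}$ over $K_{\ell'}$ to translate ``stabilizer of the lattice'' back into $\cO(-)\otimes_\Z\Z_{\ell'}$. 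What your version buys is consistency with the lattice viewpoint of Proposition~\ref{prop:correspondence}; what the paper's version buys is that it is more elementary (no appeal to Tate's theorem) and uniform in $\ell'$.

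The one genuine shortfall is the prime $\ell'=p$: the proposition as stated allows it, and the paper really uses it (the later arguments, e.g.\ the proof of Theorem~\ref{thm:lisogenyvolcanoes} and Lemma~\ref{lemma:classNumberRelation}, require the orders to agree at \emph{every} prime other than $\ell$, including $p$), whereas your proof explicitly restricts to $\ell'$ different from the characteristic, where the $\ell'$-adic Tate module argument makes sense. This is fixable without changing your strategy: since $\ker\varphi$ has $\ell$-power order it meets $\mathscr A[p^\infty]$ trivially, so $\varphi$ induces an isomorphism of $p$-divisible groups $\mathscr A[p^\infty]\to\mathscr B[p^\infty]$, and the $p$-adic version of Tate's theorem over finite fields identifies $\End(-)\otimes_\Z\Z_p$ with the endomorphisms of the $p$-divisible group; alternatively, the paper's categorical argument covers $\ell'=p$ with no extra work. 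As written, though, your proof does not cover that case, so you should either add one of these patches or adopt the localized-category argument outright.
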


\begin{proof}
Let $\mathcal{C}_{\ell'}$ be the category whose objects are abelian varieties over $\overline k$ and whose morphisms are $\Hom_{\mathcal{C}_{\ell'}}(\mathscr A_1, \mathscr A_2) = \Hom(\mathscr A_1, \mathscr A_2) \otimes_\Z \Z_{\ell'}$. There exists an isogeny $\hat\varphi: \mathscr B \to \mathscr A$ such that $\hat \varphi \circ \varphi = [\ell^n]$, so $\varphi$ induces an isomorphism in $\mathcal{C}_{\ell'}$; it follows that the endomorphism rings of $\mathscr A$ and $\mathscr B$ in this category are identified.
\end{proof}

\subsection{Polarizations and symplectic structures}
Fix a polarization $\xi$ of $\mathscr A$. It induces a polarization isogeny $\lambda:\mathscr A \to \mathscr A^\vee$, which in turn gives a map $T \to T_\ell(\mathscr A^\vee)$. Therefore the Weil pairing equips $T$ with a natural $\Z_\ell$-linear pairing $\langle-,-\rangle$, which extends to a pairing on $V$. We gather standard facts about this pairing in the following lemma.

\begin{lemma}
One has:
 \begin{enumerate}[label=(\roman*)]
\item The pairing $\langle-,-\rangle$ is symplectic.
\item For any $\alpha \in K$, one has
$$
\langle \alpha x, y \rangle = \langle x, \alpha^\dagger y \rangle,
$$
where $\dagger$ denotes the complex conjugation.
\item For $\Gamma$ a lattice in $V$, write
$$
\Gamma^* = \{ \alpha \in V \mid \langle \alpha, \Gamma \rangle \subset \Z_\ell \}
$$
for the dual lattice of $\Gamma$. Then $T \subset T^*$, and the quotient is isomorphic to $(\ker \lambda)[\ell^\infty]$. In particular, $T$ is self-dual if and only if the degree of $\lambda$ is coprime to $\ell$.
\end{enumerate}
\end{lemma}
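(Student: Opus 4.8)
All three assertions are standard consequences of the theory of the Weil pairing together with the fact that $\mathscr A$ has complex multiplication by the CM field $K$; the plan is to assemble them, recalling just enough of the construction to make the identifications explicit. For each $n$ the Weil pairing is a perfect pairing $e_{\ell^n}\colon \mathscr A[\ell^n]\times\mathscr A^\vee[\ell^n]\to\mu_{\ell^n}$, compatible with multiplication by $\ell$, so it assembles into a perfect $\Z_\ell$-bilinear pairing $e\colon T\times T_\ell(\mathscr A^\vee)\to\Z_\ell(1)$; fixing a generator of $\Z_\ell(1)$ identifies the target with $\Z_\ell$. The pairing of the lemma is then $\langle x,y\rangle = e(x,\lambda(y))$, extended $\Q_\ell$-bilinearly to $V$. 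Part (i) is immediate once we invoke the standard fact that the pairing attached to a polarization (equivalently, to a symmetric homomorphism $\mathscr A\to\mathscr A^\vee$) is alternating, i.e. $e(x,\lambda(x))=1$ for all $x$; this is in Mumford's \emph{Abelian Varieties}.

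For part (ii) the one input is the adjunction formula $e_{\ell^n}(\phi x,y)=e_{\ell^n}(x,\phi^\vee y)$ for $\phi\colon\mathscr A\to\mathscr A$ with dual $\phi^\vee\colon\mathscr A^\vee\to\mathscr A^\vee$. Applying it with $\phi=\alpha\in\End(\mathscr A)\otimes_\Z\Q$ gives
\[
\langle\alpha x,y\rangle = e(\alpha x,\lambda y)=e(x,\alpha^\vee\lambda y).
\]
By definition of the Rosati involution $\phi\mapsto\phi^\dagger$ attached to $\xi$ one has $\alpha^\vee\lambda=\lambda\alpha^\dagger$, so the right-hand side equals $e(x,\lambda\alpha^\dagger y)=\langle x,\alpha^\dagger y\rangle$. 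It then remains to observe that $\dagger$ restricted to $K=\End(\mathscr A)\otimes_\Z\Q$ is complex conjugation: the Rosati involution is a positive involution of $K$, and complex conjugation is the unique positive involution of a CM field. Extending $\Q_\ell$-linearly to $K_\ell$ finishes the statement.

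For part (iii), perfectness of $e$ means the induced map $T_\ell(\mathscr A^\vee)\xrightarrow{\sim}\Hom_{\Z_\ell}(T,\Z_\ell)$ is an isomorphism, identifying $T_\ell(\mathscr A^\vee)$ with the $\Z_\ell$-dual of $T$ inside the dual of $V$. Since $T^*=\{v\in V : \langle v,T\rangle\subset\Z_\ell\}$ is, under this identification and the isomorphism $\lambda_{\Q_\ell}\colon V\to V^\vee$, exactly the dual lattice of $\lambda(T)\subset T_\ell(\mathscr A^\vee)$, the inclusion $\lambda(T)\subseteq T_\ell(\mathscr A^\vee)$ gives $T\subseteq T^*$, and duality of lattices yields $T^*/T\cong\bigl(T_\ell(\mathscr A^\vee)/\lambda(T)\bigr)^\vee$ (the $\Z_\ell$-linear dual of this finite group). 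For any isogeny, the cokernel of the induced map on $\ell$-adic Tate modules is isomorphic to the $\ell$-primary part of the kernel — a diagram chase via the snake lemma applied to multiplication by $\ell^n$ on $0\to\ker\lambda\to\mathscr A\xrightarrow{\lambda}\mathscr A^\vee\to 0$ — so $T_\ell(\mathscr A^\vee)/\lambda(T)\cong(\ker\lambda)[\ell^\infty]$, a finite abelian group isomorphic to its own dual. Hence $T^*/T\cong(\ker\lambda)[\ell^\infty]$, and $T=T^*$ precisely when this group is trivial, i.e. when $\ell\nmid\deg\lambda$.

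The genuinely delicate point is part (iii): carefully transporting the perfect pairing on $T\times T_\ell(\mathscr A^\vee)$ to the pairing $\langle-,-\rangle$ on $T$ via $\lambda$, and identifying the quotient $T^*/T$ as a group rather than merely computing its order (which also requires knowing that the Tate-module cokernel of an isogeny is the $\ell$-part of its kernel). Parts (i) and (ii) are bookkeeping once the alternating property of the polarization pairing and the positivity of the Rosati involution are cited.
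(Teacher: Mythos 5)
Your proof is correct. For (i) and (ii) the paper simply cites Milne for these standard facts, and your spelled-out arguments (the alternating property of the pairing attached to a symmetric homomorphism, the adjunction $e(\phi x,y)=e(x,\phi^\vee y)$ combined with $\alpha^\vee\lambda=\lambda\alpha^\dagger$, and uniqueness of the positive involution on a CM field) are exactly the expected ones.

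For (iii) your route differs slightly from the paper's. The paper identifies $T^*$ directly with $\lambda_*^{-1}(T_\ell\mathscr A^\vee)$ and observes that $\lambda_*$ itself induces an isomorphism $\lambda_*^{-1}(T_\ell\mathscr A^\vee)/T\stackrel{\sim}{\to}(\ker\lambda)[\ell^\infty]$ (concretely, via the identification $V/T\cong\mathscr A[\ell^\infty]$, a class $v+T$ lies in $T^*/T$ exactly when the corresponding torsion point is killed by $\lambda$). You instead view $T^*$ as the dual lattice of $\lambda(T)$, invoke the duality $\Lambda_1^\vee/\Lambda_2^\vee\cong\Hom(\Lambda_2/\Lambda_1,\Q_\ell/\Z_\ell)$, compute $\mathrm{coker}(T_\ell\lambda)\cong(\ker\lambda)[\ell^\infty]$ by the snake lemma, and finish by the (non-canonical) self-duality of finite abelian groups. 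Both are valid; the paper's version is shorter and produces a canonical isomorphism, while yours passes through an extra dualization that is only needed because you dualize $\lambda(T)$ rather than pull $T_\ell\mathscr A^\vee$ back along $\lambda_*$. Two small points of hygiene: the quotient $\bigl(T_\ell(\mathscr A^\vee)/\lambda(T)\bigr)^\vee$ should be the Pontryagin dual $\Hom(-,\Q_\ell/\Z_\ell)$, not the ``$\Z_\ell$-linear dual'' (which vanishes on a finite group); and in (i) you should note explicitly that non-degeneracy of $\langle-,-\rangle$ on $V$ follows from perfectness of $e$ together with $\lambda_*$ being an isomorphism on $V$, since ``symplectic'' requires non-degeneracy as well as the alternating property.
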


\begin{proof}
The first two claims are standard --- see \cite[Lemma 16.2e, and \textsection 167]{MilneAV}. For the third,
note that $T^*$ identifies with $\lambda_*^{-1}(T_\ell \mathscr A^\vee)$, and $\lambda_*$ induces an isomorphism
$$\lambda_*^{-1}(T_\ell \mathscr A^\vee)/{T} \stackrel{\sim}{\longrightarrow} \ker (\lambda)[\ell^\infty].$$
\end{proof}

\section{Graphs of $\mathfrak l$-isogenies}

In this section we study $\mathfrak l$-isogenies through the lens of lattices in an $\ell$-adic vector space, endowed with an action of the algebra $K_\ell$.

\subsection{Lattices with locally maximal real multiplication}
Throughout this subsection, $V$ is a $\Q_\ell$-vector space of dimension $2g$, $\ell$ is a prime number, $K$ is a quartic CM-field, with $K_{0}$ its maximal real subfield. The algebra $K_\ell$ is a $\Q_\ell$-algebra of dimension~$2g$.  Suppose that it acts ($\Q_\ell$-linearly) on~$V$.
Define the \emph{order} of a full-rank $\Z_\ell$-lattice $\Lambda \subset V$ as
$$\lO(\Lambda) = \{x \in K_\ell \mid x\Lambda \subset \Lambda \}.$$
For any order $\lO$ in $K_\ell$, say that $\Lambda$ is an $\lO$-lattice if $\lO(\Lambda) = \lO$.
Let $\lO = \lO(\Lambda)$ be the order of $\Lambda$, and suppose that it has maximal real multiplication, i.e., that $\lO$ contains the maximal order $\lO_0$ of $K_{0,\ell} = K_{0} \otimes_\Q \Q_\ell$.
We now need some commutative algebra:

\begin{lemma}\label{lemma:quadraticImpliesGorenstein}
Let $A$ be a Dedekind domain with field of fractions $F$, and let $L$ be a quadratic extension of $F$. If $\cO$ is any $A$-subalgebra of the integral closure of $A$ in $L$, with $\cO \otimes K = L$, then $\cO$ is Gorenstein. 
\end{lemma}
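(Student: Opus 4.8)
The claim is that any $A$-order $\cO$ in a quadratic extension $L/F$ (with $A$ Dedekind, $F = \operatorname{Frac} A$) is Gorenstein. Since the Gorenstein property is local, I would first reduce to the case where $A$ is a discrete valuation ring (or even a complete DVR): $\cO$ is Gorenstein if and only if $\cO_{\mathfrak p}$ is Gorenstein for every maximal ideal $\mathfrak p$ of $A$, and each $\cO_{\mathfrak p}$ is an order in $L$ over the DVR $A_{\mathfrak p}$. So assume $A$ is a DVR with uniformizer $\varpi$ and maximal ideal $\mathfrak m$. Now $\cO$ is a finitely generated torsion-free, hence free, $A$-module of rank $2$ (it spans $L$ over $F$ and is contained in the integral closure, which is finite over $A$).

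\textbf{Main step: an explicit description of $\cO$.} Over the DVR $A$, I would pick a basis: since $1 \in \cO$ and $\cO/A$ is a free $A$-module of rank $1$ (it is a rank-one torsion-free module over the DVR $A$), there is $\alpha \in \cO$ with $\cO = A \oplus A\alpha = A[\alpha]$. Thus $\cO$ is \emph{monogenic} over $A$, say $\cO \cong A[x]/(x^2 - tx + n)$ for some $t, n \in A$. For such a monogenic complete-intersection $A$-algebra the dualizing module is easy to compute: the trace form, or equivalently the different $\mathfrak d_{\cO/A} = (f'(\alpha)) = (2\alpha - t)$, is a principal ideal, and the dual $\cO^\vee = \{x \in L : \operatorname{Tr}_{L/F}(x\cO) \subseteq A\}$ equals $\frac{1}{2\alpha - t}\cO$, a free rank-one $\cO$-module. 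Being Gorenstein (local, so: the canonical module is free of rank one) is exactly the statement that $\cO^\vee$ is invertible as an $\cO$-module, which this displays explicitly. Alternatively, and perhaps cleanest to write: a local Noetherian ring that is a complete intersection is Gorenstein, and $A[x]/(f(x))$ with $A$ regular is a complete intersection; combined with the local reduction this finishes it. One subtlety to address is the degenerate case $L = F \times F$ (the statement allows $\cO \otimes_A F = L$ with $L$ merely a "quadratic extension" in the algebra sense, matching how the lemma is applied to $K_\ell$): there $\cO$ is an $A$-order in $F \times F$, still monogenic of the form $A[x]/(f)$ with $f$ separable quadratic, and the same differential/complete-intersection argument applies verbatim.

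\textbf{Expected main obstacle.} The genuine content is establishing monogenicity of $\cO$ over the DVR $A$ — i.e.\ that $\cO/A$ is free of rank one so that $\cO = A[\alpha]$. This needs: $\cO$ is finite over $A$ (it sits in the integral closure, which is a finite $A$-module since $A$ is Dedekind, hence excellent/Nagata — for a general Dedekind domain one should note that for orders one typically assumes, or it follows in the cases of interest, that the integral closure is finite; in the $\ell$-adic application $A = \Z_\ell$ or $\lO_0$ and finiteness is automatic), torsion-free hence free of rank $2$, and that the quotient by the free rank-one submodule $A\cdot 1$ is again free (true over a PID/DVR). Once monogenicity is in hand, the Gorenstein conclusion is formal: either quote "complete intersection $\Rightarrow$ Gorenstein" for $A[x]/(f)$ over the regular ring $A$, or exhibit $\operatorname{Hom}_A(\cO, A) \cong \cO$ as $\cO$-modules via the trace pairing and the principal different $(f'(\alpha))$. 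I would write the proof in the DVR-local, monogenic form, flag the complete-intersection shortcut, and handle the split case $F\times F$ in a one-line remark since the argument is identical.
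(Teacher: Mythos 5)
Your proposal is correct and follows essentially the same route as the paper: localize so that $A$ becomes a PID/DVR, observe $\cO$ is free of rank $2$ with $1$ extendable to a basis (your torsion-freeness of $\cO/A$ argument, using $\cO\cap F=A$, is the same point), conclude $\cO=A[\alpha]$ is monogenic, and then deduce Gorenstein. The only difference is cosmetic: the paper quotes a reference (Buchmann--Lenstra) for ``monogenic $\Rightarrow$ Gorenstein,'' whereas you supply that last step yourself via the complete-intersection or trace-dual argument.
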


\begin{proof}
The hypotheses and result are local on $\text{Spec} A$, so we may take $A$ a principal ideal domain. Then $\cO$ is a free $A$-module, which must be $2$-dimensional. The element $1 \in \cO$ is not an $A$-multiple of any element of $\cO$, so there is a basis $\{1, \alpha\}$ for $\cO$ as an $A$-module; clearly $\cO = A[\alpha]$ as $A$-algebras. The result then follows from \cite[Ex.2.8]{BuchmannLenstra}.
\end{proof}

By Lemma \ref{lemma:quadraticImpliesGorenstein}, the order $\lO$, which has maximal real multiplication, is a Gorenstein ring and $\Lambda$ is a free $\lO$-module of rank 1.
Recall the notations $\lO_K = \cO_K\otimes_\Z \Z_\ell$ and $\lO_0 = \cO_{K_0}\otimes_\Z \Z_\ell$ from Section~\ref{subsec:setting}.
For any ideal $\mathfrak f$ in $\lO_0$, let $\lO_\mathfrak f = \lO_0 + \mathfrak f\lO_{K}$. From Theorem~\ref{thm:classificationOrdersMaxRM}, all the orders containing $\lO_0$ are of this form.

\begin{definition}[$\mathfrak l$-neighbors]
Let $\Lambda$ be a lattice with maximal real multiplication, and let $\mathfrak l$ be a prime ideal in $\lO_{0}$. The set $\mathscr L_{\mathfrak l}(\Lambda)$ of \emph{$\mathfrak l$-neighbors} of $\Lambda$ consists
of all the lattices $\Gamma$ such that $\mathfrak l \Lambda \subset \Gamma \subset \Lambda$, and $\Gamma/\mathfrak l\Lambda \cong \lO_{0}/\mathfrak l$, i.e., $\Gamma/\mathfrak l\Lambda \in \mathbb P^1(\Lambda/\mathfrak l\Lambda)$.
\end{definition}

\begin{remark}\label{rem:correspFrakLAndFrakL}
Consider the lattice $T = T_\ell\mathscr A$.
Then, $\mathfrak l$-isogenies $\mathscr A \to \mathscr B$ (see Definition~\ref{def:frakLIso}) correspond under Proposition \ref{prop:correspondence} to lattices $\Gamma$ with $T \subset \Gamma \subset \mathfrak l^{-1} T$ and $\Gamma/T$ is an $\lO_{0}/\mathfrak l$-subspace of dimension one of $(\mathfrak l^{-1} T) / T$.
\end{remark}

The following lemma is key to understanding $\mathfrak l$-neighbors. It arises from the technique employed by Cornut and Vatsal~\cite[\textsection 6]{Cornut04} to study the action of a certain Hecke algebra on quadratic CM-lattices.

\begin{lemma}\label{lemma:fixedPointsMaxRM}
Let $K$ be a CM-field, and $K_0$ its maximal real subfield. Let $\mathfrak l$ be a prime ideal in $\lO_{0}$, and $\mathbb F = \lO_{0}/\mathfrak l$. Let $\mathfrak f$ be an ideal in $\lO_{0}$ and $\lO_\mathfrak f = \lO_{0} + \mathfrak f \lO_{K}$.
The action of $\lO_{\mathfrak f}^\times$ on the set of $\F$-lines $\mathbb P^1(\lO_{\mathfrak f}/\mathfrak l \lO_{\mathfrak f})$ factors through $\lO_{\mathfrak f}^\times/ \lO_{\mathfrak l\mathfrak f}^\times$. Let $\mathfrak L$ be a prime in $\lO_{\mathfrak f}$ above $\mathfrak l$. The fixed points are
\[
\mathbb P^1(\lO_{\mathfrak f}/\mathfrak l \lO_{\mathfrak f})^{\lO_{\mathfrak f}^\times} = 
\left\{ \begin{array}{ll}
        \emptyset & \mbox{if $\mathfrak l \nmid \mathfrak f$ and $\mathfrak l\lO_\mathfrak f = \mathfrak L$,}\\
        \{\mathfrak L/\mathfrak l \lO_{\mathfrak f}, \mathfrak L^\dagger/\mathfrak l \lO_{\mathfrak f}\} & \mbox{if $\mathfrak l \nmid \mathfrak f$ and $\mathfrak l\lO_\mathfrak f = \mathfrak L \mathfrak L^\dagger$,}\\
        \{(\mathfrak l \lO_{\mathfrak l^{-1}\mathfrak f})/\mathfrak l \lO_{\mathfrak f}\} & \mbox{if $\mathfrak l \mid \mathfrak f$.}\end{array} \right.
\]
The remaining points are permuted simply transitively by $\lO_{\mathfrak f}^\times/ \lO_{\mathfrak l\mathfrak f}^\times$.
\end{lemma}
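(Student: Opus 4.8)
The plan is to localize the whole situation at $\mathfrak l$ and reduce to a statement about an action of a unit group on $\mathbb P^1$ over a residue ring. First I would observe that $\lO_{\mathfrak f}$ is a free $\lO_0$-module of rank $2$, so $\lO_{\mathfrak f}/\mathfrak l\lO_{\mathfrak f}$ is a free module of rank $2$ over the finite local ring $\lO_0/\mathfrak l = \mathbb F$ — hence $\mathbb P^1(\lO_{\mathfrak f}/\mathfrak l\lO_{\mathfrak f})$ makes sense as the set of free rank-one direct summands, and there are $N(\mathfrak l)+1$ of them. The action of $\lO_{\mathfrak f}^\times$ on this $\mathbb P^1$ is through its image in $\GL_2(\mathbb F)$, and since $\mathbb F$-lines are only affected by $\lO_{\mathfrak f}^\times$ modulo the subgroup acting trivially, I would identify this kernel. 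An element $u \in \lO_{\mathfrak f}^\times$ acts trivially on $\mathbb P^1(\lO_{\mathfrak f}/\mathfrak l\lO_{\mathfrak f})$ iff its reduction is a scalar in $\mathbb F^\times \cdot (1 + \mathfrak l\lO_{\mathfrak f}/\mathfrak l^2 \lO_{\mathfrak f})$-type conditions; concretely $u$ fixes every line iff $u \bmod \mathfrak l\lO_{\mathfrak f}$ lies in the image of $\lO_0^\times$ together with the principal units, which one checks is exactly the image of $\lO_{\mathfrak l\mathfrak f}^\times = (\lO_0 + \mathfrak l\mathfrak f\lO_K)^\times$. This gives the asserted factorization through $\lO_{\mathfrak f}^\times/\lO_{\mathfrak l\mathfrak f}^\times$.

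Next I would compute the fixed points by a case analysis on the splitting behaviour of $\mathfrak l$ in $\lO_{\mathfrak f}$, which is governed by whether $\mathfrak l \mid \mathfrak f$ and, if not, by the splitting of $\mathfrak l$ in $\lO_K$ (equivalently in $K$). A line $P \in \mathbb P^1(\lO_{\mathfrak f}/\mathfrak l\lO_{\mathfrak f})$ is fixed by all of $\lO_{\mathfrak f}^\times$ precisely when $P$ is stable under multiplication by $\lO_{\mathfrak f}$, i.e. when $P$ is an $\lO_{\mathfrak f}/\mathfrak l\lO_{\mathfrak f}$-submodule — so the fixed lines correspond to ideals of the Artinian ring $\lO_{\mathfrak f}/\mathfrak l\lO_{\mathfrak f}$ that are free of rank one over $\mathbb F$. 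When $\mathfrak l \nmid \mathfrak f$ and $\mathfrak l$ is inert in $\lO_{\mathfrak f}$ (so $\mathfrak l\lO_{\mathfrak f} = \mathfrak L$ is prime), $\lO_{\mathfrak f}/\mathfrak L$ is a field quadratic over $\mathbb F$ and has no intermediate ideals, so there are no fixed points; when $\mathfrak l$ splits as $\mathfrak L\mathfrak L^\dagger$, the quotient is $\mathbb F \times \mathbb F$ and the two nontrivial ideals are $\mathfrak L/\mathfrak l\lO_{\mathfrak f}$ and $\mathfrak L^\dagger/\mathfrak l\lO_{\mathfrak f}$ (these are lines since the residue fields are $\mathbb F$); the ramified case $\mathfrak l\lO_{\mathfrak f} = \mathfrak L^2$ needs a little more care since $\mathfrak L/\mathfrak l\lO_{\mathfrak f}$ is then the unique nonzero proper ideal but might a priori not be $\mathbb F$-free — here I would use that $\mathfrak L/\mathfrak L^2 \cong \mathbb F$ because $\lO_{\mathfrak f}$ is Gorenstein (Lemma~\ref{lemma:quadraticImpliesGorenstein}) and $\mathfrak l \nmid \mathfrak f$ forces $\lO_{\mathfrak f}$ to be maximal at $\mathfrak l$, hence a DVR locally, where $\mathfrak L/\mathfrak L^2$ is visibly one-dimensional. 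Finally, when $\mathfrak l \mid \mathfrak f$, I would use Lemma~\ref{bijfrak} (or a direct computation) to identify the sublattice $\mathfrak l\lO_{\mathfrak l^{-1}\mathfrak f}$ inside $\lO_{\mathfrak f}$: it has index $N(\mathfrak l)$ and contains $\mathfrak l\lO_{\mathfrak f}$ with quotient a line, and being $\lO_{\mathfrak f}$-stable it is fixed; conversely any $\lO_{\mathfrak f}$-stable line is an ideal of $\lO_{\mathfrak f}/\mathfrak l\lO_{\mathfrak f}$, and since $\lO_{\mathfrak f}$ is non-maximal at $\mathfrak l$ the relevant local ring $\lO_{\mathfrak f} \otimes_{\lO_0} (\lO_0)_\mathfrak l$ has a unique nonzero ideal of the right colength, giving uniqueness.

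For the last sentence — that the non-fixed points are permuted simply transitively by $\lO_{\mathfrak f}^\times/\lO_{\mathfrak l\mathfrak f}^\times$ — I would combine a counting argument with the fixed-point computation: the image $G$ of $\lO_{\mathfrak f}^\times$ in $\GL_2(\mathbb F)$ acting on $\mathbb P^1(\mathbb F^2)$ has orbits whose sizes are determined by the above, and since $\lO_{\mathfrak f} = \mathbb F[\alpha]$ for a suitable $\alpha$ (writing $\lO_{\mathfrak f}/\mathfrak l\lO_{\mathfrak f} = \mathbb F[\bar\alpha]$), the group $G$ is (the image of) a non-split or split torus, or a Borel-type subgroup, according to the three cases — and in each case such a torus acts on the complement of its fixed locus in $\mathbb P^1$ simply transitively, the order of the acting group matching $N(\mathfrak l) + 1 - \#(\text{fixed points})$. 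The main obstacle, I expect, is the ramified subcase of the fixed-point computation: pinning down that $\mathfrak L/\mathfrak l\lO_{\mathfrak f}$ really is a free rank-one $\mathbb F$-module (rather than, say, $\mathbb F[\epsilon]/\epsilon^2$ squeezed differently) and that it is the \emph{only} fixed line, which is exactly where the Gorenstein property and the local maximality coming from $\mathfrak l \nmid \mathfrak f$ are both essential; everything else is bookkeeping with the module structure of $\lO_{\mathfrak f}$ over $\lO_0$.
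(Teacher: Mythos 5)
Your proposal is correct and follows essentially the same route as the paper: reduce to the action of $(\lO_{\mathfrak f}/\mathfrak l\lO_{\mathfrak f})^\times/\F^\times$ on $\mathbb P^1$ of the rank-two $\F$-module $\lO_{\mathfrak f}/\mathfrak l\lO_{\mathfrak f}$, identify that Artinian ring case by case as $\F[\epsilon]$, a quadratic field extension, $\F\times\F$, or $\F[X]/(X^2)$, read off the stable lines, and match the order of the acting group to the number of non-fixed lines. Your extra care in the ramified subcase (Gorenstein/local maximality when $\mathfrak l\nmid\mathfrak f$) is a harmless refinement of what the paper asserts more briefly.
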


\begin{proof}
The ring $\lO_{\mathfrak l\mathfrak f}^\times$ acts trivially on $\mathbb P^1(\lO_{\mathfrak f}/\mathfrak l \lO_{\mathfrak f})$, which proves the first statement. 
Observe that the projection $\lO_{\mathfrak f} \rightarrow \lO_{\mathfrak f}/\lO_{\mathfrak l\mathfrak f}$ induces a canonical isomorphism between $\lO_{\mathfrak f}^\times/ \lO_{\mathfrak l\mathfrak f}^\times$ and $(\lO_{\mathfrak f}/\mathfrak l \lO_{\mathfrak f})^\times/\F^{\times}$.
Suppose that $\mathfrak l$ divides $\mathfrak f$. Then, there exists an element $\epsilon \in \lO_{\mathfrak f}/\mathfrak l \lO_{\mathfrak f}$ such that $\lO_{\mathfrak f}/\mathfrak l \lO_{\mathfrak f} = \F[\epsilon]$ and $\epsilon^2 = 0$. But the only $\F$-line in $\F[\epsilon]$ fixed by the action of $\F[\epsilon]^\times$ is $\epsilon \F = (\mathfrak l \lO_{\mathfrak l^{-1}\mathfrak f})/\mathfrak l \lO_{\mathfrak f}$, and this action is transitive on the $\ell$ other lines. Therefore the action of $\F[\epsilon]^\times/\F^\times = (\lO_{\mathfrak f}/\mathfrak l \lO_{\mathfrak f})^\times/\F^{\times}$ on these $\ell$ lines is simply transitive. 

Now, suppose that $\mathfrak l$ does not divide $\mathfrak f$. If $\mathfrak l$ is inert in $\lO_\mathfrak f$, then $\lO_{\mathfrak f}/\mathfrak l \lO_{\mathfrak f} = \mathbb K$ is a quadratic field extension of $\F$, and $\mathbb K^\times/\F^\times$ acts simply transitively on the $\F$-lines $\mathbb P^1(\mathbb K)$. To statement follows from the isomorphism between $\mathbb K^\times/\F^\times$ and $\lO_{\mathfrak f}^\times/ \lO_{\mathfrak l\mathfrak f}^\times$.
The cases where $\mathfrak l$ splits or ramifies in $K$ are treated similarly, with $\lO_{\mathfrak f}/\mathfrak l \lO_{\mathfrak f} \cong \F^2$ in the first case, and $\lO_{\mathfrak f}/\mathfrak l \lO_{\mathfrak f} \cong \F[X]/(X^2)$ in the second case.
\end{proof}

\begin{proposition}[Structure of $\mathscr L_\mathfrak l(\Lambda)$]\label{prop:genericNeighbors}
Suppose $\Lambda$ is an $\lO_{\mathfrak f}$-lattice, for some $\lO_{0}$-ideal $\mathfrak f$, and let $\mathfrak l$ be a prime ideal in $\lO_{0}$. The lattice $\Lambda$ has $N(\mathfrak l) + 1$ $\mathfrak l$-neighbors. The $\mathfrak l$-neighbors that have order $\lO_{\mathfrak l\mathfrak f}$ are permuted simply transitively by $(\lO_{\mathfrak f} /  \lO_{\mathfrak l\mathfrak f})^\times$. The other $\mathfrak l$-neighbors have order $\lO_{\mathfrak l^{-1}\mathfrak f}$ if $\mathfrak l $ divides $ \mathfrak f$, or $\lO_{K}$ otherwise.

More explicitly, if $\mathfrak l $ divides $ \mathfrak f$, there is one $\mathfrak l$-neighbor of order $\lO_{\mathfrak l^{-1}\mathfrak f}$, namely $\mathfrak l \lO_{\mathfrak l^{-1}\mathfrak f} \Lambda$, and $N(\mathfrak l)$ $\mathfrak l$-neighbors of order $\lO_{\mathfrak l\mathfrak f}$. If $\mathfrak l $ does not divide $ \mathfrak f$, we have:
\begin{enumerate}[label=(\roman*)]
\item If $\mathfrak l$ is inert in $K$, all $N(\mathfrak l) + 1$ lattices of $\mathscr L_\mathfrak l(\Lambda)$ have order $\lO_{\mathfrak l}$,
\item If $\mathfrak l$ splits in $K$ into prime ideals $\mathfrak L_1$ and $\mathfrak L_2$, $\mathscr L_\mathfrak l(\Lambda)$ consists of two lattices of order $\lO_{K}$, namely $\mathfrak L_1\Lambda$ and $\mathfrak L_2\Lambda$, and $N(\mathfrak l)-1$ lattices of order $\lO_{\mathfrak l}$,
\item If $\mathfrak l$ ramifies in $K$ as $\mathfrak L^2$, $\mathscr L_\mathfrak l(\Lambda)$ consists of one lattice of order $\lO_{K}$, namely $\mathfrak L\Lambda$, and $N(\mathfrak l)$ lattices of order $\lO_{\mathfrak l}$.
\end{enumerate}
\end{proposition}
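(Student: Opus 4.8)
The plan is to reduce everything to the structure of the projective line $\mathbb{P}^1(\Lambda/\mathfrak{l}\Lambda)$ together with the $\lO_\mathfrak{f}^\times$-action on it, and then invoke Lemma~\ref{lemma:fixedPointsMaxRM}. First I would record that since $\lO_\mathfrak{f}$ is Gorenstein (Lemma~\ref{lemma:quadraticImpliesGorenstein}), $\Lambda$ is free of rank one over $\lO_\mathfrak{f}$, so after choosing a generator we may identify $\Lambda \cong \lO_\mathfrak{f}$ and $\Lambda/\mathfrak{l}\Lambda \cong \lO_\mathfrak{f}/\mathfrak{l}\lO_\mathfrak{f}$ as $\lO_\mathfrak{f}$-modules, in particular as $\F = \lO_0/\mathfrak{l}$-vector spaces of dimension $2$. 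Hence the $\mathfrak{l}$-neighbors $\Gamma$, which by definition are the lattices with $\mathfrak{l}\Lambda \subset \Gamma \subset \Lambda$ and $\Gamma/\mathfrak{l}\Lambda$ an $\F$-line, are in bijection with $\mathbb{P}^1(\lO_\mathfrak{f}/\mathfrak{l}\lO_\mathfrak{f})$, which has exactly $N(\mathfrak{l})+1$ elements (as $|\F| = N(\mathfrak{l})$). This proves the count.

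Next I would compute the order $\lO(\Gamma)$ of each neighbor. The key observation is that $\lO(\Gamma)$ always contains $\lO_\mathfrak{f}$ — indeed $\lO_\mathfrak{f}\Gamma \subset \Gamma$ is automatic from $\Gamma$ being an $\lO_\mathfrak{f}$-submodule between $\mathfrak{l}\Lambda$ and $\Lambda$ — and also is contained in $\lO(\mathfrak{l}\Lambda) \cap \lO(\Lambda)$; combined with the classification of orders with maximal real multiplication (Theorem~\ref{thm:classificationOrdersMaxRM}), this forces $\lO(\Gamma)$ to be one of $\lO_\mathfrak{g}$ for $\mathfrak{g}$ an $\lO_0$-ideal with $\mathfrak{l}\mathfrak{f} \subseteq \mathfrak{g}$-divisibility constraints, so in practice $\lO(\Gamma) \in \{\lO_{\mathfrak{l}\mathfrak{f}}, \lO_\mathfrak{f}, \lO_{\mathfrak{l}^{-1}\mathfrak{f}} \text{ (if } \mathfrak{l}\mid\mathfrak{f}), \lO_K\}$. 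The point is then that a neighbor $\Gamma$ has order strictly larger than $\lO_\mathfrak{f}$ precisely when its class in $\mathbb{P}^1(\lO_\mathfrak{f}/\mathfrak{l}\lO_\mathfrak{f})$ is fixed by the $\lO_\mathfrak{f}^\times$-action: one direction is because an $x \in \lO(\Gamma) \setminus \lO_\mathfrak{f}$ can, after multiplying by an element of $\lO_\mathfrak{f}$, be taken to be a unit in $\lO_{\mathfrak{l}^{-1}\mathfrak{f}}$ or in $\lO_K$ whose action stabilizes the line; conversely if a unit of a strictly larger order stabilizes the line then $\Gamma$ is stable under it, enlarging $\lO(\Gamma)$. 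So Lemma~\ref{lemma:fixedPointsMaxRM} directly reads off: the non-fixed points (permuted simply transitively by $\lO_\mathfrak{f}^\times/\lO_{\mathfrak{l}\mathfrak{f}}^\times \cong (\lO_\mathfrak{f}/\mathfrak{l}\lO_\mathfrak{f})^\times/\F^\times$) give the $N(\mathfrak{l})$ neighbors of order $\lO_{\mathfrak{l}\mathfrak{f}}$, and the fixed points give the remaining neighbors of larger order.

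It then remains to match the fixed points with the explicit lattices claimed. When $\mathfrak{l}\mid\mathfrak{f}$, Lemma~\ref{lemma:fixedPointsMaxRM} gives the single fixed line $(\mathfrak{l}\lO_{\mathfrak{l}^{-1}\mathfrak{f}})/\mathfrak{l}\lO_\mathfrak{f}$; translating back to the lattice picture, this corresponds to the sublattice $\mathfrak{l}\lO_{\mathfrak{l}^{-1}\mathfrak{f}}\Lambda$, whose order is $\lO_{\mathfrak{l}^{-1}\mathfrak{f}}$ (the homothety by an $\lO_{\mathfrak{l}^{-1}\mathfrak{f}}$-ideal does not change the order). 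When $\mathfrak{l}\nmid\mathfrak{f}$, I split into the three cases of the splitting behaviour of $\mathfrak{l}$ in $K$ — which coincides with the splitting behaviour in $\lO_\mathfrak{f}$ since $\mathfrak{l}\nmid\mathfrak{f}$ — and in each case $\lO_\mathfrak{f}/\mathfrak{l}\lO_\mathfrak{f}$ is respectively $\mathbb{K}$ (a quadratic field, no fixed line, all $N(\mathfrak{l})+1$ neighbors of order $\lO_\mathfrak{l}$), $\F \times \F$ (two fixed lines, corresponding to the lattices $\mathfrak{L}_1\Lambda$ and $\mathfrak{L}_2\Lambda$ of order $\lO_K$, plus $N(\mathfrak{l})-1$ of order $\lO_\mathfrak{l}$), or $\F[X]/(X^2)$ (one fixed line, corresponding to $\mathfrak{L}\Lambda$ of order $\lO_K$, plus $N(\mathfrak{l})$ of order $\lO_\mathfrak{l}$). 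The main obstacle I anticipate is the precise bookkeeping in the paragraph above — namely verifying cleanly that $\lO(\Gamma)$ strictly exceeds $\lO_\mathfrak{f}$ \emph{if and only if} the corresponding line is a fixed point, and that the specific fixed line produced by the lemma corresponds to the specific homothety-type sublattice named in the statement; everything else is a translation between the projective-line language of Lemma~\ref{lemma:fixedPointsMaxRM} and the lattice language via the rank-one freeness of $\Lambda$.
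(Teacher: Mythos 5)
Your skeleton is exactly the paper's (its proof is literally ``free of rank one over $\lO_{\mathfrak f}$ plus Lemma~\ref{lemma:fixedPointsMaxRM}''), but the ``key observation'' you use to pin down the orders is false on both sides, and as written it breaks the bookkeeping you yourself identify as the crux. A neighbor $\Gamma$ is \emph{not} automatically an $\lO_{\mathfrak f}$-submodule: the definition only makes $\Gamma/\mathfrak l\Lambda$ an $\F$-line, so $\Gamma$ is a priori only $\lO_0$-stable, and indeed the proposition's own conclusion says that $N(\mathfrak l)$ of the neighbors have order $\lO_{\mathfrak l\mathfrak f}\subsetneq\lO_{\mathfrak f}$, contradicting ``$\lO(\Gamma)\supseteq\lO_{\mathfrak f}$ always.'' The other containment $\lO(\Gamma)\subseteq\lO(\mathfrak l\Lambda)\cap\lO(\Lambda)=\lO_{\mathfrak f}$ is equally false: a sublattice can have a strictly larger multiplicator ring, and the ascending neighbor $\mathfrak l\lO_{\mathfrak l^{-1}\mathfrak f}\Lambda$ does. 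Taken literally your two containments force $\lO(\Gamma)=\lO_{\mathfrak f}$ for every neighbor, which is precisely what must be disproved. Your ``conversely'' step is also aimed at the wrong condition: the fixed-point criterion in Lemma~\ref{lemma:fixedPointsMaxRM} concerns units of $\lO_{\mathfrak f}$ acting on the line, not units of a putative larger order stabilizing it.

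The correct bookkeeping — which is the only content the paper leaves implicit — is: (a) automatically $\lO_{\mathfrak l\mathfrak f}\subseteq\lO(\Gamma)$, since $\lO_0\Gamma\subseteq\Gamma$ and $\mathfrak l\mathfrak f\lO_K\Gamma\subseteq\mathfrak l\,\mathfrak f\lO_K\Lambda\subseteq\mathfrak l\Lambda\subseteq\Gamma$; (b) $\Gamma$ agrees with $\Lambda$ away from $\mathfrak l$ because $\mathfrak l\Lambda\subseteq\Gamma\subseteq\Lambda$, so by Theorem~\ref{thm:classificationOrdersMaxRM} the order $\lO(\Gamma)=\lO_{\mathfrak g}$ can differ from $\lO_{\mathfrak f}$ only in the valuation $v_{\mathfrak l}(\mathfrak g)\in\{0,\dots,v_{\mathfrak l}(\mathfrak f)+1\}$; (c) the line of $\Gamma$ is fixed by $\lO_{\mathfrak f}^\times$ if and only if $\Gamma$ is $\lO_{\mathfrak f}$-stable (the quotient $\lO_{\mathfrak f}/\mathfrak l\lO_{\mathfrak f}$ is spanned over $\F$ by its units), i.e.\ if and only if $v_{\mathfrak l}(\mathfrak g)\leq v_{\mathfrak l}(\mathfrak f)$. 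Hence the non-fixed lines have order exactly $\lO_{\mathfrak l\mathfrak f}$ and Lemma~\ref{lemma:fixedPointsMaxRM} gives the simply transitive action of $\lO_{\mathfrak f}^\times/\lO_{\mathfrak l\mathfrak f}^\times$ on them, while the fixed lines are the explicit ones listed in the lemma, and their orders are computed directly from the explicit lattices $\mathfrak l\lO_{\mathfrak l^{-1}\mathfrak f}\Lambda$, $\mathfrak L\Lambda$, $\mathfrak L^\dagger\Lambda$ (invertible-ideal translates do not lower the order). With these repairs your argument becomes a correct expansion of the paper's one-line proof; without them, the step determining which neighbors have which order does not go through.
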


\begin{proof}
This is a direct consequence of Lemma~\ref{lemma:fixedPointsMaxRM}, together with the fact that $\Lambda$ is a free $\lO_{\mathfrak f}$-module of rank 1.
\end{proof}

\subsection{Graphs of $\mathfrak l$-isogenies}\label{subsec:frakLIsogenyGraphs}

Fix again a principally polarizable absolutely simple ordinary abelian variety $\mathscr A$ of dimension $g$ over $k$, with endomorphism algebra~$K$. 
Suppose that $\mathscr A$ has locally maximal real multiplication at $\ell$ (i.e., $\lO_0 \subset \lO(\mathscr A)$). The $\mathfrak l$-neighbors correspond in the world of varieties to $\mathfrak l$-isogenies (see Remark~\ref{rem:correspFrakLAndFrakL}).

\begin{definition}
Suppose $\mathscr A$ has local order $\lO_{\mathfrak f}$, for some $\lO_{0}$-ideal $\mathfrak f$ and let $\mathfrak l$ be a prime ideal in $\lO_{0}$. An $\mathfrak l$-isogeny $\varphi : \mathscr A \rightarrow \mathscr B$ is \emph{$\mathfrak l$-ascending} if $\lO(\mathscr B) = \lO_{\mathfrak l^{-1}\mathfrak f}$, it is \emph{$\mathfrak l$-descending} if $\lO(\mathscr B) = \lO_{\mathfrak l\mathfrak f}$, and it is \emph{$\mathfrak l$-horizontal} if $\lO(\mathscr B) = \lO_{\mathfrak f}$.
\end{definition}

\begin{proposition}\label{prop:frakLStructure}
Suppose $\mathscr A$ has local order $\lO_{\mathfrak f}$ for some $\lO_{0}$-ideal $\mathfrak f$  and let $\mathfrak l$ be a prime ideal in $\lO_{0}$. There are $N(\mathfrak l) + 1$ kernels of $\mathfrak l$-isogenies from $\mathscr A$. The kernels of $\mathfrak l$-descending $\mathfrak l$-isogenies are permuted simply transitively by the action of $(\lO_{\mathfrak f} /  \lO_{\mathfrak l\mathfrak f})^\times$. The other $\mathfrak l$-isogenies are $\mathfrak l$-ascending if $\mathfrak l $ divides $ \mathfrak f$, and $\mathfrak l$-horizontal otherwise.

More explicitely, if $\mathfrak l $ divides $ \mathfrak f$, there is a unique $\mathfrak l$-ascending $\mathfrak l$-kernel from $\mathscr A$, 
and $N(\mathfrak l)$ $\mathfrak l$-descending $\mathfrak l$-kernels. If $\mathfrak l$ does not divide $ \mathfrak f$, we have:
\begin{enumerate}[label=(\roman*)]
\item If $\mathfrak l$ is inert in $K$, all $N(\mathfrak l) + 1$ $\mathfrak l$-kernels are $\mathfrak l$-descending;
\item If $\mathfrak l$ splits in $K$ into two prime ideals $\mathfrak L_1$ and $\mathfrak L_2$, there are two $\mathfrak l$-horizontal $\mathfrak l$-kernels, namely $\mathscr A[\mathfrak L_1]$ and $\mathscr A[\mathfrak L_2]$, and $N(\mathfrak l)-1$ $\mathfrak l$-descending ones;
\item If $\mathfrak l$ ramifies in $K$ as $\mathfrak L^2$, there is one $\mathfrak l$-horizontal $\mathfrak l$-kernel, namely $\mathscr A[\mathfrak L]$, and $N(\mathfrak l)$ $\mathfrak l$-descending ones.
\end{enumerate}
\end{proposition}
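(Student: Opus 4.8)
The plan is to transfer Proposition~\ref{prop:genericNeighbors} from the world of lattices to the world of abelian varieties via the dictionary of Proposition~\ref{prop:correspondence} and Remark~\ref{rem:correspFrakLAndFrakL}. Concretely, let $\mathscr A$ be a variety in the fixed isogeny class with locally maximal real multiplication at $\ell$, say $\lO(\mathscr A) = \lO_{\mathfrak f}$, and set $T = T_\ell\mathscr A$, which is then an $\lO_{\mathfrak f}$-lattice in $V$. By Remark~\ref{rem:correspFrakLAndFrakL}, the kernels of $\mathfrak l$-isogenies $\mathscr A \to \mathscr B$ correspond bijectively to the $\mathfrak l$-neighbors of $T$ (equivalently, the lattices $\Gamma$ with $\mathfrak l T \subset \Gamma \subset T$ and $\Gamma/\mathfrak l T$ an $\lO_0/\mathfrak l$-line in $T/\mathfrak l T$, after rescaling by $\mathfrak l^{-1}$). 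So the count $N(\mathfrak l)+1$ of kernels is immediate from Proposition~\ref{prop:genericNeighbors}.

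Next I would match up the endomorphism rings. By Proposition~\ref{prop:correspondence}(ii) (together with Remark~\ref{rem:kernelVsIso}), if $\kappa$ is the kernel corresponding to a neighbor $\Gamma$, then $\lO(\mathscr A/\kappa) = \lO(\Gamma)$, the order of the lattice $\Gamma$. Proposition~\ref{prop:genericNeighbors} tells us exactly which orders occur: when $\mathfrak l \mid \mathfrak f$, one neighbor has order $\lO_{\mathfrak l^{-1}\mathfrak f}$ (namely $\mathfrak l\lO_{\mathfrak l^{-1}\mathfrak f}\Lambda$, which pulls back to the kernel $\mathscr A[\mathfrak l\cap\lO(\mathscr A)]$ up to the identification), giving the unique $\mathfrak l$-ascending isogeny, and $N(\mathfrak l)$ neighbors of order $\lO_{\mathfrak l\mathfrak f}$, the $\mathfrak l$-descending ones; when $\mathfrak l \nmid \mathfrak f$, the ``special'' neighbors have order $\lO_K$ and they correspond to the kernels $\mathscr A[\mathfrak L_i]$ (in the split case) or $\mathscr A[\mathfrak L]$ (in the ramified case), which give $\mathfrak l$-horizontal isogenies since $\lO_K$ still lies above $\lO_{\mathfrak f}$ and has the same real part; the inert case has no special neighbor, so all $N(\mathfrak l)+1$ are $\mathfrak l$-descending. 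The identifications of the distinguished kernels $\mathscr A[\mathfrak L_i]$, $\mathscr A[\mathfrak L]$ with the lattices $\mathfrak L_i\Lambda$, $\mathfrak L\Lambda$ follow by unwinding the correspondence of Proposition~\ref{prop:correspondence}: the subgroup attached to the lattice $\mathfrak L\Lambda \supset \Lambda = T$ is $(\mathfrak L^{-1}T)/T$ modulo rescaling, i.e.\ the $\mathfrak L$-torsion of $\mathscr A$.

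Finally, the simply transitive action: Proposition~\ref{prop:genericNeighbors} says $(\lO_{\mathfrak f}/\lO_{\mathfrak l\mathfrak f})^\times$ permutes the $\mathfrak l$-descending neighbors simply transitively, and this action is induced by the action of $\lO(\mathscr A)^\times = \lO_{\mathfrak f}^\times$ on lattices containing $T$ (which factors through $\lO_{\mathfrak f}^\times/\lO_{\mathfrak l\mathfrak f}^\times \cong (\lO_{\mathfrak f}/\lO_{\mathfrak l\mathfrak f})^\times$ by the isomorphism noted in the proof of Lemma~\ref{lemma:fixedPointsMaxRM}). Since $\lO(\mathscr A)^\times$ acts on the set of kernels of $\mathfrak l$-isogenies from $\mathscr A$ through its action on the corresponding lattices, this transfers verbatim. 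The proof is thus essentially a translation exercise; the only point requiring a little care — and the main potential obstacle — is checking that the distinguished lattices singled out in Proposition~\ref{prop:genericNeighbors} (those of the form $\mathfrak L\Lambda$, $\mathfrak l\lO_{\mathfrak l^{-1}\mathfrak f}\Lambda$, etc.) really correspond under Proposition~\ref{prop:correspondence} to the ideal-theoretic kernels $\mathscr A[\mathfrak L]$, $\mathscr A[\mathfrak l]$ as claimed, rather than to some twist of them; this is settled by noting that the correspondence is equivariant for the $K_\ell$-action and that $\mathscr A[\mathfrak a]$ corresponds to the lattice $\mathfrak a^{-1}T$ for any invertible ideal $\mathfrak a$ of $\lO(\mathscr A)$.

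\begin{proof}
Let $T = T_\ell \mathscr A$, an $\lO_{\mathfrak f}$-lattice in $V$ by Proposition~\ref{prop:correspondence}(ii). By Remark~\ref{rem:correspFrakLAndFrakL}, the kernels of $\mathfrak l$-isogenies from $\mathscr A$ are in bijection with the $\mathfrak l$-neighbors of $T$, where a neighbor $\Gamma$ (satisfying $T \subset \Gamma \subset \mathfrak l^{-1}T$ with $\Gamma/T$ an $\lO_0/\mathfrak l$-line in $(\mathfrak l^{-1}T)/T$) corresponds to the kernel $\Gamma/T \subset \mathscr A[\ell^\infty]$. Hence there are $N(\mathfrak l)+1$ such kernels by Proposition~\ref{prop:genericNeighbors}. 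Moreover, by Proposition~\ref{prop:correspondence}(ii) and Remark~\ref{rem:kernelVsIso}, the kernel corresponding to a neighbor $\Gamma$ is the kernel of an $\mathfrak l$-isogeny $\mathscr A \to \mathscr B$ with $\lO(\mathscr B) = \lO(\Gamma)$.

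The action of $\lO(\mathscr A)^\times = \lO_{\mathfrak f}^\times$ on the set of lattices containing $T$ restricts to an action on $\mathscr L_\mathfrak l(T)$ (after the rescaling identification), and via the correspondence this is exactly the action on the set of $\mathfrak l$-kernels. By Proposition~\ref{prop:genericNeighbors} (which rests on Lemma~\ref{lemma:fixedPointsMaxRM}), this action factors through $\lO_{\mathfrak f}^\times/\lO_{\mathfrak l\mathfrak f}^\times \cong (\lO_{\mathfrak f}/\lO_{\mathfrak l\mathfrak f})^\times$, permuting simply transitively the neighbors of order $\lO_{\mathfrak l\mathfrak f}$ — equivalently, the $\mathfrak l$-descending kernels.

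It remains to identify the remaining kernels. The correspondence of Proposition~\ref{prop:correspondence} is $K_\ell$-equivariant, and for any invertible $\lO(\mathscr A)$-ideal $\mathfrak a$ the lattice $\mathfrak a^{-1} T$ corresponds to the subgroup $\mathscr A[\mathfrak a \cap \lO(\mathscr A)]$. Applying this with $\mathfrak a = \mathfrak l \lO_{\mathfrak l^{-1}\mathfrak f}$ when $\mathfrak l \mid \mathfrak f$, and with $\mathfrak a = \mathfrak L_i$ or $\mathfrak a = \mathfrak L$ when $\mathfrak l \nmid \mathfrak f$ (split or ramified), the distinguished neighbors of Proposition~\ref{prop:genericNeighbors} correspond respectively to the unique $\mathfrak l$-ascending kernel, and to the horizontal kernels $\mathscr A[\mathfrak L_1], \mathscr A[\mathfrak L_2]$ or $\mathscr A[\mathfrak L]$. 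When $\mathfrak l$ is inert in $K$ and $\mathfrak l \nmid \mathfrak f$, Proposition~\ref{prop:genericNeighbors} shows every neighbor has order $\lO_{\mathfrak l}$, so all $N(\mathfrak l)+1$ kernels are $\mathfrak l$-descending. This proves all the assertions.
\end{proof}
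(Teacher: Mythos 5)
Your proposal is correct and follows exactly the paper's route: the paper proves this proposition in one line by combining Proposition~\ref{prop:genericNeighbors} with the lattice--isogeny dictionary of Proposition~\ref{prop:correspondence} and Remark~\ref{rem:correspFrakLAndFrakL}, and your write-up simply makes that translation explicit (including the equivariance of the correspondence and the identification of the distinguished neighbors with $\mathscr A[\mathfrak L]$, $\mathscr A[\mathfrak L_i]$, up to the harmless relabelling $\mathfrak L \leftrightarrow \mathfrak L^\dagger$ introduced by the rescaling by $\mathfrak l^{-1}$).
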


\begin{proof}
This proposition follows from Proposition~\ref{prop:genericNeighbors} together with Remark~\ref{rem:correspFrakLAndFrakL}.
\end{proof}

\begin{definition}[$\mathfrak l$-predecessor]
When it exists, let $\kappa$ be the unique $\mathfrak l$-ascending kernel of Proposition~\ref{prop:frakLStructure}. We call 
$\pr_\mathfrak l(\mathscr A) = \mathscr A / \kappa$ the \emph{$\mathfrak l$-predecessor} of $\mathscr A$, and denote by
$\mathrm{up}_{\mathscr A}^{\mathfrak l} : \mathscr A \ra \pr_\mathfrak l(\mathscr A)$ the canonical projection.
\end{definition}

The following notion of volcano was introduced in~\cite{fouquet-morain} to describe the structure of graphs of $\ell$-isogenies between elliptic curves.

\begin{definition}[volcano]
Let $n$ be a positive integer. An (infinite) \emph{$n$-volcano} $\mathscr V$ is an $(n+1)$-regular, connected, undirected graph whose vertices are partitioned into \emph{levels} $\{\mathscr V_i\}_{i \in \Z_{\geq 0}}$ such that:
\begin{enumerate}[label=(\roman*)]
\item The subgraph $\mathscr V_0$, the \emph{surface}, is a finite regular graph of degree at most 2,
\item For each $i > 0$, each vertex in $\mathscr V_i$ has exactly one neighbor in $\mathscr V_{i-1}$, and these are exactly the edges of the graph that are not on the surface.
\end{enumerate}
For any positive integer $h$, the corresponding (finite) volcano of height $h$ is the restriction of $\mathscr V$ to its first $h$ levels.
\end{definition}

Let $\mathfrak l$ be a prime of $K_0$ above $\ell$.
Consider the $\mathfrak l$-isogeny graph $\mathscr W_\mathfrak l$ as defined in Section~\ref{subsubsec:theorem1}.
Note that it is a directed multigraph; we say that such a graph is \emph{undirected} if for any vertices $u$ and $v$, the multiplicity of the edge from $u$ to $v$ is the same as the multiplicity from $v$ to $u$.
The remainder of this section is a proof of Theorem~\ref{thm:lisogenyvolcanoes}, which provides a complete description of the structure of the leveled $\mathfrak l$-isogeny graph $(\mathscr W_\mathfrak l, v_\mathfrak l)$, closely related to volcanoes.

\begin{lemma}\label{lemma:distinctFrakLKernels}
Suppose that $\cO(\mathscr B)\subset \cO(\mathscr A)$. If there exists an $\mathfrak l$-isogeny $\varphi: \mathscr A \to \mathscr B$, then there are at least $[\cO(\mathscr A)^\times : {\cO}(\mathscr B)^\times]$ pairwise distinct kernels of $\mathfrak l$-isogenies from $\mathscr A$ to $\mathscr B$.
\end{lemma}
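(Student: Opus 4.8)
The plan is to exploit the group action of $\cO(\mathscr A)^\times$ on the set of kernels of $\mathfrak l$-isogenies from $\mathscr A$ to $\mathscr B$, and to show that the stabilizer of any such kernel is exactly $\cO(\mathscr B)^\times$. Translating to lattices via Proposition~\ref{prop:correspondence}, fix a lattice $\Lambda = T_\ell \mathscr A$ with $\lO(\Lambda) = \lO(\mathscr A)$; an $\mathfrak l$-isogeny $\mathscr A \to \mathscr B$ corresponds to a lattice $\Gamma$ with $\mathfrak l \Lambda \subset \Gamma \subset \Lambda$, $\Gamma/\mathfrak l\Lambda$ a line, and $\lO(\Gamma) = \lO(\mathscr B)$. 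The unit group $\lO(\Lambda)^\times$ acts on $V$ preserving $\Lambda$, hence permutes the $\mathfrak l$-neighbors of $\Lambda$, and clearly preserves the condition $\lO(\Gamma) = \lO(\mathscr B)$ (since $\lO(u\Gamma) = \lO(\Gamma)$ for $u \in K_\ell^\times$). So $\lO(\mathscr A)^\times$ acts on the (nonempty, by hypothesis) set $S$ of kernels of $\mathfrak l$-isogenies $\mathscr A \to \mathscr B$.

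Next I would identify the stabilizer of a point $\Gamma \in S$. If $u \in \lO(\Lambda)^\times$ fixes the line $\Gamma/\mathfrak l\Lambda$ inside $\Lambda/\mathfrak l\Lambda$, this does not immediately give $u\Gamma = \Gamma$; but in fact $\Gamma$ is determined by the line $\Gamma/\mathfrak l\Lambda$, so $u$ fixes $\Gamma$ as a lattice iff it fixes that line. By Lemma~\ref{lemma:fixedPointsMaxRM} (applied with $\mathfrak f$ the conductor exponent data of $\lO(\mathscr A)$, using that $\Lambda$ is free of rank one over $\lO(\mathscr A)$), the action of $\lO(\mathscr A)^\times$ on $\mathbb P^1(\Lambda/\mathfrak l\Lambda)$ factors through $\lO(\mathscr A)^\times / \lO_{\mathfrak l \mathfrak f}^\times$, and the points with a strictly larger stabilizer than the generic one are the one or two exceptional fixed points. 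Since $\varphi$ is $\mathfrak l$-descending (as $\cO(\mathscr B) \subsetneq \cO(\mathscr A)$ forces $\lO(\mathscr B) = \lO_{\mathfrak l\mathfrak f}$ by Proposition~\ref{prop:frakLStructure}), the line $\Gamma/\mathfrak l\Lambda$ is one of the non-exceptional points, on which $\lO(\mathscr A)^\times / \lO_{\mathfrak l\mathfrak f}^\times$ acts simply transitively; hence the stabilizer of $\Gamma$ in $\lO(\mathscr A)^\times$ is exactly $\lO_{\mathfrak l\mathfrak f}^\times = \lO(\mathscr B)^\times = \cO(\mathscr B)^\times \otimes \Z_\ell$-units. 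One must be slightly careful to pass between $\cO(\mathscr B)^\times$ and $\lO(\mathscr B)^\times$: since $\cO(\mathscr A)$ and $\cO(\mathscr B)$ agree away from $\ell$ by Proposition~\ref{prop:EllDoesNotChangeP}, the index $[\cO(\mathscr A)^\times : \cO(\mathscr B)^\times]$ equals $[\lO(\mathscr A)^\times : \lO(\mathscr B)^\times]$ — this is the point where a short argument comparing global and local unit indices is needed.

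Finally, the orbit of $\Gamma$ under $\cO(\mathscr A)^\times$ lies inside $S$, and by the orbit-stabilizer theorem it has cardinality $[\cO(\mathscr A)^\times : \cO(\mathscr B)^\times]$; since $S$ contains this orbit, $|S| \geq [\cO(\mathscr A)^\times : \cO(\mathscr B)^\times]$, which is the claim. I expect the main obstacle to be the bookkeeping around which exceptional fixed point (if any) one might have landed on, and justifying that $\mathfrak l$-descending kernels are never among the exceptional points — this is exactly the content of Proposition~\ref{prop:frakLStructure}, so the real work is just citing it correctly — together with the global-versus-local unit index comparison mentioned above.
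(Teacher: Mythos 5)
Your overall strategy --- act by units on the set of kernels and bound the orbit via orbit-stabilizer --- is the same as the paper's, but two of your steps are genuinely wrong as written. First, the bridge you propose at the end, the equality $[\cO(\mathscr A)^\times : \cO(\mathscr B)^\times] = [\lO(\mathscr A)^\times : \lO(\mathscr B)^\times]$, is false in general: the whole point of Lemma~\ref{lemma:classNumberRelation} is that these two indices differ, their ratio being $|\Pic(\cO(\mathscr B))|/|\Pic(\cO(\mathscr A))|$, which is typically of size about $N(\mathfrak l)$ (otherwise every level of the volcano would have the same number of vertices). No ``short argument'' can establish it, so the proof as structured does not close. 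Second, the group you actually let act, $\lO(\mathscr A)^\times$, does not act on your set $S$ of kernels of $\mathfrak l$-isogenies \emph{with target isomorphic to $\mathscr B$}: multiplying the lattice $\Gamma$ by a local unit preserves $\lO(\Gamma)$, hence the endomorphism ring of the target, but not its isomorphism class (varieties with the same order form a torsor under a Picard group, and a local unit generally moves you within it). Only the global units $\cO(\mathscr A)^\times$ preserve $S$, and even that requires the observation --- which you never make --- that a unit $u$, being an automorphism of $\mathscr A$, induces an isomorphism $\mathscr A/\kappa \cong \mathscr A/\kappa^u$.

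Both defects are repairable, and the repair shows the local analysis is unnecessary. Work only with the global group $\cO(\mathscr A)^\times$ acting on $S$; its stabilizer of $\kappa$ is $\cO(\mathscr A)^\times \cap \lO(\mathscr B)^\times = \cO(\mathscr B)^\times$ (since $\cO(\mathscr A)$ and $\cO(\mathscr B)$ agree at all primes other than $\ell$ by Proposition~\ref{prop:EllDoesNotChangeP}, so $\cO(\mathscr B) = \cO(\mathscr A)\cap\lO(\mathscr B)$), and orbit-stabilizer then gives exactly the bound $[\cO(\mathscr A)^\times : \cO(\mathscr B)^\times]$ with no comparison of unit indices. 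This is essentially what the paper does, but even more directly: it never computes the stabilizer via Lemma~\ref{lemma:fixedPointsMaxRM} or Proposition~\ref{prop:frakLStructure} (so it needs no descending/horizontal case distinction); it simply shows by a commutative diagram involving the dual isogeny and the compatibility of the embeddings $\imath_\mathscr A, \imath_\mathscr B$ that any unit $u$ with $\kappa^u = \kappa$ satisfies $u = \imath_\mathscr B(\tilde u) \in \cO(\mathscr B)$, i.e.\ the stabilizer is \emph{contained} in $\cO(\mathscr B)^\times$, which already suffices for the lower bound.
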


\begin{proof}
The elements $\alpha \in \cO(\mathscr A)$ act on the subgroups of $\mathscr A$ via the isomorphism $\cO(\mathscr A) \cong \End(\mathscr A)$, and we denote this action $\kappa \mapsto \kappa^\alpha$. Let $\kappa = \ker \varphi$. If $u \in  \cO(\mathscr A)^\times$ is a unit, then $\kappa^u$ is also the kernel of an $\mathfrak l$-isogeny. Furthermore, $u$ canonically induces an isomorphism $\mathscr A/\kappa \rightarrow \mathscr A/\kappa^u$, so $\kappa^u$ is the kernel of a $\mathfrak l$-isogeny with target $\mathscr B$.

It only remains to prove that the orbit of $\kappa$ for the action of $\cO(\mathscr A)^\times$ contains at least $[\cO(\mathscr A)^\times : {\cO}(\mathscr B)^\times]$ distinct kernels. It suffices to show that if $\kappa^u = \kappa$, then $u \in \cO(\mathscr B)^\times$. Let $u \in \cO(\mathscr A)^\times$ such that $\kappa^u = \kappa$.
Recall that for any variety $\mathscr C$ in our isogeny class, we have fixed an isomorphism $\imath_\mathscr C : \End(\mathscr C) \ra \cO(\mathscr C)$, and that these isomorphisms are all compatible in the sense that for any isogeny $\psi : \mathscr C \ra \mathscr D$, and $\gamma \in \End(\mathscr C)$, we have $\imath_\mathscr C(\gamma) = \imath_\mathscr D(\psi \circ \gamma \circ \hat\psi)/\deg \psi$.
Let $u_\mathscr A \in \End(\mathscr A)$ be the endomorphism of $\mathscr A$ corresponding to~$u$. It induces an isomorphism $\tilde u_\mathscr A : \mathscr A/\kappa \ra \mathscr A/\kappa^u$, which is actually an automorphism of $\mathscr A/\kappa$ since $\kappa^u = \kappa$. Let $\varphi : \mathscr A \ra \mathscr A/\kappa$ be the natural projection. We obtain the following commutative diagram:
\begin{equation*}
\xymatrix{
\mathscr A/\kappa \ar[r]^{\tilde u_\mathscr A} & \mathscr A/\kappa \ar[rd]^{\hat \varphi} & \\
\mathscr A \ar[u]^{\varphi}  \ar[r]^{u_\mathscr A} & \mathscr A \ar[u]^{\varphi} \ar[r]_{[\deg \varphi]} & \mathscr A.} 
\label{eq:ppav}
\end{equation*}
Finally, we obtain
\[u = \imath_\mathscr A([\deg \varphi] \circ u_\mathscr A)/\deg \varphi = \imath_\mathscr A( \hat\varphi \circ \tilde u_\mathscr A \circ \varphi)/\deg \varphi = \imath_\mathscr B(\tilde u_\mathscr A)\in \cO(\mathscr B).\]
\end{proof}
\begin{lemma}\label{lemma:classNumberRelation}
Let $K$ be a CM-field and $K_0$ its maximal real subfield.
Let $\cO$ be an order in $K$ of conductor $\mathfrak f$ such that $\lO_{0} \subset \cO \otimes_\Z \Z_{\ell}$. Let $\cO'$ be the order such that $\cO' \otimes_\Z \Z_{\ell'} = \cO \otimes_\Z \Z_{\ell'}$ for all prime $\ell' \neq \ell$, and $\cO' \otimes_\Z \Z_{\ell} = \lO_{0} + \mathfrak l\mathfrak f\lO_{K}$. Then, \[|\Pic(\cO')| = \frac{\left[(\cO \otimes_\Z \Z_{\ell})^\times : ({\cO'} \otimes_\Z \Z_\ell)^\times\right]}{[\cO^\times : {\cO'}^\times]}|\Pic(\cO)|.\]
\end{lemma}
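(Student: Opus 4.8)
The plan is to use the standard exact sequence relating the Picard group of a non-maximal order to that of a larger order. Recall that for orders $\cO' \subseteq \cO$ in a number field $K$ with the same fraction field, there is an exact sequence
\begin{equation*}
1 \longrightarrow \cO^\times/{\cO'}^\times \longrightarrow \left( \cO/\mathfrak{c} \right)^\times \Big/ \left( \cO'/\mathfrak{c} \right)^\times \longrightarrow \Pic(\cO') \longrightarrow \Pic(\cO) \longrightarrow 1,
\end{equation*}
where $\mathfrak{c}$ is any ideal of $\cO$ contained in $\cO'$ (e.g.\ the conductor of $\cO'$ in $\cO$), and the middle term does not depend on the choice of such $\mathfrak{c}$. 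In our situation $\cO'$ and $\cO$ agree at every prime $\ell' \neq \ell$ and differ at $\ell$ only by passing from $\lO_0 + \mathfrak{f}\lO_K$ to $\lO_0 + \mathfrak{l}\mathfrak{f}\lO_K$; hence the conductor of $\cO'$ in $\cO$ is supported at $\ell$, and the quotient $\left( \cO/\mathfrak{c}\right)^\times / \left(\cO'/\mathfrak{c}\right)^\times$ is canonically isomorphic to $\left( (\cO\otimes_\Z\Z_\ell) / \mathfrak{c}_\ell \right)^\times / \left( (\cO'\otimes_\Z\Z_\ell)/\mathfrak{c}_\ell\right)^\times$, which (again independently of the auxiliary ideal) equals $(\cO\otimes_\Z\Z_\ell)^\times / (\cO'\otimes_\Z\Z_\ell)^\times$ as a set-theoretic quotient. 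Counting cardinalities in the exact sequence then gives
\begin{equation*}
|\Pic(\cO')| = \frac{\left| \left( \cO/\mathfrak{c}\right)^\times / \left(\cO'/\mathfrak{c}\right)^\times \right|}{[\cO^\times : {\cO'}^\times]}\,|\Pic(\cO)|,
\end{equation*}
so it remains only to identify the numerator with $\left[(\cO\otimes_\Z\Z_\ell)^\times : (\cO'\otimes_\Z\Z_\ell)^\times\right]$.

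The first step, then, is to record (or cite, e.g.\ from Neukirch or from the literature on orders) the above exact sequence and to spell out the localization argument: since $\cO/\cO'$ is an $\ell$-group, the natural map $\cO \to \cO\otimes_\Z\Z_\ell$ induces $\cO/\mathfrak{c} \xrightarrow{\sim} (\cO\otimes_\Z\Z_\ell)/\mathfrak{c}_\ell$ and likewise for $\cO'$, compatibly with the units. The second step is to verify that the index $[(\cO\otimes_\Z\Z_\ell)^\times : (\cO'\otimes_\Z\Z_\ell)^\times]$ is finite and equals the index of the corresponding unit groups modulo $\mathfrak{c}_\ell$ — this is where one uses that both local orders contain $1 + \mathfrak{c}_\ell$, so that the units congruent to $1$ mod $\mathfrak{c}_\ell$ lie in both and the index is "seen" at the finite level $\cO_\ell/\mathfrak{c}_\ell$. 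Combining the two displays yields the claimed formula.

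The main obstacle is purely bookkeeping: making sure the quotient of local unit groups $(\cO\otimes_\Z\Z_\ell)^\times/(\cO'\otimes_\Z\Z_\ell)^\times$ is genuinely the same as the quotient $\left(\cO/\mathfrak{c}\right)^\times/\left(\cO'/\mathfrak{c}\right)^\times$ appearing in the exact sequence, rather than merely a quotient of it. The point to be careful about is that elements of $(\cO\otimes_\Z\Z_\ell)^\times$ need not be congruent mod $\mathfrak{c}_\ell$ to elements of $\cO^\times$ (global units), but the sequence only involves the image of global units, and one checks that the natural map $(\cO\otimes_\Z\Z_\ell)^\times \to \left(\cO/\mathfrak{c}\right)^\times$ is surjective (lift a unit mod $\mathfrak{c}$ to $\cO_\ell$, adjust by an element of $1+\mathfrak{c}_\ell\subset\cO_\ell^\times$) and has kernel exactly $1+\mathfrak{c}_\ell$, which is contained in both $\cO_\ell^\times$ and $\cO'_\ell{}^\times$. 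This gives the identification of the two quotients and finishes the proof. One should also note in passing that $[\cO^\times : {\cO'}^\times]$ and $[\cO_\ell^\times : \cO'_\ell{}^\times]$ are both finite — the former because $\cO^\times/{\cO'}^\times$ injects into the finite middle term, the latter because $\cO'_\ell$ contains $1+\mathfrak{c}_\ell$ which has finite index in $\cO_\ell^\times$ — so that all quantities in the statement are well-defined.
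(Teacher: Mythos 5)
Your proposal is correct in its overall strategy, but it takes a genuinely different route from the paper. The paper does not set up the units--Picard exact sequence for the pair $\cO' \subset \cO$ at all: it simply quotes the classical formula $|\Pic(\cO)| = \frac{h_K}{[\cO_K^\times : \cO^\times]}\prod_{\ell'}\left[(\cO_K\otimes_\Z\Z_{\ell'})^\times : (\cO\otimes_\Z\Z_{\ell'})^\times\right]$ (Neukirch, Th.~12.12 and Prop.~12.11) for each of $\cO$ and $\cO'$ relative to the maximal order, and divides; all factors at primes $\ell' \neq \ell$ cancel because the two orders agree there, and the global unit indices combine into $[\cO^\times : {\cO'}^\times]$ since $\cO' \subseteq \cO$. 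Your approach instead compares $\cO'$ to $\cO$ directly via the Mayer--Vietoris (Milnor patching) sequence for the conductor square, and then localizes. What the paper's route buys is brevity and a single textbook citation; what yours buys is a more self-contained and more general statement (valid for any pair of nested orders, with no reference to $h_K$ or to $\cO_K$), at the cost of having to justify the exact sequence for a pair of non-maximal orders (it follows from the cartesian square $\cO' = \cO\times_{\cO/\mathfrak{c}}\cO'/\mathfrak{c}$ together with the vanishing of Picard groups of the Artinian quotients, or by applying the maximal-order sequence twice) and of carrying out the local--global bookkeeping yourself, which in the paper is absorbed into the cited product formula.

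One detail in your bookkeeping is wrong as stated, though easily repaired: it is not true in general that $1+\mathfrak{c}_\ell \subset \cO_\ell^\times$, because the conductor $\mathfrak{c}_\ell$ of $\cO'_\ell$ in $\cO_\ell$ need not lie in every maximal ideal of the semilocal ring $\cO_\ell$. This genuinely occurs in the situation of the lemma, e.g.\ when $\ell$ splits in $K_0$ and $\mathfrak{l}$ does not divide $\mathfrak{f}$: then $\cO_\ell$ and $\cO'_\ell$ differ only in the component above $\mathfrak{l}$, $\mathfrak{c}_\ell$ is the unit ideal in the other component, and $1+\mathfrak{c}_\ell$ contains non-units. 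Consequently, your description of the kernel as ``exactly $1+\mathfrak{c}_\ell$'' and the surjectivity argument ``adjust by an element of $1+\mathfrak{c}_\ell\subset\cO_\ell^\times$'' do not work literally. The correct statements are: the kernel of $\cO_\ell^\times\to(\cO_\ell/\mathfrak{c}_\ell)^\times$ is $\cO_\ell^\times\cap(1+\mathfrak{c}_\ell)$, which is automatically contained in ${\cO'_\ell}^\times$ (if $u=1+c$ is a unit of $\cO_\ell$ with $c\in\mathfrak{c}_\ell$, then $u^{-1}=1-cu^{-1}\in 1+\mathfrak{c}_\ell\subseteq\cO'_\ell$), and the same set is the kernel for ${\cO'_\ell}^\times$; surjectivity of $\cO_\ell^\times\to(\cO_\ell/\mathfrak{c}_\ell)^\times$ holds because $\cO_\ell$ is semilocal (given $a$ invertible mod $\mathfrak{c}_\ell$, add an element of $\mathfrak{c}_\ell$ chosen by prime avoidance to miss all maximal ideals). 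With these corrections your identification of $\left(\cO/\mathfrak{c}\right)^\times/\left(\cO'/\mathfrak{c}\right)^\times$ with $(\cO\otimes_\Z\Z_\ell)^\times/(\cO'\otimes_\Z\Z_\ell)^\times$, and hence the whole proof, goes through.
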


\begin{proof}
First, for any order $\cO$ in $K$ of conductor $\mathfrak f$ we have the classical formula (see~\cite[Th.12.12 and Prop.12.11]{Neukirch99})
\begin{align*}
|\Pic(\cO)| &= \frac{h_K}{[\cO_K^\times : \cO^\times]}\frac{|(\cO_K/\mathfrak f)^\times|}{|(\cO/\mathfrak f)^\times|}\\
& = \frac{h_K}{[\cO_K^\times : \cO^\times]} \prod_{\ell' \text{ prime}} [(\cO_{K} \otimes_\Z \Z_{\ell'})^\times : (\cO \otimes_\Z \Z_{\ell'})^\times].
\end{align*}
Now, consider $\cO$ and $\cO'$ as in the statement of the lemma. We obtain
\begin{align*}
\frac{|\Pic(\cO')|}{|\Pic(\cO)|} & = \frac{[\cO_K^\times : \cO^\times]}{[\cO_K^\times : {\cO'}^\times]} [(\cO \otimes_\Z \Z_{\ell})^\times : (\cO' \otimes_\Z \Z_{\ell})^\times] \\
& = \frac{\left[(\cO \otimes_\Z \Z_{\ell})^\times : (\cO' \otimes_\Z \Z_\ell)^\times\right]}{[\cO^\times : {\cO'}^\times]}.
\end{align*}
\end{proof}

\begin{remark}
If one supposes that $\cO_K^\times = \cO_{K_0}^\times$, then $[\cO^\times : {\cO'}^\times]$ is always $1$ in the above lemma. Indeed, one has  $\cO^\times \subset  \cO_{K_0}^\times \subset \lO_{0}^\times \subset (\cO' \otimes_\Z \Z_\ell)^\times,$ and
therefore, since $\cO$ and $\cO'$ coincide at every other prime, we obtain
$\cO^\times \subset  {\cO'}^\times,$ hence $\cO^\times =  {\cO'}^\times$.
\end{remark}
\begin{remark}\label{rem:Streng}
For $g = 2$, the field $K$ is a primitive quartic CM-field. Then, the condition $\cO_K^\times = \cO_{K_0}^\times$ is simply equivalent to $K \neq \Q(\zeta_5)$ by \cite[Lem.3.3]{Streng10}. So in dimension 2, if $K \neq \Q(\zeta_5)$, one always has $[\cO^\times : {\cO'}^\times] = 1$ in the above lemma.
\end{remark}

\subsection*{Proof of Theorem~\ref{thm:lisogenyvolcanoes}}
Let $\mathscr V$ be any of connected component of $\mathscr W_\mathfrak l$.
First, it follows from Proposition~\ref{prop:EllDoesNotChangeP} that locally at any prime other than $\ell$, the endomorphism rings occurring in $\mathscr V$ all coincide. Also, locally at $\ell$, Proposition~\ref{prop:frakLStructure} implies that an $\mathfrak l$-isogeny can only change the valuation at $\mathfrak l$ of the conductor. Therefore within $\mathscr V$, the endomorphism ring of a variety $\mathscr A$ is uniquely determined by its level $v_{\mathfrak l}(\mathscr A)$.  Let $\cO_i$ be the endomorphism of any (and therefore every) variety $\mathscr A$ in $\mathscr V$ at level $v_{\mathfrak l}(\mathscr A) = i$. Write $\mathscr V_i$ for the corresponding subset of $\mathscr V$.
Proposition~\ref{prop:frakLStructure} implies that, except at the surface, all the edges connect consecutive levels of the graph, and each vertex at level $i$ has exactly one edge to the level $i-1$.

The structure of the connected components of the level $\mathscr V_0$ is already a consequence of the well-known free CM-action of $\Pic(\cO_0)$ on ordinary abelian varieties with endomorphism ring $\cO_0$.
Note that if $\varphi: \mathscr A \rightarrow \mathscr B$ is a descending $\mathfrak l$-isogeny within $\mathscr V$, then the unique ascending $\mathfrak l$-isogeny from $\mathscr B$ is $\mathrm{up}_{\mathscr B}^\mathfrak l : \mathscr B \rightarrow \pr_\mathfrak l(\mathscr B)$, and we
have $\pr_\mathfrak l(\mathscr B) \cong \mathscr A/\mathscr A[\mathfrak l]$; also, we have
$\pr_\mathfrak l(\mathscr B/\mathscr B[\mathfrak l]) \cong \pr_\mathfrak l(\mathscr B)/\pr_\mathfrak l(\mathscr B)[\mathfrak l]$.
These facts easily follow from the lattice point of view (see Proposition~\ref{prop:genericNeighbors}, and observe that if
$\Gamma \in \mathscr L_\mathfrak l (\Lambda)$, then $\mathfrak l\Gamma \in \mathscr L_\mathfrak l (\mathfrak l\Lambda)$). 
We can deduce in particular that $\mathscr V_0$ is connected: a path from $\mathscr A \in \mathscr V_0$ to another vertex of $\mathscr V_0$ containing only vertical isogenies can only end at a vertex $\mathscr A/\mathscr A[\mathfrak l^i]$, which can also be reached within $\mathscr V_0$.

We now need to look at a bigger graph. For each $i \geq 0$, let $\mathscr U_{i}$ be the orbit of the level $\mathscr V_i$ for the CM-action of $\Pic(\cO_i)$. The action is transitive on $\mathscr U_0$ since the connected graph $\mathscr V_0$ is in a single orbit of the action of $\Pic(\cO_0)$. Let us show by
induction that each $\mathscr U_{i+1}$ consists of a single orbit, and that each vertex of $\mathscr U_{i+1}$ is reachable by an edge from $\mathscr U_{i}$. First, $\mathscr U_{i+1}$ is non-empty because,
by induction, $\mathscr U_{i}$ is non-empty, and each vertex in $\mathscr U_{i}$ has neighbors in $\mathscr U_{i+1}$. Choose
any isogeny $\varphi : {\mathscr A}'\rightarrow \mathscr A$ from $\mathscr U_{i}$ to $\mathscr U_{i+1}$. For any vertex $\mathscr B$ in the
orbit of $\mathscr A$, there is an isogeny $\psi : \mathscr A \rightarrow \mathscr B$ of degree coprime to $\ell$. The isogeny $\psi\circ\varphi$ factors
through a variety ${\mathscr B}'$ via an isogeny $ \psi' : {\mathscr A}' \rightarrow {\mathscr B}'$ of same degree as $\psi$, and an
isogeny $\nu : {\mathscr B}' \rightarrow \mathscr B$ of kernel $\psi'(\ker \varphi)$. In particular, $\nu$ is an $\mathfrak l$-isogeny, and $\mathscr B'$ is in the orbit of $\mathscr A'$ for the CM-action, so it is in $\mathscr U_{i}$. This proves that any vertex in the orbit of $\mathscr A$ is reachable by an isogeny down from $\mathscr U_{i}$.

Let $\mathscr E_i$ be the set of all edges (counted with multiplicities) from $\mathscr U_i$ to $\mathscr U_{i+1}$. From Proposition~\ref{prop:frakLStructure}, we have
\begin{equation}\label{eq:Ulysse}
|\mathscr E_i| = \left[(\cO_{i} \otimes_{\Z}\Z_\ell)^\times : ({\cO_{i+1} \otimes_{\Z}\Z_\ell)}^\times\right]\cdot|\mathscr U_i|.
\end{equation}
For any $\mathscr B \in \mathscr U_{i+1}$, let $d(\mathscr B)$ be the number of edges in $\mathscr E_i$ targeting $\mathscr B$ (with multiplicities). We have seen that any $\mathscr B$ is reachable from $\mathscr U_{i}$, therefore $d(\mathscr B) \geq 1$, and we deduce from Lemma~\ref{lemma:distinctFrakLKernels} that $d(\mathscr B) \geq \left[\cO_{i}^\times : \cO_{i+1}^\times\right]$. We deduce
\[|\mathscr E_i| = \sum_{\mathscr B \in \mathscr U_{i+1}} d(\mathscr B) \geq \left[\cO_{i}^\times : \cO_{i+1}^\times\right]\cdot|\mathscr U_{i+1}|.\]
Together with Equation~\eqref{eq:Ulysse}, we obtain the inequality
\begin{equation}\label{eq:Penelope}
|\mathscr U_{i+1}| \leq  \frac{\left[(\cO_{i} \otimes_{\Z}\Z_\ell)^\times : ({\cO_{i+1} \otimes_{\Z}\Z_\ell)}^\times\right]}{\left[\cO_{i}^\times : \cO_{i+1}^\times\right]}\cdot|\mathscr U_i|.
\end{equation}
Since the CM-action of the Picard group of $\cO_i$ is free, we obtain from Lemma~\ref{lemma:classNumberRelation} that the right-hand side of Equation~\eqref{eq:Penelope} is exactly the size of the orbit of any vertex in $\mathscr U_{i+1}$. So $\mathscr U_{i+1}$ contains at most one orbit, and thereby contains exactly one, turning Equation~\eqref{eq:Penelope} into an actual equality.
In particular, all the edges in $\mathscr E_i$ must have multiplicity precisely $[\cO_{i}^\times : \cO_{i+1}^\times]$.
This conclude the recursion.

Note that with all these properties, the graph is a volcano if and only if it is undirected, and all the vertical multiplicities are $1$. The latter is true if and only if $[\cO_{i}^\times : \cO_{i+1}^\times] = 1$ for any $i$, i.e., if $\cO_0^\times \subset K_0$. For the following, suppose it is the case; it remains to decide when the graph is undirected.
If $\mathfrak l$ is principal in $\cO_0 \cap K_0$, the surface $\mathscr V_0$ is undirected because the primes above $\mathfrak l$ in $\cO_0$ are inverses of each other.
If $\varphi: \mathscr A \rightarrow \mathscr B$ is a descending $\mathfrak l$-isogeny within $\mathscr V$, then the unique ascending $\mathfrak l$-isogeny from $\mathscr B$ points to $\mathscr A/\mathscr A[\mathfrak l]$, which is isomorphic to $\mathscr A$ if and only if $\mathfrak l$ is principal in $\cO(\mathscr A)$. So for each descending edge $\mathscr A \rightarrow \mathscr B$ there is an ascending edge $\mathscr B \rightarrow \mathscr A$, and since we have proven above that each vertical edge has multiplicity 1, we conclude that the graph is undirected (so is a volcano) if and only if $\mathfrak l$ is principal in $\cO_0 \cap K_0$ (if $\mathfrak l$ is not principal in $\cO_0 \cap K_0$, there is a level $i$ where $\mathfrak l$ is not principal in $\cO_i$).

For Point~\ref{ascendingImpliesDescending},  choose a descending edge $\mathscr A \rightarrow \mathscr B$. We get that $\mathscr C \cong \mathscr A/\mathscr A[\mathfrak l]$. It is then easy to see that the isogeny $\mathscr A \rightarrow \mathscr B$ induces an isogeny $\mathscr C \rightarrow \mathscr B/\mathscr B[\mathfrak l]$.
\qed\\

Theorem~\ref{thm:lisogenyvolcanoes} gives a complete description of the graph: it allows one to construct an abstract model of any connected component corresponding to an order $\cO_0$ from the knowledge of the norm of $\mathfrak l$, of the (labeled) Cayley graph of the subgroup of $\Pic(\cO_0)$ with generators the prime ideals in $\cO_0$ above $\mathfrak l$, of the order of $\mathfrak l$ in each Picard group $\Pic(\cO_i)$, and of the indices $[\cO_{i}^\times : \cO_{i+1}^\times]$.

\begin{example}
For instance, suppose that $\ell = 2$ ramifies in $K_0$ as $\mathfrak l^2$, and $\mathfrak l$ is principal in $\cO_K$, but is of order 2 in both $\Pic(\cO_{K_0} + \mathfrak l\cO_K)$ and $\Pic(\cO_{K_0} + \mathfrak l^2\cO_K)$, and that $\cO_K^\times \subset K_0$. Then, the first four levels of any connected component of the $\mathfrak l$-isogeny graph for which the largest order is $\cO_K$ are isomorphic to the graph of Figure~\ref{fig:exampleOfNonVolcano}. It is not a volcano since $\mathfrak l$ is not principal in every order $\cO_{K_0} + \mathfrak l^i\cO_K$.
\end{example}

\begin{figure}
\includegraphics{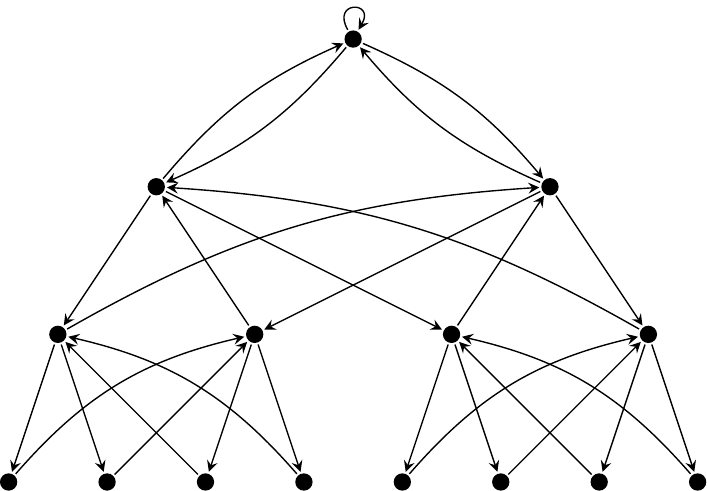}
\caption{\label{fig:exampleOfNonVolcano} An example of an $\mathfrak l$-isogeny graph which is not a volcano, because the ideal $\mathfrak l$ is not principal.}
\end{figure}

\begin{example}
When $K$ is a primitive quartic CM-field, we have seen in Remark~\ref{rem:Streng} that
the multiplicities $[\cO_{i}^\times : \cO_{i+1}^\times]$ are always one, except maybe if
$K = \Q(\zeta_5)$. Actually, even for $K = \Q(\zeta_5)$, only the maximal order $\cO_K$ has
units that are not in $K_0$. We give in Figure~\ref{fig:exampleZeta5} examples of
$\mathfrak l$-isogeny graphs when the order at the surface is $\cO_K = \Z[\zeta_5]$ (which is a principal ideal domain). The primes $2$ and
$3$ are inert in $K$, so we consider $\mathfrak l = 2\cO_{K_0}$ and $\mathfrak l = 3\cO_{K_0}$, and the prime number
$5$ is ramified in $K_0$ so $\mathfrak l^2 = 5\cO_{K_0}$ (and $\mathfrak l$ is also ramified
in $K$, explaining the self-loop at the surface of the last graph).
\end{example}

\begin{figure}
\includegraphics{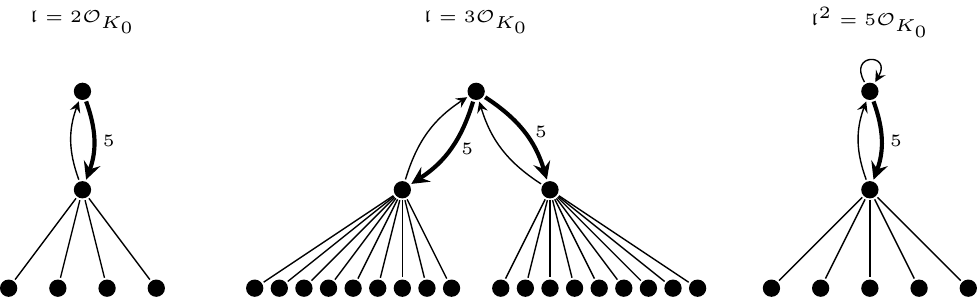}
\caption{\label{fig:exampleZeta5}Some $\mathfrak l$-isogeny graphs for $K = \Q(\zeta_5)$, when the endomorphism ring at the surface is the maximal order $\Z[\zeta_5]$. All edges are simple except the thick ones, of multiplicity 5. The undirected edges are actually directed in both directions.}
\end{figure}

\begin{notation}\label{notation:lVolcano}
Let $\cO$ be any order in $K$ with locally maximal real multiplication at $\ell$, whose conductor is not divisible by $\mathfrak l$. 
We denote by $\mathscr V_\mathfrak l(\cO)$ the connected graph $\mathscr V$ described in Theorem~\ref{thm:lisogenyvolcanoes}.
If $\mathfrak l$ does divide the conductor of $\cO$, let $\cO'$ be the smallest order containing $\cO$, whose order is not divisible by $\mathfrak l$. Then, we also write $\mathscr V_\mathfrak l(\cO)$ for the graph $\mathscr V_\mathfrak l(\cO')$.
\end{notation}

\section{Graphs of $\mathfrak{l}$-isogenies with polarization} \label{sec:Polarizations}
When $\mathfrak l$ is trivial in the narrow class group of $K_0$, then $\mathfrak l$-isogenies preserve principal polarizability. The graphs of $\mathfrak l$-isogenies studied in Section~\ref{subsec:frakLIsogenyGraphs} do not account for polarizations. The present section fills this gap, by describing polarized graphs of $\beta$-isogenies, where $\beta \in K_0$ is a totally positive generator of $\mathfrak l$.
The main result of this section is Theorem \ref{thm:polarizedbetaisogenyvolcanoes} according to which the connected components of polarized isogeny graphs are either isomorphic to the corresponding components of the non-polarized isogeny graphs, or non-trivial double-covers thereof. Yet, this description is not quite exact due to problems arising when the various abelian varieties occurring in a connected component have different automorphism groups.

\subsection{Graphs with polarization}
Before defining the graph, we record the following proposition, which implies that one vertex of a fixed connected component of $(\mathscr W_\beta, v_\beta)$ is principally polarizable if and only if all of them are. Note that since $\beta$ is a generator of $\mathfrak l$, we will write $\beta$-isogeny to mean $\mathfrak l$-isogeny.

\begin{proposition}\label{prop:UniquePolarization}
If $\varphi: \mathscr A \to \mathscr B$ is a $\beta$-isogeny, then there is a unique principal polarization $\xi_{\mathscr B}$ on $\mathscr B$ satisfying
$$
\varphi^* \xi_{\mathscr B} = \xi_{\mathscr A}^\beta.
$$
\end{proposition}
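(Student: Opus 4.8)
The statement is essentially the genus-$g$ analogue of the uniqueness result quoted earlier for $(\ell,\ell)$-isogenies of surfaces, and the strategy will be the same: translate the problem into the language of lattices in the symplectic space $V = T_\ell\mathscr A \otimes \Q_\ell$ via Proposition~\ref{prop:correspondence}, and exploit that a $\beta$-isogeny has kernel a proper $\cO_0(\mathscr A)$-stable subgroup of $\mathscr A[\mathfrak l]$. Write $\Lambda = T_\ell\mathscr A$, and let $\Gamma$ be the lattice with $\Lambda \subset \Gamma \subset \mathfrak l^{-1}\Lambda$ corresponding to $\ker\varphi$ (Remark~\ref{rem:correspFrakLAndFrakL}), so that $T_\ell\mathscr B$ is identified with $\Gamma$ and the dual map $\varphi^\vee$ corresponds to the inclusion of dual lattices. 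First I would recall that a principal polarization $\xi_{\mathscr A}$ makes $\Lambda$ self-dual for the Weil pairing $\langle-,-\rangle$ (the lemma on symplectic structures, part (iii), since the degree of the polarization isogeny is coprime to $\ell$), and that a principal polarization on $\mathscr B$ is the same as a choice of symplectic form on $V$ making $\Gamma$ self-dual, compatible at all other primes — but by Proposition~\ref{prop:EllDoesNotChangeP} the away-from-$\ell$ data is rigid, so the whole question is local at $\ell$.

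\textbf{Key steps.} (1) Existence: define $\xi_{\mathscr B}$ by declaring that on $V$ the relevant form is $\beta^{-1}\langle-,-\rangle$ (equivalently, rescale the polarization isogeny $\lambda_{\mathscr A}^\beta = \lambda_{\mathscr A}\circ \beta$ which kills $\mathscr A[\mathfrak l]$ since $\beta$ generates $\mathfrak l$ and $\mathfrak l$ is coprime to the conductor), and check that $\Gamma$ is self-dual for it. This is where the hypothesis that $\beta$ is a \emph{totally positive} generator of $\mathfrak l$ enters: total positivity guarantees that the rescaled form is still a polarization (i.e.\ gives a genuine ample line bundle, not merely a nondegenerate pairing), by the standard positivity criterion for the Rosati involution combined with the identity $\langle\alpha x,y\rangle = \langle x,\alpha^\dagger y\rangle$ and $\beta^\dagger = \beta$. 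The computation $\Gamma^* = \Gamma$ reduces to: $\mathfrak l\Lambda \subset \Gamma$ and $\Gamma/\mathfrak l\Lambda$ is a maximal isotropic $\cO_0/\mathfrak l$-line in $\Lambda/\mathfrak l\Lambda$ — but for $\mathfrak l$-isogenies the "isotropic" condition is automatic once one uses the $\cO_0$-stability and $\beta^\dagger=\beta$, exactly as in the surface case. Concretely, $\varphi^*\xi_{\mathscr B} = \xi_{\mathscr A}^\beta$ becomes the lattice identity $\lambda_{\mathscr B\,*}^{-1}(T_\ell\mathscr B^\vee) = \Gamma$ pulled back to $\lambda_{\mathscr A\,*}^{-1}\beta^{-1}$, which is a direct check. (2) Uniqueness: two principal polarizations $\xi, \xi'$ on $\mathscr B$ with $\varphi^*\xi = \varphi^*\xi' = \xi_{\mathscr A}^\beta$ differ by an element $u$ of $\Aut(\mathscr B) \cap (\text{totally positive units})$, via $\lambda_{\xi'} = \lambda_\xi\circ u$; pulling back, $\varphi^* $ of this relation forces $u$ to act trivially on $\varphi^*\xi_{\mathscr A}^\beta = \xi_{\mathscr A}^\beta$, hence $u$ lifts to an automorphism of $\mathscr A$ fixing $\xi_{\mathscr A}$ and compatible with $\ker\varphi$; since $\varphi$ is an isogeny (surjective on Tate modules after $\otimes\Q_\ell$), $u$ must be the identity. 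Alternatively — and more cleanly — uniqueness is immediate from the fact that a principal polarization is determined by the induced self-duality of $T_\ell\mathscr B = \Gamma$, together with the away-from-$\ell$ rigidity and the observation that the symplectic form on $V$ is determined up to scalar (rank one over $K_\ell$, part (ii) of the lemma), the scalar being pinned down by $\varphi^*\xi_{\mathscr B} = \xi_{\mathscr A}^\beta$.

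\textbf{Main obstacle.} The routine part is the lattice bookkeeping; the subtle point — and the one I would spend the most care on — is verifying that the rescaled pairing $\beta^{-1}\langle-,-\rangle$ actually comes from an honest polarization (ampleness / the Rosati-positivity condition), rather than just a nondegenerate alternating form, and that the self-duality of $\Gamma$ holds on the nose rather than up to a unit ideal. This is precisely where total positivity of $\beta$ is indispensable and where the argument is not purely formal; everything else follows from Proposition~\ref{prop:correspondence}, the symplectic-structures lemma, and Proposition~\ref{prop:EllDoesNotChangeP}. One should also remark that the cited Grothendieck-descent statement (\cite{mumford:eq1}, \cite{drobert:thesis}) handles exactly this for maximal isotropic kernels in the $(\ell,\ell)$ setting, and a $\beta$-isogeny factors as such a situation after composing with $\lambda_{\mathscr A}$, so in fact one may simply invoke that result once the local picture is matched up.
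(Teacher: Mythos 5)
Your proposal is correct in substance, and on the existence half it ultimately lands on the same argument as the paper: the paper's proof simply observes that $\ker\varphi \subset \ker(\varphi_{\xi_{\mathscr A}^\beta})$ is a maximal isotropic subgroup for the commutator pairing and invokes Grothendieck descent (\cite[Lem.2.4.7]{drobert:thesis}, extended to ordinary characteristic $p$ via canonical lifts), which is exactly the fallback you allow yourself in your final paragraph. The extra $\ell$-adic lattice scaffolding is not needed for this, and your declared ``main obstacle'' (ampleness of the rescaled form) dissolves in the descent formulation: ampleness is detected after pullback by a finite surjective morphism, so the descended line bundle is automatically a polarization, and principality follows from the degree count $\deg(\lambda_{\xi_{\mathscr A}}\circ\beta)/|\ker\varphi|^2=1$; total positivity of $\beta$ is only needed to know that $\xi_{\mathscr A}^\beta$ is a polarization in the first place. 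One genuinely useful addition in your write-up is the justification that the kernel is isotropic (via $\cO_0(\mathscr A)$-stability, $K_0$-bilinearity of the pairing and $\beta^\dagger=\beta$), a point the paper asserts without comment. On uniqueness you take a different route from the paper: the paper notes that $\varphi^*\colon \NS(\mathscr B)\to\NS(\mathscr A)$ is a homomorphism of free abelian groups of the same rank that becomes an isomorphism after tensoring with $\Q$, hence is injective, so equality of pullbacks forces $\xi_{\mathscr B}=\xi'_{\mathscr B}$; your argument --- writing $\lambda_{\xi'}=\lambda_{\xi}\circ u$ with $u$ a totally positive symmetric automorphism and then cancelling $\varphi$ and $\varphi^\vee$ (invertible in the isogeny category) to force $u=1$ --- is valid but slightly longer, and the intermediate step about $u$ ``lifting to an automorphism of $\mathscr A$'' is unnecessary, since cancellation already yields $\lambda_{\xi}=\lambda_{\xi'}$ directly.
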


\begin{proof}
Writing $\varphi_{\xi_{\mathscr A}}$ the polarization isogeny, then  $\ker(\varphi) \subset \ker (\varphi_{\xi_{\mathscr A}^\beta})$ is a maximal isotropic subgroup for the commutator pairing and hence by Grothendieck descent (see \cite[Lem.2.4.7]{drobert:thesis}); the proof there is in characteristic $0$, but it extends to ordinary abelian varieties in characteristic $p$ via to canonical lifts), it follows that $\xi_{\mathscr A}^{\beta}$ is a pullback of a principal polarization $\xi_{\mathscr B}$ on $\mathscr B$. For uniqueness, note that the homomorphism $\varphi^* \colon \NS(\mathscr B) \to \NS(\mathscr A)$ of free abelian groups of the same rank becomes an isomorphism after tensoring with $\mathbb{Q}$, hence is injective.
\end{proof}

We define the principally polarized, leveled, $\beta$-isogeny graph $(\mathscr W_\beta^\mathrm{pp}, v_\beta)$ as follows. A point is an isomorphism class\footnote{Recall that two polarizations $\xi_\mathscr A$ and $\xi'_\mathscr A$ on $\mathscr A$ are isomorphic if and only if there is a unit $u \in \cO(\mathscr A)^{\times}$ such that $\xi'_\mathscr A = u^*\xi_\mathscr A$.} of pair $(\mathscr A, \xi_{\mathscr A})$, where $\mathscr A$ is a principally polarizable abelian variety occuring in $(\mathscr W_\beta, v_\beta)$, and $\xi_{\mathscr A}$ is a principal polarization on $\mathscr A$.
There is an edge of multiplicity $m$ from the isomorphism class of $(\mathscr A, \xi_{\mathscr A})$ to the isomorphism class of $(\mathscr B, \xi_{\mathscr B})$ if there are $m$ distinct subgroups of $\mathscr A$ that are kernels of $\beta$-isogenies $\varphi : \mathscr A \rightarrow \mathscr B$ such that $\varphi^*\xi'_{\mathscr B}$ is isomorphic to $\xi_{\mathscr A}^\beta$, for some polarization $\xi'_{\mathscr B}$ isomorphic to $\xi_{\mathscr B}$.
The graph $\mathscr W_\beta^\mathrm{pp}$ admits a forgetful map to $\mathscr W_\beta$, and in particular inherits the structure of a leveled graph $(\mathscr W_\beta^\mathrm{pp}, v_\beta)$.

\begin{remark}
It can be the case that there is no $\beta$-isogeny $\varphi : \mathscr A \to \mathscr B$ such that $\varphi^* \xi_{\mathscr{B}} \cong \xi_{\mathscr A}^\beta$, but that there is nonetheless an edge (because there is a map with this property for some other polarization $\xi'_{\mathscr B}$, isomorphic to $\xi_{\mathscr B}$). This can happen because pullbacks of isomorphic polarizations are not necessarily isomorphic, when $\mathscr A$ and $\mathscr B$ have different automorphism groups.
\end{remark}

We note that this graph is undirected:
\begin{proposition}\label{prop:BetaDual}
If $\varphi: \mathscr A \to \mathscr B$ is a $\beta$-isogeny, then there is a unique $\beta$-isogeny $\tilde\varphi: \mathscr B \to \mathscr A$ satisfying $\tilde\varphi \varphi= \beta$, called the $\beta$-dual of $\varphi$.
\end{proposition}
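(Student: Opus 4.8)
The plan is to produce $\tilde\varphi$ by descending the isogeny $[\beta] \colon \mathscr A \to \mathscr A$ through $\varphi$, and then to check uniqueness via the torsion-freeness of the relevant Hom-groups. First I would observe that $\beta$ is a totally positive generator of the prime $\mathfrak l$ of $K_0$ coprime to the conductor of $\cO_0(\mathscr A)$, and that $\varphi$, being a $\beta$-isogeny, has kernel $\kappa = \ker(\varphi)$ equal to a proper $\cO_0(\mathscr A)$-stable subgroup of $\mathscr A[\mathfrak l] = \mathscr A[\beta]$. Since $\beta \in K_0 \subset \cO(\mathscr A)$ acts on $\mathscr A$ by the endomorphism $[\beta]$, we have $\kappa \subset \mathscr A[\beta] = \ker([\beta])$, so $[\beta]$ factors uniquely as $[\beta] = \tilde\varphi \circ \varphi$ for a homomorphism $\tilde\varphi \colon \mathscr B \to \mathscr A$. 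As $[\beta]$ is an isogeny, so is $\tilde\varphi$, and $\deg\tilde\varphi = \deg[\beta]/\deg\varphi = N(\beta \cO(\mathscr A)) / N\mathfrak l$. Using $\beta\cO(\mathscr A) = \mathfrak l \cdot (\text{its } \mathfrak l^\dagger\text{-part, etc.})$ — more precisely $N_{K/\Q}(\beta) = N_{K_0/\Q}(\beta)^2 = N\mathfrak l^2$ — we get $\deg\tilde\varphi = N\mathfrak l$, so $\tilde\varphi$ has the right degree to be a $\beta$-isogeny.

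Next I would verify that $\tilde\varphi$ is genuinely a $\beta$-isogeny, i.e.\ that $\ker\tilde\varphi$ is a proper $\cO_0(\mathscr B)$-stable subgroup of $\mathscr B[\mathfrak l]$. One has $\ker\tilde\varphi = \varphi(\mathscr A[\beta]) = \varphi(\mathscr A[\mathfrak l])$, since $x \in \mathscr B$ lies in $\ker\tilde\varphi$ iff $\tilde\varphi(x) = 0$; writing $x = \varphi(a)$ gives $[\beta]a = \tilde\varphi\varphi(a) = 0$, so $a \in \mathscr A[\beta]$. As $\varphi$ is $\cO_0$-equivariant for the common real order (which is locally maximal at $\ell$ along any connected component, and in any case $\mathfrak l$ is coprime to the conductor of $\cO_0$), the image $\varphi(\mathscr A[\mathfrak l])$ is $\cO_0(\mathscr B)$-stable and is killed by $\mathfrak l$, hence sits inside $\mathscr B[\mathfrak l]$; it is proper because $\#\ker\tilde\varphi = N\mathfrak l < N\mathfrak l^2 = \#\mathscr B[\mathfrak l]$ (using that $\mathscr A[\mathfrak l]$ has order $N\mathfrak l^2$ and $\ker\varphi \subset \mathscr A[\mathfrak l]$ has order $N\mathfrak l$). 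Thus $\tilde\varphi$ is a $\beta$-isogeny.

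Finally, for uniqueness, suppose $\tilde\varphi'$ is another $\beta$-isogeny $\mathscr B \to \mathscr A$ with $\tilde\varphi'\varphi = [\beta] = \tilde\varphi\varphi$. Then $(\tilde\varphi' - \tilde\varphi)\circ\varphi = 0$ in $\Hom(\mathscr A, \mathscr A)$; since $\varphi$ is an isogeny, it is an epimorphism in the isogeny category, so $\tilde\varphi' - \tilde\varphi = 0$ in $\Hom(\mathscr B,\mathscr A)\otimes_\Z\Q$, and as $\Hom(\mathscr B,\mathscr A)$ is torsion-free this forces $\tilde\varphi' = \tilde\varphi$. I do not expect any serious obstacle here; the only mild subtlety — and the step I would be most careful about — is the bookkeeping of degrees and the $\cO_0$-stability of $\ker\tilde\varphi$, to make sure $\tilde\varphi$ lands in the precise class of $\beta$-isogenies of Definition~\ref{def:frakLIso} rather than merely being some isogeny of degree $N\mathfrak l$; this is where the hypothesis that $\mathfrak l$ is coprime to the conductor of $\cO_0$ and the identity $\mathscr A[\beta] = \mathscr A[\mathfrak l]$ are used.
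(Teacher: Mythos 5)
Your proof is correct and follows essentially the same route as the paper: both descend multiplication by $\beta$ through $\varphi$ and identify the $\beta$-dual as the isogeny with kernel $\varphi(\mathscr A[\beta])$. The only cosmetic difference is that the paper builds $\tilde\varphi$ by choosing an explicit complement $\kappa'$ of $\ker\varphi$ in the two-dimensional $\cO_0(\mathscr A)/(\beta)$-vector space $\mathscr A[\beta]$, pushing it forward and correcting by an isomorphism, whereas you invoke the universal property of the quotient isogeny and then verify directly that the cofactor has degree $N\mathfrak l$ and a proper $\cO_0$-stable kernel inside $\mathscr B[\mathfrak l]$.
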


\begin{proof}
Let $\kappa$ be the kernel of $\varphi$. 
The group $\mathscr A[\beta]$ is an $\cO_0(\mathscr A)/(\beta)$-vector space of dimension 2, of which the kernel $\kappa$ is a vector subspace of dimension 1. Therefore there is another vector subspace $\kappa'$ such that $\mathscr A[\beta] = \kappa \oplus \kappa'$, and
$\varphi(\kappa')$ is the kernel of a $\beta$-isogeny
$\psi : \mathscr B \rightarrow \mathscr C$.
Then, the kernel of the composition $\psi \circ \varphi$ is $\mathscr A[\beta]$ so there is an isomorphism $u : \mathscr C \ra \mathscr A$ such that $u\circ \psi \circ \varphi = \beta$.
The isogeny $u\circ \psi$ is the $\beta$-dual of $\varphi$ (which is trivially unique).
\end{proof}

\subsection{Counting polarizations}
To describe $(\mathscr W_\beta^\mathrm{pp}, v_\beta)$, we need to count principal polarizations on any fixed variety. If $\mathcal{O}$ is an order in $K$, write $\mathcal{O}^{+\times}$ for the group of totally positive units in $\mathcal{O} \cap K_0$.
\begin{proposition}\label{allPolarizations}
Let $\mathscr A$ be a simple ordinary abelian variety over $\mathbb{F}_q$ with endomorphism ring $\mathcal{O}$. Then the set of isomorphism classes of principal polarizations (when non-empty) on $\mathscr A$ is a torsor for the group
$$
U(\mathcal{O}) := \frac{\mathcal{O}^{+\times}}{\mathbf{N}: \mathcal{O}^\times \to (\mathcal{O} \cap K_0)^\times}.
$$
\end{proposition}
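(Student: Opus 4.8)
The plan is to fix a base point and linearize the problem. Since the set of principal polarizations on $\mathscr A$ is assumed non-empty, fix one of them, $\xi_0$, with polarization isogeny $\lambda_0 \colon \mathscr A \to \mathscr A^\vee$; as $\xi_0$ is principal, $\lambda_0$ is an isomorphism. To any polarization isogeny $\lambda \colon \mathscr A \to \mathscr A^\vee$ I attach the element $a_\lambda := \lambda_0^{-1}\circ \lambda$, which, $\lambda_0^{-1}$ being an honest isomorphism, is an isogeny $\mathscr A \to \mathscr A$, hence lies in $\End(\mathscr A) = \mathcal{O} \subset \End(\mathscr A)\otimes_\Z\Q \cong K$; the assignment $\lambda \mapsto a_\lambda$ is injective, and the first step is to identify its image. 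Recall that the Rosati involution attached to $\lambda_0$ induces complex conjugation $\dagger$ on $K$ --- this is the content of the Weil-pairing identity $\langle \alpha x, y\rangle = \langle x, \alpha^\dagger y\rangle$ recorded in Section~\ref{sec:correspondence}, and is standard for simple ordinary abelian varieties over finite fields. A polarization isogeny is symmetric, i.e. equals its own dual under the canonical identification $\mathscr A \cong \mathscr A^{\vee\vee}$, which translates into $a_\lambda^\dagger = a_\lambda$, i.e. $a_\lambda \in K_0$; conversely $\lambda_0 \circ a$ is symmetric for any $a \in \mathcal{O} \cap K_0$. Furthermore $\lambda_0 \circ a$ is an \emph{isomorphism} --- so the associated polarization is principal --- exactly when $a \in \mathcal{O}^\times$, and it is the isogeny of an ample line bundle --- a genuine polarization --- exactly when the symmetric element $a$ is totally positive, by the Rosati-positivity criterion characterizing polarizations among symmetric homomorphisms (see \cite{MilneAV}). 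Combining these facts, $\lambda \mapsto a_\lambda$ is a bijection from the set of principal polarizations onto $\mathcal{O}^{+\times}$, the group of totally positive units of $\mathcal{O} \cap K_0$; in particular the principal polarizations form a torsor under $\mathcal{O}^{+\times}$, acting through $a \colon \lambda \mapsto \lambda \circ a$ (freeness and transitivity are immediate, since $a_{\lambda\circ a} = a_\lambda \cdot a$ and $K$ is a field).

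It remains to pass to isomorphism classes. Two principal polarizations $\xi, \xi'$ on $\mathscr A$ are isomorphic if and only if $\xi' = u^*\xi$ for some $u \in \Aut(\mathscr A) = \mathcal{O}^\times$, which on polarization isogenies reads $\lambda' = \hat u \circ \lambda \circ u$, where $\hat u$ is the dual automorphism. Inserting $\lambda_0 \circ \lambda_0^{-1} = \mathrm{id}$ and using both $\lambda_0^{-1}\circ \hat u \circ \lambda_0 = u^\dagger$ (again: the Rosati involution is $\dagger$) and the commutativity of $K$, one computes $a_{\lambda'} = u^\dagger\, a_\lambda\, u = (u^\dagger u)\, a_\lambda = \mathbf{N}(u)\, a_\lambda$, where $\mathbf{N} = \mathbf{N}_{K/K_0}$ denotes the norm. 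Thus, under the bijection of the previous paragraph, the isomorphism relation on principal polarizations becomes the relation $a \sim \mathbf{N}(u)\,a$ on $\mathcal{O}^{+\times}$, with $u$ ranging over $\mathcal{O}^\times$. Since for any real place $\sigma$ of $K_0$ and any complex place $\tau$ of $K$ above it one has $\sigma(\mathbf{N}(u)) = |\tau(u)|^2 > 0$, the norm of a unit is a totally positive unit, so $\mathbf{N}(\mathcal{O}^\times) \subseteq \mathcal{O}^{+\times}$ and the quotient $U(\mathcal{O}) = \mathcal{O}^{+\times}/\mathbf{N}(\mathcal{O}^\times)$ makes sense. Hence the set of isomorphism classes of principal polarizations is in bijection with $U(\mathcal{O})$, and the induced action of $U(\mathcal{O})$ --- by $[b]\cdot[\xi] := [\lambda_\xi \circ b]$, well-defined by the identity just displayed --- is free and transitive; a short check that this action is independent of the chosen base point $\xi_0$ then finishes the proof.

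The only genuinely external ingredient --- and therefore the step I expect to demand the most care --- is the positivity criterion: that, among symmetric homomorphisms $\mathscr A \to \mathscr A^\vee$, the polarizations are precisely those whose image in $K$ under $\lambda \mapsto \lambda_0^{-1}\circ\lambda$ is totally positive in $K_0$. Everything else is bookkeeping with the Rosati involution: that integrality ($a \in \mathcal{O}$ rather than merely $a \in K$) corresponds to being a genuine morphism of abelian varieties, that ``principal'' is exactly the unit condition on $a$, and that the $\Aut(\mathscr A)$-action on polarizations is implemented on the level of $K_0$ by multiplication by the norms $\mathbf{N}(u) = u^\dagger u$.
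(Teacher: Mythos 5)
Your argument is correct, but it takes a genuinely different route from the paper. The paper disposes of the proposition in two lines: it cites the complex-analytic classification (Birkenhake--Lange, Cor.\ 5.2.7) and transfers it to ordinary abelian varieties in characteristic $p$ via canonical lifts. You instead argue directly on $\mathscr A$: fixing $\lambda_0$, you identify polarization isogenies with Rosati-symmetric elements $a_\lambda=\lambda_0^{-1}\lambda$, use that the Rosati involution is complex conjugation on $K$ to land in $K_0$, read off principality as the unit condition, ampleness as total positivity, and then compute that the $\cO^\times$-action on polarizations becomes multiplication by $\mathbf N(u)=u^\dagger u$, giving the torsor structure under $\cO^{+\times}/\mathbf N(\cO^\times)$. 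What your approach buys is an explicit, base-point-functorial description of the bijection and of the $U(\cO)$-action (which is in fact how the group is used later, e.g.\ in Lemma~\ref{lemma:uniquePolUp}); what the paper's citation buys is brevity and a clean way to handle the one genuinely nontrivial input. That input is exactly the step you flag: the statement that, among symmetric homomorphisms $\mathscr A\to\mathscr A^\vee$, the polarizations are precisely those with totally positive $a_\lambda$ must be justified in characteristic $p$ over $\F_q$. It is true, but the reference should be to Mumford's positivity-of-the-Rosati-involution section (ampleness detected by positivity of the characteristic roots of $\lambda_0^{-1}\lambda_{\mathscr L}$, valid in any characteristic, together with the fact that over $\overline\F_q$ every symmetric homomorphism arises from a line bundle), or alternatively to the same canonical-lift transfer the paper uses; Milne's notes alone are a thin peg for that precise claim. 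Two small points worth making explicit in a polished write-up: $\cO$ is $\dagger$-stable because $\lambda_0$ is an isomorphism (so $u^\dagger u$ really lies in $(\cO\cap K_0)^\times$), and the well-definedness computation $\lambda_\xi\cdot\mathbf N(u)a=u^*(\lambda_\xi a)$ is what makes the $U(\cO)$-action independent of the chosen base point.
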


\begin{proof}
See \cite[Cor.5.2.7]{BL04} for a proof in characteristic $0$. That the result remains true for ordinary abelian varieties in characteristic $p$ follows from the theory of canonical lifts.
\end{proof}

The following lemma recalls some well-known facts about $U(\mathcal{O})$. 

\begin{lemma}\label{lem:units}
The group $U(\mathcal{O})$ is an $\mathbb{F}_2$-vector space of dimension $d$, where $0 \leq d \leq g-1$. If $\mathcal{O} \subset \mathcal{O'}$ and $\mathcal{O} \cap K_0 = \cO'  \cap K_0$, then the natural map $U(\mathcal{O}) \to U(\mathcal{O'})$ is surjective.
\end{lemma}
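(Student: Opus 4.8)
The plan is to first prove that $U(\mathcal O)$ is killed by $2$, then bound its dimension, and finally establish the surjectivity statement by a direct argument on totally positive units.

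For the $\mathbb F_2$-vector space structure: by Proposition~\ref{allPolarizations} the relevant group is $\mathcal O^{+\times}/\Nm(\mathcal O^\times)$, where $\mathcal O^{+\times}$ denotes the totally positive units of $\mathcal O_0 := \mathcal O\cap K_0$. For any totally positive unit $u\in\mathcal O_0^\times$, we have $u^2 = \Nm(u)$ (since $\dagger$ fixes $K_0$, the norm $\Nm(u) = u\cdot u^\dagger = u^2$ for $u\in K_0$), so $u^2\in\Nm(\mathcal O^\times)$. Hence every element of $U(\mathcal O)$ has order dividing $2$, so $U(\mathcal O)$ is an $\mathbb F_2$-vector space. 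Its finiteness and the dimension bound $d\leq g-1$ come from Dirichlet's unit theorem: $\mathcal O_0^\times$ has free rank $g-1$ (as $K_0$ is totally real of degree $g$), the totally positive units $\mathcal O_0^{+\times}$ form a finite-index subgroup containing $(\mathcal O_0^\times)^2$, and $\Nm(\mathcal O^\times)\supseteq (\mathcal O_0^{+\times})^2$ (because $\Nm$ of a unit is totally positive, and $\Nm(u)=u^2$ already shows $(\mathcal O_0^\times)^2\subseteq\Nm(\mathcal O^\times)$; one checks $\Nm(\mathcal O^\times)$ lands in $\mathcal O_0^{+\times}$ as $\Nm(v) = v v^\dagger$ is a norm from a CM-field, hence totally positive). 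Thus $U(\mathcal O)$ is a quotient of $\mathcal O_0^{+\times}/(\mathcal O_0^{+\times})^2$, which has $\mathbb F_2$-dimension at most the rank of $\mathcal O_0^\times$, namely $g-1$; and $d\geq 0$ trivially.

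For surjectivity of $U(\mathcal O)\to U(\mathcal O')$ when $\mathcal O\subset\mathcal O'$ and $\mathcal O\cap K_0 = \mathcal O'\cap K_0 =: \mathcal O_0$: since the real orders agree, the numerators $\mathcal O^{+\times} = \mathcal O_0^{+\times} = \mathcal O'^{+\times}$ coincide. The natural map is then induced by the inclusion $\Nm(\mathcal O^\times)\subseteq\Nm(\mathcal O'^\times)$ of subgroups of the common group $\mathcal O_0^{+\times}$, so it is the quotient map $\mathcal O_0^{+\times}/\Nm(\mathcal O^\times)\twoheadrightarrow\mathcal O_0^{+\times}/\Nm(\mathcal O'^\times)$, which is visibly surjective.

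The only slightly delicate point — and the step I would be most careful about — is verifying the precise definition of $U(\mathcal O)$ and matching it against Proposition~\ref{allPolarizations}: one must make sure that $\mathcal O^{+\times}$ really means the totally positive units of $\mathcal O\cap K_0$ (not of $\mathcal O$), and that the map labelled $\Nm$ is the relative norm $\mathcal O^\times\to(\mathcal O\cap K_0)^\times$, $v\mapsto v v^\dagger$; with these conventions fixed, all the claims above are immediate from Dirichlet's unit theorem and the identity $u = u^\dagger$ for $u\in K_0$.
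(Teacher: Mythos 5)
Your proof is correct and takes essentially the same route as the paper: the identity $\mathbf{N}(u)=u^{2}$ for $u\in\mathcal{O}\cap K_{0}$ gives $(\mathcal{O}\cap K_0)^{\times 2}\subseteq\mathbf{N}(\mathcal{O}^\times)\subseteq\mathcal{O}^{+\times}$, Dirichlet's unit theorem bounds the dimension, and surjectivity is immediate because the numerators coincide while the denominators are nested. The only point you leave implicit is that $-1$ is never totally positive, which is what makes $\mathcal{O}^{+\times}$ torsion-free of rank at most $g-1$ and hence gives $d\le g-1$ rather than $g$; the paper invokes this explicitly, but it is an immediate check, and your reading of the definition of $U(\mathcal{O})$ and of $\mathbf{N}$ matches the paper's.
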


\begin{proof}
Writing $\mathbf{N}$ for the norm from $K$ to $K_0$, we have the following hierarchy, the last containment following because for $\beta \in \mathcal{O}_r$ one has $\beta^2 = \mathbf N \beta$:
\begin{equation}\label{hierarchy}
(\mathcal{O} \cap K_0)^\times \supseteq \mathcal{O}^{+\times} \supseteq \mathbf{N}(\mathcal{O}^\times)  \supseteq (\mathcal{O} \cap K_0)^{\times 2}
\end{equation}
By Dirichlet's unit theorem (and its extension to non-maximal orders), the group $\mathcal{O}_0^\times$ is of the form $\{\pm 1\} \times A$, where $A$ is a free abelian group of cardinality $2^{g-1}$, so the quotient 
$(\mathcal{O} \cap K_0)^\times/(\mathcal{O} \cap K_0)^{\times 2}$ is an $\mathbb{F}_2$-vector space of dimension at most $g$. Since $-1$ is never a totally positive unit, the first claim follows. The second sentence of the lemma is clear.
\end{proof}

\begin{remark}
We remark that, other than the simple calculations described, there is little one can say in great generality about the indices of the containments in (\ref{hierarchy}), which vary depending on the specific fields $K$ and orders $\mathcal{O}$ chosen. For example, if $g = 2$, the total index in (\ref{hierarchy}) is $4$, and one has examples with the ``missing'' factor of $2$ (i.e., the one unaccounted for by the totally negative unit $-1$) occurring in any of the three containments. 
\end{remark}

\subsection{Structure of $(\mathscr W_\beta^\mathrm{pp}, v_\beta)$}
We may now state the main theorem.

\begin{theorem}\label{thm:polarizedbetaisogenyvolcanoes}
Let $\mathscr V^\mathrm{pp}$ be any connected component of the leveled $\beta$-isogeny graph $(\mathscr W_\beta^\mathrm{pp}, v_\beta)$.
For each $i \geq 0$, let $\mathscr V^\mathrm{pp}_i$ be the subgraph of $\mathscr V^\mathrm{pp}$ at level $i$.
We have:
\begin{enumerate}[label=(\roman*)]
\item \label{thmitem:polLevels} For each $i \geq 0$, the varieties in $\mathscr V^\mathrm{pp}_i$ share a common endomorphism ring $\cO_i$. The order $\cO_0$ can be any order with locally maximal real multiplication at $\ell$, whose conductor is not divisible by $\beta$;
\item \label{thmitem:polLevel0}The level $\mathscr V^\mathrm{pp}_0$ is isomorphic to the Cayley graph of the subgroup of $\mathfrak C(\cO_0)$ with generators $(\mathfrak L_i, \beta)$ where $\mathfrak L_i$ are the prime ideals in $\cO_0$ above $\beta$;
\item For any $\mathscr A \in \mathscr V^\mathrm{pp}_0$, there are 
$$\frac {N(\mathfrak l)-\left(\frac{K}{\beta}\right)}{[\cO_{0}^\times : \cO_{1}^\times]}\frac{U(\cO_{1})}{U(\cO_{0})}$$
 edges of multiplicity $[\cO_{0}^\times : \cO_{1}^\times]$ from $\mathscr A$ to distinct vertices of~$\mathscr V^\mathrm{pp}_{1}$ (where $\left(\frac{K}{\beta}\right)$ is $-1$, $0$ or $1$ if $\beta$ is inert, ramified, or split in $K$);
\item For each $i > 0$, and any $x \in \mathscr V^\mathrm{pp}_i$, there is one simple edge from $x$ to a vertex of $\mathscr V^{pp}_{i-1}$, and
$$\frac {N(\mathfrak l)}{[\cO_{i}^\times : \cO_{i+1}^\times]}\frac{U(\cO_{i+1})}{U(\cO_{i})}$$
edges of multiplicity $[\cO_{i}^\times : \cO_{i+1}^\times]$ to distinct vertices of $\mathscr V^\mathrm{pp}_{i+1}$;
\item\label{thmitem:almostundirected} For each edge $x \rightarrow y$, there is an edge $y \rightarrow x$.
\end{enumerate}
In particular, the graph $\mathscr V^\mathrm{pp}$ is an $N(\beta)$-volcano if and only if $\cO_0^\times \subset K_0$.
Also, if $\mathscr V^\mathrm{pp}$ contains a variety defined over the finite field $k$, the subgraph containing only the varieties defined over $k$ consists of the subgraph of the first $v$ levels, where $v$ is the valuation at $\beta$ of the conductor of $\cO_{K_0}[\pi] = \cO_{K_0}[\pi, \pi^\dagger]$.
\end{theorem}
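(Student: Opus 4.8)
Everything will be deduced from Theorem~\ref{thm:lisogenyvolcanoes} through the forgetful map $F\colon\mathscr W_\beta^\mathrm{pp}\to\mathscr W_\beta$, whose fibre over a vertex $\mathscr A$ is a torsor under $U(\cO(\mathscr A))$ by Proposition~\ref{allPolarizations}, and which lifts edges: by Proposition~\ref{prop:UniquePolarization} a choice of principal polarization $\xi$ on $\mathscr A$ assigns to each $\mathfrak l$-isogeny kernel $\kappa$ from $\mathscr A$ a target $(\mathscr A/\kappa,\eta_{\kappa,\xi})$, well defined up to isomorphism. Two bookkeeping identities drive the counting. First, for $u\in\cO(\mathscr A)^\times$ one has $u^*\xi=\Nm(u)\,\xi$ in the $K_0$-torsor of polarizations, so the stabiliser of the isomorphism class of $\xi$ in $\cO(\mathscr A)^\times$ is $\ker(\Nm\colon\cO(\mathscr A)^\times\to K_0^\times)$; and two polarizations on a variety $\mathscr B$ differing by a totally positive $c\in K_0^\times$ are isomorphic exactly when $c\in\Nm(\cO(\mathscr B)^\times)$. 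Second, if $\varphi\colon\mathscr A\to\mathscr B$ has kernel $\kappa$ then, since $\varphi\circ u$ has kernel $\kappa^{u^{-1}}$, we get $\eta_{\kappa^u,\xi}=\eta_{\kappa,\,u^*\xi}=\Nm(u)\,\eta_{\kappa,\xi}$. Part (i) and the ``defined over $k$'' statement then transfer verbatim from Theorem~\ref{thm:lisogenyvolcanoes}: $F$ is surjective onto the component of $\mathscr W_\beta$ it hits (lift a connecting path edge by edge), and Propositions~\ref{prop:UniquePolarization} and~\ref{prop:BetaDual} together show that a component of $\mathscr W_\beta$ is principally polarizable at one vertex iff at all of them, so the orders $\cO_i$ and the valuation $v$ are as before.

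For part (ii), on the surface every $\mathfrak l$-isogeny is horizontal with kernel $\mathscr A[\mathfrak L_j]$ for a prime $\mathfrak L_j\mid\beta$ of $\cO_0$ (Proposition~\ref{prop:frakLStructure}); under the free action of the polarized class group $\mathfrak C(\cO_0)$ on principally polarized abelian varieties with CM by $\cO_0$, the assignment $(\mathscr A,\xi)\mapsto(\mathscr A/\mathscr A[\mathfrak L_j],\eta)$ of Proposition~\ref{prop:UniquePolarization} is the action of $(\mathfrak L_j,\beta)$ --- legitimate since $\mathfrak L_j\mathfrak L_j^\dagger=\mathfrak l=\beta\cO_0$, and $(\mathfrak L_j,\beta)(\mathfrak L_j^\dagger,\beta)$ is trivial --- so a connected component of $\mathscr V_0^\mathrm{pp}$ is an orbit, hence isomorphic to the Cayley graph of the subgroup generated by the $(\mathfrak L_j,\beta)$.

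The heart is the edge count (iii)--(iv). Fix $x=[\mathscr A,\xi]$ at level $i$ and a kernel $\kappa$ with $\varphi\colon\mathscr A\to\mathscr B$; unwinding the edge definition with the identities above, $x$ is joined to $[\mathscr B,\xi_\mathscr B]$ through $\kappa$ exactly when $\xi_\mathscr B\isom c\,\eta_{\kappa,\xi}$ for some $c\in\Nm(\cO(\mathscr A)^\times)$. Thus $\kappa$ ``sees'' exactly $[\Nm(\cO(\mathscr A)^\times):\Nm(\cO(\mathscr A)^\times)\cap\Nm(\cO(\mathscr B)^\times)]$ polarized vertices over $\mathscr B$; for a descending $\kappa$, with $\cO(\mathscr B)=\cO_{i+1}\subset\cO_i=\cO(\mathscr A)$, this index equals $[\Nm(\cO_i^\times):\Nm(\cO_{i+1}^\times)]=|U(\cO_{i+1})|/|U(\cO_i)|$ (using $U(\cO_j)=\cO_j^{+\times}/\Nm(\cO_j^\times)$ with $\cO_j^{+\times}$ the totally positive units of the common real order), and for the unique ascending $\kappa$ it equals $1$. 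Moreover, the $\cO(\mathscr A)^\times$-orbit of $\kappa$ --- which by Lemma~\ref{lemma:distinctFrakLKernels} and the counting in the proof of Theorem~\ref{thm:lisogenyvolcanoes} consists of exactly $[\cO_i^\times:\cO_{i+1}^\times]$ kernels, all mapping onto the same unpolarized $\mathscr B$ --- contributes, by $\eta_{\kappa^u,\xi}=\Nm(u)\eta_{\kappa,\xi}$, to precisely the same polarized edges, so every realized edge over $\mathscr B$ has multiplicity exactly $[\cO_i^\times:\cO_{i+1}^\times]$. Combining this with the unpolarized target counts of Theorem~\ref{thm:lisogenyvolcanoes} (namely $N(\mathfrak l)/[\cO_i^\times:\cO_{i+1}^\times]$, resp.\ $(N(\mathfrak l)-\left(\frac{K}{\beta}\right))/[\cO_0^\times:\cO_1^\times]$ on the surface, descending unpolarized targets, over each of which lie $|U(\cO_{i+1})|/|U(\cO_i)|$ distinct polarized ones, all reached with multiplicity $[\cO_i^\times:\cO_{i+1}^\times]$) yields (iii)--(iv), the ascending kernel giving a single simple edge.

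For (v), the $\beta$-dual $\tilde\varphi\colon\mathscr B\to\mathscr A$ of Proposition~\ref{prop:BetaDual} satisfies $\tilde\varphi^*\xi_\mathscr A=\xi_\mathscr B^\beta$: apply $\varphi^*$, use $\tilde\varphi\varphi=[\beta]$, $\varphi^*\xi_\mathscr B=\xi_\mathscr A^\beta$, $[\beta]^*\xi_\mathscr A=\xi_\mathscr A^{\beta^2}$ (centrality of $[\beta]$), and injectivity of $\varphi^*$ on $\NS$; hence $\tilde\varphi$ realizes a reverse edge $[\mathscr B,\xi_\mathscr B]\to[\mathscr A,\xi_\mathscr A]$. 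When $\cO_0^\times\subset K_0$ every $\cO_j^\times$ equals the unit group of the common real order, so $[\cO_j^\times:\cO_{j+1}^\times]=1$ and $U(\cO_j)=U(\cO_{j+1})$ for all $j$, and then (iii)--(v) say the downward degree is $N(\mathfrak l)$ with all edges simple and reversible, i.e.\ $\mathscr V^\mathrm{pp}$ is an $N(\beta)$-volcano; otherwise the discrepancy forces a multiplicity or an inflated downward degree already between the surface and level one, exactly as in the unpolarized case, so it is not a volcano. I expect the genuinely delicate step to be the edge count (iii)--(iv): the phenomenon flagged in the remark after the definition of $\mathscr W_\beta^\mathrm{pp}$ --- that pullbacks of isomorphic polarizations on varieties with different automorphism groups need not be isomorphic --- is exactly what produces the factor $|U(\cO_{i+1})|/|U(\cO_i)|$, and threading together kernels, their $\cO(\mathscr A)^\times$-orbits, and isomorphism classes of induced target polarizations without overcounting is where the real work lies.
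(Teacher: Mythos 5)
Your proposal is correct and follows essentially the same route as the paper: (iii)--(iv) via a per-kernel count of induced polarizations on the target (your norm-group index $[\Nm(\cO_i^\times):\Nm(\cO_{i+1}^\times)]$ is exactly the paper's lemma giving $|U(\cO_{i+1})|/|U(\cO_i)|$ targets per descending kernel and a unique one per ascending kernel), (ii) via the Shimura class group action, the multiplicity statement via the unit-orbit of kernels, and (v) via $\beta$-duals with the same pullback computation. The only points the paper treats slightly more explicitly are the connectedness of the level-$0$ subgraph $\mathscr V_0^{\mathrm{pp}}$ (needed to get the whole surface as one Cayley graph rather than ``a connected component is an orbit'') and the fact that the polarizations are defined over $k$ for the field-of-definition claim; also, your aside that the stabiliser of the isomorphism class of $\xi$ in $\cO(\mathscr A)^\times$ is $\ker\Nm$ should read stabiliser of $\xi$ itself (the class is fixed by every unit), though nothing in your counting depends on it.
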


Before proving this theorem, we need some preliminary results.
First, we recall the action of the Shimura class group. For $\mathcal{O}$ an order, write $\mathscr{I}(\mathcal{O})$ for the group of invertible $\mathcal{O}$-ideals, and define the Shimura class group as
$$
\mathfrak{C}(\mathcal{O}) = \{ (\mathfrak{a}, \alpha) \mid \mathfrak{a} \in \mathscr{I}(\mathcal{O}): \mathbf N\mathfrak{a} = \alpha\mathcal{O}, \alpha \in K_0 \text{ totally positive } \} /\sim
$$
where two pairs $(\mathfrak{a}, \alpha), (\mathfrak{a}', \alpha')$ are equivalent if there exists $u \in K^\times$ with $\mathfrak{a}' = u\mathfrak{a}$ and $\alpha' = uu^\dagger \alpha$. 
The Shimura class group acts freely on the set of isomorphism classes of principally polarized abelian varieties whose endomorphism ring is $\mathcal{O}$ (see~\cite[\S 17]{taniyama-shimura} for the result in characteristic 0, which extends via canonical lifts to the ordinary characteristic $p$ case). If $\beta$ is coprime to the conductor of $\mathcal{O}$, then an element of $\mathfrak C(\cO)$ acts by a $\beta$-isogeny if and only if it is of the form $(\mathfrak{L}, \beta)$, for some prime ideal $\mathfrak{L}$ of $\cO$ dividing $(\beta)$.

\begin{lemma}\label{lemma:uniquePolUp}
Let $\varphi : \mathscr A \rightarrow \mathscr B$ be a $\beta$-isogeny, and let $\xi_\mathscr A$ be a principal polarization on $\mathscr A$. We have:
\begin{enumerate}[label=(\roman*)]
\item \label{lemmaitem:ascendingpol} If $\varphi$ is $\beta$-ascending, there is, up to isomorphism, a unique polarization $\xi_\mathscr B$ on $\mathscr B$ such that $\varphi^*\xi_\mathscr B$ is isomorphic to $\xi_\mathscr A^\beta$;
\item \label{lemmaitem:descendingpol} It $\varphi$ is $\beta$-descending, there are, up to isomorphism, exactly 
\[\frac {|U(\mathcal{O}(\mathscr B))|}{|U(\mathcal{O}(\mathscr A))|}\]
distinct polarizations $\xi_\mathscr B$ on $\mathscr B$ such that $\varphi^*\xi_\mathscr B$ is isomorphic to $\xi_\mathscr A^\beta$.
\end{enumerate}
\end{lemma}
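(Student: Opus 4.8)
The plan is to reduce both parts to an elementary computation with totally positive units. Write $\cO_\mathscr A = \cO(\mathscr A)$ and $\cO_\mathscr B = \cO(\mathscr B)$. The first step is to note that a $\beta$-isogeny preserves the real multiplication: locally at $\ell$ this is contained in Proposition~\ref{prop:frakLStructure}, and away from $\ell$ it follows from Proposition~\ref{prop:EllDoesNotChangeP}; hence $\cO_\mathscr A \cap K_0 = \cO_\mathscr B \cap K_0 =: R_0$, and in particular the groups of totally positive units agree: $\cO_\mathscr A^{+\times} = \cO_\mathscr B^{+\times} = R_0^{+\times} =: P$. Moreover $\cO_\mathscr A \subseteq \cO_\mathscr B$ when $\varphi$ is $\beta$-ascending and $\cO_\mathscr B \subseteq \cO_\mathscr A$ when $\varphi$ is $\beta$-descending, so, writing $N_\mathscr A = \mathbf N(\cO_\mathscr A^\times)$ and $N_\mathscr B = \mathbf N(\cO_\mathscr B^\times)$ as subgroups of $P$, one has $N_\mathscr A \subseteq N_\mathscr B$ in the ascending case and $N_\mathscr B \subseteq N_\mathscr A$ in the descending case, while $U(\cO_\mathscr A) = P/N_\mathscr A$ and $U(\cO_\mathscr B) = P/N_\mathscr B$ by Proposition~\ref{allPolarizations}.

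Next I would parametrize the principal polarizations on $\mathscr B$ relative to a base point. Proposition~\ref{prop:UniquePolarization} provides a distinguished polarization $\xi_\mathscr B$ with $\varphi^*\xi_\mathscr B = \xi_\mathscr A^\beta$ exactly. Comparing polarization isogenies with that of $\xi_\mathscr B$, every principal polarization on $\mathscr B$ is of the form $u\cdot\xi_\mathscr B$ (the polarization with isogeny $\lambda_{\xi_\mathscr B}\circ u$) for a unique $u \in P$, and $u\cdot\xi_\mathscr B \cong u'\cdot\xi_\mathscr B$ precisely when $u/u' \in N_\mathscr B$ — this is just the torsor structure of Proposition~\ref{allPolarizations} made explicit. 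Since $\varphi$ is $K_0$-equivariant and $u \in K_0$, pulling back commutes with multiplication by $u$, so $\varphi^*(u\cdot\xi_\mathscr B) = u\cdot\varphi^*\xi_\mathscr B = u\cdot\xi_\mathscr A^\beta = \xi_\mathscr A^{\beta u}$, which is isomorphic to $\xi_\mathscr A^\beta$ if and only if $u \in \mathbf N(\cO_\mathscr A^\times) = N_\mathscr A$. Thus the polarizations on $\mathscr B$ pulling back to (something isomorphic to) $\xi_\mathscr A^\beta$ are exactly the $u\cdot\xi_\mathscr B$ with $u \in N_\mathscr A$.

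Finally I would carry out the count. In the $\beta$-ascending case $N_\mathscr A \subseteq N_\mathscr B$, so all of these polarizations $u\cdot\xi_\mathscr B$ ($u\in N_\mathscr A$) lie in the single isomorphism class $[\xi_\mathscr B]$, which is part~\ref{lemmaitem:ascendingpol}. In the $\beta$-descending case $N_\mathscr B \subseteq N_\mathscr A$, so this set is a union of isomorphism classes; in fact $\varphi^*$ then descends to the natural surjection $U(\cO_\mathscr B) = P/N_\mathscr B \twoheadrightarrow P/N_\mathscr A = U(\cO_\mathscr A)$ (up to the harmless twist identifying $[\xi^\beta]$ with $[\xi]$), and the classes we want are precisely its kernel, of size $[N_\mathscr A : N_\mathscr B] = [P:N_\mathscr B]/[P:N_\mathscr A] = |U(\cO_\mathscr B)|/|U(\cO_\mathscr A)|$, which is part~\ref{lemmaitem:descendingpol}. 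I expect the only genuine subtlety — and the reason the statement must be phrased "up to isomorphism" — to be the bookkeeping that $\varphi^*$ is a map on polarizations and not on isomorphism classes (in the ascending direction isomorphic polarizations on $\mathscr B$ can have non-isomorphic pullbacks, since the automorphism groups differ), so one must quantify over representatives within each class on $\mathscr B$; once that is handled, everything is the group theory of the two-term chain $N_\mathscr B,\,N_\mathscr A \subseteq P$.
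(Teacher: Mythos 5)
Your argument is correct and takes essentially the same route as the paper: both fix the base polarization $\xi_\mathscr B$ with exact pullback $\xi_\mathscr A^\beta$ via Proposition~\ref{prop:UniquePolarization} and then reduce the count to unit-group bookkeeping over the common real order --- the paper by letting the kernel of the surjection $U(\cO(\mathscr B))\to U(\cO(\mathscr A))$ of Lemma~\ref{lem:units} act simply transitively on the relevant isomorphism classes, you by writing out the torsor of actual principal polarizations under totally positive units and computing $\varphi^*(u\cdot\xi_\mathscr B)=(\beta u)\cdot\xi_\mathscr A$. The only cosmetic difference is in part~\ref{lemmaitem:ascendingpol}, where the paper deduces uniqueness from the injectivity of $\varphi^*$ on $\NS(\mathscr B)$ together with $\cO(\mathscr A)^\times\subset\cO(\mathscr B)^\times$, while you obtain it from the same unit parametrization; the content is identical.
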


\begin{proof}
Let us first prove \ref{lemmaitem:ascendingpol}.
From Proposition~\ref{prop:UniquePolarization}, there exists a polarization $\xi_\mathscr B$ on $\mathscr B$ such that $\varphi^*\xi_\mathscr B = \xi_\mathscr A^\beta$.
Suppose $\xi'_\mathscr B$ is a polarization such that $\varphi^*\xi'_\mathscr B \cong \xi_\mathscr A^\beta$. Then, there is a unit $u \in \cO(\mathscr A)^{\times}$ such that $\varphi^*\xi'_\mathscr B = u^*\xi_\mathscr A^{\beta}$. But $\varphi$ is ascending, so $u \in \cO(\mathscr B)^{\times}$ and therefore $$\varphi^*\xi'_\mathscr B = u^*\xi_\mathscr A^\beta = u^*(\varphi^*\xi_\mathscr B) = \varphi^*(u^*\xi_\mathscr B).$$
From the uniqueness in Proposition~\ref{prop:UniquePolarization}, we obtain $\xi'_\mathscr B = u^*\xi_\mathscr B$, so $\xi_\mathscr B$ and $\xi'_\mathscr B$ are two isomorphic polarizations.

For \ref{lemmaitem:descendingpol}, again apply Proposition~\ref{prop:UniquePolarization}, and observe that the kernel of the surjection
$U(\mathcal{O}(\mathscr B)) \ra U(\mathcal{O}(\mathscr A))$
of Lemma~\ref{lem:units} acts simply transitively on the set of isomorphism classes of polarizations $\xi_\mathscr B$ on $\mathscr B$ satisfying $\varphi^*\xi_\mathscr B \cong\xi_\mathscr A^\beta$.
\end{proof}

\subsection*{Proof of Theorem \ref{thm:polarizedbetaisogenyvolcanoes}}
First observe that \ref{thmitem:polLevels} is immediate from Theorem~\ref{thm:lisogenyvolcanoes}(i), since the leveling on $\mathscr{V}^{\mathrm{pp}}$ is induced from that of $\mathscr{V}$. Also, \ref{thmitem:almostundirected} is a direct consequence of the existence of $\beta$-duals, established in Proposition~\ref{prop:BetaDual}.
Now, let us prove that for any class $(\mathscr A, \xi_\mathscr A)$ at a level $i > 0$, there is a unique edge to the level $i-1$.
From Theorem~\ref{thm:lisogenyvolcanoes}, there exists an ascending isogeny $\varphi : \mathscr A \rightarrow \mathscr B$ (unique up to isomorphism of $\mathscr B$), and from 
Lemma~\ref{lemma:uniquePolUp}\ref{lemmaitem:ascendingpol}, there is a unique polarization $\xi_\mathscr B$ on $\mathscr B$ (up to isomorphism) such that $(\mathscr A, \xi_\mathscr A) \ra (\mathscr B, \xi_\mathscr B)$ is an edge in $\mathscr{V}^{\mathrm{pp}}$.

These results, and the fact that $\mathscr V_0$ is connected, imply that $\mathscr V_0^\mathrm{pp}$ is connected. We can then deduce \ref{thmitem:polLevel0} from the action of the Shimura class group $\mathfrak C(\cO_0)$.

Now, (iii) (respectively, (iv)) is a consequence of Theorem~\ref{thm:lisogenyvolcanoes}(iii) (respectively, Theorem~\ref{thm:lisogenyvolcanoes}(iv)) together with Lemma~\ref{lemma:uniquePolUp}. The statement on multiplicities of the edges also uses the fact that if $\varphi,\psi : \mathscr A\ra \mathscr B$ are two $\beta$-isogenies with same kernel, and $\xi_\mathscr A$ is a principal polarization on $\mathscr A$, then the two principal polarizations on $\mathscr B$ induced via $\varphi$ and $\psi$ are isomorphic.

The volcano property follows from the corresponding phrase in the statement of Theorem \ref{thm:lisogenyvolcanoes}, and the statement on fields of definition follows from Remark \ref{rem:fieldOfDefIsogenies}, which shows that the isomorphism from a principally polarized absolutely simple ordinary abelian variety to its dual, and hence the polarization, is defined over the field of definition of the variety.
\qed

\subsection{Principally polarizable surfaces}\label{subsec:ppas}
The result of Theorem~\ref{thm:polarizedbetaisogenyvolcanoes} for abelian surfaces is a bit simpler than the general case, thanks to the following lemma.

\begin{lemma}
Suppose $g = 2$.
With all notations as in Theorem~\ref{thm:polarizedbetaisogenyvolcanoes}, we have $U(\mathcal{O}_i) = U(\mathcal{O}_0)$ for any non-negative integer $i$.
\end{lemma}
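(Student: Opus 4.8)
The plan is to reduce the statement to a fact about unit groups, exploiting that all the orders $\cO_i$ occurring in $\mathscr V^{\mathrm{pp}}$ have the \emph{same} intersection with the real field $K_0$. First I would establish this: away from $\ell$, Proposition~\ref{prop:EllDoesNotChangeP} (as already used in the proof of Theorem~\ref{thm:lisogenyvolcanoes}) gives $\cO_i \otimes_\Z \Z_{\ell'} = \cO_0 \otimes_\Z \Z_{\ell'}$ for every $\ell' \neq \ell$; at $\ell$, each $\cO_i \otimes_\Z \Z_\ell$ has maximal real multiplication (the vertices of $\mathscr V^{\mathrm{pp}}$ have locally maximal real multiplication at $\ell$), so it contains the maximal order $\lO_0$ of $K_{0,\ell}$ and hence $(\cO_i \otimes_\Z \Z_\ell) \cap K_{0,\ell} = \lO_0$, the same for all $i$. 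Thus $\cO_i \cap K_0$ and $\cO_0 \cap K_0$ are orders of $K_0$ with identical localisations at every prime, so they coincide by the local--global principle of Section~\ref{subsec:localglobalorders}; write $R$ for this common order. Then $\cO_i^{+\times}$, the group of totally positive units of $\cO_i \cap K_0 = R$, equals $R^{+\times}$ for all $i$, so $U(\cO_i) = R^{+\times}/\mathbf{N}(\cO_i^\times)$, and the whole point becomes showing that $\mathbf{N}(\cO_i^\times)$ is independent of $i$. Note that $\cO_0 \supseteq \cO_1 \supseteq \cdots$ (Proposition~\ref{prop:frakLStructure}) and that, by the hierarchy~\eqref{hierarchy}, $R^{\times 2} \subseteq \mathbf{N}(\cO_i^\times) \subseteq R^{+\times}$ for every $i$.

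Next I would invoke the input (available because $g = 2$, so $K$ is a primitive quartic CM field) that \emph{every proper suborder $\cO \subsetneq \cO_K$ has $\cO^\times \subseteq K_0$}: this is \cite[Lem.3.3]{Streng10} when $K \neq \Q(\zeta_5)$ (giving even $\cO_K^\times = \cO_{K_0}^\times$, cf.\ Remark~\ref{rem:Streng}), and for $K = \Q(\zeta_5)$ it is the statement that only the maximal order of $\Q(\zeta_5)$ has units outside its real subfield (the discussion following Theorem~\ref{thm:lisogenyvolcanoes}). Now split into two cases. If $\cO_0 \subsetneq \cO_K$, then every $\cO_i \subseteq \cO_0$ is a proper suborder of $K$, so $\cO_i^\times = (\cO_i \cap K_0)^\times = R^\times$; since complex conjugation acts trivially on $K_0$, $\mathbf{N}(\cO_i^\times) = \{u^2 : u \in R^\times\} = R^{\times 2}$ for all $i$, and we are done. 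If instead $\cO_0 = \cO_K$ (so $R = \cO_{K_0}$): for $i \geq 1$ the order $\cO_i$ is a proper suborder, hence again $\mathbf{N}(\cO_i^\times) = \cO_{K_0}^{\times 2}$, and it remains to see that $\mathbf{N}(\cO_K^\times) = \cO_{K_0}^{\times 2}$ as well. If $K \neq \Q(\zeta_5)$ this is immediate from $\cO_K^\times = \cO_{K_0}^\times$. If $K = \Q(\zeta_5)$, then $K_0 = \Q(\sqrt 5)$ has fundamental unit $(1+\sqrt 5)/2$ of norm $-1$, and a quick look at the signs under the two real embeddings shows that the totally positive units of $\cO_{K_0}$ are exactly its squares, i.e.\ $\cO_{K_0}^{+\times} = \cO_{K_0}^{\times 2}$; then the containments above squeeze $\mathbf{N}(\cO_i^\times)$ to $\cO_{K_0}^{\times 2}$ for every $i$ (so in fact $U(\cO_i) = 0$). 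In all cases $\mathbf{N}(\cO_i^\times)$, hence $U(\cO_i)$, does not depend on $i$, which is the claim.

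I expect the only real obstacle to be the case $\cO_0 = \cO_K$: a priori, passing from $\cO_1$ down to the maximal order $\cO_K$ can enlarge the norm image $\mathbf{N}(\cO^\times)$, and it genuinely does so precisely when $\cO_K$ carries a unit outside $K_0$ while $\cO_{K_0}$ has a totally positive fundamental unit. The reason this configuration cannot arise is structural: among primitive quartic CM fields only $\Q(\zeta_5)$ has roots of unity beyond $\pm 1$, and there the real subfield $\Q(\sqrt 5)$ fails the totally-positive condition. Everything else is bookkeeping with~\eqref{hierarchy} and the local--global description of the orders $\cO_i$.
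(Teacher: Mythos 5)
Your proposal is correct and follows essentially the same route as the paper's proof: reduce $U(\cO_i)$ to the norm image $\mathbf N(\cO_i^\times)$ inside the common real order, and control units via Streng's equality $\cO_K^\times = \cO_{K_0}^\times$ (Remark~\ref{rem:Streng}) together with a separate treatment of $K = \Q(\zeta_5)$, for which you invoke the same assertion the paper itself makes that only the maximal order of $\Q(\zeta_5)$ has units outside $K_0$. The only differences are expository: you make explicit, via Proposition~\ref{prop:EllDoesNotChangeP} and the local--global description of orders, that all $\cO_i \cap K_0$ coincide, and in the $\Q(\zeta_5)$ maximal-order case you replace the paper's remark that the extra units have norm $1$ by the equivalent observation that the totally positive units of $\Z[(1+\sqrt 5)/2]$ are exactly the squares, which squeezes the hierarchy~\eqref{hierarchy}.
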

\begin{proof}
In these cases, one has $\mathcal{O}_K^\times = \mathcal{O}_{K_0}^\times$ except in the case $K = \Q(\zeta_5)$ (see Remark \ref{rem:Streng}); but even when $K = \Q(\zeta_5)$ the equality is true up to units of norm~$1$.  Therefore for any order $\cO$ in $K$, one has $ \mathbf N \mathcal{O}^\times = \mathbf N(\cO \cap {K_0})^\times$. Thus, none of the groups $U(\mathcal{O}_i)$ actually depend on $i$.
\end{proof}

Therefore, the factors $|U(\mathcal{O}_{i+1})|/|U(\mathcal{O}_i)|$ disappear when $g = 2$.
It follows that each component $\mathscr{W}^\mathrm{pp}$ is either isomorphic to its image in $(\mathscr W_\beta, v_\beta)$, or is isomorphic to the natural double cover of this image constructed by doubling the length of the cycle $\mathscr V_0$ (as illustrated in Figure~\ref{fig:examplePlusMinusVolcano}).
The first case occurs when $(\beta)$ is inert in $K/K_0$, or when the order of $(\mathfrak L, \beta)$ in $\mathfrak C(\cO_0)$ equals the order of $\mathfrak L$ in $\Cl(\cO_0)$ (where $\mathfrak L$ is a prime ideal of $\cO_0$ above $(\beta)$). The second case occurs when the order of $(\mathfrak L, \beta)$ is twice that of $\mathfrak L$.

\begin{figure}
\includegraphics{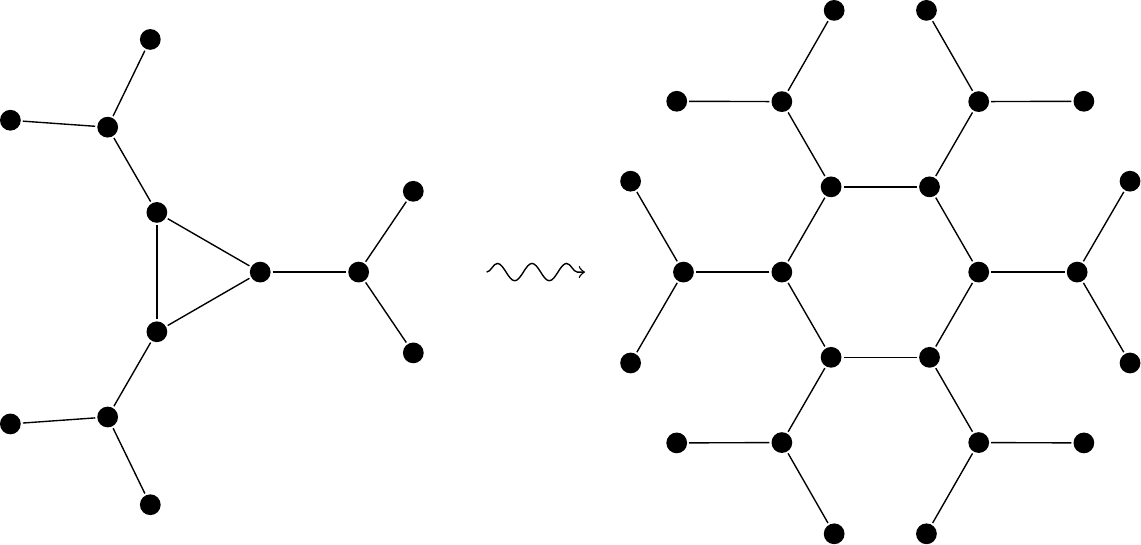}
\caption{\label{fig:examplePlusMinusVolcano} An example of how adding the polarization data to a volcano of $\beta$-isogenies can double the length of the cycle.}
\end{figure}

\section{Levels for the real multiplication in dimension 2}\label{sec:levelsRM}

We now specialize to the case $g = 2$. Then, $\mathscr A$ is of dimension 2, and $K$ is a primitive quartic CM-field. The subfield $K_0$ is a real quadratic number field. The orders in $K_{0,\ell}$ are linearly ordered since they are all of the form $\Z_\ell + \ell^n\lO_0$. These $n$'s can be seen as ``levels'' of real multiplication. Taking advantage of this simple structure, the goal of this section is to prove Theorem~\ref{RMupIso}.

\subsection{Preliminaries on symplectic lattices} Let $\F_\ell$ be the finite field with $\ell$ elements. 

\begin{lemma}\label{lemma:countingMaxIso}
Let $W$ be a symplectic $\F_\ell$-vector space of dimension $4$. 
It contains exactly $\ell^{3} + \ell^{2} + \ell + 1$ maximal isotropic subspaces.
\end{lemma}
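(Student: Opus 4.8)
The statement is a standard count of maximal isotropic (Lagrangian) subspaces in a $4$-dimensional symplectic space over $\F_\ell$, so I would prove it by a direct enumeration, counting in two stages corresponding to $\dim = 1$ and then extending to $\dim = 2$. First I would recall that any nonzero vector spans a (necessarily) isotropic line, so there are $(\ell^4 - 1)/(\ell-1) = \ell^3 + \ell^2 + \ell + 1$ isotropic lines $L$. Then, fixing such a line $L$, I would describe the Lagrangians containing it: inside $L^\perp/L$, which is a symplectic (indeed $2$-dimensional) $\F_\ell$-space, the Lagrangians of $W$ containing $L$ correspond bijectively to the lines in $L^\perp/L$, of which there are $\ell + 1$. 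Hence the number of (line, Lagrangian) incident pairs is $(\ell^3+\ell^2+\ell+1)(\ell+1)$.

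**Finishing the count.** Each Lagrangian $P$ is $2$-dimensional and every one of its lines is isotropic, so each $P$ contains exactly $\ell + 1$ isotropic lines; therefore it is counted $\ell + 1$ times among the incident pairs. Dividing, the number of Lagrangians is $(\ell^3+\ell^2+\ell+1)(\ell+1)/(\ell+1) = \ell^3 + \ell^2 + \ell + 1$, which is the claim. (Equivalently, one can invoke the known formula that the number of Lagrangians in a $2n$-dimensional symplectic space over $\F_q$ is $\prod_{i=1}^{n}(q^i+1)$, which for $n=2$, $q=\ell$ gives $(\ell+1)(\ell^2+1) = \ell^3+\ell^2+\ell+1$; but the two-step double-count above is self-contained and elementary.)

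**Main obstacle.** There is no serious obstacle; the only point requiring a line of justification is the bijection between Lagrangians of $W$ containing a fixed isotropic line $L$ and lines in the quotient symplectic space $L^\perp/L$. This is the routine fact that for an isotropic subspace $U$, the symplectic form descends to a nondegenerate form on $L^\perp/L$ and the correspondence $P \mapsto P/L$ identifies isotropic subspaces of $W$ containing $L$ with isotropic subspaces of $L^\perp/L$, carrying maximal ones to maximal ones (dimension count: $\dim L^\perp = 3$, so a Lagrangian $P \supset L$ has $P \subset L^\perp$ and $\dim P/L = 1$). I would state this briefly and move on, as it is standard linear symplectic algebra.
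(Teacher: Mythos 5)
Your proof is correct and is essentially the same double-count the paper uses: count incident pairs of (isotropic line, Lagrangian) by fixing a line $L$, observing $\dim L^\perp = 3$ and that the Lagrangians through $L$ correspond to the $\ell+1$ lines of $L^\perp/L$, then divide by the $\ell+1$ lines inside each Lagrangian. The only cosmetic difference is that you also note the closed-form formula $\prod_{i=1}^n(q^i+1)$, which the paper omits.
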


\begin{proof}
In the following, a \emph{line} or a \emph{plane} means a dimension 1 or 2 subspace of a vector space (i.e., they contain the origin of the vector space).
Fix any line $L$ in $W$. We will count the number of maximal isotropic subspaces of $W$ containing $L$. The line $L$ is itself isotropic (yet not maximal), so $L \subset L^\perp$. Also, $\dim L + \dim L^\perp = 4$, so $\dim L^\perp = 3$. Since any maximal isotropic subspace of $W$ is of dimension 2, it is easy to see that those containing $L$ are exactly the planes in $L^\perp$ containing $L$. There are $\ell + 1$ such planes, because they are in natural correspondence with the lines in the dimension 2 vector space $L^\perp / L$. It follows that there are $\ell + 1$ maximal isotropic subspaces of $W$ containing $L$. There are $\ell^{3} + \ell^{2} + \ell + 1$ lines $L$ in $W$, and each maximal isotropic subspace of $W$ contains $\ell + 1$ lines, we conclude that there are $\ell^{3} + \ell^{2} + \ell + 1$ maximal isotropic subspaces.
\end{proof}

\begin{lemma}
Let $V$ be a symplectic $\Q_\ell$-vector space of dimension 4. Let $\Lambda \subset V$ be a lattice in $V$ such that $\Lambda^* = \Lambda$. Then $\Lambda/\ell \Lambda$ is a symplectic $\F_\ell$-vector space of dimension $4$ for the symplectic form
$$\langle \lambda + \ell\Lambda, \mu + \ell\Lambda \rangle_\ell = \langle \lambda , \mu \rangle \mod \ell.$$
\end{lemma}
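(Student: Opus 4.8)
The plan is to verify, in order, that the pairing $\langle-,-\rangle_\ell$ is well defined on $\Lambda/\ell\Lambda$, that it is $\F_\ell$-bilinear and alternating, and finally that it is non-degenerate; the dimension count is immediate, since $\Lambda$ is a full-rank $\Z_\ell$-lattice in the $4$-dimensional space $V$, hence free of rank $4$ over $\Z_\ell$, so $\Lambda/\ell\Lambda \cong \F_\ell^4$ as an $\F_\ell$-vector space. Throughout, write $\bar\lambda$ for the class of $\lambda \in \Lambda$ in $\Lambda/\ell\Lambda$.

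\textbf{Well-definedness, bilinearity, alternating property.} The hypothesis $\Lambda^* = \Lambda$ gives in particular the inclusion $\Lambda \subset \Lambda^*$, i.e.\ $\langle \lambda, \mu\rangle \in \Z_\ell$ for all $\lambda, \mu \in \Lambda$, so the reduction $\langle\lambda,\mu\rangle \bmod \ell$ is meaningful. If $\lambda' \in \Lambda$, then
$$
\langle \lambda + \ell\lambda', \mu\rangle = \langle\lambda,\mu\rangle + \ell\langle\lambda',\mu\rangle \equiv \langle\lambda,\mu\rangle \pmod \ell,
$$
since $\langle\lambda',\mu\rangle \in \Z_\ell$; the symmetric computation in the second variable shows the value depends only on $\bar\lambda$ and $\bar\mu$. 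Reducing the $\Z_\ell$-bilinearity of $\langle-,-\rangle$ modulo $\ell$ gives $\F_\ell$-bilinearity of $\langle-,-\rangle_\ell$, and $\langle\bar\lambda,\bar\lambda\rangle_\ell = 0$ follows from $\langle\lambda,\lambda\rangle = 0$, so $\langle-,-\rangle_\ell$ is an alternating $\F_\ell$-bilinear form.

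\textbf{Non-degeneracy.} This is the only place where the full force of self-duality (the reverse inclusion $\Lambda^* \subset \Lambda$) enters. Suppose $\bar\lambda \in \Lambda/\ell\Lambda$ satisfies $\langle\bar\lambda,\bar\mu\rangle_\ell = 0$ for all $\bar\mu \in \Lambda/\ell\Lambda$, and pick a representative $\lambda \in \Lambda$. Then $\langle\lambda,\mu\rangle \in \ell\Z_\ell$ for every $\mu \in \Lambda$, so $\langle \ell^{-1}\lambda, \mu\rangle \in \Z_\ell$ for every $\mu \in \Lambda$, i.e.\ $\ell^{-1}\lambda \in \Lambda^* = \Lambda$. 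Hence $\lambda \in \ell\Lambda$ and $\bar\lambda = 0$, so the radical of $\langle-,-\rangle_\ell$ is trivial. A non-degenerate alternating bilinear form on a finite-dimensional vector space is, by definition, a symplectic form, so $\Lambda/\ell\Lambda$ is a $4$-dimensional symplectic $\F_\ell$-vector space.

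\textbf{Main obstacle.} There is essentially none: every step is a direct unwinding of definitions. The one substantive point is the non-degeneracy argument, which hinges on exploiting $\Lambda^* = \Lambda$ in both directions — the inclusion $\Lambda \subset \Lambda^*$ to make the reduced pairing land in $\F_\ell$, and $\Lambda^* \subset \Lambda$ to eliminate the radical.
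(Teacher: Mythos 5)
Your proof is correct and follows essentially the same route as the paper: reduce the integral pairing modulo $\ell$ (using $\Lambda \subset \Lambda^*$ for integrality), note bilinearity and the alternating property are inherited, and establish non-degeneracy by showing that a vector in the radical satisfies $\ell^{-1}\lambda \in \Lambda^* = \Lambda$, hence $\lambda \in \ell\Lambda$. The only difference is that you spell out the well-definedness and dimension count, which the paper leaves implicit.
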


\begin{proof}
The fact that the form $\langle -, - \rangle_\ell$ is bilinear and alternating easily follows from the fact that the form $\langle -, - \rangle$ is symplectic. It only remains to prove that it is non-degenerate. Let $\lambda \in \Lambda$, and suppose that $\langle \lambda + \ell\Lambda, \mu + \ell\Lambda \rangle_\ell = 0$ for any $\mu \in \Lambda$. We now prove that $\lambda \in \ell\Lambda$. For any $\mu \in \Lambda$, we have $\langle \lambda , \mu \rangle \in \ell \Z_\ell$, and therefore
$\langle \ell^{-1}\lambda , \mu \rangle \in \Z_\ell.$
So $\ell^{-1}\lambda \in \Lambda^* = \Lambda$, whence $\lambda \in \ell \Lambda$, concluding the proof.
\end{proof}

\begin{lemma}
Let $V$ be a symplectic $\Q_\ell$-vector space of dimension 4, and $\Lambda$ a self-dual lattice in $V$.
Let $\ell\Lambda \subset \Gamma \subset \Lambda$ be an intermediate lattice. Then $\Gamma/\ell\Lambda$ is maximal isotropic in $\Lambda/\ell \Lambda$ if and only if $\Gamma^* = \ell^{-1}\Gamma$.
\end{lemma}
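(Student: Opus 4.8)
The plan is to rephrase both sides of the equivalence in terms of orthogonal complements inside the symplectic $\F_\ell$-space $\overline\Lambda := \Lambda/\ell\Lambda$, and then apply the elementary fact that a subspace of a nondegenerate symplectic space of dimension $4$ is maximal isotropic if and only if it equals its own orthogonal complement.

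First I would record how $\ell$-adic duality acts on the chain $\ell\Lambda \subseteq \Gamma \subseteq \Lambda$. Since $\Lambda \mapsto \Lambda^*$ reverses inclusions, $(\ell\Lambda)^* = \ell^{-1}\Lambda^*$, and $\Lambda^* = \Lambda$ by hypothesis, dualizing yields $\Lambda \subseteq \Gamma^* \subseteq \ell^{-1}\Lambda$, hence $\ell\Lambda \subseteq \ell\Gamma^* \subseteq \Lambda$; in particular $\ell\Gamma^*/\ell\Lambda$ is a genuine subspace of $\overline\Lambda$. Next I would compute the orthogonal complement of $\overline\Gamma := \Gamma/\ell\Lambda$ for the reduced form $\langle-,-\rangle_\ell$ of the preceding lemma: by definition $\overline\mu \in \overline\Gamma^{\perp}$ means $\langle\mu,\gamma\rangle \in \ell\Z_\ell$ for all $\gamma\in\Gamma$, which upon dividing by $\ell$ is equivalent to $\ell^{-1}\mu \in \Gamma^*$, i.e.\ $\mu \in \ell\Gamma^*$. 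Combined with the previous step this gives $\overline\Gamma^{\perp} = \ell\Gamma^*/\ell\Lambda$.

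Finally I would assemble the equivalence. In the $4$-dimensional symplectic space $\overline\Lambda$ one has $\dim\overline\Gamma + \dim\overline\Gamma^{\perp} = 4$, and $\overline\Gamma$ is isotropic exactly when $\overline\Gamma\subseteq\overline\Gamma^{\perp}$; hence $\overline\Gamma$ is maximal isotropic (i.e.\ of dimension $2$) if and only if $\overline\Gamma = \overline\Gamma^{\perp}$. By the computation above this says $\Gamma/\ell\Lambda = \ell\Gamma^*/\ell\Lambda$, equivalently $\Gamma = \ell\Gamma^*$, equivalently $\Gamma^* = \ell^{-1}\Gamma$, which is the assertion. I do not anticipate a genuine obstacle; the only point demanding a little care is the identification $\overline\Gamma^{\perp} = \ell\Gamma^*/\ell\Lambda$ --- one must check that the clearing-of-denominators step is an equivalence and that $\ell\Gamma^*$ indeed sits between $\ell\Lambda$ and $\Lambda$, both of which come down to the self-duality $\Lambda^* = \Lambda$ used above.
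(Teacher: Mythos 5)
Your proof is correct, and it reaches the paper's conclusion by a slightly different organization of the same computation. You prove the single identity $\overline{\Gamma}^{\perp}=\ell\Gamma^{*}/\ell\Lambda$ inside $\overline{\Lambda}=\Lambda/\ell\Lambda$ (first checking, via $\Lambda^{*}=\Lambda$, that $\ell\Lambda\subseteq\ell\Gamma^{*}\subseteq\Lambda$ so this makes sense), and then invoke the standard linear-algebra fact that in a nondegenerate $4$-dimensional symplectic $\F_\ell$-space a subspace is maximal isotropic exactly when it equals its own orthogonal complement; both implications then fall out at once from $\Gamma=\ell\Gamma^{*}\iff\Gamma^{*}=\ell^{-1}\Gamma$. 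The paper instead argues the two directions separately by element computations with the pairing: from maximal isotropy it deduces $\ell^{-1}\Gamma\subseteq\Gamma^{*}$ and then, applying maximality to the class of $\ell\alpha$ for $\alpha\in\Gamma^{*}$, the reverse containment; conversely it derives isotropy and then maximality from $\Gamma^{*}=\ell^{-1}\Gamma$. The underlying manipulations (multiplying or dividing by $\ell$ inside $\langle-,-\rangle$) are identical, so the content coincides; what your packaging buys is economy and a bit of extra care, since the containment $\ell\Gamma^{*}\subseteq\Lambda$, which the paper uses only implicitly when it applies maximality to $\ell\alpha$, is verified explicitly in your first step. No gap.
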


\begin{proof}
First, suppose that $\Gamma/\ell\Lambda$ is maximal isotropic. Fix $\gamma \in \Gamma$. For any $\delta \in \Gamma$, since $\Gamma/\ell\Lambda$ is isotropic, we have $\langle \gamma, \delta \rangle \in \ell \Z_\ell$, so $\langle \ell^{-1}\gamma, \delta \rangle \in \Z_\ell$ and therefore $\ell^{-1}\gamma \in \Gamma^*$. This proves that $\ell^{-1}\Gamma \subset \Gamma^*$. Now, let $\alpha \in \Gamma^*$.
Observe that $\langle \ell\alpha, \gamma \rangle = \ell\langle \alpha, \gamma \rangle \in \ell\Z_\ell$ for any $\gamma \in \Gamma$. This implies that $\ell^{-1}\alpha$ must be in $\Gamma$, because $\Gamma/\ell\Lambda$ is maximally isotropic. This proves that $\ell^{-1}\Gamma^* \subset \Gamma$.

Now, suppose that $\Gamma^* = \ell^{-1}\Gamma$. 
Then, $\langle  \ell^{-1} \Gamma, \Gamma \rangle \subset \Z_\ell$, so $\langle  \Gamma, \Gamma \rangle \in \ell\Z_\ell$, and $\Gamma/\ell \Lambda$ is isotropic. Let $\lambda \in \Lambda$ such that $\langle \lambda + \ell \Lambda, \Gamma/\ell \Lambda \rangle_\ell = \{0\}$. Then, $\langle \ell^{-1}\lambda , \Gamma \rangle \subset \ell\Z_\ell$, so $\ell^{-1}\lambda \in \Gamma^* = \ell^{-1}\Gamma$, which implies that $\lambda \in \Gamma$. So $\Gamma/\ell \Lambda$ is maximal isotropic.
\end{proof}

\begin{definition}[($\ell,\ell$)-neighbors]
The set $\mathscr L(\Lambda)$ of \emph{$(\ell,\ell)$-neighbors} of $\Lambda$ is the set of lattices $\Gamma$ such that $\ell \Lambda \subset \Gamma \subset \Lambda$, and $\Gamma/\ell\Lambda$ is maximal isotropic in $\Lambda/\ell\Lambda$.
\end{definition}

\begin{remark}\label{rem:correspEllEllAndEllEll}
Consider the lattice $T = T_\ell\mathscr A$.
Note that $(\ell, \ell)$-isogenies $\mathscr A \to \mathscr B$ correspond under Proposition \ref{prop:correspondence} to lattices $\Gamma$ with $T \subset \Gamma \subset \frac{1}{\ell} T$ and $\Gamma/T$ a maximal isotropic subspace of $\frac{1}{\ell} T / T$, i.e., to $\ll$-neighbors of $T$ rescaled by a factor $\ell^{-1}$.
\end{remark}

\subsection{$\ll$-neighboring lattices}
Throughout this section, $V$ is a symplectic $\Q_\ell$-vector space of dimension 4. Again, we consider a prime number $\ell$, a quartic CM-field $K$, with $K_{0}$ its quadratic real subfield. The algebra $K_\ell = K\otimes_{\Q}\Q_\ell$ is a $\Q_\ell$-algebra of dimension~4, with an involution $x \mapsto x^\dagger$ fixing $K_{0,\ell}$ induced by the generator of $\Gal(K/K_0)$. Suppose that $K_\ell$ acts ($\Q_\ell$-linearly) on~$V$, and that for any $x \in K_\ell$, $u,v \in V$, we have $\langle xu,v \rangle = \langle u,x^\dagger v \rangle$.
For any lattice $\Lambda$ in $V$, the \emph{real order} of $\Lambda$ is the order in $K_{0,\ell} = K_0\otimes_{\Q}\Q_\ell$ defined as
$$\lO_0(\Lambda) = \{x \in K_{0,\ell} \mid x\Lambda \subset \Lambda \}.$$
Any order in $K_{0,\ell}$ is of the form $\lO_n = \Z_\ell + \ell^{n}\lO_0$, for some non-negative integer $n$, with $\lO_0$ the maximal order of $K_{0,\ell}$.
We say that $\Lambda$ is an $\lO_n$-lattice if $\lO(\Lambda) = \lO_n$. The goal of this section is to prove Theorem~\ref{RMupIso} by first proving its lattice counterpart, in the form of the following proposition.

\begin{proposition}\label{prop:neighborsLevelsRM}
Let $\Lambda$ be a self-dual $\lO_n$-lattice, with $n > 0$. The set $\mathscr L(\Lambda)$ of its $(\ell,\ell)$-neighbors contains exactly one $\lO_{n-1}$-lattice, namely $\ell \lO_{n-1}\Lambda$, $\ell^2 + \ell$ lattices of real order $\lO_{n}$, and $\ell^3$ lattices of real order $\lO_{n+1}$.
\end{proposition}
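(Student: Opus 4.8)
The strategy is to reduce everything to an explicit count inside the symplectic $\F_\ell$-vector space $W = \Lambda/\ell\Lambda$, using the dictionary already established: $(\ell,\ell)$-neighbors of $\Lambda$ correspond bijectively to maximal isotropic subspaces of $W$ (Lemma~\ref{lemma:countingMaxIso} gives $\ell^3+\ell^2+\ell+1$ of them), and for such a neighbor $\Gamma$ the real order $\lO_0(\Gamma)$ is determined by how the residual action of $\lO_0(\Lambda) = \lO_n$ on $W$ interacts with the line $\Gamma/\ell\Lambda$'s stabilizer. First I would analyze the structure of $W$ as a module over $\lO_n \otimes \F_\ell = \F_\ell[\epsilon]/(\epsilon^2) \otimes_{\F_\ell}(\text{the residual algebra of }\lO_0)$; more concretely, since $n>0$, the image of $\ell^n\lO_0$ in $\End(W)$ is a nilpotent ideal, and I would pin down the filtration of $W$ by powers of this nilpotent piece. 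Because $\Lambda$ is a free $\lO_n$-module of rank $1$ (by Lemma~\ref{lemma:quadraticImpliesGorenstein}, $\lO_n$ being Gorenstein and $\Lambda$ rank-one over it — this needs the quadratic-extension hypothesis applied to $K_\ell/K_{0,\ell}$, which holds since $K$ is a CM-field), $W \cong \lO_n/\ell\lO_n$ as a module over itself, and its submodule structure is completely explicit.

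The core computation is then: for a maximal isotropic plane $P \subset W$, determine $\lO_0(\Gamma)$ where $\Gamma$ is the preimage of $P$ in $\Lambda$. The key observations are (i) $\lO_0(\Gamma) \supseteq \lO_0(\Lambda) = \lO_n$ is impossible in general — rather $\lO_0(\Gamma)$ can be $\lO_{n-1}$, $\lO_n$, or $\lO_{n+1}$, and which one occurs is governed by whether $P$ is stable under the extra operator that generates $\lO_{n-1}$ over $\lO_n$ (namely $\ell^{n-1}$ times a generator of $\lO_0$), and whether $\ell^{-1}$ times the defining element of $\lO_{n+1}\cdot\Gamma$ already lands in $\Gamma$; (ii) the self-duality constraint $\Lambda^* = \Lambda$ together with the compatibility $\langle xu,v\rangle = \langle u, x^\dagger v\rangle$ forces the symplectic form on $W$ to be "$\lO_0$-linear" in the appropriate sense, which is what makes $\ell\lO_{n-1}\Lambda$ actually self-dual-up-to-rescaling (via the lemma relating $\Gamma^* = \ell^{-1}\Gamma$ to maximal isotropy), hence a genuine $(\ell,\ell)$-neighbor. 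I would first check that $\ell\lO_{n-1}\Lambda$ is the unique neighbor with real order $\lO_{n-1}$: uniqueness because any such $\Gamma$ must contain $\ell\lO_{n-1}\Lambda$ (being $\lO_{n-1}$-stable and sitting between $\ell\Lambda$ and $\Lambda$), and a dimension count forces equality; existence because one verifies $\ell\lO_{n-1}\Lambda$ is isotropic mod $\ell\Lambda$ and of the right dimension.

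For the remaining $\ell^3 + \ell^2 + \ell$ neighbors I would count those of real order $\lO_{n+1}$ directly: these are the $\Gamma$ for which the stabilizer drops, equivalently those whose image $P$ in $W$ is \emph{not} stable under multiplication by the image of $\ell^{n}\lO_0$-adjacent operators in a way that... — more cleanly, I would count the complementary set. A maximal isotropic $P$ has real order $\supseteq \lO_n$ automatically (since $\lO_n\Lambda \subseteq \Lambda$ descends), and has real order exactly $\lO_{n-1}$ only in the one case above; among the remaining $\ell^3 + \ell^2 + \ell$ planes, those with real order $\lO_n$ are exactly the ones stable under the action of all of $\lO_n/\ell$-endomorphisms but not lifting to $\lO_{n-1}$-stability, and I expect this to be $\ell^2 + \ell$ of them (the planes containing a specific distinguished line, intersected with the maximal isotropic condition — by a count parallel to Lemma~\ref{lemma:countingMaxIso} applied inside the $2$-dimensional quotient $\ell^{n-1}\lO_0\Lambda / \ell\Lambda$ or similar), leaving $\ell^3$ of real order $\lO_{n+1}$. \textbf{The main obstacle} is getting this last trichotomy right: precisely identifying, for each maximal isotropic plane $P$, the order $\lO_0(\Gamma)$ in terms of linear-algebra data on $W$, and making sure the self-duality hypothesis is used correctly so that the count of "order $\lO_n$" planes comes out to $\ell^2+\ell$ rather than something else. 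I would organize this by choosing an explicit $\lO_n$-basis of $\Lambda$, writing the nilpotent generator and the symplectic form in coordinates, and then classifying the $\ell^3+\ell^2+\ell+1$ isotropic planes by the rank of the induced map they receive from the nilpotent operator — a finite, if fiddly, case analysis. Finally, Theorem~\ref{RMupIso} follows from Proposition~\ref{prop:neighborsLevelsRM} via Remark~\ref{rem:correspEllEllAndEllEll} and Remark~\ref{rem:fieldOfDefIsogenies}, translating "real order $\lO_{n-1}$/$\lO_n$/$\lO_{n+1}$" into "RM-ascending/horizontal/descending" and reading off the target's local order and field of definition from the lattice.
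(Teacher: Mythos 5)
There is a genuine gap, and it undermines the central count. You assert that $\Lambda$ is a free $\lO_n$-module of rank~$1$ and hence $W = \Lambda/\ell\Lambda \cong \lO_n/\ell\lO_n$ as a module over itself, invoking Lemma~\ref{lemma:quadraticImpliesGorenstein} applied to $K_\ell/K_{0,\ell}$. This is wrong on two counts. First, dimensionally: $V$ has $\Q_\ell$-dimension $4$, so $\Lambda$ has $\Z_\ell$-rank $4$; the real order $\lO_n$ sits inside $K_{0,\ell}$ and has $\Z_\ell$-rank $2$; so $\Lambda$ has $\lO_n$-rank $2$, not $1$, and $W$ is $4$-dimensional over $\F_\ell$, not $2$-dimensional. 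Second, Lemma~\ref{lemma:quadraticImpliesGorenstein} applied to $K_\ell/K_{0,\ell}$ gives Gorenstein-ness of $\lO(\Lambda)$ (the order in $K_\ell$) and freeness of $\Lambda$ of rank $1$ over \emph{that} ring — a different ring from $\lO_n$. The freeness (of rank $2$) of $\Lambda$ over the real order $\lO_n$ is not automatic: it is exactly what the paper's Lemma~\ref{lemma:splittingOnLattice} proves, using reflexivity over the Gorenstein ring $\lO_n$ (Bass) \emph{and}, crucially, the self-duality of $\Lambda$ to rule out a direct summand of the form $\lO_m e_2$ with $m<n$. You mention self-duality only in connection with the symplectic form, not in establishing the module structure, so this step would be missing even if the rank were corrected.

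The error propagates: your plan to read off the count from "the completely explicit submodule structure of $\lO_n/\ell\lO_n$" collapses, since that ring has a unique nonzero proper submodule, whereas the correct object is the free rank-$2$ module $\F_\ell[\epsilon]^2$ over $\F_\ell[\epsilon]$, whose $\ell^2+\ell+1$ two-dimensional submodules (the content of the paper's Lemma~\ref{lemma:subspacesEpsilon}) are what produce the stratification $1 + (\ell^2+\ell) + \ell^3$. Your later reference to a "maximal isotropic plane $P\subset W$" is incoherent under your stated model of $W$, since in your picture $W$ is itself a plane. The surrounding strategy — count via Lemma~\ref{lemma:countingMaxIso}, stratify by $\lO_n$-stability, identify $\ell\lO_{n-1}\Lambda$ as the unique $\lO_{n-1}$-stable neighbor by a containment-plus-dimension argument — is sound and matches the paper, but the missing rank-$2$ structure and the unused self-duality hypothesis are not cosmetic: they are precisely what make the middle count come out to $\ell^2+\ell$.
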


\begin{lemma}\label{lemma:splittingOnLattice}
Let $\Lambda$ be a self-dual $\lO_n$-lattice in $V$, for some non-negative integer $n$. Then, $\Lambda$ is a free $\lO_n$-module of rank 2.
\end{lemma}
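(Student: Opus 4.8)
The goal is to prove that $\Lambda$ is free over $\lO_n$; since $\Lambda$ is a full-rank lattice in $V$ and $\dim_{\Q_\ell}V = 4 = 2\,[K_{0,\ell}:\Q_\ell]$, any free $\lO_n$-submodule of $V$ spanning $V$ is automatically of rank $2$, so the only real content is freeness. Note first that $\Lambda$ is torsion-free over $\lO_n$ (it embeds in the $K_{0,\ell}$-module $V$, and every non-zero-divisor of $\lO_n$ is invertible in $K_{0,\ell}$). If $n = 0$, then $\lO_0 = \cO_{K_0}\otimes_\Z\Z_\ell$ is the maximal order of $K_{0,\ell}$, hence a finite product of complete discrete valuation rings, over which finitely generated torsion-free modules are free; so $\Lambda$ is free, and self-duality is not even needed here. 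I therefore assume $n \ge 1$, in which case $\lO_n = \Z_\ell + \ell^n\lO_0$ is a complete \emph{local} ring (one has $\lO_n/\ell\lO_n \cong \F_\ell[\epsilon]/(\epsilon^2)$ whether $\ell$ is inert, ramified, or split in $K_0$) and is monogenic over $\Z_\ell$, hence Gorenstein by Lemma~\ref{lemma:quadraticImpliesGorenstein}; equivalently its codifferent $\mathfrak d^{-1} := \Hom_{\Z_\ell}(\lO_n,\Z_\ell)$ is an invertible, hence free rank-one, $\lO_n$-module, say $\mathfrak d^{-1} = \delta^{-1}\lO_n$ with $\delta \in K_{0,\ell}^\times$.

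The next step is to upgrade the $\Z_\ell$-valued symplectic form to an $\lO_n$-valued one on $\Lambda$. Because $\dagger$ fixes $K_{0,\ell}$, the form satisfies $\langle x\lambda,\mu\rangle = \langle\lambda,x\mu\rangle$ for $x \in K_{0,\ell}$, so $\langle x\lambda,\mu\rangle = \Tr_{K_{0,\ell}/\Q_\ell}(x\,B(\lambda,\mu))$ for a unique $K_{0,\ell}$-bilinear alternating form $B$ on $V$. Using $\Lambda = \Lambda^*$, the Hom-tensor adjunction, and the identification $\mathfrak d^{-1} \cong \lO_n$, one checks that $\tilde B := \delta B$ takes values in $\lO_n$ on $\Lambda \times \Lambda$ and is a \emph{perfect} $\lO_n$-bilinear alternating form, i.e. $\lambda \mapsto \tilde B(\lambda,-)$ is an isomorphism $\Lambda \xrightarrow{\sim} \Hom_{\lO_n}(\Lambda,\lO_n)$.

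It remains to deduce freeness. The mechanism I would use is to split off a hyperbolic plane: choose $v \in \Lambda \setminus \mathfrak m_n\Lambda$ (Nakayama); if $\tilde B(v,-)\colon \Lambda \to \lO_n$ is surjective, pick $w$ with $\tilde B(v,w) = 1$; then $\lO_n v \oplus \lO_n w$ is free of rank $2$, and since $\tilde B$ is $\lO_n$-valued it is an orthogonal direct summand, whose orthogonal complement is a torsion-free submodule of generic rank $2 - 2 = 0$, hence zero. Thus $\Lambda = \lO_n v \oplus \lO_n w$ is free of rank $2$.

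The main obstacle is establishing the surjectivity of $\tilde B(v,-)$. This is \emph{not} a formal consequence of perfectness: there exist non-free self-dual $\lO_n$-modules of rank $2$ carrying a perfect alternating form — for instance $\mathfrak m_n \oplus \mathfrak m_n^\vee$, built from the non-free reflexive rank-one module $\mathfrak m_n = \ell\lO_{n-1} \cong \mathfrak m_n^\vee$. What rules these out is precisely the hypothesis $\lO_0(\Lambda) = \lO_n$: each such module is stable under a strictly larger order of $K_{0,\ell}$ (e.g. $\mathfrak m_n \oplus \mathfrak m_n^\vee \cong \lO_{n-1}^2$ has real order $\lO_{n-1}$). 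So the heart of the proof is to show that if $\Lambda$ is not free — equivalently, if $\tilde B(v,-)$ is never surjective — then some element of $K_{0,\ell} \setminus \lO_n$ stabilises $\Lambda$, contradicting the exactness of the real order; producing that element from the failure of surjectivity and the perfect pairing, using the explicit description $\mathfrak m_n = \ell\Z_\ell + \ell^n\lO_0$, is where the real work lies.
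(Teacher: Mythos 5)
Your setup is sound (the reduction to $n\ge 1$, the observation that $\lO_n$ is then local and Gorenstein, the trace construction of a $K_{0,\ell}$-bilinear alternating form $B$ with $\langle\,\cdot\,,\cdot\,\rangle=\Tr_{K_{0,\ell}/\Q_\ell}\circ B$, the identification of self-duality of $\Lambda$ with perfectness of $\tilde B=\delta B$ as an $\lO_n$-valued form, and the hyperbolic-splitting criterion ``some $\tilde B(v,-)$ surjective $\Rightarrow$ free''). But the proof stops exactly where the lemma's content begins: you acknowledge yourself that the surjectivity of $\tilde B(v,-)$ --- equivalently, the exclusion of non-free self-dual lattices --- ``is where the real work lies'' and you do not supply that argument. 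So as it stands this is a reduction of the lemma to an unproved claim, not a proof. Moreover, the mechanism you propose for that missing step (``non-free $\Rightarrow$ some element of $K_{0,\ell}\setminus\lO_n$ stabilises $\Lambda$'') is delicate: there do exist non-free torsion-free rank-two $\lO_n$-modules whose multiplicator ring is exactly $\lO_n$, e.g.\ $\lO_n e_1\oplus\lO_{n-1}e_2$, so a larger stabiliser cannot be extracted from non-freeness alone; what must be shown is that such modules admit no perfect alternating $\lO_n$-valued pairing at all, and that is precisely the hard point you have deferred.

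For comparison, the paper closes this gap by structure theory rather than by enlarging the stabiliser: since $\lO_n$ is Gorenstein, Bass's results give a splitting $\Lambda\cong\lO_n e_1\oplus\lO_m e_2$ with $m\le n$ (a free summand split off, the complement being a proper, hence principal, fractional ideal over its own multiplicator ring $\lO_m$). Self-duality is then used through the symplectic $\F_\ell$-space $\Lambda/\ell\Lambda$: if $m<n$, the subspace $\lO_m e_2/\ell\lO_m$ is a maximal isotropic plane (alternating plus $K_0$-bilinearity), while $\ell\lO_{n-1}e_1/\ell\lO_n$ is a nonzero subspace of the complementary summand that is forced into its orthogonal, i.e.\ into the plane itself --- a contradiction, so $m=n$. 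If you want to complete your route instead, you would need an analogous classification step (every torsion-free rank-two $\lO_n$-module is $\lO_a e_1\oplus\lO_b e_2$) followed by a direct check that a perfect alternating form, necessarily of the shape $c\,(xy'-x'y)$ after extending to $V$, forces $a=b$; without some such argument the proposal has a genuine gap.
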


\begin{proof}
By Lemma \ref{lemma:quadraticImpliesGorenstein}, the order $\lO_n$ is a Gorenstein ring of dimension 1, and it follows from~\cite[Thm. 6.2]{Bass63} that $\Lambda$ is a reflexive $\lO_n$-module. From~\cite[Prop. 7.2]{Bass63}, $\Lambda$ has a projective direct summand, so $\Lambda =  \lO_n e_1 \oplus M$ for some $e_1 \in \Lambda$, and $M$ an $\lO_n$-submodule. This $M$ is still reflexive (any direct summand of a reflexive module is reflexive). So applying \cite[Prop. 7.2]{Bass63} again to $M$, together with the fact that it has $\Z_\ell$-rank 2, there is a non-negative integer $m \leq n$ and an element $e_2 \in \Lambda$ such that $M = \lO_me_2$. We shall prove that $m = n$. By contradiction, assume $m < n$. We have $\Lambda/\ell\Lambda = ( \lO_n e_1/\ell\lO_n) \oplus ( \lO_m e_2 / \ell \lO_m)$. Observe that $ \lO_m e_2/ \ell \lO_m$ is maximal isotropic. Indeed, it is of dimension 2, and for any $x,y \in \lO_m$, $\langle x e_2, y e_2 \rangle = - \langle y e_2, x e_2 \rangle$ because the form is alternating, and $\langle x e_2, y e_2 \rangle = \langle y e_2, x e_2 \rangle$ because it is $K_0$-bilinear, so $\langle x e_2, y e_2 \rangle = 0$.
Also, we have $\lO_{n-1} \subset \lO_m$, so 
$$\langle \ell\lO_{n-1} e_1, \lO_m e_2 \rangle = \langle \ell e_1, \lO_{n-1}\lO_m e_2 \rangle = \ell \langle e_1, \lO_m e_2 \rangle \subset \ell \Z_\ell.$$
This proves that $\ell\lO_{n-1}e_1/\ell\lO_n \subset (\lO_m e_2/\ell\lO_m)^\perp = \lO_m e_2/\ell\lO_m$, a contradiction.
\end{proof}

Using a standard abuse of notation, write $\F_\ell[\epsilon]$ for the ring of dual numbers, i.e. an $\F_\ell$-algebra isomorphic to $\F_\ell[X]/X^2$ via an isomorphism sending $\epsilon$ to $X$.
\begin{lemma}\label{lemma:subspacesEpsilon}
Let $R = \F_\ell[\epsilon]f_1 \oplus \F_\ell[\epsilon]f_2$ be a free $\F_\ell[\epsilon]$-module of rank 2.
The $\F_\ell[\epsilon]$-submodules of $R$ of $\F_{\ell}$-dimension 2 are exactly the $\ell^2 + \ell + 1$ modules $\epsilon R$, and $\F_\ell[\epsilon] \cdot g$ for any $g \not \in \epsilon R$. A complete list of these orbits $\F_\ell[\epsilon] \cdot g$ is given by $\F_\ell[\epsilon]\cdot(b\epsilon f_1 + f_2)$ for any $b \in \F_\ell$, and $\F_\ell[\epsilon]\cdot(f_1 + \alpha f_2 + \beta \epsilon f_2)$, for any $\alpha,\beta \in \F_\ell$.
\end{lemma}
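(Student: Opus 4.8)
The plan is to exploit that $\F_\ell[\epsilon]$ is a local principal ideal ring with maximal ideal $(\epsilon)$ satisfying $\epsilon^2=0$, so its only ideals are $0$, $(\epsilon)$ and the whole ring. First I would record the elementary fact that, since $R$ is free over $\F_\ell[\epsilon]$, the kernel of multiplication by $\epsilon$ on $R$ is exactly $\epsilon R$, an $\F_\ell$-subspace of dimension $2$.

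Next, given an $\F_\ell[\epsilon]$-submodule $N\subseteq R$ with $\dim_{\F_\ell}N=2$, I split into two cases. If $\epsilon N=0$, then $N\subseteq\epsilon R$, and as both spaces have dimension $2$ we get $N=\epsilon R$; this is the single exceptional submodule. If $\epsilon N\neq 0$, choose $g\in N$ with $\epsilon g\neq 0$. Then $\mathrm{Ann}_{\F_\ell[\epsilon]}(g)$ is an ideal contained in $(\epsilon)$ but not equal to $(\epsilon)$, hence zero, so $\F_\ell[\epsilon]g\cong\F_\ell[\epsilon]$ has $\F_\ell$-dimension $2$ and therefore equals $N$. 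The same annihilator computation shows conversely that $\F_\ell[\epsilon]g$ has dimension $2$ for every $g\notin\epsilon R$, and that if $g\in\epsilon R$ then $\epsilon g=0$ forces $\dim_{\F_\ell}\F_\ell[\epsilon]g\le 1$. Hence the dimension-$2$ submodules are precisely $\epsilon R$ together with the free rank-one submodules $\F_\ell[\epsilon]g$ with $g\notin\epsilon R$.

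For the count I would note that two elements outside $\epsilon R$ generate the same submodule $N$ if and only if they differ by a unit of $\F_\ell[\epsilon]$, and that $N\cap\epsilon R=\epsilon N$ has $\ell$ elements, so $N$ contains exactly $\ell^2-\ell=|\F_\ell[\epsilon]^\times|$ generators lying outside $\epsilon R$. Partitioning the $\ell^4-\ell^2$ elements of $R\setminus\epsilon R$ into these orbits yields $\ell^2+\ell$ free rank-one submodules, and hence $\ell^2+\ell+1$ dimension-$2$ submodules in total. Finally, to obtain the explicit list I normalize a generator $g=af_1+bf_2$ (with $a,b$ not both in $(\epsilon)$) by a unit: if $a\in\F_\ell[\epsilon]^\times$ then $g$ is associate to $f_1+(\alpha+\beta\epsilon)f_2$ for some $\alpha,\beta\in\F_\ell$, while if $a\in(\epsilon)$ (so $b$ is a unit) then $g$ is associate to $b'\epsilon f_1+f_2$ with $b'\in\F_\ell$. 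Comparing coordinates in the free decomposition $R=\F_\ell[\epsilon]f_1\oplus\F_\ell[\epsilon]f_2$ shows the listed generators lie in pairwise distinct submodules, and since there are exactly $\ell^2+\ell$ of them they exhaust all the free rank-one submodules.

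The routine part is the module theory over $\F_\ell[\epsilon]$; the only point requiring a little care is the last non-redundancy check, in particular that a generator from the ``$f_1$-leading'' family $f_1+\alpha f_2+\beta\epsilon f_2$ can never be an associate of one from the family $b'\epsilon f_1+f_2$. This follows immediately from the observation that every generator of a submodule in the first family has $f_1$-coordinate a unit, whereas every generator in the second family has $f_1$-coordinate in $(\epsilon)$; matching this up with the exact count $\ell^2+\ell$ then closes the argument.
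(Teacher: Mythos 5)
Your proof is correct, and it takes a genuinely more structural route than the paper's. The paper argues directly in $\F_\ell$-coordinates: writing each $g\in H$ as $a_g f_1 + b_g\epsilon f_1 + c_g f_2 + d_g\epsilon f_2$, it observes that $\epsilon g = a_g\epsilon f_1 + c_g\epsilon f_2$ must again lie in $H$, and then case-analyzes on whether all $a_g$ vanish and whether all $c_g$ vanish, explicitly exhibiting a spanning set for $H$ in each case and tallying the parameters. You instead invoke the fact that $\F_\ell[\epsilon]$ is a local principal ideal ring with $\mathfrak m = (\epsilon)$ and $\mathfrak m^2 = 0$: once you know $\ker(\epsilon\cdot) = \epsilon R$, the annihilator argument shows every dimension-$2$ submodule is either $\epsilon R$ or free of rank one, and the count $\ell^2+\ell$ drops out of partitioning $R\setminus\epsilon R$ into orbits of size $|\F_\ell[\epsilon]^\times| = \ell^2-\ell$, rather than from enumerating bases. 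The normalization of a generator $af_1 + bf_2$ by a unit then recovers the paper's explicit list. What your approach buys is cleanliness and generality: it works verbatim for a free rank-$2$ module over any finite local PIR with square-zero maximal ideal, and it gets the count $\ell^2+\ell+1$ independently of the list, which also sidesteps a minor confusion in the paper's proof (the paper's count of $\ell+1$ in its second case silently folds in $\epsilon R$). The paper's coordinate approach is more hands-on but produces the explicit spanning sets directly, which it reuses later in the proof of Proposition~\ref{prop:neighborsLevelsRM}.
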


\begin{proof}
Let $H \subset R$ be a subspace of dimension 2, stable under the action of $\F_\ell[\epsilon]$. 
For any $g \in H$, write $g = a_gf_1+b_g\epsilon f_1 + c_gf_2 + d_g \epsilon f_2 \in H$ for $a_g,b_g,c_g,d_g \in \F_\ell$.
Since $H$ is $\F_\ell[\epsilon]$-stable, for any $g \in H$, the element $g\epsilon =  a_g\epsilon f_1 + c_g\epsilon f_2$ is also in $H$.

First suppose $a_g = 0$ and $c_g = 0$ for any $g \in H$. Then, as $H = \epsilon R$, it is indeed an $\F_\ell[\epsilon]$-submodule and has $\F_{\ell}$-dimension 2.
Now, suppose $a_g = 0$ for any $g \in H$, but $H$ contains an element $g$ such that $c_g \neq 0$ is non-zero. Then, $H$ contains both $b_g\epsilon f_1 + c_gf_2 + d_g \epsilon f_2$, and $c_g\epsilon f_2$, so $H$ is the $\F_\ell$-vector space spanned by $\epsilon f_2$ and $b_g\epsilon f_1 + c_gf_2$. There are $\ell + 1$ such subspaces $H$ (one for each possible $(b_g:c_g) \in \mathbb P^1(\F_\ell)$), and all of them are of dimension 2 and $R$-stable. 

Finally, suppose there exists $g \in H$ such that $a_g \neq 0$. Then, it is spanned as an $\F_\ell$-vector spaces by a pair $\{f_1 + \alpha f_2 + \beta \epsilon f_2, \epsilon f_1 + \alpha \epsilon f_2\}$, with $\alpha,\beta \in \F_\ell$, and any of the $\ell^2$ subspaces of this form are $\F_\ell[\epsilon]$-submodules.
\end{proof}

\begin{lemma}\label{lemma:realOrbitIsotropic}
Let $\Lambda$ be an $\lO_n$-lattice, for some non-negative integer $n$. For any element $g \in \Lambda/\ell\Lambda$, the orbit $\lO_n \cdot g$ is an isotropic subspace of $\Lambda/\ell\Lambda$.
\end{lemma}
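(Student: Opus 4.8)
The claim is that for $\Lambda$ an $\lO_n$-lattice and $g \in \Lambda/\ell\Lambda$, the $\lO_n$-orbit $\lO_n\cdot g$ is isotropic for the symplectic form $\langle-,-\rangle_\ell$ on $\Lambda/\ell\Lambda$. The plan is to reduce this to the compatibility $\langle xu, v\rangle = \langle u, x^\dagger v\rangle$ between the $K_\ell$-action and the symplectic form, combined with the fact that $\lO_n \subset K_{0,\ell}$, so that the involution $\dagger$ acts trivially on $\lO_n$. First I would lift: choose $\tilde g \in \Lambda$ reducing to $g$, and note $\lO_n\cdot g$ is spanned over $\F_\ell$ by the reductions of $x\tilde g$ for $x$ ranging over a $\Z_\ell$-basis of $\lO_n$ (in fact $\{1, \ell^n\omega\}$ for a suitable $\omega$ generating $\lO_0$ over $\Z_\ell$, though one needn't be explicit). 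So it suffices to show $\langle x\tilde g, y\tilde g\rangle \in \ell\Z_\ell$ for all $x,y \in \lO_n$.

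The key computation is the two-line argument already used inside the proof of Lemma~\ref{lemma:splittingOnLattice}: for $x, y \in \lO_n \subset K_{0,\ell}$, on one hand the form is alternating so $\langle x\tilde g, y\tilde g\rangle = -\langle y\tilde g, x\tilde g\rangle$; on the other hand, since $x = x^\dagger$ (as $x \in K_{0,\ell}$), the compatibility gives $\langle x\tilde g, y\tilde g\rangle = \langle \tilde g, x y\tilde g\rangle = \langle \tilde g, yx\tilde g\rangle = \langle y\tilde g, x\tilde g\rangle$ using commutativity of $K_{0,\ell}$ and the compatibility again in the other direction. Comparing, $2\langle x\tilde g, y\tilde g\rangle = 0$, hence $\langle x\tilde g, y\tilde g\rangle = 0$ in $\Q_\ell$ — in particular it lies in $\ell\Z_\ell$, so its reduction mod $\ell$ vanishes. (When $\ell = 2$ the conclusion $\langle x\tilde g, y\tilde g\rangle = 0$ still holds from the alternating property alone applied after symmetrizing, but the cleanest statement is simply that the $K_{0,\ell}$-bilinearity forces $\langle x\tilde g, y\tilde g\rangle$ to be symmetric in $x,y$ while the form is antisymmetric, so it vanishes identically on $\lO_n\cdot\tilde g$; this is characteristic-free.) Therefore $\langle-,-\rangle$ vanishes on $\lO_n\tilde g \times \lO_n\tilde g$, a fortiori its mod-$\ell$ reduction vanishes on $\lO_n\cdot g$, which is exactly the isotropy statement.

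I do not anticipate a genuine obstacle here: the lemma is essentially a restatement, at the level of a single cyclic $\lO_n$-submodule, of the observation that any $K_{0,\ell}$-stable subspace of $V$ is isotropic because the form is alternating and $K_{0,\ell}$-bilinear (as $\dagger|_{K_{0,\ell}} = \mathrm{id}$). The only mild care needed is passing from the statement over $V$ (or $\Lambda$) to the statement mod $\ell$, which is immediate since $\langle x\tilde g, y\tilde g\rangle = 0$ integrally implies its reduction is $0$; and noting the argument does not require $\Lambda$ to be self-dual, only that it carries the $K_\ell$-action compatible with the pairing. The spanning remark — that $\lO_n\cdot g$ is the $\F_\ell$-span of $\{x\tilde g \bmod \ell\Lambda : x \in \lO_n\}$ — is the one bookkeeping point to state explicitly, after which bilinearity of $\langle-,-\rangle_\ell$ extends vanishing on generators to vanishing on the whole subspace.
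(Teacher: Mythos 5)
Your proof is correct and follows the same argument as the paper: lift $g$ to $\lambda\in\Lambda$, observe that $\langle x\lambda,y\lambda\rangle$ is both antisymmetric in $(x,y)$ (the form is alternating) and symmetric in $(x,y)$ (since $x^\dagger = x$ for $x\in K_{0,\ell}$, i.e.\ $K_0$-bilinearity), hence zero, and reduce mod $\ell$. The parenthetical worry about $\ell=2$ is unnecessary: the identity $2\langle x\lambda,y\lambda\rangle=0$ lives in $\Q_\ell$, which has characteristic $0$ for every $\ell$, so division by $2$ is never an issue.
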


\begin{proof}
Let $\lambda \in \Lambda$ such that $g = \lambda + \ell\Lambda$.
For any $\alpha, \beta \in \lO_n$, we have
$\langle \alpha \lambda, \beta \lambda \rangle = -\langle \beta \lambda,  \alpha \lambda \rangle$
because the symplectic form on $V$ is alternating, and 
$\langle \alpha \lambda, \beta \lambda \rangle = \langle \beta \lambda,  \alpha \lambda \rangle$
because it is $K_0$-bilinear. So $\langle \alpha g, \beta g \rangle_\ell = 0$, and the orbit of $g$ is isotropic.
\end{proof}

\subsection*{Proof of Proposition~\ref{prop:neighborsLevelsRM}}
From Lemma~\ref{lemma:splittingOnLattice}, $\Lambda$ splits as $e_1 \lO_n \oplus e_2 \lO_n$, for some $e_1,e_2 \in \Lambda$.
Observe that there is an element $\epsilon \in \lO_n$ such that $\lO_n / \ell \lO_n = \F_\ell[\epsilon] \cong \F_\ell[X]/(X^2)$, via the isomorphism sending $\epsilon$ to $X$. The quotient
$R = \Lambda / \ell \Lambda$ is a free $\F_\ell[\epsilon]$-module of rank 2. Let $\pi : \Lambda \rightarrow R$ be the
canonical projection. The set $\{f_1,\epsilon f_1, f_2, \epsilon f_2\}$ forms an $\F_\ell$-basis of $R$, where $f_i = \pi(e_i)$.

From Lemma~\ref{lemma:subspacesEpsilon}, $R$ contains $\ell^2 + \ell + 1$ subspaces of dimension 2 that are $\F_\ell[\epsilon]$-stable.
The subspace $\epsilon R = \F_\ell\epsilon f_1 \oplus \F_\ell\epsilon f_2$ is isotropic because
$$\langle \epsilon f_1, \epsilon f_2 \rangle_\ell = \langle f_1, \epsilon^2 f_2 \rangle_\ell = 0.$$
Together with Lemma~\ref{lemma:realOrbitIsotropic}, we conclude that all $\ell^2 + \ell + 1$ of these $\F_\ell[\epsilon]$-stable subspaces are maximal isotropic.
From Lemma~\ref{lemma:countingMaxIso}, $R$ contains a total of $\ell^{3} + \ell^{2} + \ell + 1$ maximal isotropic subspaces.
Thus, the $(\ell,\ell)$-neighbors corresponding to the remaining $\ell^3$ subspaces are not stable for the action of $\lO_n$. They are
however stable for the action of $\lO_{n+1}$, so those are $\lO_{n+1}$-lattices. 

It remains to prove that among the $\ell^{2} + \ell + 1$ neighbors that are $\lO_n$-stable, only the lattice $\ell \lO_{n-1}\Lambda$ (which
corresponds to the subspace $\epsilon R$) is $\lO_{n-1}$-stable, and that it is not $\lO_{n-2}$-stable. This would prove that
$\ell \lO_{n-1}\Lambda$ is an $\lO_{n-1}$-lattice, and the $\ell^2 + \ell$ other lattices have order $\lO_{n}$.

Write $\Gamma = \ell \lO_{n-1}\Lambda$. Then $\pi(\Gamma) = \epsilon R$ is maximal isotropic and $\F_\ell[\epsilon]$-stable.
Suppose by contradiction that we have 
$\lO_{n-2} \Gamma \subset \Gamma$. Then,
$\ell \lO_{n-2}\Lambda \subset \lO_{n-2}\Gamma \subset \Gamma \subset \Lambda,$
so $\ell \lO_{n-2}\Lambda \subset \Lambda$. But $\ell \lO_{n-2} \not \subset \lO_{n}$, which contradicts the fact that $\Lambda$ is an $\lO_n$-lattice. Therefore $\Gamma$ is an $\lO_{n-1}$-lattice.

Let $H \subset R$ be another maximal isotropic subspace, and suppose that $\pi^{-1}(H)$ is $\lO_{n-1}$-stable.
Let $\lambda = e_1(a+\ell^nx) + e_2(b+\ell^ny) \in \pi^{-1}(H)$, with $a,b \in \Z_\ell$ and $x,y \in \lO_0$, and let $z \in \lO_{n-1}$. A simple computation yields
$$\Lambda = z \lambda + \Lambda = z ae_1 + z be_2 + \Lambda.$$
Therefore, both $z a$ and $z b$ must be in $\lO_n$ for any $z \in \lO_{n-1}$. It follows that $a$ and $b$ must be in $\ell\Z_\ell$, whence
$\lambda \in \Gamma$. So $\pi^{-1}(H) \subset \Gamma$, and we conclude that $H = \epsilon R$ from the fact that both are maximal
isotropic. This proves that no $(\ell,\ell)$-neighbor other that $\Gamma$ is $ \lO_{n-1}$-stable.
\qed

\subsection{Changing the real multiplication with $\ll$-isogenies}\label{subsec:ellellLevelsRM}
The results for lattices are now ready to be applied to analyze how $\ll$-isogenies can change the real multiplication.
Fix a principally polarizable absolutely simple ordinary abelian surface $\mathscr A$ over $\F_q$. As usual, $K$ is its endomorphism algebra, and $K_0$ the maximal real subfield of $K$. 
The local real order $\lO_0(\mathscr A)$ of $\mathscr A$ is of the form $\lO_n = \Z_\ell + \ell^n\lO_0$ for some non-negative integer $n$.

\begin{definition}
Let $\varphi: \mathscr A \rightarrow \mathscr B$ be an isogeny.
If $\lO_0(\mathscr A) \subset \lO_0(\mathscr B)$, we say that $\varphi$ is an \emph{RM-ascending} isogeny, if $\lO_0(\mathscr B) \subset \lO_0(\mathscr A)$ we say it is \emph{RM-descending}, otherwise $\lO_0(\mathscr A) = \lO_0(\mathscr B)$ and it is \emph{RM-horizontal}.
\end{definition}

\subsection*{Proof of Theorem~\ref{RMupIso}}
Theorem ~\ref{RMupIso} follows from Proposition~\ref{prop:neighborsLevelsRM} together with Remark~\ref{rem:correspEllEllAndEllEll}, and the observation that the $\lO_{n-1}$-lattice $\ell \lO_{n-1}\Lambda$ has order $\lO_{n-1}\cdot\lO(\Lambda)$.
\qed\\

In the following, we show that some structure of the graphs of horizontal isogenies at any level can be inferred from the structure at the maximal level: indeed, there is a graph homomorphism from any non-maximal level to the level above.

\begin{definition}[RM-predecessor]
Suppose $\lO_0(\mathscr A) = \lO_n$ with $n > 0$. Note that the kernel $\kappa$ of the unique RM-ascending isogeny of Proposition~\ref{RMupIso} is given by $(\lO_{n-1}T_\ell\mathscr A)/T_\ell\mathscr A$ (via Proposition~\ref{prop:correspondence}) and does not depend on the polarization.
The \emph{RM-predecessor} of $\mathscr A$ is the variety $\pr(\mathscr A) = \mathscr A / \kappa$, and we denote by $\mathrm{up}_\mathscr A : \mathscr A \ra \mathscr A / \kappa$ the canonical projection.
If $\xi$ is a principal polarization on $\mathscr A$, let $\pr(\xi)$ be the unique principal polarization induced by $\xi$ via $\mathrm{up}_\mathscr A$.
\end{definition}

\begin{proposition}\label{prop:liftingIsogeniesUp}
Suppose $n > 0$. For any principal polarization $\xi$ on $\mathscr A$, and any RM-horizontal
$\ll$-isogeny $\varphi : \mathscr A \rightarrow \mathscr B$ with respect to $\xi$, there is an $\ll$-isogeny $\tilde{\varphi} : \pr(\mathscr A) \rightarrow \pr(\mathscr B)$ with respect to $\pr(\xi)$ such that the following diagram commutes:
\begin{equation*}
\xymatrix{
\pr(\mathscr A) \ar[r]^{\tilde\varphi} & \pr(\mathscr B)  \\
\mathscr A \ar[u]^{\mathrm{up}_\mathscr A}  \ar[r]^{\varphi} & \mathscr B \ar[u]_{\mathrm{up}_\mathscr B}.
} 
\label{eq:ppav}
\end{equation*}
\end{proposition}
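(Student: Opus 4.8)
The plan is to translate everything into the language of lattices via Proposition~\ref{prop:correspondence} and Remark~\ref{rem:correspEllEllAndEllEll}, where the statement becomes a simple containment of lattices. Write $T = T_\ell\mathscr A$, and let $\Gamma \subset \frac1\ell T$ be the lattice corresponding to the RM-horizontal $\ll$-isogeny $\varphi$ (so $T \subset \Gamma \subset \frac1\ell T$ and $\Gamma/T$ is maximal isotropic). The RM-ascending isogeny $\mathrm{up}_\mathscr A$ corresponds to the lattice $\lO_{n-1}T$. So what I must produce is a lattice $\Gamma'$ with $\lO_{n-1}T \subset \Gamma' \subset \frac1\ell\lO_{n-1}T$, with $\Gamma'/\lO_{n-1}T$ maximal isotropic (for the symplectic form on $\frac1\ell\lO_{n-1}T/\lO_{n-1}T$ induced by $\pr(\xi)$), and such that $\Gamma' \supseteq \Gamma$; the commutativity of the square is then automatic from functoriality of the correspondence, since $\mathrm{up}_\mathscr B$ corresponds to $\lO_{n-1}T_\ell\mathscr B = \lO_{n-1}\Gamma$ sitting inside $\frac1\ell T_\ell \mathscr B$, and $\pr(\mathscr B) = \mathscr B/\ker(\mathrm{up}_\mathscr B)$.

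First I would take $\Gamma' = \lO_{n-1}\Gamma = \lO_{n-1}T + \lO_{n-1}\cdot(\Gamma)$. Since $\Gamma \subseteq \frac1\ell T$ and $\lO_{n-1} \subseteq \frac1\ell\lO_n$ would be false, I instead observe $\lO_{n-1}\Gamma \subseteq \lO_{n-1}\cdot\frac1\ell T = \frac1\ell\lO_{n-1}T$, and $\lO_{n-1}\Gamma \supseteq \lO_{n-1}T$, so the sandwiching holds and $\Gamma' \supseteq \Gamma$. This $\Gamma'$ is automatically $\lO_{n-1}$-stable. The key point to check is that $\Gamma'/\lO_{n-1}T$ is maximal isotropic in $\frac1\ell\lO_{n-1}T/\lO_{n-1}T$. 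For isotropy, use the argument of Lemma~\ref{lemma:realOrbitIsotropic}: the form is alternating and $K_0$-bilinear, so any $\lO_{n-1}$-stable subspace generated by the image of $\Gamma$ (which is itself isotropic, being $\Gamma/T$ transported up) stays isotropic — more precisely $\langle \alpha x, \beta y\rangle = \langle \beta y, \alpha x\rangle$ by $K_0$-bilinearity while $= -\langle\beta y,\alpha x\rangle$ by alternation for $\alpha,\beta\in\lO_{n-1}$, forcing the pairing to vanish on the $\lO_{n-1}$-span once it vanishes on $\Gamma$. For maximality, I would check the index: $[\frac1\ell\lO_{n-1}T : \lO_{n-1}T] = \ell^4$ and one needs $[\Gamma' : \lO_{n-1}T] = \ell^2$, which can be verified by a direct count using the $\lO_n$-module structure of $T$ (Lemma~\ref{lemma:splittingOnLattice}: $T = e_1\lO_n \oplus e_2\lO_n$) and tracking how $\Gamma$, which has index $\ell^2$ in $\frac1\ell T$, behaves under multiplication by $\lO_{n-1}$. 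Alternatively — and this is cleaner — one shows $\Gamma'$ is self-dual up to the rescaling dictated by $\pr(\xi)$, using the lemma characterizing $(\ell,\ell)$-neighbors via the condition $\Gamma^* = \frac1\ell\Gamma$, and noting that $\dagger$-stability of $\lO_{n-1}$ (it lies in $K_{0,\ell}$, which is fixed by $\dagger$) makes the dual compatible with multiplication by $\lO_{n-1}$.

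The main obstacle I anticipate is the index/maximality computation: it is conceivable that for certain configurations $\Gamma$ is already $\lO_{n-1}$-stable (the RM-ascending case of Theorem~\ref{RMupIso} excluded, but $\Gamma$ could contain $\lO_{n-1}T$ partially), in which case $\lO_{n-1}\Gamma$ might coincide with $\frac1\ell\lO_{n-1}T$ or have the wrong index, and one would need a different construction of $\Gamma'$. To handle this I would argue using Theorem~\ref{RMupIso} itself: an RM-horizontal neighbor $\Gamma$ of $T$ restricts, under the surjection $\frac1\ell T/T \twoheadrightarrow \frac1\ell\lO_{n-1}T/\lO_{n-1}T$ (which has kernel $\lO_{n-1}T/T$, of $\F_\ell$-dimension $2$), to a $2$-dimensional subspace provided $\Gamma/T$ meets $\lO_{n-1}T/T$ in exactly dimension $2$ — and this intersection condition is precisely what distinguishes RM-horizontal from RM-ascending among the neighbors enumerated in Proposition~\ref{prop:neighborsLevelsRM}. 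So I would first prove the combinatorial fact that for an RM-horizontal $\Gamma$, one has $\lO_{n-1}T/T \subseteq \Gamma/T$, using that $\Gamma$ is $\lO_n$-stable of order exactly $\lO_n$ together with the explicit module description; granting this, $\Gamma' := \Gamma + \lO_{n-1}T = \lO_{n-1}\Gamma$ has image of dimension exactly $4 - 2 = 2$ in $\frac1\ell\lO_{n-1}T/\lO_{n-1}T$, giving maximality, and the proof concludes. The polarization bookkeeping — that $\pr(\xi)$ is the polarization making all this isotropy statement meaningful and that the resulting $\tilde\varphi$ is an $\ll$-isogeny with respect to $\pr(\xi)$ — is then just unwinding the definition of $\pr(\xi)$ and Proposition~\ref{prop:UniquePolarization}.
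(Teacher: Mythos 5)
Your overall strategy is the same as the paper's: the paper's entire proof is the assertion that if $\Gamma$ is an $\ll$-neighbor of an $\lO_n$-lattice $\Lambda$ then $\ell\lO_{n-1}\Gamma$ is an $\ll$-neighbor of $\ell\lO_{n-1}\Lambda$ (equivalently, in your rescaled convention, that $\Gamma' = \lO_{n-1}\Gamma$ is the required neighbor of $\lO_{n-1}T$), and your isotropy argument via $K_0$-bilinearity plus alternation is exactly right and is what Lemma~\ref{lemma:realOrbitIsotropic} provides.

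However, the route you commit to for the maximality (index) computation rests on a false claim. You propose to prove that for an RM-horizontal $\Gamma$ one has $\lO_{n-1}T/T \subseteq \Gamma/T$. This is impossible: both $\Gamma/T$ and $\lO_{n-1}T/T$ have $\F_\ell$-dimension $2$ inside the $4$-dimensional space $\tfrac1\ell T/T$, so the containment would force $\Gamma = \lO_{n-1}T$, i.e.\ $\varphi$ would be the RM-ascending isogeny, contradicting the hypothesis. In fact, writing $T = e_1\lO_n\oplus e_2\lO_n$ and $\Gamma/T = (\lO_n/\ell\lO_n)\cdot \bar\gamma$ with $\gamma = ae_1+ce_2+(\ell^{n}\omega\text{-part})$ and $(a,c)\not\equiv(0,0)\bmod\ell$ (Lemmas~\ref{lemma:splittingOnLattice} and~\ref{lemma:subspacesEpsilon}), one checks that $(\Gamma/T)\cap(\lO_{n-1}T/T)$ is exactly the line $\epsilon\bar\gamma\,\F_\ell$. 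Consequently your identity $\Gamma + \lO_{n-1}T = \lO_{n-1}\Gamma$ also fails: the left-hand side has index only $\ell$ over $\lO_{n-1}T$, whereas $\lO_{n-1}\Gamma = \ell^{-1}\lO_{n-1}\gamma + \lO_{n-1}T$ has index $\ell^2$ (the missing generator is $\ell^{-1}\epsilon'\cdot\ell^{-1}\gamma$, where $\epsilon'$ generates $\lO_{n-1}$ over $\Z_\ell$ modulo $\ell$; note also that the map $\tfrac1\ell T/T\to\tfrac1\ell\lO_{n-1}T/\lO_{n-1}T$ you invoke is not surjective). The correct argument is the direct computation you mention first and then abandon: reducing modulo $\ell\lO_{n-1}T$, the image of $\lO_{n-1}\Gamma$ in $\tfrac1\ell\lO_{n-1}T/\lO_{n-1}T$ is the free rank-one $\lO_{n-1}/\ell\lO_{n-1}$-submodule generated by the class of $\ell^{-1}(ae_1+ce_2)$; since $(a,c)\not\equiv(0,0)$ this has dimension $2$, and being an $\lO_{n-1}$-orbit it is isotropic by Lemma~\ref{lemma:realOrbitIsotropic}, hence maximal isotropic. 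With that substitution (and the routine check that the form induced by $\pr(\xi)$ is the $\ell^{-1}$-rescaling making $\lO_{n-1}T$ self-dual), your proof goes through.
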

\begin{proof}
This follows from the fact that if $\Lambda$ is an $\lO_n$-lattice, and $\Gamma\in\mathscr L(\Lambda)$ is an $\ll$-neighbor of $\Lambda$, then $\ell\lO_{n-1}\Gamma \in \mathscr L(\ell\lO_{n-1}\Lambda)$.
\end{proof}

\section{$\ll$-isogenies preserving the real multiplication}

\subsection{$\ll$-neighbors and $\mathfrak l$-neighbors}
Let $\mathscr L_0(\Lambda)$ be the set of $(\ell,\ell)$-neighbors of the lattice $\Lambda$ with maximal real multiplication. These neighbors will be analysed through $\mathfrak l$-neighbors, for $\mathfrak l$ a prime ideal in $\lO_0$.
This will allow us to account for the possible splitting behaviors of $\ell$.
The relation between the set $\mathscr L_0(\Lambda)$ and the sets $\mathscr L_\mathfrak l(\Lambda)$ is given by the following proposition proved case-by-case in the following three sections, as Propositions~\ref{prop:L0LambdaInert}, \ref{prop:L0LambdaSplit} and \ref{prop:L0LambdaRamif}: 

\begin{proposition}\label{prop:classificationRMPreservingNeighbors}
Let $\Lambda$ be a lattice with maximal real multiplication. The set of $(\ell,\ell)$-neighbors with maximal real multiplication is
\[
\mathscr L_0(\Lambda) =  \left\{ \begin{array}{ll}
        \mathscr L_{\ell\lO_0}(\Lambda) & \mbox{if $\ell$ is inert in $K_0$},\\
        \mathscr L_{\mathfrak l_1}[\mathscr L_{\mathfrak l_2}(\Lambda)] = \mathscr L_{\mathfrak l_2}[\mathscr L_{\mathfrak l_1}(\Lambda)] & \mbox{if $\ell$ splits as $\mathfrak l_1\mathfrak l_2$ in $K_0$},\\
        \mathscr L_{\mathfrak l}[\mathscr L_{\mathfrak l}(\Lambda)] & \mbox{if $\ell$ ramifies as $\mathfrak l^2$ in $K_0$}.\end{array} \right.
\]
\end{proposition}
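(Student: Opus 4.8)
The plan is to reduce this entirely to the local structure of $\Lambda/\ell\Lambda$ as a module over $\lO_0/\ell\lO_0 = \cO_{K_0}\otimes\F_\ell$, using that $\Lambda$ is a free $\lO_0$-module of rank $1$ (Lemma~\ref{lemma:quadraticImpliesGorenstein} applied with $A = \lO_0$, since $\Lambda$ has maximal real multiplication), hence $\Lambda$ is a free $\lO_0$-module... wait — more precisely, as a module over the \emph{real} order $\lO_0 = \cO_{K_0}\otimes\Z_\ell$, the lattice $\Lambda$ is free of rank $2$, by the same argument as Lemma~\ref{lemma:splittingOnLattice} (which is stated for $g=2$; here $K_0$ is quadratic so $\lO_0$ has $\Z_\ell$-rank $2$ and $\lO_0/\ell\lO_0$ is a product or local Artinian ring as $\ell$ is inert/split/ramified). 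The key observation is that an $(\ell,\ell)$-neighbor $\Gamma$ of $\Lambda$ \emph{with maximal real multiplication} is exactly an intermediate lattice $\ell\Lambda\subset\Gamma\subset\Lambda$ with $\Gamma/\ell\Lambda$ maximal isotropic in $\Lambda/\ell\Lambda$ \emph{and} $\lO_0$-stable. So the first step is to record the equivalence: $\Gamma$ is an $(\ell,\ell)$-neighbor with maximal RM iff $\Gamma/\ell\Lambda$ is an $\lO_0/\ell\lO_0$-submodule of $\Lambda/\ell\Lambda$ that is maximal isotropic for the symplectic form $\langle-,-\rangle_\ell$. (The ``only if'' needs that $\lO_0\subset\lO(\Gamma)$ forces $\lO_0(\Gamma)=\lO_0$, which holds because $\lO_0$ is already the maximal real order; and conversely $\lO_0$-stability of $\Gamma/\ell\Lambda$ lifts to $\lO_0\Gamma\subset\Gamma$ since $\ell\Lambda\subset\Gamma$.)

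Next I would compare the two sides of each claimed equality directly as sets of intermediate lattices. For the right-hand side: an element of $\mathscr L_{\mathfrak l_1}[\mathscr L_{\mathfrak l_2}(\Lambda)]$ is a lattice $\Gamma$ obtained as an $\mathfrak l_1$-neighbor of some $\mathfrak l_2$-neighbor $\Gamma'$ of $\Lambda$; by Remark~\ref{rem:correspFrakLAndFrakL} and the definition of $\mathfrak l$-neighbor this gives $\mathfrak l_1\mathfrak l_2\Lambda\subset\Gamma\subset\Lambda$ with $\Gamma/\mathfrak l_1\mathfrak l_2\Lambda$ an $\lO_0$-submodule that is a line over $\lO_0/\mathfrak l_2$ in the $\mathfrak l_2$-layer and a line over $\lO_0/\mathfrak l_1$ in the $\mathfrak l_1$-layer. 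Since $\mathfrak l_1\mathfrak l_2 = \ell\lO_0$ in the split case, this is precisely a length-$2$ (as $\lO_0/\ell\lO_0$-module) submodule of $\Lambda/\ell\Lambda$, and one checks it is automatically isotropic: on each $\mathfrak l_i$-component $\langle\alpha g,\beta g\rangle = \langle\beta g,\alpha g\rangle$ by $K_0$-bilinearity and $= -\langle\beta g,\alpha g\rangle$ by the alternating property (compare Lemma~\ref{lemma:realOrbitIsotropic}), and the cross-pairing between the $\mathfrak l_1$- and $\mathfrak l_2$-parts lands in $\ell\Z_\ell$ because $\langle\mathfrak l_1 x,\mathfrak l_2 y\rangle = \langle x,\mathfrak l_1^\dagger\mathfrak l_2 y\rangle$ and $\mathfrak l_1^\dagger\mathfrak l_2$ is divisible by $\ell$. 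Counting dimensions, such a submodule has $\F_\ell$-dimension $2$, hence is maximal isotropic; so the right-hand side is contained in $\mathscr L_0(\Lambda)$. The reverse inclusion: given $\Gamma\in\mathscr L_0(\Lambda)$, use the decomposition $\lO_0/\ell\lO_0 \cong \lO_0/\mathfrak l_1\times\lO_0/\mathfrak l_2$ to split $\Gamma/\ell\Lambda$ as a product, read off an intermediate $\mathfrak l_2$-neighbor $\Gamma'$ (the preimage of the $\mathfrak l_1$-part's full space plus $\mathfrak l_2$-part), and check $\Gamma\in\mathscr L_{\mathfrak l_1}(\Gamma')$. The inert case is the cleanest: $\lO_0/\ell\lO_0 = \F_{\ell^2}$ is a field, so an $\lO_0/\ell\lO_0$-submodule of the free rank-$2$ module $\Lambda/\ell\Lambda$ of $\F_\ell$-dimension $2$ is exactly an $\F_{\ell^2}$-line, which is what $\mathscr L_{\ell\lO_0}(\Lambda)$ parametrizes (all automatically isotropic); no isotropy condition is lost. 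The ramified case $\ell = \mathfrak l^2$: here $\lO_0/\ell\lO_0 \cong \F_\ell[\epsilon]$, and the $\lO_0/\ell\lO_0$-submodules of $\F_\ell$-dimension $2$ are classified by Lemma~\ref{lemma:subspacesEpsilon}; they all turn out isotropic (as in the proof of Proposition~\ref{prop:neighborsLevelsRM}), and one identifies $\mathscr L_{\mathfrak l}[\mathscr L_{\mathfrak l}(\Lambda)]$ with precisely this set by tracking the two $\mathfrak l$-layers $\mathfrak l^2\Lambda\subset\mathfrak l\Lambda'\subset\Lambda'$.

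The main obstacle I anticipate is the ramified case, and specifically two points: first, showing that composing two $\mathfrak l$-neighbor steps never produces the ``same'' lattice twice in a way that miscounts — i.e. that the map $\mathscr L_{\mathfrak l}[\mathscr L_{\mathfrak l}(\Lambda)]\to\mathscr L_0(\Lambda)$ is well-defined on the \emph{set} of resulting lattices and surjective, including the degenerate neighbor $\mathfrak l\Lambda'$ appearing as an $\mathfrak l$-ascending step; and second, verifying that $\Gamma = \ell\lO_0\text{-stable}$ submodules of $\Lambda/\ell\Lambda$ of $\F_\ell$-dimension $2$ really do exhaust $\mathscr L_0(\Lambda)$ — one must rule out a maximal isotropic $\Gamma/\ell\Lambda$ that is $\lO_0$-stable but not free of the expected shape. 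This is handled by the explicit classification in Lemma~\ref{lemma:subspacesEpsilon} together with the freeness of $\Lambda$ over $\lO_0$, but assembling it cleanly — and making the split/ramified cross-pairing isotropy check uniform — is where the real bookkeeping lies. The reference to Propositions~\ref{prop:L0LambdaInert}, \ref{prop:L0LambdaSplit}, \ref{prop:L0LambdaRamif} indicates the intended structure: prove each of the three cases as a separate proposition with its own $\lO_0/\ell\lO_0$-module analysis, and then this proposition is just their conjunction.
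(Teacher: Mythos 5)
Your plan follows the paper's own route: reduce to the observation that $\Gamma\in\mathscr L_0(\Lambda)$ iff $\Gamma/\ell\Lambda$ is an $\lO_0/\ell\lO_0$-stable maximal isotropic subspace, then split into the inert, split, and ramified cases (exactly Propositions~\ref{prop:L0LambdaInert}, \ref{prop:L0LambdaSplit}, \ref{prop:L0LambdaRamif}), analyzing $\Lambda/\ell\Lambda$ as a module over $\F_{\ell^2}$, $\F_\ell\times\F_\ell$, and $\F_\ell[\epsilon]$ respectively, with isotropy of $\lO_0$-orbits supplied by $K_0$-bilinearity of the alternating form (Lemma~\ref{lemma:realOrbitIsotropic}) and the orthogonal decomposition in the split case (Lemma~\ref{lemma:decompositionLambda}). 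The only small deviation is in the ramified case, where you lean on the explicit classification of Lemma~\ref{lemma:subspacesEpsilon} to enumerate the $\lO_0$-stable planes, whereas the paper argues more directly by producing the intermediate lattice $\Pi=\Gamma+\mathfrak l\Lambda$ and checking the chain of index-$\ell$ inclusions in both directions; both work, and your worry about ``double-counting'' is moot since the statement is a set equality, not a count.
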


\subsubsection{The inert case} Suppose that $\ell$ is inert in $K_0$. Then, $\ell\cO_{K_0}$ is the unique prime ideal of $K_0$ above $\ell$. The orders in $K_\ell$ with maximal real multiplication are exactly the orders $\lO_{\ell^n\lO_0} = \lO_0 + \ell^n \lO_{K}$.
\begin{proposition}\label{prop:L0LambdaInert}
Let $\Lambda$ be a lattice with maximal real multiplication. If $\ell$ is inert in $K_0$, the set of $(\ell,\ell)$-neighbors with maximal real multiplication is $$\mathscr L_0(\Lambda) = \mathscr L_{\ell\lO_0}(\Lambda).$$
\end{proposition}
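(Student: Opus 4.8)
The plan is to prove the equality by showing that both $\mathscr L_0(\Lambda)$ and $\mathscr L_{\ell\lO_0}(\Lambda)$ consist of lattices $\Gamma$ with $\ell\Lambda\subset\Gamma\subset\Lambda$, and then checking that the condition each notion imposes on the subspace $\Gamma/\ell\Lambda\subset\Lambda/\ell\Lambda$ is literally the same. First I would record the arithmetic input: since $\ell$ is inert in $K_0$, the prime $\mathfrak l=\ell\cO_{K_0}$ satisfies $\mathfrak l\lO_0=\ell\lO_0$, so $\mathfrak l\Lambda=\ell\Lambda$, and $\F=\lO_0/\mathfrak l$ is the field $\F_{\ell^2}$, whence $N(\mathfrak l)=\ell^2$. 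By Lemma~\ref{lemma:splittingOnLattice} applied with $n=0$ (equivalently, by the Gorenstein argument following Lemma~\ref{lemma:quadraticImpliesGorenstein}), $\Lambda$ is a free $\lO_0$-module of rank $2$, so $\Lambda/\ell\Lambda=\Lambda\otimes_{\lO_0}\F$ is a two-dimensional $\F$-vector space, i.e. a four-dimensional $\F_\ell$-vector space on which $\lO_0$ acts through its quotient $\F$ and which carries the symplectic form reduced from $V$.

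Next I would prove $\mathscr L_{\ell\lO_0}(\Lambda)\subseteq\mathscr L_0(\Lambda)$. If $\Gamma\in\mathscr L_{\ell\lO_0}(\Lambda)$, then $\Gamma/\ell\Lambda$ is an $\F$-line, say $\F\cdot g$; since $\F=\lO_0/\mathfrak l$ acts through the quotient, $\F\cdot g$ is exactly the orbit $\lO_0\cdot g$, which is isotropic by Lemma~\ref{lemma:realOrbitIsotropic}. Being a two-dimensional isotropic subspace of a four-dimensional symplectic $\F_\ell$-space, it is maximal isotropic, so $\Gamma$ is an $(\ell,\ell)$-neighbor of $\Lambda$; and since $\Gamma/\ell\Lambda$ is $\lO_0$-stable, $\lO_0\Gamma\subset\Gamma$, i.e. $\Gamma$ has maximal real multiplication. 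Hence $\Gamma\in\mathscr L_0(\Lambda)$.

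For the reverse inclusion $\mathscr L_0(\Lambda)\subseteq\mathscr L_{\ell\lO_0}(\Lambda)$, take $\Gamma$ an $(\ell,\ell)$-neighbor of $\Lambda$ with maximal real multiplication. Then $\lO_0\Gamma\subset\Gamma$, so $\Gamma/\ell\Lambda$ is a sub-$\lO_0$-module, hence a sub-$\F$-space, of $\Lambda/\ell\Lambda$; being two-dimensional over $\F_\ell$ it is one-dimensional over $\F$, i.e. a point of $\mathbb P^1(\Lambda/\ell\Lambda)$. Since $\ell\Lambda=\mathfrak l\Lambda$, this says precisely that $\Gamma\in\mathscr L_{\ell\lO_0}(\Lambda)$, giving the claimed equality. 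As a consistency check, both sides then have $N(\mathfrak l)+1=\ell^2+1$ elements by Proposition~\ref{prop:genericNeighbors}.

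I do not expect a genuine obstacle here: the whole content reduces to two bookkeeping observations — that a line over the residue field $\F_{\ell^2}$ is automatically a maximal isotropic $\F_\ell$-plane (via the orbit-isotropy Lemma~\ref{lemma:realOrbitIsotropic}), and conversely that ``maximal real multiplication'' forces $\F_{\ell^2}$-stability of $\Gamma/\ell\Lambda$, which together with the dimension count pins it down as an $\F_{\ell^2}$-line. The only points requiring a little care are to use that $\Lambda$ is self-dual (implicit in the very definition of $(\ell,\ell)$-neighbor) so that the symplectic form on $\Lambda/\ell\Lambda$ is available, and to make the identification $\mathfrak l\Lambda=\ell\Lambda$ explicit so that the two ``$\Gamma$ lies between'' conditions match on the nose.
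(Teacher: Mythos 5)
Your proof is correct and follows essentially the same route as the paper's: identify $\Lambda/\ell\Lambda$ as a $2$-dimensional $\F_{\ell^2}$-vector space, note that $\lO_0$-stable maximal isotropic $\F_\ell$-planes are exactly $\F_{\ell^2}$-lines, and use the orbit-isotropy observation (Lemma~\ref{lemma:realOrbitIsotropic}) for the converse inclusion. The only cosmetic difference is that you invoke freeness of rank $2$ over $\lO_0$ where the paper uses freeness of rank $1$ over the full order $\lO(\Lambda)$; both yield the same $\F_{\ell^2}$-structure on the quotient, so nothing is gained or lost.
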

\begin{proof}
Since $\lO_0/\ell\lO_0 \isom \F_{\ell^2}$, $\Lambda/\ell\Lambda$ is a free $\lO(\Lambda)/\ell \lO(\Lambda)$-module of rank 1. In particular, it is a vector space over $\F_{\ell^2}$ of dimension 2, and thereby the $\lO_{0}$-stable maximal isotropic subspaces of $\Lambda/\ell\Lambda$ are $\F_{\ell^2}$-lines. Since any $\F_{\ell^2}$-line is isotropic,
$\mathscr L_{\ell\lO_0}(\Lambda)$ is precisely the set of $(\ell,\ell)$-neighbors preserving the maximal real multiplication.
\end{proof}

\begin{remark}
The structure of $\mathscr L_0(\Lambda)$ is then fully described by Proposition~\ref{prop:genericNeighbors}, with $\mathfrak l = \ell \lO_0$, and $N\mathfrak l = \ell^2$. In particular, $\mathscr L(\Lambda)$ consists
of $\ell^2 + 1$ neighbors with maximal real multiplication, and $\ell^3 + \ell$ with real multiplication by $\lO_1$.
\end{remark}

\subsubsection{The split case} Suppose that $\ell$ splits in $K_0$ as $\ell\cO_{K_0} = \mathfrak l_1 \mathfrak l_2$. 
The orders in $K_\ell$ with maximal real multiplication are exactly the orders $\lO_{\mathfrak f} = \lO_0 + \mathfrak f \lO_{K}$, where $\mathfrak f = \mathfrak l_1^m \mathfrak l_2^n$ for any non-negative integers $m$ and $n$.

\begin{lemma}\label{lemma:decompositionLambda}
Suppose $\Lambda$ has maximal real multiplication. Then, we have the orthogonal decomposition
$\Lambda/\ell\Lambda = (\mathfrak l_1\Lambda/\ell\Lambda) \perp (\mathfrak l_2\Lambda/\ell\Lambda).$
\end{lemma}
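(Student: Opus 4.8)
The plan is to exploit the fact that, because $\ell$ splits in $K_0$ as $\mathfrak l_1\mathfrak l_2$, the local ring $\lO_0 = \cO_{K_0}\otimes_\Z\Z_\ell$ is a product $\Z_\ell\times\Z_\ell$, with $\mathfrak l_1$ and $\mathfrak l_2$ (base-changed to $\Z_\ell$) its two maximal ideals, satisfying $\mathfrak l_1 + \mathfrak l_2 = \lO_0$ and $\mathfrak l_1\cap\mathfrak l_2 = \mathfrak l_1\mathfrak l_2 = \ell\lO_0$. Since $\Lambda$ has maximal real multiplication it is a module over $\lO_0$, so the central idempotents $e_1,e_2\in\lO_0$ (with $e_1+e_2=1$, $e_1e_2=0$) split it as $\Lambda = e_1\Lambda\oplus e_2\Lambda$.

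First I would describe $\mathfrak l_1\Lambda$ and $\mathfrak l_2\Lambda$ in these terms: writing $\mathfrak l_i = \ell e_i\lO_0 + e_j\lO_0$ for $\{i,j\}=\{1,2\}$, one gets $\mathfrak l_1\Lambda = \ell e_1\Lambda\oplus e_2\Lambda$ and $\mathfrak l_2\Lambda = e_1\Lambda\oplus \ell e_2\Lambda$. From this it is immediate that $\mathfrak l_1\Lambda + \mathfrak l_2\Lambda = \Lambda$ and $\mathfrak l_1\Lambda\cap\mathfrak l_2\Lambda = \ell\Lambda$, so reducing modulo $\ell\Lambda$ gives the internal direct-sum decomposition $\Lambda/\ell\Lambda = (\mathfrak l_1\Lambda/\ell\Lambda)\oplus(\mathfrak l_2\Lambda/\ell\Lambda)$. (One could instead argue purely from the coprimality relations $\mathfrak l_1+\mathfrak l_2 = \lO_0$ and $\mathfrak l_1\cap\mathfrak l_2 = \ell\lO_0$ together with $\lO_0$-flatness of $\Lambda$, but the idempotent description is the cleanest.)

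The only remaining point is orthogonality of the two summands for the form $\langle-,-\rangle_\ell$ on $\Lambda/\ell\Lambda$. Here I would use the compatibility $\langle xu,v\rangle = \langle u, x^\dagger v\rangle$ together with the fact that $\dagger$ acts trivially on $K_{0,\ell}$: for $a\in\mathfrak l_1$, $b\in\mathfrak l_2$ and $u,v\in\Lambda$, this yields $\langle au,bv\rangle = \langle u, a^\dagger bv\rangle = \langle u, abv\rangle$, and since $ab\in\mathfrak l_1\mathfrak l_2 = \ell\lO_0$ one writes $ab = \ell c$ with $c\in\lO_0$, so $\langle au,bv\rangle = \ell\langle u,cv\rangle\in\ell\Z_\ell$ using that $\Lambda$ is self-dual (so $\langle\Lambda,\Lambda\rangle\subset\Z_\ell$). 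Hence $\langle-,-\rangle_\ell$ vanishes on $(\mathfrak l_1\Lambda/\ell\Lambda)\times(\mathfrak l_2\Lambda/\ell\Lambda)$, which is exactly the claimed orthogonality.

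I do not expect a real obstacle here; once the product decomposition of $\lO_0$ is in place the computation is short. The only things to watch are bookkeeping — making sure $\mathfrak l_i\Lambda$ is matched with the correct idempotent component (the indices get transposed, but since the assertion is symmetric in the two summands this is harmless) — and recalling that $\Lambda$ is self-dual in this setting, which is precisely what makes $\langle-,-\rangle_\ell$ defined and lets the orthogonality argument go through.
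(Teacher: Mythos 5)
Your proof is correct and follows essentially the same route as the paper's: the direct-sum part is the same coprimality/CRT argument (the paper phrases it via the splitting of $\lO/\ell\lO$ rather than the idempotents of $\lO_0$, but these are interchangeable), and the orthogonality computation $\langle au,bv\rangle = \langle u,abv\rangle \in \ell\Z_\ell$ is precisely the paper's identity $\langle \mathfrak l_1\Lambda,\mathfrak l_2\Lambda\rangle = \langle\Lambda,\mathfrak l_1\mathfrak l_2\Lambda\rangle \subset \ell\Z_\ell$. The only cosmetic difference is that the paper finishes the orthogonality with a dimension count (inclusion in the perp plus both summands being $2$-dimensional), whereas you get it directly from having already established the internal direct sum; both are fine.
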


\begin{proof}
Let  $\lO = \lO(\Lambda)$. Since $\mathfrak l_1$ and $\mathfrak l_2$ are coprime and $\mathfrak l_1\mathfrak l_2 = \ell\lO_0$, the quotient $\lO/\ell\lO$ splits as $\mathfrak l_1\lO/\ell\lO \oplus \mathfrak l_2\lO/\ell\lO.$
It follows that $\Lambda/\ell\Lambda = (\mathfrak l_1\Lambda/\ell\Lambda) \oplus (\mathfrak l_2\Lambda/\ell\Lambda)$. Furthermore,
$\langle \mathfrak l_1\Lambda, \mathfrak l_2\Lambda\rangle = \langle \Lambda, \mathfrak l_1\mathfrak l_2\Lambda\rangle = \langle \Lambda, \ell\Lambda\rangle \subset \ell\Z_\ell,$
so $\mathfrak l_1\Lambda/\ell\Lambda \subset (\mathfrak l_2\Lambda/\ell\Lambda)^\perp$. The last inclusion is also an equality because both $\mathfrak l_1\Lambda/\ell\Lambda$ and $\mathfrak l_2\Lambda/\ell\Lambda$ have dimension 2.
\end{proof}

\begin{lemma}\label{lemma:caracSplitIso}
Suppose $\Lambda$ has maximal real multiplication. An $(\ell,\ell)$-neighbor $\Gamma \in \mathscr L(\Lambda)$ has maximal real multiplication if and only if there exist $\Gamma_1 \in \mathscr L_{\mathfrak l_1}(\Lambda)$ and $\Gamma_2 \in \mathscr L_{\mathfrak l_2}(\Lambda)$ such that $\Gamma = \mathfrak l_2\Gamma_1 + \mathfrak l_1\Gamma_2$.
\end{lemma}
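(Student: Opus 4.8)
The plan is to make the splitting $\ell\lO_0 = \mathfrak l_1\mathfrak l_2$ completely explicit and reduce the statement to linear algebra modulo $\ell$. Since $\ell$ splits in the quadratic field $K_0$, the ring $\lO_0$ is a product of two copies of $\Z_\ell$ with idempotents $e_1,e_2$, and $\mathfrak l_i = \ell e_i\lO_0 + e_j\lO_0$ for $j\neq i$. Because $\Lambda$ has maximal real multiplication it is an $\lO_0$-module, hence splits as $\Lambda = \Lambda^{(1)}\oplus\Lambda^{(2)}$ with $\Lambda^{(i)} = e_i\Lambda$ a free $\Z_\ell$-lattice of rank $2$ in $V^{(i)} = e_iV$, and $\mathfrak l_1\Lambda = \ell\Lambda^{(1)}\oplus\Lambda^{(2)}$, $\mathfrak l_2\Lambda = \Lambda^{(1)}\oplus\ell\Lambda^{(2)}$. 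The first step is to record, using Lemma~\ref{lemma:decompositionLambda} together with non-degeneracy of the reduced symplectic form, that $\Lambda/\ell\Lambda = (\Lambda^{(1)}/\ell\Lambda^{(1)}) \perp (\Lambda^{(2)}/\ell\Lambda^{(2)})$ with each summand a \emph{non-degenerate} symplectic $\F_\ell$-plane (non-degeneracy of each factor follows from orthogonality of the two pieces plus non-degeneracy of the whole); in particular the maximal isotropic subspaces of each factor are exactly its lines. Note $\mathfrak l_i\Lambda/\ell\Lambda = \Lambda^{(j)}/\ell\Lambda^{(j)}$ for $j\neq i$.

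Next I would establish, independently of the statement, the following description of the $\ll$-neighbors with maximal real multiplication. For any $\Gamma$ with $\ell\Lambda\subseteq\Gamma\subseteq\Lambda$, the lattice $\Gamma$ has maximal real multiplication if and only if $\bar\Gamma := \Gamma/\ell\Lambda$ is stable under $\lO_0$, i.e.\ if and only if $\bar\Gamma = (\bar\Gamma\cap \Lambda^{(1)}/\ell\Lambda^{(1)}) \oplus (\bar\Gamma\cap \Lambda^{(2)}/\ell\Lambda^{(2)})$. If moreover $\Gamma\in\mathscr L(\Lambda)$, so $\bar\Gamma$ is maximal isotropic of dimension $2$, then each intersection is isotropic in its symplectic plane, hence of dimension $\le 1$, and since the two sum to dimension $2$ each is a line; conversely the direct sum of a line in $\Lambda^{(1)}/\ell\Lambda^{(1)}$ and a line in $\Lambda^{(2)}/\ell\Lambda^{(2)}$ is $\lO_0$-stable and, being a sum of isotropic lines in mutually orthogonal planes, maximal isotropic. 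Thus $\mathscr L_0(\Lambda)$ is in bijection with $\mathbb P(\Lambda^{(1)}/\ell\Lambda^{(1)})\times\mathbb P(\Lambda^{(2)}/\ell\Lambda^{(2)})$ via $\Gamma\mapsto(\bar\Gamma\cap\Lambda^{(1)}/\ell\Lambda^{(1)},\ \bar\Gamma\cap\Lambda^{(2)}/\ell\Lambda^{(2)})$.

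It remains to match this with the $\mathfrak l_i$-neighbors. One has $\Lambda/\mathfrak l_1\Lambda = \Lambda^{(1)}/\ell\Lambda^{(1)}$, so $\mathscr L_{\mathfrak l_1}(\Lambda)$ is the set of lines $L\subseteq\Lambda^{(1)}/\ell\Lambda^{(1)}$; writing $\Gamma_1 = \tilde L\oplus\Lambda^{(2)}$ for the corresponding neighbor (with $\ell\Lambda^{(1)}\subseteq\tilde L\subseteq\Lambda^{(1)}$, $\tilde L/\ell\Lambda^{(1)} = L$), a direct computation gives $\mathfrak l_2\Gamma_1 = \tilde L\oplus\ell\Lambda^{(2)}$, whence $\mathfrak l_2\Gamma_1/\ell\Lambda = L$. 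Symmetrically, $\mathscr L_{\mathfrak l_2}(\Lambda)$ is the set of lines $M\subseteq\Lambda^{(2)}/\ell\Lambda^{(2)}$, the corresponding $\Gamma_2 = \Lambda^{(1)}\oplus\tilde M$ satisfies $\mathfrak l_1\Gamma_2 = \ell\Lambda^{(1)}\oplus\tilde M$, and $\mathfrak l_1\Gamma_2/\ell\Lambda = M$. Therefore $(\mathfrak l_2\Gamma_1+\mathfrak l_1\Gamma_2)/\ell\Lambda = L\oplus M$, so the map $(\Gamma_1,\Gamma_2)\mapsto \mathfrak l_2\Gamma_1+\mathfrak l_1\Gamma_2$ from $\mathscr L_{\mathfrak l_1}(\Lambda)\times\mathscr L_{\mathfrak l_2}(\Lambda)$ into $\mathscr L(\Lambda)$ has image exactly the $\Gamma$ with $\bar\Gamma$ of the form $L\oplus M$, which by the second step is precisely $\mathscr L_0(\Lambda)$. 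This gives both implications simultaneously.

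All the computations above are routine once the idempotent decomposition is set up; I do not expect a genuine obstacle. The only points that deserve a line of care are that each factor of $\Lambda/\ell\Lambda$ is non-degenerate symplectic (so that ``maximal isotropic in the factor'' really means ``line''), and that $\mathfrak l_j\Gamma_i/\ell\Lambda$ is honestly one-dimensional — immediate from $[\Lambda:\Gamma_i]=\ell$ together with $[\mathfrak l_j\Lambda:\ell\Lambda]=\ell^2$ and the injectivity of multiplication by the invertible ideal $\mathfrak l_j$. Essentially, the lemma is a bookkeeping translation of the coprime factorization $\ell\lO_0 = \mathfrak l_1\mathfrak l_2$ into an orthogonal splitting of $\Lambda/\ell\Lambda$.
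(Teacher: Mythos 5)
Your proof is correct, and it rests on the same structural input as the paper's argument --- the orthogonal splitting of $\Lambda/\ell\Lambda$ into the two eigenplanes for $\lO_0/\ell\lO_0 \cong \F_\ell\times\F_\ell$, which is exactly Lemma~\ref{lemma:decompositionLambda} --- but the execution is genuinely different. The paper's forward direction sets $\Gamma_i = \Gamma + \mathfrak l_i\Lambda$, uses the orthogonal decomposition only to rule out the degenerate cases $\Gamma_i = \Lambda$ or $\mathfrak l_i\Lambda$, and then concludes by the comaximality identity $\mathfrak l_2\Gamma_1 + \mathfrak l_1\Gamma_2 = (\mathfrak l_1+\mathfrak l_2)\Gamma + \ell\Lambda = \lO_0\Gamma + \ell\Lambda = \Gamma$; its converse is the same isotropy-and-orthogonality argument you give. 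You instead lift the splitting to $\Lambda$ itself via the idempotents, classify $\mathscr L_0(\Lambda)$ as the pairs of lines in $\mathbb P^1(\Lambda^{(1)}/\ell\Lambda^{(1)})\times\mathbb P^1(\Lambda^{(2)}/\ell\Lambda^{(2)})$, and verify that $(\Gamma_1,\Gamma_2)\mapsto \mathfrak l_2\Gamma_1+\mathfrak l_1\Gamma_2$ realizes precisely these subspaces; unwinding your construction, your $\Gamma_1,\Gamma_2$ coincide with the paper's $\Gamma+\mathfrak l_1\Lambda$, $\Gamma+\mathfrak l_2\Lambda$, so the two proofs produce the same witnesses. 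Your route costs a little more bookkeeping but buys more: the explicit parametrization yields at once the count $|\mathscr L_0(\Lambda)| = (\ell+1)^2$ noted after Proposition~\ref{prop:L0LambdaSplit}, and essentially Proposition~\ref{prop:L0LambdaSplit} itself, rather than just the stated equivalence. The only hypotheses you use silently are self-duality of $\Lambda$ (so that $\Lambda/\ell\Lambda$ is non-degenerate and each eigenplane is a non-degenerate symplectic plane) and that each eigenspace $e_iV$ is $2$-dimensional; both are also implicit in the paper's Lemma~\ref{lemma:decompositionLambda} and hold in the intended setting, so neither is a gap.
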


\begin{proof}
First, let $\Gamma \in \mathscr L(\Lambda)$ be an $(\ell,\ell)$-neighbor with maximal real multiplication. Defining $\Gamma_i = \Gamma + \mathfrak l_i\Lambda$, we then have
$$\mathfrak l_2 \Gamma_1 + \mathfrak l_1 \Gamma_2 = (\mathfrak l_1 + \mathfrak l_2)\Gamma + \ell\Lambda = \lO_0\Gamma + \ell\Lambda = \Gamma.$$
By contradiction, suppose $\Gamma_i \not \in \mathscr L_i(\Lambda)$. Then, $\Gamma_i$ is either $\Lambda$ or $\mathfrak l_i\Lambda$. Suppose first that $\Gamma_i = \Lambda$. Then $\Gamma \subset \mathfrak l_i\Lambda$, and even $\Gamma = \mathfrak l_i\Lambda$ since $[\Lambda : \Gamma] = [\Lambda : \mathfrak l_i\Lambda] = \ell^2$. But the orthogonal decomposition of Lemma~\ref{lemma:decompositionLambda} implies that $\mathfrak l_i\Lambda/\Lambda$ is not isotropic, contradicting the fact that $\Gamma \in \mathscr L(\Lambda)$.

For the converse, suppose $\Gamma = \mathfrak l_2\Gamma_1 + \mathfrak l_1\Gamma_2$ for some $\Gamma_1 \in \mathscr L_{\mathfrak l_1}(\Lambda)$ and $\Gamma_2 \in \mathscr L_{\mathfrak l_2}(\Lambda)$. Then $\Gamma/\ell\Lambda$ is of dimension 2, so it suffices to prove that it is isotropic. Each summand $\mathfrak l_i\Gamma_j$ is isotropic, because it is of dimension 1, and Lemma~\ref{lemma:decompositionLambda} implies that $\mathfrak l_2\Gamma_1$ and $\mathfrak l_1\Gamma_2$ are orthogonal, so their sum $\Gamma$ is isotropic.
\end{proof}

\begin{proposition}\label{prop:L0LambdaSplit}
Suppose $\Lambda$ has maximal real multiplication. If $\ell$ splits in $K_0$ as $\ell\lO_0 = \mathfrak l_1 \mathfrak l_2$, the set of $(\ell,\ell)$-neighbors of $\Lambda$ with maximal real multiplication is $$\mathscr L_0(\Lambda) = \mathscr L_{\mathfrak l_1}[\mathscr L_{\mathfrak l_2}(\Lambda)] = \mathscr L_{\mathfrak l_2}[\mathscr L_{\mathfrak l_1}(\Lambda)].$$
\end{proposition}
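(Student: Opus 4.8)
The plan is to prove the identity $\mathscr L_0(\Lambda) = \mathscr L_{\mathfrak l_1}[\mathscr L_{\mathfrak l_2}(\Lambda)] = \mathscr L_{\mathfrak l_2}[\mathscr L_{\mathfrak l_1}(\Lambda)]$ by two inclusions, using Lemma~\ref{lemma:caracSplitIso} as the bridge. First I would unwind the definitions: a lattice $\Gamma' \in \mathscr L_{\mathfrak l_1}[\mathscr L_{\mathfrak l_2}(\Lambda)]$ is obtained by first passing to some $\mathfrak l_2$-neighbor $\Gamma_1 \in \mathscr L_{\mathfrak l_2}(\Lambda)$ and then taking an $\mathfrak l_1$-neighbor of $\Gamma_1$; since everything in sight has maximal real multiplication (the orders arising from $\mathfrak l_i$-neighbors of an $\lO_0$-lattice are among the $\lO_{\mathfrak f}$ by Proposition~\ref{prop:genericNeighbors}, hence still contain $\lO_0$), $\mathfrak l$-neighbors are well-defined at each stage. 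The key computational observation is that $\mathfrak l_1 \Gamma_1 \subset \Gamma' \subset \Gamma_1$ together with $\mathfrak l_2 \Lambda \subset \Gamma_1 \subset \Lambda$ forces $\ell\Lambda = \mathfrak l_1\mathfrak l_2\Lambda \subset \Gamma'$ and $[\Lambda:\Gamma'] = N(\mathfrak l_1)N(\mathfrak l_2) = \ell^2$, so $\Gamma'/\ell\Lambda$ has $\F_\ell$-dimension $2$; it then remains only to check it is isotropic to conclude $\Gamma' \in \mathscr L(\Lambda)$, and $\lO_0$-stable to conclude it has maximal real multiplication.

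For the isotropy and real multiplication, I would compute $\Gamma'_i := \Gamma' + \mathfrak l_i \Lambda$ and show $\Gamma' = \mathfrak l_2 \Gamma'_1 + \mathfrak l_1 \Gamma'_2$ with $\Gamma'_i \in \mathscr L_{\mathfrak l_i}(\Lambda)$, so that Lemma~\ref{lemma:caracSplitIso} applies directly. Concretely, by construction $\Gamma' + \mathfrak l_1 \Lambda = \Gamma_1$ (the intermediate $\mathfrak l_2$-neighbor), since $\mathfrak l_1\Gamma_1 \subset \Gamma' \subset \Gamma_1$ and adding $\mathfrak l_1 \Lambda \supset \mathfrak l_1\Gamma_1$ recovers $\Gamma_1$ exactly (as $\Gamma_1/\mathfrak l_1\Gamma_1$ has length $2$ and $(\mathfrak l_1\Lambda \cap \Gamma_1)/\mathfrak l_1\Gamma_1$ has length $1$). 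Similarly $\Gamma' + \mathfrak l_2\Lambda$ is an $\mathfrak l_1$-neighbor of $\Lambda$. Then $\mathfrak l_2(\Gamma'+\mathfrak l_1\Lambda) + \mathfrak l_1(\Gamma'+\mathfrak l_2\Lambda) = (\mathfrak l_1+\mathfrak l_2)\Gamma' + \ell\Lambda = \lO_0\Gamma' + \ell\Lambda = \Gamma'$, using $\mathfrak l_1 + \mathfrak l_2 = \lO_0$ and $\ell\Lambda \subset \Gamma'$. Hence $\Gamma'$ satisfies the criterion of Lemma~\ref{lemma:caracSplitIso} and lies in $\mathscr L_0(\Lambda)$, giving the inclusion $\mathscr L_{\mathfrak l_1}[\mathscr L_{\mathfrak l_2}(\Lambda)] \subseteq \mathscr L_0(\Lambda)$, and symmetrically for $\mathscr L_{\mathfrak l_2}[\mathscr L_{\mathfrak l_1}(\Lambda)]$.

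For the reverse inclusion, let $\Gamma \in \mathscr L_0(\Lambda)$. By Lemma~\ref{lemma:caracSplitIso} there are $\Gamma_1 \in \mathscr L_{\mathfrak l_1}(\Lambda)$ and $\Gamma_2 \in \mathscr L_{\mathfrak l_2}(\Lambda)$ with $\Gamma = \mathfrak l_2\Gamma_1 + \mathfrak l_1\Gamma_2$. I would then verify that $\mathfrak l_1\Gamma_2$ is an $\mathfrak l_2$-neighbor of $\Gamma_2$ — equivalently that $\mathfrak l_2\Gamma_2 \subset \mathfrak l_1\Gamma_2 \cap \Gamma$... more cleanly: set $\Delta = \Gamma + \mathfrak l_1\Lambda$; using $\mathfrak l_1 \Lambda \supset \mathfrak l_1\Gamma_2$ and the decomposition one checks $\Delta = \Gamma_1$, and that $\mathfrak l_1\Gamma_1 \subset \Gamma \subset \Gamma_1$ with $\Gamma/\mathfrak l_1\Gamma_1 \cong \lO_0/\mathfrak l_1$, so $\Gamma \in \mathscr L_{\mathfrak l_1}(\Gamma_1) \subset \mathscr L_{\mathfrak l_1}[\mathscr L_{\mathfrak l_2}(\Lambda)]$ (noting $\Gamma_1 \in \mathscr L_{\mathfrak l_2}(\Lambda)$ after matching indices correctly — I will double check the index bookkeeping, since $\Gamma_1$ was defined as an $\mathfrak l_1$-neighbor but the composition notation puts the second-applied operator first). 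Running the same argument with the roles of $\mathfrak l_1$ and $\mathfrak l_2$ exchanged gives membership in $\mathscr L_{\mathfrak l_2}[\mathscr L_{\mathfrak l_1}(\Lambda)]$ as well, completing all three sets' equality.

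The main obstacle I anticipate is bookkeeping the lengths and the $\mathfrak l$-neighbor conditions precisely — in particular confirming that $\Gamma + \mathfrak l_i\Lambda$ is genuinely an $\mathfrak l_j$-neighbor (proper, $\lO_0$-stable, with quotient $\lO_0/\mathfrak l_j$) rather than accidentally equal to $\Lambda$ or $\mathfrak l_j\Lambda$, and that the "first apply $\mathscr L_{\mathfrak l_2}$, then $\mathscr L_{\mathfrak l_1}$" parsing of the nested notation is consistent throughout. The orthogonal decomposition $\Lambda/\ell\Lambda = (\mathfrak l_1\Lambda/\ell\Lambda) \perp (\mathfrak l_2\Lambda/\ell\Lambda)$ of Lemma~\ref{lemma:decompositionLambda} is what rules out the degenerate cases, exactly as in the proof of Lemma~\ref{lemma:caracSplitIso}, so that lemma does most of the real work and this proposition is largely an organizational consequence of it.
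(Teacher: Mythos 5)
Your overall strategy — reduce everything to Lemma~\ref{lemma:caracSplitIso}, and pass back and forth via the lattices $\Gamma+\mathfrak l_i\Lambda$ — is exactly the mechanism the paper uses (the paper's proof is the one-line observation that $\mathfrak l_2\Gamma_1 + \mathfrak l_1\Gamma_2 \in \mathscr L_{\mathfrak l_2}(\Gamma_1) \cap \mathscr L_{\mathfrak l_1}(\Gamma_2)$, leaving the reverse inclusion to a counting argument). But the index bookkeeping you flagged as a worry is in fact wrong in two places, and the flagged worry should be resolved, not just noted.

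First, with $\Gamma' \in \mathscr L_{\mathfrak l_1}(\Gamma_1)$ for $\Gamma_1 \in \mathscr L_{\mathfrak l_2}(\Lambda)$, you claim $\Gamma' + \mathfrak l_1\Lambda = \Gamma_1$. This is false: by Lemma~\ref{lemma:decompositionLambda} the image of $\Gamma_1/\ell\Lambda$ in $\mathfrak l_1\Lambda/\ell\Lambda$ is only $1$-dimensional (since $\Gamma_1 \supset \mathfrak l_2\Lambda$ and $[\Lambda:\Gamma_1]=\ell$), so $\mathfrak l_1\Lambda \not\subset \Gamma_1$, and therefore $\Gamma'+\mathfrak l_1\Lambda \not\subset \Gamma_1$. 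In fact $\mathfrak l_1\Lambda \cap \Gamma_1 = \mathfrak l_1\Gamma_1$ (both are $1$-dimensional mod $\ell\Lambda$ inside $\mathfrak l_1\Lambda/\ell\Lambda$), so your supporting length computation — that $(\mathfrak l_1\Lambda\cap\Gamma_1)/\mathfrak l_1\Gamma_1$ has length $1$ — is also off; that quotient is zero. The correct identity is $\Gamma' + \mathfrak l_2\Lambda = \Gamma_1$ (since $\mathfrak l_2\Lambda \subset \Gamma_1$, $\mathfrak l_2\Lambda \not\subset \Gamma'$, and $[\Gamma_1:\Gamma']=\ell$), while $\Gamma' + \mathfrak l_1\Lambda$ is a \emph{different} lattice, namely an $\mathfrak l_1$-neighbor of $\Lambda$. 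With that swap, the formula $\Gamma' = \mathfrak l_2(\Gamma'+\mathfrak l_1\Lambda) + \mathfrak l_1(\Gamma'+\mathfrak l_2\Lambda)$ does give the decomposition required by Lemma~\ref{lemma:caracSplitIso}, with $\Gamma'+\mathfrak l_1\Lambda \in \mathscr L_{\mathfrak l_1}(\Lambda)$ and $\Gamma'+\mathfrak l_2\Lambda = \Gamma_1 \in \mathscr L_{\mathfrak l_2}(\Lambda)$.

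Second, in the reverse inclusion you compute $\Delta = \Gamma + \mathfrak l_1\Lambda = \Gamma_1$ correctly (with $\Gamma_1 \in \mathscr L_{\mathfrak l_1}(\Lambda)$ as in the lemma), but you then conclude $\Gamma \in \mathscr L_{\mathfrak l_1}(\Gamma_1)$, which would show $\Gamma \in \mathscr L_{\mathfrak l_1}[\mathscr L_{\mathfrak l_1}(\Lambda)]$ — not what you want, and also the containment $\mathfrak l_1\Gamma_1 \subset \Gamma$ need not hold. What the decomposition $\Gamma = \mathfrak l_2\Gamma_1 + \mathfrak l_1\Gamma_2$ actually gives is $\mathfrak l_2\Gamma_1 \subset \Gamma \subset \Gamma_1$ and $\mathfrak l_1\Gamma_2 \subset \Gamma \subset \Gamma_2$, with the index chains showing $\Gamma \in \mathscr L_{\mathfrak l_2}(\Gamma_1)$ and $\Gamma \in \mathscr L_{\mathfrak l_1}(\Gamma_2)$. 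This is precisely the paper's observation and yields both $\Gamma \in \mathscr L_{\mathfrak l_2}[\mathscr L_{\mathfrak l_1}(\Lambda)]$ and $\Gamma \in \mathscr L_{\mathfrak l_1}[\mathscr L_{\mathfrak l_2}(\Lambda)]$ simultaneously. Once both index labels are corrected, your proof and the paper's are the same argument written at different levels of detail.
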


\begin{proof}
For any $\Gamma_1 \in \mathscr L_{\mathfrak l_1}(\Lambda)$ and $\Gamma_2 \in \mathscr L_{\mathfrak l_2}(\Lambda)$, we have that $\mathfrak l_2\Gamma_1 + \mathfrak l_1\Gamma_2 \in  \mathscr L_{\mathfrak l_2}(\Gamma_1)$ and $\mathfrak l_2\Gamma_1 + \mathfrak l_1\Gamma_2 \in  \mathscr L_{\mathfrak l_1}(\Gamma_2)$. This proposition is thus a consequence of Lemma~\ref{lemma:caracSplitIso}.
\end{proof}

\begin{remark}
When $\ell$ splits in $K_0$, $\mathscr L_0(\Lambda)$ is then of size $\ell^2 + 2\ell + 1$, and the $\ell^3 - \ell$ other $(\ell,\ell)$-neighbors have real order $\lO_1$.
\end{remark}

\subsubsection{The ramified case} Suppose that $\ell$ ramifies in $K_0$ as $\ell\cO_{K_0} = \mathfrak l^2$. Then, $\lO_0/\ell\lO_0$ is isomorphic to $\F_\ell[\epsilon]$ with $\epsilon^2 = 0$. The orders in $K_\ell$ with maximal real multiplication are exactly the orders $\lO_{\mathfrak l^n} = \lO_0 + \mathfrak l^n \lO_{K}$.

\begin{proposition}\label{prop:L0LambdaRamif}
Suppose $\Lambda$ has maximal real multiplication. If $\ell$ splits in $K_0$ as $\ell\lO_0 = \mathfrak l^2$, the set of $(\ell,\ell)$-neighbors of $\Lambda$ with maximal real multiplication is $$\mathscr L_0(\Lambda) = \mathscr L_{\mathfrak l}[\mathscr L_{\mathfrak l}(\Lambda)].$$
\end{proposition}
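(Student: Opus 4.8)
The plan is to prove the two inclusions of the claimed equality separately, using throughout that $\ell\lO_0 = \mathfrak l^2$, so that $\lO_0$ is a discrete valuation ring with $\lO_0/\ell\lO_0$ the ring of dual numbers $\F_\ell[\epsilon]$, and consequently every lattice with maximal real multiplication is a free $\lO_0$-module of rank $2$, hence has reduction mod $\ell$ a free $\F_\ell[\epsilon]$-module of rank $2$. From this, exactly as in the proof of Proposition~\ref{prop:neighborsLevelsRM} (via Lemmas~\ref{lemma:subspacesEpsilon} and~\ref{lemma:realOrbitIsotropic}, together with $\langle \epsilon x,\epsilon y\rangle = \langle x,\epsilon^2 y\rangle = 0$ since $\epsilon^\dagger=\epsilon$ and $\epsilon^2=0$), every $\lO_0$-stable $2$-dimensional $\F_\ell$-subspace of $\Lambda/\ell\Lambda$ is maximal isotropic. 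Hence $\mathscr L_0(\Lambda)$ consists precisely of the intermediate lattices $\ell\Lambda \subseteq \Gamma \subseteq \Lambda$ with $\Gamma$ being $\lO_0$-stable and $[\Lambda:\Gamma]=\ell^2$. Recall also that in this case $N\mathfrak l = \ell$, so $|\mathscr L_\mathfrak l(\Lambda)| = \ell+1$ by Proposition~\ref{prop:genericNeighbors}.

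For the inclusion $\mathscr L_\mathfrak l[\mathscr L_\mathfrak l(\Lambda)] \subseteq \mathscr L_0(\Lambda)$, take $\Gamma_1 \in \mathscr L_\mathfrak l(\Lambda)$ (which by Proposition~\ref{prop:genericNeighbors} still has maximal real multiplication, so $\mathscr L_\mathfrak l(\Gamma_1)$ is defined) and $\Gamma \in \mathscr L_\mathfrak l(\Gamma_1)$. Then $\Gamma$ is again $\lO_0$-stable, one has $\ell\Lambda = \mathfrak l^2\Lambda \subseteq \mathfrak l\Gamma_1 \subseteq \Gamma \subseteq \Gamma_1 \subseteq \Lambda$, and $[\Lambda:\Gamma] = [\Lambda:\Gamma_1]\,[\Gamma_1:\Gamma] = (N\mathfrak l)^2 = \ell^2$, so $\Gamma \in \mathscr L_0(\Lambda)$ by the description above.

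For the reverse inclusion, given $\Gamma \in \mathscr L_0(\Lambda)$ I would produce the intermediate $\mathfrak l$-neighbor explicitly as $\Gamma_1 := \Gamma + \mathfrak l\Lambda$. If $\Gamma \subseteq \mathfrak l\Lambda$ then $\Gamma = \mathfrak l\Lambda$ (equal indices in $\Lambda$), and $\mathfrak l\Lambda \in \mathscr L_\mathfrak l(\Gamma_1')$ for \emph{any} $\Gamma_1' \in \mathscr L_\mathfrak l(\Lambda)$ (since $\mathfrak l\Gamma_1' \subseteq \mathfrak l\Lambda \subseteq \Gamma_1'$ with $\mathfrak l\Lambda/\mathfrak l\Gamma_1' \cong \Lambda/\Gamma_1' \cong \lO_0/\mathfrak l$), so $\Gamma \in \mathscr L_\mathfrak l[\mathscr L_\mathfrak l(\Lambda)]$. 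Otherwise $\Gamma \not\subseteq \mathfrak l\Lambda$, whence $\mathfrak l\Lambda \subsetneq \Gamma_1$; moreover $\Gamma_1 \neq \Lambda$, because the image $\overline\Gamma$ of $\Gamma$ in $\Lambda/\ell\Lambda$ is a $2$-dimensional $\F_\ell[\epsilon]$-submodule of the free rank-$2$ module $\Lambda/\ell\Lambda$ and so must meet $\mathfrak l\Lambda/\ell\Lambda = \epsilon\cdot(\Lambda/\ell\Lambda)$ nontrivially (if the intersection were trivial, $\epsilon$ would annihilate $\overline\Gamma$, forcing $\overline\Gamma \subseteq \epsilon\cdot(\Lambda/\ell\Lambda)$, a contradiction), so $\overline\Gamma + \epsilon\cdot(\Lambda/\ell\Lambda)$ has $\F_\ell$-dimension at most $3$ and $\Gamma_1 \subsetneq \Lambda$. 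Since $[\Lambda:\mathfrak l\Lambda]=\ell^2$, the strict chain $\mathfrak l\Lambda \subsetneq \Gamma_1 \subsetneq \Lambda$ forces $[\Lambda:\Gamma_1]=\ell$, i.e. $\Gamma_1 \in \mathscr L_\mathfrak l(\Lambda)$. Finally $\mathfrak l\Gamma_1 = \mathfrak l\Gamma + \ell\Lambda \subseteq \Gamma \subseteq \Gamma_1$ ($\Gamma$ being $\lO_0$-stable and containing $\ell\Lambda$), and index counting gives $[\Gamma_1:\Gamma]=\ell$ and then $[\Gamma:\mathfrak l\Gamma_1]=\ell$, so $\Gamma/\mathfrak l\Gamma_1 \cong \lO_0/\mathfrak l$ and $\Gamma \in \mathscr L_\mathfrak l(\Gamma_1)$, finishing the argument. (Alternatively one can conclude by cardinality: $|\mathscr L_0(\Lambda)| = \ell^2+\ell+1$ by Lemma~\ref{lemma:subspacesEpsilon}, while the only lattice lying in $\mathscr L_\mathfrak l(\Gamma_1)$ for two distinct $\Gamma_1 \in \mathscr L_\mathfrak l(\Lambda)$ is $\mathfrak l\Lambda$, which lies in all of them, so $|\mathscr L_\mathfrak l[\mathscr L_\mathfrak l(\Lambda)]| = (\ell+1)^2 - \ell = \ell^2+\ell+1$.)

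The main obstacle is the reverse inclusion, and inside it the verification that the candidate $\Gamma_1 = \Gamma+\mathfrak l\Lambda$ is a genuine $\mathfrak l$-neighbor — that is, neither $\Lambda$ nor $\mathfrak l\Lambda$ — which is exactly where the ramified structure ($\epsilon^2=0$, $N\mathfrak l=\ell$) and the $\F_\ell[\epsilon]$-module bookkeeping are essential, and where the degenerate case $\Gamma = \mathfrak l\Lambda$ has to be peeled off and treated separately.
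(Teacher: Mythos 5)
Your proof is correct, and it is closely aligned with the paper's argument but makes one simplifying observation that the paper does not exploit in this particular proof. You front-load the reduction: since $\ell\lO_0 = \mathfrak l^2$, the residue ring $\lO_0/\ell\lO_0 \cong \F_\ell[\epsilon]$ has the same nilpotent structure as $\lO_n/\ell\lO_n$ for $n>0$, so the same chain of Lemma~\ref{lemma:subspacesEpsilon}, Lemma~\ref{lemma:realOrbitIsotropic}, and the computation $\langle\epsilon x,\epsilon y\rangle_\ell = \langle x,\epsilon^2 y\rangle_\ell = 0$ that appears in the proof of Proposition~\ref{prop:neighborsLevelsRM} shows that \emph{every} $\lO_0$-stable two-dimensional $\F_\ell$-subspace of $\Lambda/\ell\Lambda$ is automatically maximal isotropic. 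That makes $\mathscr L_0(\Lambda)$ a purely module-theoretic object (intermediate $\lO_0$-stable lattices of index $\ell^2$), and the inclusion $\mathscr L_\mathfrak l[\mathscr L_\mathfrak l(\Lambda)]\subseteq\mathscr L_0(\Lambda)$ becomes index bookkeeping. The paper instead proves that inclusion by choosing explicit generators $\pi,\gamma$, showing $\Gamma/\ell\Lambda = \F_\ell x \oplus \F_\ell\,\epsilon y$, and checking $\langle x,\epsilon y\rangle_\ell=0$ directly; your version is cleaner but ultimately rests on the same ingredients. For the reverse inclusion both you and the paper set $\Gamma_1 = \Gamma + \mathfrak l\Lambda$ and peel off the degenerate case $\Gamma = \mathfrak l\Lambda$; you rule out $\Gamma_1 = \Lambda$ via the $\F_\ell[\epsilon]$-module dimension count on $\overline\Gamma + \epsilon R$, whereas the paper argues by contradiction that if $\Gamma_1 = \Lambda$ then $\mathfrak l\Lambda \subseteq \ell\Lambda$. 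Both work. One cosmetic point: your alternative cardinality argument implicitly uses that $\mathscr L_\mathfrak l(\Gamma_1)\cap\mathscr L_\mathfrak l(\Gamma_1') = \{\mathfrak l\Lambda\}$ for $\Gamma_1\neq\Gamma_1'$; this is true (and is exactly the content of Remark~\ref{rem:numberofEllEllinRamifCase}), but since you offer it only as an alternative the omission is harmless.
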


\begin{proof}
Let $\Gamma \in \mathscr L_0(\Lambda)$. First, if $\Gamma = \mathfrak l \Lambda$, observe that for any $\Pi \in \mathscr L_\mathfrak l(\Lambda)$, we have $\mathfrak l \Lambda \in \mathscr L_\mathfrak l(\Lambda)$, and therefore $\Gamma \in \mathscr L_{\mathfrak l}[\mathscr L_{\mathfrak l}(\Lambda)]$. We can now safely suppose $\Gamma \neq \mathfrak l \Lambda$. Let $\Pi = \Gamma + \mathfrak l \Lambda$. We have the sequence of inclusions
$$\ell\Lambda \subset \mathfrak l \Pi \subset \Gamma \subsetneq \Pi \subset \Lambda.$$
By contradiction, suppose $\Pi = \Lambda$. Then, $\Gamma \cap \mathfrak l \Lambda = \ell\Lambda$. Since $\mathfrak l \Gamma \subset \Gamma \cap \mathfrak l \Lambda = \ell\Lambda$, it follows that
$\mathfrak l \Lambda = \mathfrak l \Pi = \mathfrak l \Gamma + \ell \Lambda \subset \ell \Lambda,$
a contradiction. Therefore $\Gamma \subsetneq \Pi \subsetneq \Lambda$, and each inclusion must be of index $\ell$. Then, $\Gamma \in \mathscr L_\mathfrak l (\Pi) \subset \mathscr L_\mathfrak l [\mathscr L_\mathfrak l (\Lambda)]$.

Let us now prove that $\mathscr L_{\mathfrak l}[\mathscr L_{\mathfrak l}(\Lambda)] \subset \mathscr L_0(\Lambda)$. Let $\Pi \in \mathscr L_{\mathfrak l}(\Lambda)$ and $\Gamma \in \mathscr L_{\mathfrak l}(\Pi)$. We have the sequence of inclusions
$$\ell\Lambda = \mathfrak l (\mathfrak l \Lambda) \subset_\ell \mathfrak l \Pi \subset_\ell \Gamma \subset_\ell \Pi \subset_\ell \Lambda,$$
where $\subset_\ell$ means that the first lattice is of index $\ell$ in the second. Therefore $\ell\Lambda \subset \Gamma \subset \Lambda$, and $\Gamma / \ell \Lambda$ is of dimension 2 over $\F_\ell$.
Since $\Gamma / \mathfrak l \Lambda$ is a line, there is an element $\pi \in \Pi$ such that $\Pi = \Z_\ell \pi + \mathfrak l \Lambda$. Similarly, $\Pi / \mathfrak l \Gamma$ is a line, so there is an element
$\gamma \in \Gamma$ such that $\Gamma = \Z_\ell\gamma + \mathfrak l \pi + \ell \Lambda$. Therefore, writing $x = \gamma + \ell\Lambda$ and $y = \pi + \ell\Lambda$, $\Gamma / \ell \Lambda$ is generated as an $\F_\ell$-vector space by $x$ and
$\epsilon y$. Since $\gamma \in \Gamma \subset \Pi =  \Z_\ell \pi + \mathfrak l \Lambda$, there exist $a\in\Z_\ell$ and $z \in \Lambda/\ell\Lambda$ such that $x = ay + \epsilon z$. Then,
$$\langle x, \epsilon y \rangle_\ell = \langle a y, \epsilon y \rangle_\ell + \langle \epsilon z, \epsilon y \rangle_\ell = a \langle y, \epsilon y \rangle_\ell + \langle z, \epsilon^2 y \rangle_\ell = 0,$$
where the last equality uses Lemma~\ref{lemma:realOrbitIsotropic}, and the fact that $\epsilon^2 = 0$. So $\Gamma/\ell\Lambda$ is maximal isotropic, and $\Gamma \in \mathscr L(\Lambda)$. Furthermore $\epsilon x = a\epsilon y$, and $\epsilon y = 0$ are both in $\Gamma/\ell\Lambda$, so the latter is $\F_\ell[\epsilon]$-stable, so $\Gamma$ is $\lO_0$-stable. This proves that $\Gamma \in \mathscr L_0(\Lambda)$.
\end{proof}

\begin{remark}\label{rem:numberofEllEllinRamifCase}
We can deduce from Lemma~\ref{lemma:subspacesEpsilon} that $|\mathscr L_0(\Lambda)| = \ell^2 + \ell + 1$. In fact, for any two distinct lattices $\Pi_1, \Pi_2 \in \mathscr L_{\mathfrak l}(\Lambda)$, we have $\mathscr L_{\mathfrak l}(\Pi_1) \cap \mathscr L_{\mathfrak l}(\Pi_2) = \{\mathfrak l \Lambda\}$.
\end{remark}

\subsection{Locally maximal real multiplication and $\ll$-isogenies}\label{subsec:ellellMaxRM}

Fix again a principally polarizable absolutely simple ordinary abelian surface $\mathscr A$ over $\F_q$, with endomorphism algebra $K$, and $K_0$ the maximal real subfield of $K$. 
Now suppose that $\mathscr A$ has locally maximal real multiplication at $\ell$. Recall from Theorem~\ref{thm:classificationOrdersMaxRM} that any such locally maximal real order is of the form $\lO_\mathfrak f = \lO_0 + \mathfrak f \lO_K$, for some $\lO_0$-ideal $\mathfrak f$.
The structure of $\mathfrak l$-isogeny graphs as described by Theorem~\ref{thm:lisogenyvolcanoes} can be used to describe graphs of $\ll$-isogenies preserving the real multiplication, via Theorem~\ref{thm:ellellLCombinations}.

\subsection*{Proof of Theorem~\ref{thm:ellellLCombinations}}
This theorem is a direct consequence of Proposition~\ref{prop:classificationRMPreservingNeighbors} translated to the world of isogenies via Remark~\ref{rem:correspEllEllAndEllEll}.
\qed

\begin{remark}\label{rem:RMPreservingEllEllDoNotDependOnPol}
Note that in particular, Theorem~\ref{thm:ellellLCombinations} implies that the kernels of the $\ll$-isogenies $\mathscr A \ra \mathscr B$ preserving the real multiplication do not depend on the choice of a polarization $\xi$ on $\mathscr A$.
\end{remark}

\subsubsection{The inert and ramified cases}
Combining Theorem~\ref{thm:lisogenyvolcanoes} and Theorem~\ref{thm:ellellLCombinations} allows us to describe the graph of $\ll$-isogenies with maximal local real multiplication at $\ell$.
 For purpose of exposition, we assume from now on that the primitive quartic CM-field $K$ is different from $\Q(\zeta_5)$, but the structure for $\Q(\zeta_5)$ can be deduced in the same way (bearing in mind that in that case, $\cO_{K_0}^\times$ is of index 5 in $\cO_{K}^\times$).
 Let $\mathscr A$ be any principally polarizable abelian variety with order $\cO$, with maximal real multiplication locally at $\ell$.
When $\ell$ is inert in $K_0$, the connected component of $\mathscr A$ in the $\ll$-isogeny graph (again, for maximal local real multiplication) is exactly the volcano $\mathscr V_\mathfrak l(\cO)$ (see Notation~\ref{notation:lVolcano}).
When $\ell$ ramifies as $\mathfrak l^2$ in $K_0$, 
the connected component of $\mathscr A$ in the graph of $\mathfrak l$-isogenies is isomorphic to the graph $\mathscr V_\mathfrak l(\cO)$, and the graph of $\ll$-isogenies
can be constructed from it as follows: on the same set of vertices, add an edge in the $\ll$-graph between $\mathscr B$ and $\mathscr C$ for each path of length 2 between $\mathscr B$ and $\mathscr C$ in the $\mathfrak l$-volcano; each vertex $\mathscr B$ has now $\ell^2 + 2\ell + 1$ outgoing edges, while there are only $\ell^2 + \ell + 1$ possible kernels of RM-preserving $\ll$-isogenies (see Remark~\ref{rem:numberofEllEllinRamifCase}). This is because the edge corresponding to the canonical projection $\mathscr B \rightarrow \mathscr B / \mathscr B[\mathfrak l]$ has been accounted for $\ell + 1$ times. Remove $\ell$ of these copies, and the result is exactly the graph of $\ll$-isogenies.

\begin{example}
Suppose $\ell = 2$ ramifies in $K_0$ as $\mathfrak l^2$, and $\mathfrak l$ is principal in $\cO_{K_0}$. Suppose further that $\mathfrak l$ splits in $K$ into two prime ideals of order $4$ in $\Cl(\cO_K)$.
Then, the first four levels of any connected component of the $\ll$-isogeny graph for which the largest order is $\cO_K$ are isomorphic to the graph of Figure~\ref{fig:exampleOfEllEllRamif}. The underlying $\mathfrak l$-isogeny volcano is represented with dotted nodes and edges. Since $\mathfrak l$ is principal in $\cO_{K_0}$, it is an undirected graph, and we represent it as such. The level 0, i.e., the surface of the volcano, is the dotted cycle of length 4 at the center. The circles have order $\cO_{K}$, the squares have order $\cO_{K_0} + \mathfrak l \cO_{K}$, the diamonds $\cO_{K_0} + \ell \cO_{K}$, and the triangles $\cO_{K_0} + \mathfrak l^3 \cO_{K}$.
\end{example}

\begin{figure}
\includegraphics{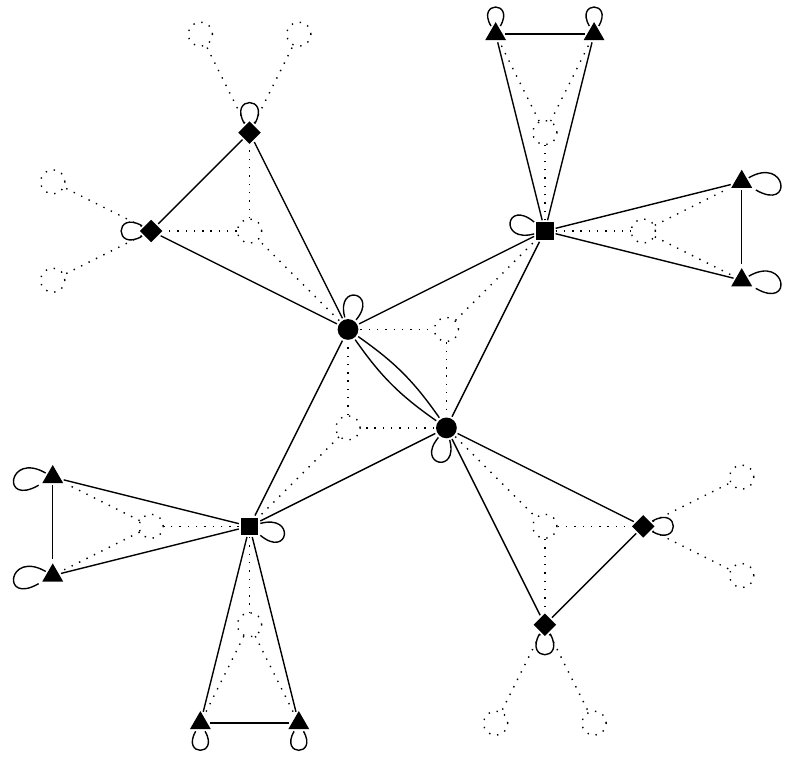}
\caption{\label{fig:exampleOfEllEllRamif} An example of $\ll$-isogeny graph, when $\ell$ ramifies in $K_0$.}
\end{figure}

\subsubsection{The split case}
For simplicity, suppose again that the primitive quartic CM-field $K$ is a different from $\Q(\zeta_5)$.
Let $\mathscr A$ be any principally polarizable abelian variety with order $\cO$, with maximal real multiplication locally at $\ell$.
The situation when $\ell$ splits as $\mathfrak l_1\mathfrak l_2$ in $K_0$ (with $\mathfrak l_1$ and $\mathfrak l_2$ principal in $\cO \cap K_0$) is a bit more delicate because the $\mathfrak l_1$ and $\mathfrak l_2$-isogeny graphs need to be carefully pasted together.
Let $\mathscr G_{\mathfrak l_1,\mathfrak l_2}(\mathscr A)$ be the connected component of $\mathscr A$ in the labelled isogeny graphs whose edges are $\mathfrak l_1$-isogenies (labelled $\mathfrak l_1$) and $\mathfrak l_2$-isogenies (labelled~$\mathfrak l_2$).
The graph of $\ll$-isogenies is the graph on the same set of vertices, such that the number of edges between two vertices $\mathscr B$ and $\mathscr C$ is exactly the number of paths of length 2 from $\mathscr B$ to $\mathscr C$, whose first edge is labelled $\mathfrak l_1$ and second edge is labelled $\mathfrak l_2$. It remains to fully understand the structure of the graph $\mathscr G_{\mathfrak l_1,\mathfrak l_2}(\mathscr A)$.
Like for the cases where $\ell$ is inert or ramified in $K_0$, we would like a complete characterization of the structure of the isogeny graph, i.e., a description that is sufficient to construct an explicit model of the abstract graph.

Without loss of generality, suppose $\cO$ is locally maximal at $\ell$. Then, the endomorphism ring of any variety in $\mathscr G_{\mathfrak l_1,\mathfrak l_2}(\mathscr A)$ is characterized by the conductor $\mathfrak l_1^m\mathfrak l_2^n$ at $\ell$, and we denote by $\cO_{m,n}$ the corresponding order. The graph $\mathscr G_{\mathfrak l_1,\mathfrak l_2}(\mathscr A)$ only depends on the order, so we also denote it $\mathscr G_{\mathfrak l_1,\mathfrak l_2}(\cO)$. For simplicity of exposition, let us assume that $\mathfrak l_1$ and $\mathfrak l_2$ are principal in $\cO\cap K_0$, so that the $\mathfrak l_i$-isogeny graphs are volcanoes.

\begin{definition}[cyclic homomorphism]
Let $\mathscr X$ and $\mathscr Y$ be two graphs. A graph homomorphism $\psi : \mathscr X \rightarrow \mathscr Y$ is a \emph{cyclic homomorphism} if each edge of $\mathscr X$ and $\mathscr Y$ can be directed in such a way that $\psi$ becomes a homomorphism of directed graphs, and each undirected cycle in $\mathscr X$ becomes a directed cycle.
\end{definition}

\begin{lemma}\label{lemma:easyLemmaConnectedComp}
Let $\mathscr X$, $\mathscr Y$ and $\mathscr Y'$ be connected, $d$-regular graphs, with $d \leq 2$, such that $\mathscr Y$ and $\mathscr Y'$ are isomorphic. If $\varphi : \mathscr Y \rightarrow \mathscr X$ and $\varphi' : \mathscr Y' \rightarrow \mathscr X$ are two cyclic homomorphisms, there is an isomorphism $\psi : \mathscr Y \rightarrow \mathscr Y'$ such that $\varphi = \varphi' \circ \psi$.
\end{lemma}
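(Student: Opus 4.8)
The plan is to reduce the lemma to an explicit description of connected graphs of degree at most $2$ and of the cyclic homomorphisms between them, after which the conclusion is a one-line computation. First I would dispose of the degenerate cases: if $d\le 1$, then $\mathscr X,\mathscr Y,\mathscr Y'$ each have at most two vertices, every graph homomorphism between them is an isomorphism, and one simply takes $\psi=\varphi'^{-1}\circ\varphi$. So I may assume $d=2$; I would also set aside, to be checked directly, the finitely many configurations in which $\mathscr Y$ or $\mathscr X$ has at most two vertices (and whatever loops or multiple edges the ambient Cayley-graph setting might produce there). A connected $2$-regular graph is then either a finite cycle or the two-way infinite path. Writing $\mathbb Z/n$ for $n\in\mathbb Z_{\ge 3}\cup\{\infty\}$ with the convention $\mathbb Z/\infty:=\mathbb Z$, every such graph is isomorphic, for a unique $n$, to the graph $C(n)$ with vertex set $\mathbb Z/n$ and an edge joining $\bar\imath$ to $\overline{\imath+1}$ for each $i$; and the graph automorphisms of $C(n)$ are exactly the maps $\bar\imath\mapsto\overline{\delta i+b}$ with $\delta\in\{\pm1\}$, $b\in\mathbb Z/n$ (the dihedral group). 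I would fix isomorphisms $\mathscr Y\cong C(n)$ and $\mathscr Y'\cong C(n)$ (the same $n$, since $\mathscr Y\cong\mathscr Y'$) and $\mathscr X\cong C(m)$, normalising the latter so that a rotational orientation on $C(m)$ is $\bar\jmath\to\overline{\jmath+1}$, and regard $\varphi,\varphi'$ as maps between these standard models.

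\textbf{Key step: cyclic homomorphisms $C(n)\to C(m)$.} I claim that any cyclic homomorphism $\varphi\colon C(n)\to C(m)$ satisfies $m\mid n$ (with $m\mid\infty$ for every finite $m$, and $\infty\nmid m$ for finite $m$) and has the form $\bar\imath\mapsto\overline{\varepsilon i+c}$ for some $\varepsilon\in\{\pm1\}$ and $c\in\mathbb Z/m$. To prove it, choose orientations of the edges of source and target witnessing the cyclic-homomorphism property. The only orientations of a finite cycle under which its cycle is directed are the two rotational ones, so for $n$ finite the source may be taken to be oriented $\bar\imath\to\overline{\imath+1}$ after possibly replacing $i$ by $-i$ (this is the origin of the sign $\varepsilon$), and likewise the target, if it is a finite cycle, carries a rotational orientation. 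If the source is a finite cycle, then its image under $\varphi$ is a closed directed walk of positive length (a homomorphism into a loopless graph sends edges to edges), forcing the target to contain a directed cycle and hence, being connected and $2$-regular, to be a finite cycle --- the two-way infinite path, being acyclic, admits no directed cycle under any orientation. In every remaining configuration the condition that $\varphi$ respect orientations forces $\varphi(\overline{\imath+1})$ to be the head of the unique oriented edge leaving $\varphi(\bar\imath)$, i.e. $\varphi(\overline{\imath+1})=\varphi(\bar\imath)+\bar 1$; hence $\varphi(\bar\imath)=\varphi(\bar 0)+\bar\imath$, and well-definedness on $\mathbb Z/n$ forces $m\mid n$ (and $m$ finite whenever $n$ is). Undoing the possible relabelling $i\mapsto -i$ of the source yields the stated form.

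\textbf{Conclusion.} With $\varphi(\bar\imath)=\overline{\varepsilon i+c}$ and $\varphi'(\bar\imath)=\overline{\varepsilon' i+c'}$, I look for $\psi(\bar\imath)=\overline{\delta i+b}$ with $\varphi=\varphi'\circ\psi$. Since $\varphi'(\psi(\bar\imath))=\overline{\varepsilon'\delta\,i+(\varepsilon' b+c')}$, take $\delta=\varepsilon\varepsilon'\in\{\pm1\}$; then $\varepsilon'\delta=\varepsilon'^{2}\varepsilon=\varepsilon$, so the linear coefficients agree identically in $\mathbb Z/m$, and it remains only to choose $b\in\mathbb Z/n$ with $b\equiv\varepsilon'(c-c')\pmod m$, which is possible because $m\mid n$ makes the reduction $\mathbb Z/n\twoheadrightarrow\mathbb Z/m$ surjective. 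The resulting $\psi$ is a graph automorphism of $C(n)$, hence, transported back along the fixed isomorphisms, an isomorphism $\mathscr Y\to\mathscr Y'$ with $\varphi=\varphi'\circ\psi$.

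\textbf{Main obstacle.} There is no genuine mathematical difficulty here (the lemma is of the "easy" sort); the only thing requiring care is the bookkeeping in the key step --- pinning down exactly which edge orientations are admissible, tracking the sign $\varepsilon$, and organising the finite-cycle, infinite-path, and covering-map cases under the single uniform device $\mathbb Z/\infty=\mathbb Z$ --- together with a separate, routine verification of the small graphs excluded at the outset.
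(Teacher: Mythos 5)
Your proof is essentially the same as the paper's, just phrased more algebraically: both reduce to showing that a cyclic homomorphism between finite cycles $C(n)\to C(m)$ is forced, by the rotational orientations, to be an affine covering $\bar\imath\mapsto\overline{\pm i+c}$ with $m\mid n$, and then solve $\varphi=\varphi'\circ\psi$ inside the dihedral automorphism group of $C(n)$. The paper carries out exactly this covering-map computation by a step-by-step induction giving $\varphi(y_i)=x_{i\bmod m}$, which is your formula with $\varepsilon=1$ after a relabelling; your write-up merely makes the two signs and the translate explicit.

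The one place your proposal goes beyond the paper is where it tries to absorb the two-way infinite path via the device $\mathbb Z/\infty:=\mathbb Z$, and there is a genuine gap there. When the source is the infinite path it contains no undirected cycle, so the directedness clause in the definition of cyclic homomorphism is vacuous and nothing forces the chosen orientation of $\mathscr X$ to be rotational; your phrase ``the unique oriented edge leaving $\varphi(\bar\imath)$'' is therefore unjustified in that case. In fact the claimed classification, and the lemma itself, fail for infinite graphs: take $\mathscr Y=\mathbb Z$, $\mathscr X=C(3)$ and $\varphi(i)=\bar 0$ for $i$ even, $\bar 1$ for $i$ odd. Orienting every edge of $\mathbb Z$ so that its even endpoint is the tail, and orienting the edge $\{\bar 0,\bar 1\}$ of $C(3)$ as $\bar 0\to\bar 1$, exhibits $\varphi$ as a cyclic homomorphism, yet it is not of the form $\overline{\varepsilon i+c}$ (it is not surjective), and there is no automorphism $\psi$ of $\mathbb Z$ with $\varphi=\varphi'\circ\psi$ when $\varphi'$ is reduction modulo $3$. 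This does not bear on the paper, whose graphs are finite (connected components of Cayley graphs of finite abelian groups) and whose own proof silently assumes finiteness from the first line ``let $\mathscr X$ be the cycle $x_0-\cdots-x_m$''; you should simply drop the $n=\infty$ branch rather than try to subsume it. With that excision (and with the small multigraph cases set aside, as you already note) the argument is correct and coincides with the paper's.
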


\begin{proof}
The statement is trivial if $d$ is 0 or 1. Suppose $d = 2$, i.e., $\mathscr X$, $\mathscr Y$ and $\mathscr Y'$ are cycles.
Let $\mathscr X$ be
the cycle $x_0 - x_1 - \dots - x_m$, with $x_m = x_0$. Similarly, $\mathscr Y$ is the cycles $y_0 - y_1 - \dots - y_n$, with $y_n = y_0$. Without loss of generality, $\varphi(y_0) = x_0$ and $\varphi(y_1) = x_1$.
There is a direction on the edges of $\mathscr X$ and $\mathscr Y$ such that $\varphi$ becomes a homomorphism of directed graphs, and $\mathscr Y$ becomes a directed cycle.
Without loss of generality, the direction of $\mathscr Y$ is given by $y_i \rightarrow y_{i+1}$.
Since $y_0 \rightarrow y_{1}$, we have $\varphi(y_0) \rightarrow \varphi(y_{1})$, hence $x_0 \rightarrow x_{1}$. Since $y_1 \rightarrow y_{2}$, we must also have $x_1 \rightarrow \varphi(y_{2})$, so $\varphi(y_{2}) \neq x_0$ and therefore $\varphi(y_{2}) = x_2$, and as a consequence $x_1 \rightarrow x_{2}$.
Repeating inductively, we obtain $x_i \rightarrow x_{i+1}$ for all $i \leq m$, and $\varphi(y_{i}) = x_{i \text{ mod } m}$ for all $i \leq n$.

Similarly, any direction on $\mathscr X$ and $\mathscr Y'$ such that $\mathscr Y'$ is a directed cycle and $\varphi'$ becomes a homomorphism of directed graphs turns $\mathscr X$ into a directed cycle. Without loss of generality, it is exactly the directed cycle $x_0 \rightarrow x_1 \rightarrow \dots \rightarrow x_m$ (if it is the other direction, simply invert the directions of $\mathscr Y'$). There is then an enumeration $\{y_i'\}_{i=0}^n$ of $\mathscr Y'$ such that $\varphi'(y_i') = x_i$, and $y_i' \rightarrow y'_{i+1}$ for each $i$. The isomorphism $\psi$ is then simply given by $\psi(y_i) = y_i'$.
\end{proof}

\begin{proposition}\label{prop:structureVolcamicMess}
The graph $\mathscr G_{\mathfrak l_1,\mathfrak l_2}(\cO)$, with edges labelled by $\mathfrak l_1$ and $\mathfrak l_2$, and bi-levelled by $(v_{\mathfrak l_1}, v_{\mathfrak l_2})$, is isomorphic to the unique (up to isomorphism) graph $\mathscr G$ with edges labelled by $\mathfrak l_1$ and $\mathfrak l_2$, and bi-levelled by a pair $(v_1, v_2)$, satisfying:
\begin{enumerate}[label=(\roman*)]
\item\label{property:l1l2volcanoes} For $i= 1,2$, the subgraph of $\mathscr G$ containing only the edges labelled by $\mathfrak l_i$ is a disjoint union of $\ell$-volcanoes, levelled by $v_{i}$,
\item\label{property:l1l2compatibility} For $i \neq j$, if $u$ and $v$ are connected by an $\mathfrak l_i$-edge, then $v_{j}(u) = v_j(v)$,
\item\label{property:l1l2CayleySubgraphs} For any non-negative integers $m$, and $n$, let $\mathscr G_{m,n}$ be the subgraph containing the vertices $v$ such that $(v_1(v),v_2(v)) = (m,n)$. Then,
\begin{enumerate}[label=(\roman*)]
\item $\mathscr G_{0,0}$ is isomorphic to the Cayley graph $\mathscr C_{0,0}$ of the subgroup of $\Pic(\cO)$ with generators the invertible ideals of the order $\cO$ above $\ell$, naturally labelled by $\mathfrak l_1$ and $\mathfrak l_2$,
\item each connected component of $\mathscr G_{m,n}$ is isomorphic to the Cayley graph $\mathscr C_{m,n}$ of the subgroup of $\Pic(\cO_{m,n})$ with generators the invertible ideals of the order $\cO_{m,n}$ above $\ell$, naturally labelled by $\mathfrak l_1$ and $\mathfrak l_2$,
\end{enumerate}
\item\label{property:l1l2commute} For any two vertices $u$ and $v$ in $\mathscr G$, there is a path of the form $u -_{\mathfrak l_1} w -_{\mathfrak l_2} v$ if and only if there is a path of the form $u -_{\mathfrak l_2} w' -_{\mathfrak l_1} v$ (where $-_{\mathfrak l_i}$ denotes an edge labelled by $\mathfrak l_i$).
\end{enumerate}
\end{proposition}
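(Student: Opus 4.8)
The plan is to separate the statement into two claims: \textbf{(a)} the graph $\mathscr G_{\mathfrak l_1,\mathfrak l_2}(\cO)$ satisfies properties \ref{property:l1l2volcanoes}--\ref{property:l1l2commute}; and \textbf{(b)} any graph with labels and bi-levels satisfying these four properties is determined up to (label- and bi-level-preserving) isomorphism. Claim (a), together with the connectedness of $\mathscr G_{\mathfrak l_1,\mathfrak l_2}(\cO)$ and claim (b), yields the proposition.

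For (a): property \ref{property:l1l2volcanoes} is Theorem~\ref{thm:lisogenyvolcanoes} applied with $\mathfrak l=\mathfrak l_i$ (here $N(\mathfrak l_i)=\ell$; since $\mathfrak l_i$ is principal in $\cO\cap K_0$ and, because $K\neq\Q(\zeta_5)$, all relevant unit groups lie in $K_0$ by Remark~\ref{rem:Streng}, every vertical multiplicity is $1$ and each component is an honest $\ell$-volcano $\mathscr V_{\mathfrak l_i}(\cO)$ in the sense of Notation~\ref{notation:lVolcano}). Property \ref{property:l1l2compatibility} is immediate from Proposition~\ref{prop:frakLStructure}, as an $\mathfrak l_i$-isogeny changes only the valuation at $\mathfrak l_i$ of the conductor. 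Property \ref{property:l1l2CayleySubgraphs} follows from Theorem~\ref{thm:lisogenyvolcanoes}(ii) read inside each bi-level: every vertex of $\mathscr G_{m,n}$ has endomorphism ring $\cO_{m,n}$, these varieties carry a free action of $\Pic(\cO_{m,n})$ under which the horizontal $\mathfrak l_1$- and $\mathfrak l_2$-edges are translations by the invertible primes of $\cO_{m,n}$ above $\ell$ (of which there are none once $m,n>0$). Finally, property \ref{property:l1l2commute} follows from the coprimality of $\mathfrak l_1$ and $\mathfrak l_2$: all vertices here have maximal local real multiplication, so $\mathscr A[\mathfrak l_1]\cap\mathscr A[\mathfrak l_2]=0$, and the kernel of a composition of an $\mathfrak l_1$- with an $\mathfrak l_2$-isogeny out of $\mathscr A$ is of the form $\kappa_1\oplus\kappa_2$ with $\kappa_i\subset\mathscr A[\mathfrak l_i]$ of order $\ell$; this description is symmetric in $\mathfrak l_1,\mathfrak l_2$, giving both directions of \ref{property:l1l2commute} (the geometric shadow of Lemma~\ref{lemma:caracSplitIso} and Proposition~\ref{prop:L0LambdaSplit}). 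Connectedness of $\mathscr G_{\mathfrak l_1,\mathfrak l_2}(\cO)$ is clear from \ref{property:l1l2volcanoes}: from any vertex one ascends along $\mathfrak l_1$-edges to $v_1=0$, then along $\mathfrak l_2$-edges (which keep $v_1=0$ by \ref{property:l1l2compatibility}) to $v_2=0$, landing in the connected graph $\mathscr G_{0,0}$.

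For (b): given two graphs $\mathscr G,\mathscr G'$ with these properties, I would build an isomorphism $\Psi\colon\mathscr G\to\mathscr G'$ by induction on the bi-level, writing $\mathscr G_{m,n}$ for the subgraph where $(v_1,v_2)=(m,n)$. The base case is any isomorphism $\mathscr G_{0,0}\to\mathscr G'_{0,0}$, which exists because both are the Cayley graph of the same subgroup of $\Pic(\cO)$. Assume $\Psi$ has been defined on all $\mathscr G_{i,j}$ with $i+j<N$, commuting there with the $\mathfrak l_1$- and $\mathfrak l_2$-predecessor maps $\pr_1,\pr_2$ supplied by \ref{property:l1l2volcanoes}. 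To extend it to a block $\mathscr G_{m,n}$ with $m+n=N$: by \ref{property:l1l2volcanoes} and \ref{property:l1l2compatibility} the only edges leaving $\mathscr G_{m,n}$ are the ascending $\mathfrak l_i$-edges into $\mathscr G_{m-1,n}$ or $\mathscr G_{m,n-1}$, which are already matched. If $m,n>0$ the block has no internal edges, and it suffices to pick a bijection $\mathscr G_{m,n}\to\mathscr G'_{m,n}$ commuting with $(\pr_1,\pr_2)$; this exists because the joint fibres of $(\pr_1,\pr_2)$ have the same cardinality on both sides, as forced by the volcano degrees in \ref{property:l1l2volcanoes} together with the vertex counts of \ref{property:l1l2CayleySubgraphs}. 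If exactly one of $m,n$ vanishes, say $m=0$, then $\mathscr G_{0,n}$ is a disjoint union of cycles (its internal $\mathfrak l_1$-edges form a Cayley graph of degree at most $2$, since $\mathfrak l_1$ is principal in $\cO\cap K_0$) whose images under $\pr_2$ are already-matched components of $\mathscr G_{0,n-1}$; grouping these cycles by their $\pr_2$-image (the groups have matching sizes on the two sides) and applying Lemma~\ref{lemma:easyLemmaConnectedComp} to each matched pair produces a cycle isomorphism compatible with $\pr_2$, hence an extension of $\Psi$. One checks at each step that the extension remains a graph isomorphism --- the new edges are precisely the ascending ones, respected by construction --- and still commutes with $\pr_1,\pr_2$, closing the induction.

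The main obstacle is this inductive step: the extension to $\mathscr G_{m,n}$ must be compatible with \emph{both} predecessor maps simultaneously, so I need the properties to rigidify enough --- that the joint fibres of $(\pr_1,\pr_2)$ really have equal size on the two graphs, and that the cycle matchings produced by Lemma~\ref{lemma:easyLemmaConnectedComp} in the $\mathfrak l_1$-direction do not clash with the already-fixed $\mathfrak l_2$-direction. Making this precise amounts to verifying that the numerics coming from the volcano structure (\ref{property:l1l2volcanoes}) and from the Cayley graphs of the Picard groups $\Pic(\cO_{m,n})$ (\ref{property:l1l2CayleySubgraphs}) are mutually consistent, and that property \ref{property:l1l2commute} forces $\pr_2$ to carry each $\mathfrak l_1$-volcano onto an $\mathfrak l_1$-volcano one level up in a label-preserving fashion. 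Everything else --- the verification of (a) and the formal bookkeeping of the induction --- is routine.
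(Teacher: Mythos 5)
Your overall architecture — verify that $\mathscr G_{\mathfrak l_1,\mathfrak l_2}(\cO)$ satisfies the four properties, then prove uniqueness by building an isomorphism $\Psi$ block by block in increasing bi-level, with the Cayley-graph base case and Lemma~\ref{lemma:easyLemmaConnectedComp} for the boundary rows $\mathscr G_{0,n}$, $\mathscr G_{n,0}$ — is the same as the paper's. Part (a) is fine. But the step you flag yourself as ``the main obstacle'' is a genuine gap, and it is not a bookkeeping issue.

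For the interior blocks $m,n>0$, you assert that the joint fibres of $(\pr_1,\pr_2)$ over compatible pairs ``have the same cardinality on both sides, as forced by the volcano degrees in~\ref{property:l1l2volcanoes} together with the vertex counts of~\ref{property:l1l2CayleySubgraphs}.'' Degree and vertex counts alone give only the \emph{total} number of vertices and of compatible pairs; they say nothing about how the fibre sizes are distributed (one graph could a priori have some empty and some doubled fibres while the other has all singletons, and no $(\pr_1,\pr_2)$-equivariant bijection would exist). What must actually be shown is that $x\mapsto(\pr_1(x),\pr_2(x))$ is a \emph{bijection} from $\mathscr G_{m,n}$ onto the set of compatible pairs, i.e.\ that every joint fibre is a singleton, and this is exactly where property~\ref{property:l1l2commute} enters: from $\pr_1(x)-_{\mathfrak l_2}\pr_1(\pr_2(x))-_{\mathfrak l_1}y$ one uses~\ref{property:l1l2commute} to produce $\pr_1(x)-_{\mathfrak l_1}x'-_{\mathfrak l_2}y$ with $x'$ in the descending $\mathfrak l_1$-fibre $X=\pr_1^{-1}(\pr_1(x))$, showing that $\pr_2$ restricts to a surjection $X\to\pr_1^{-1}(\pr_1(\pr_2(x)))$ between two sets of the same size $\ell$, hence a bijection, hence $\pr_1^{-1}(\pr_1(x))\cap\pr_2^{-1}(\pr_2(x))=\{x\}$. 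Only \emph{after} this injectivity does a counting argument give surjectivity. Similarly, to invoke Lemma~\ref{lemma:easyLemmaConnectedComp} on the rows $\mathscr G_{0,n}$ you must first check that $\pr_2$ is a \emph{cyclic} homomorphism, which again uses~\ref{property:l1l2commute}; you gesture at this (``property~\ref{property:l1l2commute} forces $\pr_2$ to carry each $\mathfrak l_1$-volcano onto an $\mathfrak l_1$-volcano...'') but that is a different statement from the cyclic-homomorphism hypothesis of the lemma. Both of these are the real content of the proof and need to be carried out rather than asserted.
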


\begin{proof}
First, it is not hard to see that $\mathscr G_{\mathfrak l_1,\mathfrak l_2}(\cO)$ satisfies all these properties. Properties~\ref{property:l1l2volcanoes} and~\ref{property:l1l2compatibility} follow from Proposition~\ref{prop:frakLStructure} and Theorem~\ref{thm:lisogenyvolcanoes}. Property~\ref{property:l1l2CayleySubgraphs} follows from the free CM-action of $\Pic(\cO_{m,n})$ on the corresponding isomorphism classes. Property~\ref{property:l1l2commute} follows from the fact that $\mathscr A[\mathfrak l_1] \oplus \mathscr A[\mathfrak l_2]$ is a direct sum.

Let $\mathscr G$ and $\mathscr G'$ be two graphs with these properties.
For $i = 1,2$, let $\pr_i$ (respectively, $\pr'_i$) be the predecessor map induced by the volcano structure of the $\mathfrak l_i$-edges on $\mathscr G$ (respectively, on $\mathscr G'$).
We will construct an isomorphism $\Psi : \mathscr G \rightarrow \mathscr G'$ by starting with the isomorphism between $\mathscr G_{0,0}$ and $\mathscr G'_{0,0}$ and extending it on the blocks $\mathscr G_{m,n}$ and $\mathscr G'_{m,n}$ one at a time.
Let $n>0$, and suppose, by induction, that $\Psi$ has been defined exactly on the blocks $\mathscr G_{i,j}$ for $i + j < n$. Let us extend $\Psi$ to the blocks $\mathscr G_{m,n-m}$ for $m = 0,\dots,n$ in order.

Both $\mathscr G_{0,n}$ and $\mathscr G'_{0,n}$ have the same number of vertices, and their connected components are all isomorphic $\mathscr C_{0,n}$, which are of degree $d$ at most 2.
We have the graph homomorphism $\pr_2 : \mathscr G_{0,n} \rightarrow \mathscr G_{0,n-1}$. Let $S$ be the set of connected components of $\mathscr G_{0,n}$. Define the equivalence relation on $S$
$$A \sim B \Longleftrightarrow \pr_2(A) = \pr_2(B).$$
Similarly define the equivalence relation $\sim'$ on the set $S'$ of connected components of $\mathscr G'_{0,n}$. 
Observe that each equivalence class for either $\sim$ or $\sim'$ has same cardinality, so one can choose a bijection $\Theta : S \rightarrow S'$ such that for any $A \in S$, we have $\Psi(\pr_2(A)) = \pr'_2(\Theta(A))$. It is not hard to check that $\pr_2$ and $\pr'_2$ are cyclic homomorphisms, using Property~\ref{property:l1l2commute}. From Lemma~\ref{lemma:easyLemmaConnectedComp}, for each $A \in S$, there is a graph isomorphism $\psi_A : A \rightarrow \Theta(A)$ such that for any $x \in A$, $\pr'_2( \psi_A(x)) = \Psi( \pr_2(x))$.
Let $\hat\Psi$ be the map extending $\Psi$ by sending any $x \in \mathscr G_{0,n}$ to $\psi_A(x)$, where $A$ is the connected component of $x$ in $\mathscr G_{0,n}$.
We need to show that it is a graph isomorphism. Write $\mathscr D$ and $\mathscr D'$ the domain and codomain of $\Psi$. The map 
$\hat\Psi$, restricted and corestricted to $\mathscr D$ and $\mathscr D'$ is exactly $\Psi$ so is an isomorphism. Also, the restriction and corestriction 
to $\mathscr G_{0,n}$ and $\mathscr G'_{0,n}$ is an isomorphism, by construction. Only the edges between $\mathscr G_{0,n}$ and $\mathscr D$ (respectively $\mathscr G'_{0,n}$ and $\mathscr D'$) might cause trouble. The only edges between $\mathscr G_{0,n}$ and $\mathscr D$ are actually between $\mathscr G_{0,n}$ and $\mathscr G_{0,n-1}$, and are of the form $(x,\pr_2(x))$. But $\Psi$ was precisely constructed so that $\Psi(\pr_2(x)) = \pr'_2(\Psi(x))$, so $\hat \Psi$ is indeed an isomorphism.

Now, let $0<m<n$ and suppose that $\Psi$ has been extended to the components $\mathscr G_{i,n-i}$ for each $i < m$. Let us extend it to $\mathscr G_{m,n-m}$. Since $m>0$ and $n-m >0$, the graph $\mathscr C_{m,n-m}$ is a single point, with no edge.
Let us now prove that for any pair $(x_1,x_2)$, where $x_1$ is a vertex in $\mathscr G_{m-1,n-m}$ and $x_2$ in $\mathscr G_{m,n-m-1}$ such that $\pr_2(x_1) = \pr_1(x_2)$, there is a unique vertex $x$ in $\mathscr G_{m,n-m}$ such that $(x_1,x_2) = (\pr_1(x), \pr_2(x))$.
First, we show that for any vertex $x \in \mathscr G_{m,n-m}$, we have
$$\pr_1^{-1}(\pr_1(x)) \cap \pr_2^{-1}(\pr_2(x)) = \{x\}.$$
Let $z = \pr_1(\pr_2(x))$. Let $X = \pr_{1}^{-1}(\pr_1(x))$ and $Y = \pr_{1}^{-1}(z)$. 
From Property~\ref{property:l1l2compatibility}, $z$ and $\pr_1(x)$ are at the same $v_1$-level, so
from Property~\ref{property:l1l2volcanoes}, we have $|X| = |Y|$. For any $y \in Y$, we have $\pr_1(x) -_{\mathfrak l_2} z -_{\mathfrak l_1} y$, so there is a vertex $x'$ such that $\pr_1(x) -_{\mathfrak l_1} x' -_{\mathfrak l_2} y$. Then, $v_1(x') = v_1(y) = v_1(\pr_1(x)) - 1$, and therefore $x' \in X$. This implies that $\pr_2$ induces a surjection $\tilde{\pr}_2 : X \rightarrow Y$, which is a bijection since $|X| = |Y|$. So
$$X \cap \pr_2^{-1}(\pr_2(x)) = X \cap \tilde{\pr}_2^{-1}(\pr_2(x)) = \{x\}.$$
Now, an elementary counting argument shows that $x \mapsto (\pr_1(x), \pr_2(x))$ is a bijection between the vertices of $\mathscr G_{m,n-m}$ and the pairs $(x_1,x_2)$, where $x_1$ is a vertex in $\mathscr G_{m-1,n-m}$ and $x_2$ in $\mathscr G_{m,n-m-1}$ such that $\pr_2(x_1) = \pr_1(x_2)$. This property also holds in $\mathscr G'$, and we can thereby define $\psi: \mathscr G_{m,n-m} \rightarrow \mathscr G'_{m,n-m}$ as the bijection sending any vertex $x$ in $\mathscr G_{m,n-m}$ to the unique vertex $x'$ in $\mathscr G'_{m,n-m}$ such that $$(\pr'_1(x'), \pr'_2(x')) = (\Psi(\pr_1(x)), \Psi(\pr_2(x))).$$
It is then easy to check that the extension of $\Psi$ induced by $\psi$ is an isomorphism.
The final step, extending on $\mathscr G_{n,0}$, is similar to the case of $\mathscr G_{0,n}$. This concludes the induction, and proves that $\mathscr G$ and $\mathscr G'$ are isomorphic.
\end{proof}

\section{Applications to ``going up" algorithms}\label{sec:goingUp}
\subsection{Largest reachable orders}
The results from Section~\ref{subsec:ellellLevelsRM} and Section~\ref{subsec:ellellMaxRM} on the structure of the graph of $\ll$-isogenies allow us to determine exactly when there exists a sequence of $\ll$-isogenies leading to a surface with maximal local order at $\ell$. When there is no such path, one can still determine the largest reachable orders.

\begin{proposition}\label{prop:ellellIncreasingLocalOrder}
Suppose $\mathscr A$ has maximal local real order, and $\lO(\mathscr A) = \lO_\mathfrak f$.
\begin{enumerate}[label=(\roman*)]
\item If $\ell$ divides $\mathfrak f$, there is a unique $\ll$-isogeny to a surface with order $\lO_{\ell^{-1}\mathfrak f}$.
\item If $\ell$ ramifies in $K_0$ as $\mathfrak l^2$ and $\mathfrak f = \mathfrak l$, then there exists an $\ll$-isogeny to a surface with maximal local order if and only if $\mathfrak l$ is not inert in $K$. It is unique if $\mathfrak l$ is ramified, and there are two if it splits.
\item If $\ell$ splits in $K_0$ as $\mathfrak l_1\mathfrak l_2$, and $\mathfrak f = \mathfrak l_1^i$ for some $i > 0$, then there exists an $\ll$-isogeny to a surface with local order $\lO_{\mathfrak l_1^{i-1}}$ if and only if $\mathfrak l_2$ is not inert in $K$. It is unique if $\mathfrak l_2$ is ramified, and there are two if it splits. Also, there always exist an $\ll$-isogeny to a surface with local order $\lO_{\mathfrak l_{1}^{i-1}\mathfrak l_2}$.
\end{enumerate}
\end{proposition}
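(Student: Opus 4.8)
## Proof plan for Proposition~\ref{prop:ellellIncreasingLocalOrder}

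The plan is to reduce everything to the combinatorial picture already established: Theorem~\ref{thm:ellellLCombinations} tells us that an RM-preserving $\ll$-isogeny from a surface with maximal local real order $\lO_\mathfrak f$ is, depending on the splitting of $\ell$ in $K_0$, either an $\ell\cO_{K_0}$-isogeny (inert case), a composition of an $\mathfrak l_1$-isogeny with an $\mathfrak l_2$-isogeny (split case), or a composition of two $\mathfrak l$-isogenies (ramified case). So the entire question of which local orders in $K_\ell$ are reachable is governed by Proposition~\ref{prop:frakLStructure}, which describes exactly the effect of an $\mathfrak l$-isogeny on the $\mathfrak l$-adic conductor, and whether an $\mathfrak l$-isogeny is $\mathfrak l$-ascending, $\mathfrak l$-horizontal, or $\mathfrak l$-descending. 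The main work is just to chase each of the three cases through the relevant composition and read off both the target order and the number of kernels; I do not expect any genuinely new ingredient.

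For part~(i), write $\mathfrak f = \ell^m \mathfrak f'$ with $\mathfrak f'$ coprime to $\ell$, where $m \geq 1$ by hypothesis. In the inert case, $\ell\cO_{K_0}$ divides $\mathfrak f$, so Proposition~\ref{prop:frakLStructure} gives exactly one $\mathfrak l$-ascending ($\mathfrak l = \ell\cO_{K_0}$) kernel, landing in $\lO_{\ell^{-1}\mathfrak f}$; among all $\ll$-isogenies this is the only one that increases the order, and it is RM-preserving. In the split case $\ell = \mathfrak l_1\mathfrak l_2$, an RM-preserving $\ll$-isogeny is a composition $\mathscr L_{\mathfrak l_1}\circ\mathscr L_{\mathfrak l_2}$; since both $\mathfrak l_1$ and $\mathfrak l_2$ divide $\mathfrak f$, each factor has a unique ascending kernel, so the unique composition landing in $\lO_{\ell^{-1}\mathfrak f} = \lO_{\mathfrak l_1^{-1}\mathfrak l_2^{-1}\mathfrak f}$ is obtained by composing the two ascending maps; any other composition fails to reach this order. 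The ramified case $\ell = \mathfrak l^2$ is analogous via $\mathscr L_\mathfrak l\circ\mathscr L_\mathfrak l$ and Remark~\ref{rem:numberofEllEllinRamifCase}: since $\mathfrak l\mid\mathfrak f$, the unique way to decrease the conductor by $\mathfrak l^2$ is to ascend twice. Uniqueness in each subcase should be packaged by invoking that the kernels of RM-preserving $\ll$-isogenies do not depend on $\xi$ (Remark~\ref{rem:RMPreservingEllEllDoNotDependOnPol}) and Theorem~\ref{RMupIso}(i), which already asserts a unique RM-ascending isogeny when $n>0$.

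For parts~(ii) and~(iii) I would argue as follows. In part~(ii), $\ell$ ramifies as $\mathfrak l^2$ and $\mathfrak f = \mathfrak l$, so $\lO(\mathscr A) = \lO_\mathfrak l = \lO_0 + \mathfrak l\lO_K$, an order with $\mathfrak l$-adic conductor $\mathfrak l$. An RM-preserving $\ll$-isogeny is $\mathscr L_\mathfrak l\circ\mathscr L_\mathfrak l$; to reach the maximal order $\lO_K$ we need the first $\mathfrak l$-isogeny to be $\mathfrak l$-ascending (reaching $\lO_{\mathfrak l^{-1}\mathfrak l} = \lO_K$, i.e.\ $\lO_0$) and then the second $\mathfrak l$-isogeny to be $\mathfrak l$-horizontal, staying at $\lO_K$. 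By Proposition~\ref{prop:frakLStructure}, the first map (on $\mathscr A$, whose conductor is divisible by $\mathfrak l$) has a unique ascending kernel; then on the target, whose $\mathfrak l$-conductor is trivial, the number of $\mathfrak l$-horizontal kernels is $0$, $1$, or $2$ according as $\mathfrak l$ is inert, ramified, or split in $K$ — which is exactly the asserted dichotomy. (One must also check that composing the unique ascending map with a descending one lands at $\lO_\mathfrak l$ again, not at the maximal order, and that these compositions are genuinely distinct kernels; but they do not contribute, so it suffices to count the horizontal continuations.) Part~(iii) is the same argument with the roles of $\mathfrak l_1$ (which divides $\mathfrak f = \mathfrak l_1^i$) and $\mathfrak l_2$ (which does not) split between the two factors: composing the unique $\mathfrak l_1$-ascending isogeny (reaching $\lO_{\mathfrak l_1^{i-1}}$) with, respectively, an $\mathfrak l_2$-horizontal isogeny gives the $0/1/2$ count for reaching $\lO_{\mathfrak l_1^{i-1}}$ exactly when $\mathfrak l_2$ is non-inert, while composing it with the unique $\mathfrak l_2$-descending isogeny always exists and yields a target with local order $\lO_{\mathfrak l_1^{i-1}\mathfrak l_2}$.

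The only subtlety I anticipate — and the place I would be most careful — is the bookkeeping of \emph{kernels} versus \emph{targets}: Proposition~\ref{prop:frakLStructure} counts kernels of $\mathfrak l$-isogenies, and Proposition~\ref{prop:classificationRMPreservingNeighbors} identifies the set $\mathscr L_0(\Lambda)$ with iterated $\mathfrak l$-neighbor constructions, but to count $\ll$-isogenies with a \emph{prescribed target order} I need to know that distinct intermediate choices give distinct $\ll$-kernels, and conversely that an $\ll$-kernel reaching a given order arises from a unique such composition-path. This should follow from the lattice correspondence (Remark~\ref{rem:correspEllEllAndEllEll}) together with the fact, used already in the split-case analysis of Section~\ref{subsec:ellellMaxRM}, that $\mathscr A[\mathfrak l_1]\oplus\mathscr A[\mathfrak l_2]$ is a direct sum, and in the ramified case from Remark~\ref{rem:numberofEllEllinRamifCase} ($\mathscr L_\mathfrak l(\Pi_1)\cap\mathscr L_\mathfrak l(\Pi_2) = \{\mathfrak l\Lambda\}$ for distinct $\Pi_i$). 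Once this is in hand, the proposition is a direct translation of Propositions~\ref{prop:frakLStructure} and~\ref{prop:classificationRMPreservingNeighbors} into the language of local orders.
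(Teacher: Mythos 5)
Your proof is correct and takes essentially the same approach as the paper, which simply states that the proposition follows by a case-by-case analysis of Proposition~\ref{prop:frakLStructure} and Theorem~\ref{thm:ellellLCombinations}; you supply exactly that analysis. (One tiny slip: in your last sentence for part~(iii) the $\mathfrak l_2$-descending isogeny is not unique in general, but only existence is needed there, so the argument is unaffected.)
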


\begin{proof}
This is a straightforward case-by-case analysis of Propositions~\ref{prop:frakLStructure} and Theorem~\ref{thm:ellellLCombinations}.
\end{proof}

\begin{definition}[parity of $\mathscr A$]
Suppose $\mathscr A$ has real order $\lO_n = \Z_\ell + \ell^n \lO_0$. Construct $\mathscr B$ by the RM-predecessor of  $\mathcal A$ $n$ times, i.e., 
$\mathscr B = \pr(\pr(\dots \pr(\mathscr A)\dots))$ is the (iterated) RM-predecessor of $\mathscr A$ that has maximal real local order. 
Let $\mathfrak f$ be the conductor of $\lO(\mathscr B)$. The \emph{parity} of $\mathscr A$ is 0 if $N(\mathfrak f\cap \lO_0)$ is a square, and 1 otherwise.
\end{definition}

\begin{remark}
The parity is always 0 if $\ell$ is inert in $K_0$.
\end{remark}

\begin{theorem}\label{thm:ellellsurface}
For any $\mathscr A$, there exists a sequence of $\ll$-isogenies starting from $\mathscr A$ and ending at a variety with maximal local order, except in the following two cases:
\begin{enumerate}[label=(\roman*)]
\item $\mathscr A$ has parity 1, $\ell$ splits in $K_0$ as $\mathfrak l_1\mathfrak l_2$, and both $\mathfrak l_1$ and $\mathfrak l_2$ are inert in $K$, in which case the largest reachable local orders are $\lO_0 + \mathfrak l_1\lO_K$ and $\lO_0 + \mathfrak l_2\lO_K$;
\item $\mathscr A$ has parity 1, $\ell$ ramifies in $K_0$ as $\mathfrak l^2$, and $\mathfrak l$ is inert in $K$, in which case the largest reachable local order is $\lO_0 + \mathfrak l\lO_K$.
\end{enumerate}
\end{theorem}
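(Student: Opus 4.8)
The plan is to split Theorem~\ref{thm:ellellsurface} into a \emph{constructive} part --- producing, outside the two exceptional situations, a sequence of $\ll$-isogenies reaching a variety with maximal local order at $\ell$, and inside them a sequence reaching the claimed largest reachable order --- and an \emph{obstruction} part, where a parity invariant rules out anything better.

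I would first reduce to the case where $\mathscr A$ has maximal local real multiplication: if $\lO_0(\mathscr A)=\lO_n$ with $n>0$, iterating the unique RM-ascending isogeny of Theorem~\ref{RMupIso}(i) $n$ times produces a reachable variety $\mathscr B$ of maximal local real order $\lO_\mathfrak f$, where $\mathfrak f$ is exactly the ideal appearing in the definition of the parity of $\mathscr A$; and $\ll$-isogenies preserve principal polarizability in dimension $2$, so $\mathscr B$ is again in our setting. By Theorem~\ref{thm:ellellLCombinations}, from a variety with maximal local real order the RM-preserving $\ll$-isogenies are precisely compositions of $\mathfrak l$-isogenies, so everything is then governed by Proposition~\ref{prop:frakLStructure} (see also Proposition~\ref{prop:ellellIncreasingLocalOrder}), and I would run a case analysis on the splitting of $\ell$ in $K_0$, bookkeeping the conductor. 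When $\ell$ is inert, an RM-preserving $\ll$-isogeny is an $\ell\lO_0$-isogeny whose unique ascending representative drops the $\ell\lO_0$-valuation of the conductor by one, so $\lO_K$ is always reached (and the parity is always $0$). When $\ell=\mathfrak l_1\mathfrak l_2$ splits, an RM-preserving $\ll$-isogeny is a composition of an $\mathfrak l_1$- and an $\mathfrak l_2$-isogeny: pairing $\mathfrak l_1$- and $\mathfrak l_2$-ascending steps drives $\min(v_{\mathfrak l_1},v_{\mathfrak l_2})$ to $0$; if the conductor is then $\mathfrak l_1^c$ with $c>0$, one either inserts an $\mathfrak l_2$-horizontal step before an $\mathfrak l_1$-ascending step to keep lowering $c$ --- possible exactly when $\mathfrak l_2$ is not inert in $K$, by Proposition~\ref{prop:frakLStructure} --- or, when both $\mathfrak l_i$ are inert in $K$ and the parity is $0$, uses an oscillation argument pushing the conductor through $\mathfrak l_1^{c-1}\mathfrak l_2,\ \mathfrak l_1^{c-2},\dots$ down to $(1)$. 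The ramified case $\ell=\mathfrak l^2$ is analogous, RM-preserving $\ll$-isogenies being compositions of two $\mathfrak l$-isogenies: a conductor $\mathfrak l^m$ with $m$ even is cleared by $m/2$ double-ascending steps, while for $m$ odd one reaches $\lO_\mathfrak l$ and can only finish (by an $\mathfrak l$-ascending step followed by an $\mathfrak l$-horizontal step at $\lO_K$) when $\mathfrak l$ is not inert in $K$.

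For the obstruction I would set $\mathrm{parity}(\mathscr C):=\mathrm{parity}(\rho(\mathscr C))$ for every $\mathscr C$ in the isogeny class, where $\rho(\mathscr C)$ is the iterated RM-predecessor of $\mathscr C$ of maximal local real order, and show that whenever $\ell$ is not inert in $K_0$ and every prime of $K_0$ above $\ell$ is inert in $K$, parity is preserved by \emph{every} $\ll$-isogeny. This needs four checks. RM-ascending isogenies leave $\rho$ unchanged by definition. For an RM-descending isogeny $\mathscr C\to\mathscr C'$ with Tate lattices $T\subset\Gamma\subset\ell^{-1}T$ (self-dual, for principal polarizations, hence free of rank $2$ over their real orders by Lemma~\ref{lemma:splittingOnLattice}), a short index computation gives that the lattice $\lO_n\Gamma$ of $\pr(\mathscr C')$ equals $\ell^{-1}T$, so $\pr(\mathscr C')\cong\mathscr C$ and again $\rho$ is unchanged. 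An RM-horizontal isogeny at a positive level lifts, by iterating Proposition~\ref{prop:liftingIsogeniesUp}, to an RM-preserving $\ll$-isogeny between the level-$0$ roots. Finally, for an RM-preserving $\ll$-isogeny at level $0$ one reads off Proposition~\ref{prop:frakLStructure} that, in this splitting configuration, each constituent $\mathfrak l$-step moves the relevant conductor valuation by an \emph{odd} amount, so $N(\mathfrak f\cap\lO_0)$ keeps its square class. Since $\lO_K$ has parity $0$, a variety of parity $1$ cannot reach it. The largest-reachable-order claim then follows: the order of any reachable variety is contained in that of its level-$0$ root (the RM-predecessor enlarges the order, Theorem~\ref{RMupIso}(i)), every reachable level-$0$ order has conductor of odd norm-valuation hence lies in $\lO_0+\mathfrak l_i\lO_K$ (split) resp.\ $\lO_0+\mathfrak l\lO_K$ (ramified), and the constructive part shows these are attained.

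The main obstacle is establishing that parity really is invariant on the whole $\ll$-isogeny graph: this forces one to combine three unrelated-looking facts --- the lattice identity $\lO_n\Gamma=\ell^{-1}T$ for RM-descending edges, the lifting of Proposition~\ref{prop:liftingIsogeniesUp} for RM-horizontal edges, and a parity count on $\mathfrak l$-valuations at level $0$ --- and one must also check carefully that the oscillation navigation in the split case terminates at $\lO_0+\mathfrak l_i\lO_K$ rather than stalling one conductor level too high.
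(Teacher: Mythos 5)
Your proposal is correct and follows the paper's own strategy at a high level --- go up to the iterated RM-predecessor $\mathscr B = \rho(\mathscr A)$, reach a conductor $\mathfrak l^s$ at level $0$ via Proposition~\ref{prop:ellellIncreasingLocalOrder}, and then analyze whether the residual parity $s=1$ can be cleared --- but you are substantially more careful than the paper about the \emph{obstruction} direction. The paper's proof asserts, citing only Proposition~\ref{prop:liftingIsogeniesUp}, that the existence of a path from $\mathscr A$ to maximal local order is \emph{equivalent} to the existence of such a path through $\rho(\mathscr A)$ consisting of RM-preserving $\ll$-isogenies; but Proposition~\ref{prop:liftingIsogeniesUp} only handles the RM-horizontal case, and the paper says nothing about why RM-descending edges cannot change the reachable set. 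Your parity-invariance argument, with its four edge-type checks, is precisely the missing content. The one ingredient the paper never states and you do is the identity $\lO_n\Gamma=\ell^{-1}T$ for an RM-descending $\ll$-neighbor $\Gamma$, which yields $\pr(\mathscr C')\cong\mathscr C$ and hence that RM-descending edges leave $\rho$ unchanged. Combined with the lifting of RM-horizontal edges and the level-$0$ parity count, this genuinely establishes parity as an invariant of the whole $\ll$-isogeny graph in the inert situation, which is exactly what is needed for the ``only if'' direction.

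One caveat: calling the identity $\lO_n\Gamma=\ell^{-1}T$ a ``short index computation'' undersells it. The lattice $\Gamma$ is not self-dual (it satisfies $\Gamma^*=\ell^{-1}\Gamma$), so Lemma~\ref{lemma:splittingOnLattice} does not apply to $\Gamma$ directly and pure index-counting alone does not rule out the alternative $[\lO_n\Gamma:\Gamma]=\ell$. What one actually needs is the module-theoretic fact that a maximal isotropic $\F_\ell$-subspace $H$ of $R=T/\ell T\cong\F_\ell[\epsilon]^2$ that is \emph{not} $\F_\ell[\epsilon]$-stable necessarily satisfies $H\cap\epsilon R=0$: for any line $L\subset\epsilon R$, the $\ell+1$ maximal isotropic subspaces containing $L$ are $\epsilon R$ together with the $\ell$ cyclic modules $\F_\ell[\epsilon]\cdot g$ with $\epsilon g\in L$, all of which are $\F_\ell[\epsilon]$-stable, so a non-stable $H$ meets $\epsilon R$ trivially; then $\epsilon H=\epsilon R$ and $\F_\ell[\epsilon]H=H\oplus\epsilon R=R$, giving $\lO_n\Gamma=T$ in the rescaled picture. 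With this point spelled out, your proof is complete and in fact repairs an unstated assumption in the paper's proof.
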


\begin{proof}
First, from Propositon~\ref{prop:liftingIsogeniesUp}, there is a sequence of $\ll$-isogenies starting from $\mathscr A$ and ending at a variety with maximal local order if and only if there is such a path that starts by a sequence of isogenies up to $\mathscr B = \pr(\pr(\dots \pr(\mathscr A)\dots))$, and then only consists of $\ll$-isogenies preserving the maximality of the local real order.  It is therefore sufficient to look at sequences of RM-preserving $\ll$-isogenies from $\mathscr B$, which has by construction the same parity $s$ as $\mathscr A$.

From Proposition~\ref{prop:ellellIncreasingLocalOrder}, there is a path from $\mathscr B$ to a surface $\mathscr C$ with local order $
\lO(\mathscr C) = \lO_{\mathfrak l^s}$ where $\mathfrak l$ is a prime ideal of $\lO_0$ above $\ell$, and $s$ is the parity of $\mathscr A$. We are done if the parity is 0.
Suppose the parity is 1. From Propositions~\ref{prop:frakLStructure} and Theorem~\ref{thm:ellellLCombinations}, one can see that there exists a sequence of RM-preserving $\ll$-isogeny from $\mathscr C$ which changes the parity to 0 if and only if $\ell$ ramifies in $K_0$ as $\mathfrak l^2$ and $\mathfrak l$ is not inert in $K$, or $\ell$ splits in $K_0$ as $\mathfrak l_1\mathfrak l_2$ and either $\mathfrak l_1$ or $\mathfrak l_2$ is not inert in $K$. This concludes the proof.
\end{proof}

\subsection{A ``going up" algorithm}

In many applications (in particular, the CM method in genus 2 based on the CRT) it is useful to find a chain of isogenies to a principally polarized abelian surface with maximal endomorphism ring starting from any curve whose Jacobian is in the given isogeny class.
Lauter and Robert \cite[\S 5]{lauter-robert} propose a probabilistic algorithm to construct a principally polarized abelian variety whose endomorphism ring is maximal. That algorithm is heuristic, and the probability of failure is difficult to analyze. We now apply our structural results from Subsection~\ref{subsec:ellellMaxRM} to some of their ideas to give a provable algorithm.

\subsubsection{Prior work of Lauter--Robert}
Given a prime $\ell$ for which we would like to find an isogenous abelian surface over $\F_q$ with maximal local endomorphism ring at $\ell$, suppose that $\alpha = \ell^e \alpha'$ for some $\alpha' \in \cO_K$ and some $e > 0$. 
To find a surface $\mathscr A'/\F_q$ for which $\alpha / \ell^e \in \End(\mathscr A')$, Lauter and Robert \cite[\S 5]{lauter-robert} use $(\ell, \ell)$-isogenies and a test for  
whether $\alpha / \ell^e \in \End(\mathscr A')$. In fact, $\alpha / \ell^e \in \End(\mathscr A')$ is equivalent to testing that $\alpha(\mathscr A'[\ell^e]) = 0$, i.e., $\alpha$ is trivial on the $\ell^e$-torsion of $\mathscr A$. 
To guarantee that, one defines an ``obstruction" $N_e = \# \alpha(\mathscr A[\ell^e])$ 
that measures the failure of $\alpha/\ell^e$ to be an endomorphism of $\mathscr A'$. To construct an abelian surface that contains the element $\alpha / \ell^e$ as endomorphism, one uses $(\ell, \ell)$-isogenies iteratively in order to decrease the associated obstruction $N_e$ (this is in essence the idea of \cite[Alg.21]{lauter-robert}).  

To reach an abelian surface with maximal local endomorphism ring at $\ell$, Lauter and Robert look at the structure of $\End(\mathscr A) \otimes_{\Z}\Z_\ell$ as a $\Z_\ell$-module and define an obstruction via a particular choice of a $\Z_\ell$-basis \cite[Alg.23]{lauter-robert}.

\subsubsection{Refined obstructions and provable algorithm}
Theorem~\ref{thm:ellellsurface} above gives a provable ``going up" algorithm that runs in three main steps: 1) it uses $(\ell, \ell)$-isogenies to reach a surface with maximal local real endomorphism ring at $\ell$; 2) it reaches the largest possible order via $(\ell, \ell)$-isogenies as in Theorem~\ref{thm:ellellsurface}; 3) if needed, it makes a last step to reach maximal local endomorphism ring via a cyclic isogeny. To implement 1) and 2), one uses refined obstructions, which we now describe in detail. 

\subsubsection{``Going up" to maximal real multiplication.}\label{subsec:surfacingToMaxRM}
Considering the local orders $\lO_0 = \cO_{K_0} \otimes_\Z \Z_\ell$ and $\Z_\ell[\pi + \pi^{\dagger}]$, choose a $\Z_\ell$-basis $\{1, \beta / \ell^e\}$ for $\lO_0$ such that $\beta \in \Z[\pi, \pi^{\dagger}]$ and we apply a ``real-multiplication" modification of \cite[Alg.21]{lauter-robert} to $\beta$.  
Thus, given an abelian surface $\mathscr A$ with endomorphism algebra isomorphic to $K$,  define the obstruction for $\mathscr A$ to have maximal real multiplication at $\ell$ as 
$$
N_0(\mathscr A) = e - \max \{\epsilon \colon \beta(\mathscr A[\ell^{\epsilon}]) = 0 \}. 
$$
Clearly, $\mathscr A$ will have maximal real endomorphism ring at $\ell$ if and only if $N_0(\mathscr A) = 0$. The following simple lemma characterizes the obstruction:

\begin{lemma}\label{lem:obstr-cond-real}
The obstruction $N_0(\mathscr A)$ is equal to the valuation at $\ell$ of the conductor of the real multiplication $\cO_0(\mathscr A) \subset \cO_{K_0}$.  
\end{lemma}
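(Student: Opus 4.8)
\textbf{Proof plan for Lemma~\ref{lem:obstr-cond-real}.}

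The plan is to unwind both sides of the claimed equality into statements about the Tate module and then match them. Write $T = T_\ell\mathscr A$, $V = T\otimes\Q_\ell$, and recall that $\lO_0(\mathscr A) = \cO_0(\mathscr A)\otimes\Z_\ell = \Z_\ell[\pi+\pi^\dagger]\otimes\Z_\ell$-style orders are all of the form $\Z_\ell + \ell^n\lO_0$; thus the valuation at $\ell$ of the conductor of $\cO_0(\mathscr A)\subset\cO_{K_0}$ is exactly the integer $n$ such that $\lO_0(\mathscr A) = \lO_n = \Z_\ell + \ell^n\lO_0$. Since $\{1,\beta/\ell^e\}$ is a $\Z_\ell$-basis of $\lO_0$ with $\beta\in\Z[\pi,\pi^\dagger]\subset\lO_0(\mathscr A)$, the order $\lO_m$ is precisely $\Z_\ell + \Z_\ell\,\ell^{m}(\beta/\ell^e) = \Z_\ell + \ell^{m-e}\beta\,\Z_\ell$ for $m\le e$ (interpreting this inside $\lO_0$), so $\beta/\ell^j \in \lO_0(\mathscr A)$ if and only if $j \le e - n$. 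The goal is therefore to show that $\max\{\epsilon : \beta(\mathscr A[\ell^\epsilon]) = 0\} = e - n$.

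First I would translate the condition $\beta(\mathscr A[\ell^\epsilon]) = 0$ into lattice language. Under the identification \eqref{eq:identification}, $\mathscr A[\ell^\epsilon](\overline k) \cong (\ell^{-\epsilon}T)/T$, and $\beta$ acts on this quotient through its action on $V$ preserving $T$. So $\beta(\mathscr A[\ell^\epsilon]) = 0$ is equivalent to $\beta(\ell^{-\epsilon}T)\subset T$, i.e. $(\beta/\ell^\epsilon)T\subset T$, i.e. $\beta/\ell^\epsilon \in \lO(T) = \lO(\mathscr A)$ — but we only care about its real part, and since $\beta\in K_{0,\ell}$, this says $\beta/\ell^\epsilon \in \lO_0(\mathscr A)$. (Here one uses that $T$ is a faithful $\lO(\mathscr A)$-module, which holds because $T$ has rank one over $\lO(\mathscr A)$.) Hence $\max\{\epsilon : \beta(\mathscr A[\ell^\epsilon]) = 0\} = \max\{\epsilon : \beta/\ell^\epsilon \in \lO_0(\mathscr A)\}$, and combining with the previous paragraph this maximum is $e - n$, giving $N_0(\mathscr A) = e - (e-n) = n$, as desired.

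The one point requiring a little care — and the main obstacle — is verifying that $\max\{\epsilon : \beta/\ell^\epsilon \in \lO_0(\mathscr A)\}$ really equals $e - n$, i.e. that $\beta/\ell^{e-n}\in\lO_n$ but $\beta/\ell^{e-n+1}\notin\lO_n$. This is a statement purely about the $\Z_\ell$-order $\lO_n = \Z_\ell + \ell^n\lO_0$ and the element $\beta = \ell^e\cdot(\beta/\ell^e)$ where $\{1,\beta/\ell^e\}$ is a $\Z_\ell$-basis of $\lO_0$: writing $u = \beta/\ell^e$, we need $\ell^{e-\epsilon}u\in\Z_\ell + \ell^n\Z_\ell u \iff e - \epsilon \ge n$, which is immediate from the fact that $\{1,u\}$ is a $\Z_\ell$-basis (so the coefficient of $u$ must lie in $\ell^n\Z_\ell$, forcing $v_\ell(\ell^{e-\epsilon}) = e-\epsilon \ge n$, with equality achievable). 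I would also remark that everything takes place over $\overline{k}$ but $\beta$ and the torsion subgroups are $k$-rational, so the same statement holds with $\mathscr A[\ell^\epsilon]$ read over $k$; this is not needed for the equality of integers but is worth a sentence for the application. Finally, one should note $n$ is finite since $\cO_0(\mathscr A)$ has nonzero conductor, so the maximum is well-defined.
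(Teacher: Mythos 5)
Your proof is correct and follows essentially the same approach as the paper's: both reduce the claim to the equivalence $\beta/\ell^\epsilon \in \lO_0(\mathscr A) \iff \beta(\mathscr A[\ell^\epsilon])=0$ and then pin down $\lO_0(\mathscr A)$ among the linearly ordered orders $\Z_\ell + \ell^n\lO_0$. The paper treats the equivalence as known, while you re-derive it from the Tate-module correspondence, but the structure of the argument is the same.
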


\begin{proof}
Using the definition of $N_0(\mathscr A)$ and the fact that $\beta / \ell^\epsilon \in \cO(\mathscr A)$ if and only if $\beta(\mathscr A[\ell^\epsilon]) = 0$, it follows that 
$$
\displaystyle \Z_\ell + \beta / \ell^{e - N_0(\mathscr A)} \Z_\ell \subseteq \lO_0(\mathscr A) \subsetneq \Z_\ell + \beta / \ell^{e - N_0(\mathscr A) + 1} \Z_\ell.   
$$
Since all orders of $\cO_{K_0}$ are of the form $\Z + c \cO_{K_0}$ for some $c \in \Z_{>0}$, by localization at $\ell$ one sees that
$$
\displaystyle \lO_0(\mathscr A) = \Z_\ell + \beta / \ell^{e - N_0(\mathscr A)} \Z_\ell = \Z_\ell + \ell^{N_0(\mathscr A)} \lO_0, 
$$ 
i.e., the valuation at $\ell$ of the conductor of $\cO_0(\mathscr A)$ is $N_0(\mathscr A)$. 
\end{proof}

The lemma proves the following algorithm works (i.e., that there always exists a neighbor decreasing the obstruction $N_0$):

\begin{algorithm}
\caption{Surfacing to maximal real endomorphism ring}
\begin{algorithmic}[1]
\REQUIRE An abelian surface $\mathscr A / \F_q$ with endomorphism algebra $K = \End(\mathscr A) \otimes \Q$, and a prime number $\ell$. 
\ENSURE An isogenous abelian surface $\mathscr A' / \F_q$ with $\lO_0(\mathscr A') = \lO_0$.  
\STATE $\beta \gets$ an element $\beta \in \Z[\pi, \overline{\pi}]$ such that $\{1, \beta / \ell^e\}$ is a $\Z_\ell$-basis for $\lO_0$.
\STATE Compute $N_0(\mathscr A) := e - \max\{\epsilon \colon \beta(\mathscr A[\ell^{\epsilon}]) = 0\}$
\IF{$N_0(\mathscr A) = 0$}\label{nplus}
	\RETURN $\mathscr A$
\ENDIF 
\STATE $\cL \leftarrow$ list of maximal isotropic $\kappa \subset \mathscr A[\ell]$ with $\kappa \cap \beta(\mathscr A[\ell^{e - N_0(\mathscr A) +1}]) \ne \emptyset$
\FOR{$\kappa \in \cL$}
	\STATE Compute $N_0(\mathscr A/\kappa) := e - \max\{\epsilon \colon \beta(\mathscr (A/\kappa) [\ell^{\epsilon}]) = 0\}$
	\IF{$N_0(\mathscr A/\kappa, \epsilon) < N_0(\mathscr A, \epsilon)$}
		\STATE $\mathscr A \leftarrow \mathscr A/\kappa$ and \textbf{go to} Step~\ref{nplus}
	\ENDIF
\ENDFOR
\end{algorithmic}
\label{alg:surf-real}
\end{algorithm}

\subsubsection{Almost maximal order with $\ll$-isogenies.} 
For each prime $\ell$, use the going-up algorithm (Algorithm~\ref{alg:surf-real}), until
$\cO_0(\mathscr A) = \cO_{K_0}$. Let $\ell$ be any prime and let $\mathfrak l \subset \cO_{K_0}$ be a prime 
ideal above $\ell$. Let $\lO_{0, \mathfrak l} = \cO_{K_0, \mathfrak l}$ be the completion at $\mathfrak l$ of $\cO_{K_0}$. Let $\lO_{\mathfrak l}(\mathscr A) = \cO(\mathscr A) \otimes_{\cO_{K_0}} \lO_{0, \mathfrak l}$. 
Consider the suborder $\lO_{0, \mathfrak l}[\pi, \pi^{\dagger}]$ of the maximal local (at $\mathfrak l$) order 
$\lO_{\mathfrak l} = \cO_{K} \otimes_{\cO_{K_0}} \lO_{0, \mathfrak l}$. Now write 
$$
\lO_{0, \mathfrak l}[\pi, \pi^{\dagger}] = \lO_{0, \mathfrak l} + \gamma_{\mathfrak l} \lO_{0, \mathfrak l}, \qquad \text{and} \qquad \lO_{\mathfrak l} = \lO_{0, \mathfrak l} +
{\gamma_{\mathfrak l} / \varpi^{f_{\mathfrak l}}} \lO_{0, \mathfrak l}, 
$$
for some endomorphism $\gamma$. Here, $\varpi$ is a uniformizer for the local order $\lO_{0, \mathfrak l}$ and $f_\mathfrak l \geq 0$ is some integer. To define a similar obstruction to $N_0(\mathscr A, \epsilon)$, but at $\mathfrak l$, let
$$
N_\mathfrak l(\mathscr A) = f_{\mathfrak l} - \max \{\delta \colon \gamma (\mathscr A[\mathfrak l^\delta]) = 0\}.  
$$
To compute 
these obstructions, we compute $\gamma$ on the $\mathfrak l$-power torsion of $\mathscr A$. 
The idea is similar to Algorithm~\ref{alg:surf-real}, except that in the split case, one must test the obstructions $N_\mathfrak l(\mathscr A, \epsilon)$ for both prime ideals $\mathfrak l \subset \cO_{K_0}$ above $\ell$ at the same time. 
We now show that one can reach the maximal possible ``reachable" (in the sense of Theorem~\ref{thm:ellellsurface}) local order at $\ell$ starting from $\mathscr A$ and using only $(\ell, \ell)$-isogenies. When $\ell$ is either inert or ramified in $K_0$, there is only one obstruction $N_{\mathfrak l}(\mathscr A)$, and one can ensure that it decreases at each step via the obvious modification of Algorithm~\ref{alg:surf-real}. 

Suppose now that $\ell \cO_{K_0} = \mathfrak l_1 \mathfrak l_2$ is split. 
Let $\mathfrak f = \mathfrak l_1^{i_1} \mathfrak l_2^{i_2}$ be the conductor of 
$\mathscr A$ and suppose, without loss of generality, that $i_1 \geq i_2$. To first ensure that one can reach an abelian surface $\mathscr A$ for which $0 \leq i_1 - i_2 \leq 1$, we relate the conductor $\mathfrak f$ to the two obstructions at $\mathfrak l_1$ and $\mathfrak l_2$. 

\begin{lemma}
Let $\mathscr A$ be an abelian surface with maximal local real endomorphism ring at $\ell$ and let 
$\lO(\mathscr A) = \lO_0 + \mathfrak f \lO_K$ where $\mathfrak f$ is the conductor. Then 
$$
v_{\mathfrak l_1}(\mathfrak f) = N_{\mathfrak l_1}(\mathscr A) \qquad \text{and} \qquad 
v_{\mathfrak l_2}(\mathfrak f) = N_{\mathfrak l_2}(\mathscr A). 
$$
\end{lemma}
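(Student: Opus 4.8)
The plan is to mimic the argument of Lemma~\ref{lem:obstr-cond-real}, localizing at each prime $\mathfrak l_i$ separately rather than at $\ell$. First I would recall that, since $\ell\cO_{K_0} = \mathfrak l_1\mathfrak l_2$ is split, the completion $\cO(\mathscr A)\otimes_\Z\Z_\ell$ decomposes as a product over the two primes $\mathfrak l_1,\mathfrak l_2$ of $K_0$, so that $\lO_{\mathfrak l_i}(\mathscr A) = \cO(\mathscr A)\otimes_{\cO_{K_0}}\lO_{0,\mathfrak l_i}$ carries an independent filtration. Concretely, fix $i\in\{1,2\}$, write $\varpi$ for a uniformizer of $\lO_{0,\mathfrak l_i}$, and recall the chosen presentations $\lO_{0,\mathfrak l_i}[\pi,\pi^\dagger] = \lO_{0,\mathfrak l_i} + \gamma_{\mathfrak l_i}\lO_{0,\mathfrak l_i}$ and $\lO_{\mathfrak l_i} = \lO_{0,\mathfrak l_i} + (\gamma_{\mathfrak l_i}/\varpi^{f_{\mathfrak l_i}})\lO_{0,\mathfrak l_i}$. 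Since $\{1,\gamma_{\mathfrak l_i}/\varpi^{f_{\mathfrak l_i}}\}$ is an $\lO_{0,\mathfrak l_i}$-basis of $\lO_{\mathfrak l_i}$ and $\lO_{\mathfrak l_i}(\mathscr A)$ is an $\lO_{0,\mathfrak l_i}$-order containing $\lO_{0,\mathfrak l_i}$, the classification of Theorem~\ref{thm:classificationOrdersMaxRM} (applied locally at $\mathfrak l_i$, using that $\mathscr A$ has maximal local real multiplication at $\ell$) tells us $\lO_{\mathfrak l_i}(\mathscr A) = \lO_{0,\mathfrak l_i} + (\gamma_{\mathfrak l_i}/\varpi^{f_{\mathfrak l_i} - m_i})\lO_{0,\mathfrak l_i}$ for a unique integer $m_i$ with $0\le m_i\le f_{\mathfrak l_i}$, and $m_i = v_{\mathfrak l_i}(\mathfrak f)$ by definition of the conductor.

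Next I would identify $m_i$ with $N_{\mathfrak l_i}(\mathscr A)$. The key input is the analogue of the torsion criterion used in Lemma~\ref{lem:obstr-cond-real}: for an element $\delta$ of the endomorphism algebra, $\gamma_{\mathfrak l_i}/\varpi^{\delta}\in\lO_{\mathfrak l_i}(\mathscr A)$ if and only if $\gamma_{\mathfrak l_i}$ kills $\mathscr A[\mathfrak l_i^{\delta}]$ — this follows from identifying $\mathscr A[\mathfrak l_i^\infty]$ with $(K_\ell/\lO(\mathscr A))[\mathfrak l_i^\infty]$ via the Tate-module picture of Section~\ref{sec:correspondence}, under which multiplication by $\varpi^{-\delta}$ on the quotient lands inside $\lO(\mathscr A)$-orbits exactly when the relevant torsion is annihilated. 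Granting this, the definition $N_{\mathfrak l_i}(\mathscr A) = f_{\mathfrak l_i} - \max\{\delta : \gamma_{\mathfrak l_i}(\mathscr A[\mathfrak l_i^\delta]) = 0\}$ gives $\gamma_{\mathfrak l_i}/\varpi^{f_{\mathfrak l_i} - N_{\mathfrak l_i}(\mathscr A)}\in\lO_{\mathfrak l_i}(\mathscr A)$ but $\gamma_{\mathfrak l_i}/\varpi^{f_{\mathfrak l_i} - N_{\mathfrak l_i}(\mathscr A)+1}\notin\lO_{\mathfrak l_i}(\mathscr A)$; by the uniqueness of $m_i$ above this forces $N_{\mathfrak l_i}(\mathscr A) = m_i = v_{\mathfrak l_i}(\mathfrak f)$, which is exactly the claim.

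I expect the main obstacle to be the bookkeeping around the local decomposition: one must be careful that $\gamma_{\mathfrak l_i}$, a priori only an element of $\lO_{0,\mathfrak l_i}[\pi,\pi^\dagger]\subset\lO_{\mathfrak l_i}$, genuinely generates the "$\mathfrak l_i$-component" of the order filtration and that its action on $\mathscr A[\mathfrak l_i^\delta]$ (rather than on the full $\ell^\delta$-torsion) is the right thing to measure — i.e.\ that no contribution from the other prime $\mathfrak l_{3-i}$ leaks in. This is handled by the product decomposition of $\cO(\mathscr A)\otimes\Z_\ell$, but it should be spelled out. Everything else is a direct transcription of the proof of Lemma~\ref{lem:obstr-cond-real} with $\ell$ replaced by $\mathfrak l_i$ and $\Z_\ell$ by $\lO_{0,\mathfrak l_i}$.
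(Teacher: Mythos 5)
Your proof is correct and takes the same route as the paper, whose own proof of this lemma consists of the single sentence ``The proof is the same as the one of Lemma~\ref{lem:obstr-cond-real}.'' You carry out that transcription faithfully: localize at each $\mathfrak l_i$, apply the analogous torsion criterion ($\gamma_{\mathfrak l_i}/\varpi^{\delta}\in\lO_{\mathfrak l_i}(\mathscr A)$ if and only if $\gamma_{\mathfrak l_i}(\mathscr A[\mathfrak l_i^{\delta}])=0$, with $\delta$ a non-negative integer rather than ``an element of the endomorphism algebra,'' a harmless slip in your write-up), and replace the elementary classification of orders in the quadratic field $K_0$ by Theorem~\ref{thm:classificationOrdersMaxRM} applied locally at $\mathfrak l_i$.
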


\begin{proof}
The proof is the same as the one of Lemma~\ref{lem:obstr-cond-real}. 
\end{proof}

Using the lemma, and assuming $N_{\mathfrak l_1}(\mathscr A) - N_{\mathfrak l_2}(\mathscr A) > 1$, one repeatedly looks for an $(\ell, \ell)$-isogeny at each step that will decrease  $N_{\mathfrak l_1}(\mathscr A)$ by 1 and increase $N_{\mathfrak l_2}(\mathscr A)$ by 1. Such an isogeny exists by Proposition~\ref{prop:ellellIncreasingLocalOrder}(iii). One repeats this process until 
$$
0 \leq N_{\mathfrak l_1}(\mathscr A) - N_{\mathfrak l_2}(\mathscr A) \leq 1. 
$$ 
If at this stage $N_{\mathfrak l_2}(\mathscr A) > 0$, this means that $\ell \mid {\mathfrak f}$ and hence, by Proposition~\ref{prop:ellellIncreasingLocalOrder}(i), there exists 
a unique $(\ell, \ell)$-isogeny decreasing both obstructions. One searches for that $(\ell, \ell)$-isogeny by testing whether the two obstructions decrease simultaneously, and repeats until $N_{\mathfrak l_2}(\mathscr A) = 0$.  

If $N_{\mathfrak l_1}(\mathscr A) = 0$, then the maximal local order at $\ell$ has been reached. If $N_{\mathfrak l_1}(\mathscr A) = 1$ then Proposition~\ref{prop:ellellIncreasingLocalOrder}(iii) implies that, if $\mathfrak l_2$ is not inert in $K$, then 
there exists an $(\ell, \ell)$-isogeny that decreases $N_{\mathfrak l_1}(\mathscr A)$ to $0$ and keeps $N_{\mathfrak l_2}(\mathscr A)$ at zero.

\subsubsection{Final step via a cyclic isogeny}
In the exceptional cases of Theorem~\ref{thm:ellellsurface}, it may happen that one needs to do an extra step via a cyclic isogeny to reach maximal local endomorphism ring at $\ell$. Whenever this cyclic $\mathfrak l$-isogeny is computable via the algorithm of \cite{dudeanu-jetchev-robert}, one can 
always reach maximal local endomorphism ring at $\ell$. But $\mathfrak l$-isogenies are computable if and only if $\mathfrak l$ is trivial in the narrow class group of $K_0$. We thus distinguish the following two cases: 

\begin{enumerate}
\item If $\mathfrak l$-isogenies are computable by \cite{dudeanu-jetchev-robert} then one can always reach maximal local  endomorphism ring at $\ell$.
\item If $\mathfrak l$-isogenies are not computable by \cite{dudeanu-jetchev-robert}, one can only use $(\ell,\ell)$-isogenies, so Theorem~\ref{thm:ellellsurface} tells us what the largest order that we can reach is.
\end{enumerate}

\section*{Acknowledgements}
The first and second authors were supported by the Swiss National Science Foundation. The third author was supported by the Swiss National Science Foundation
under grant number 200021-156420.

\bibliographystyle{amsalpha}
\bibliography{biblio}

\newcommand{\etalchar}[1]{$^{#1}$}
\providecommand{\bysame}{\leavevmode\hbox to3em{\hrulefill}\thinspace}
\providecommand{\MR}{\relax\ifhmode\unskip\space\fi MR }
\providecommand{\MRhref}[2]{%
  \href{http://www.ams.org/mathscinet-getitem?mr=#1}{#2}
}
\providecommand{\href}[2]{#2}
\begin{thebibliography}{GHK{\etalchar{+}}06}

\bibitem[Bas63]{Bass63}
H.~Bass, \emph{On the ubiquity of {G}orenstein rings}, Mathematische
  Zeitschrift \textbf{82} (1963), no.~1, 8--28.

\bibitem[BGL11]{broker-gruenewald-lauter}
R.~Br{\"o}ker, D.~Gruenewald, and K.~Lauter, \emph{Explicit {CM} theory for
  level 2-structures on abelian surfaces}, Algebra Number Theory \textbf{5}
  (2011), no.~4, 495--528.

\bibitem[Bis15]{bisson}
G.~Bisson, \emph{Computing endomorphism rings of abelian varieties of dimension
  two}, Math. Comp. \textbf{84} (2015), no.~294, 1977--1989.

\bibitem[BL94]{BuchmannLenstra}
J.~Buchmann and H.~W. Lenstra, \emph{Approximating rings of integers in number
  fields}, J. Th{\'e}or. Nombres Bordeaux \textbf{6} (1994), no.~2, 221--260.
  \MR{1360644}

\bibitem[BL04]{BL04}
C.~Birkenhake and H.~Lange, \emph{Complex abelian varieties}, Die Grundlehren
  der mathematischen Wissenschaften in Einzeldarstellungen, Springer, 2004.

\bibitem[BLS12]{broker-lauter-sutherland}
R.~Br{\"o}ker, K.~Lauter, and A.~Sutherland, \emph{Modular polynomials via
  isogeny volcanoes}, Math. Comp. \textbf{81} (2012), no.~278, 1201--1231.

\bibitem[CKL08]{carls-kohel-lubicz}
R.~Carls, D.~Kohel, and D.~Lubicz, \emph{Higher-dimensional 3-adic {CM}
  construction}, J. Algebra \textbf{319} (2008), no.~3, 971--1006.

\bibitem[CL09]{carls-lubicz}
R.~Carls and D.~Lubicz, \emph{A {$p$}-adic quasi-quadratic time point counting
  algorithm}, Int. Math. Res. Not. IMRN (2009), no.~4, 698--735.

\bibitem[CQ05]{cardona-quer}
G.~Cardona and J.~Quer, \emph{Field of moduli and field of definition for
  curves of genus 2}, Computational aspects of algebraic curves, Lecture Notes
  Ser. Comput., vol.~13, World Sci. Publ., Hackensack, NJ, 2005, pp.~71--83.

\bibitem[CR15]{cosset-robert}
R.~Cosset and D.~Robert, \emph{Computing {$(\ell,\ell)$}-isogenies in
  polynomial time on {J}acobians of genus {$2$} curves}, Math. Comp.
  \textbf{84} (2015), no.~294, 1953--1975.

\bibitem[CV04]{Cornut04}
C.~Cornut and V.~Vatsal, \emph{Nontriviality of {R}ankin-{S}elberg
  {L}-functions and {C}{M} points}, Tech. report, 2004.

\bibitem[DJR16]{dudeanu-jetchev-robert}
A.~Dudeanu, D.~Jetchev, and D.~Robert, \emph{Cyclic isogenies for abelian
  varieties with real multiplication}, preprint (2016).

\bibitem[EL10]{eisentraeger-lauter}
K.~Eisentr{\"a}ger and K.~Lauter, \emph{A {CRT} algorithm for constructing
  genus 2 curves over finite fields}, Arithmetics, geometry, and coding theory
  ({AGCT} 2005), S\'emin. Congr., vol.~21, Soc. Math. France, Paris, 2010,
  pp.~161--176.

\bibitem[ES10]{enge-sutherland}
A.~Enge and A.~Sutherland, \emph{Class invariants by the {CRT} method},
  Algorithmic number theory, Lecture Notes in Comput. Sci., vol. 6197,
  Springer, Berlin, 2010, pp.~142--156.

\bibitem[FL08]{freeman-lauter}
D.~Freeman and K.~Lauter, \emph{Computing endomorphism rings of {J}acobians of
  genus 2 curves over finite fields}, Algebraic geometry and its applications,
  Ser. Number Theory Appl., vol.~5, World Sci. Publ., Hackensack, NJ, 2008,
  pp.~29--66.

\bibitem[FM02]{fouquet-morain}
M.~Fouquet and F.~Morain, \emph{Isogeny volcanoes and the {SEA} algorithm},
  Algorithmic number theory ({S}ydney, 2002), Lecture Notes in Comput. Sci.,
  vol. 2369, Springer, Berlin, 2002, pp.~276--291.

\bibitem[Gel75]{Gelbart}
Stephen~S. Gelbart, \emph{Automorphic forms on ad{\`e}le groups}, Princeton
  University Press, Princeton, N.J.; University of Tokyo Press, Tokyo, 1975,
  Annals of Mathematics Studies, No. 83. \MR{0379375}

\bibitem[GHK{\etalchar{+}}06]{ghkr:2adic}
P.~Gaudry, T.~Houtmann, D.~Kohel, C.~Ritzenthaler, and A.~Weng, \emph{The
  2-adic {CM} method for genus 2 curves with application to cryptography},
  Advances in cryptology---{ASIACRYPT} 2006, Lecture Notes in Comput. Sci.,
  vol. 4284, Springer, Berlin, 2006, pp.~114--129.

\bibitem[GL09]{GorenLauter09}
E.~Goren and K.~Lauter, \emph{The distance between superspecial abelian
  varieties with real multiplication}, Journal of Number Theory \textbf{129}
  (2009), no.~6, 1562 -- 1578.

\bibitem[IT14]{ionica-thome}
S.~Ionica and E.~Thom\'e, \emph{Isogeny graphs with maximal real
  multiplication}, \texttt{https://arxiv.org/pdf/1407.6672v1.pdf} (accessed in
  Sept. 2016), 2014.

\bibitem[JMV05]{jmv:asiacrypt}
D.~Jao, S.~D. Miller, and R.~Venkatesan, \emph{Do all elliptic curves of the
  same order have the same difficulty of discrete log?}, Advances in
  cryptology---{ASIACRYPT} 2005, Lecture Notes in Comput. Sci., vol. 3788,
  Springer, Berlin, 2005, pp.~21--40.

\bibitem[JMV09]{jmv:jnt}
\bysame, \emph{Expander graphs based on {GRH} with an application to elliptic
  curve cryptography}, J. Number Theory \textbf{129} (2009), no.~6, 1491--1504.

\bibitem[JW15]{JW15}
D.~Jetchev and B.~Wesolowski, \emph{On graphs of isogenies of principally
  polarizable abelian surfaces and the discrete logarithm problem}, CoRR
  \textbf{abs/1506.00522} (2015).

\bibitem[Koh96]{kohel:thesis}
D.~Kohel, \emph{Endomorphism rings of elliptic curves over finite fields},
  ProQuest LLC, Ann Arbor, MI, 1996, Thesis (Ph.D.)--University of California,
  Berkeley.

\bibitem[LR12a]{lauter-robert}
K.~Lauter and D.~Robert, \emph{Improved crt algorithm for class polynomials in
  genus 2}, IACR Cryptology ePrint Archive \textbf{2012} (2012), 443.

\bibitem[LR12b]{lubicz-robert}
D.~Lubicz and D.~Robert, \emph{Computing isogenies between abelian varieties},
  Compos. Math. \textbf{148} (2012), no.~5, 1483--1515.

\bibitem[Mes91]{mestre:genus2}
J.-F. Mestre, \emph{Construction de courbes de genre {$2$} \`a partir de leurs
  modules}, Effective methods in algebraic geometry ({C}astiglioncello, 1990),
  Progr. Math., vol.~94, Birkh\"auser Boston, Boston, MA, 1991, pp.~313--334.

\bibitem[Mil86]{MilneAV}
J.~S. Milne, \emph{Abelian varieties}, Arithmetic geometry ({S}torrs, {C}onn.,
  1984), Springer, New York, 1986, pp.~103--150. \MR{861974}

\bibitem[Mum66]{mumford:eq1}
D.~Mumford, \emph{On the equations defining abelian varieties. {I}}, Invent.
  Math. \textbf{1} (1966), 287--354.

\bibitem[NS99]{Neukirch99}
J.~Neukirch and N.~Schappacher, \emph{Algebraic number theory}, Grundlehren der
  mathematischen Wissenschaften, Springer, Berlin, New York, Barcelona, 1999.

\bibitem[Rob10a]{drobert:thesis}
D.~Robert, \emph{Fonctions th\^eta et applications \`a la cryptologie}, PhD
  thesis, Universit\'e Henri Poincar\'e - Nancy I (2010).

\bibitem[Rob10b]{robert}
\bysame, \emph{Theta functions and applications in cryptography}, Ph.D. thesis,
  Loria, Nancy, 2010.

\bibitem[ST61]{taniyama-shimura}
G.~Shimura and Y.~Taniyama, \emph{Complex multiplication of abelian varieties
  and its applications to number theory}, Publications of the Mathematical
  Society of Japan, vol.~6, The Mathematical Society of Japan, Tokyo, 1961.

\bibitem[Str10]{Streng10}
Marco Streng, \emph{Complex multiplication of abelian surfaces}, Ph.D. thesis,
  Universiteit Leiden, 2010.

\bibitem[Sut11]{sutherland:crt}
A.~Sutherland, \emph{Computing {H}ilbert class polynomials with the {C}hinese
  remainder theorem}, Math. Comp. \textbf{80} (2011), no.~273, 501--538.

\bibitem[Sut12]{sutherland}
\bysame, \emph{Accelerating the {CM} method}, LMS J. Comput. Math. \textbf{15}
  (2012), 172--204.

\bibitem[Tat66]{Tate1966}
J.~Tate, \emph{Endomorphisms of abelian varieties over finite fields},
  Inventiones mathematicae \textbf{2} (1966), no.~2, 134--144.

\bibitem[V{\'e}l71]{velu}
J.~V{\'e}lu, \emph{Isog\'enies entre courbes elliptiques}, C. R. Acad. Sci.
  Paris S\'er. A-B \textbf{273} (1971), A238--A241.

\bibitem[vW99]{vanwamelen:genus2}
P.~van Wamelen, \emph{Examples of genus two {CM} curves defined over the
  rationals}, Math. Comp. \textbf{68} (1999), no.~225, 307--320.

\bibitem[Wat69]{Waterhouse1969}
W.~Waterhouse, \emph{Abelian varieties over finite fields}, Annales
  scientifiques de l'\'Ecole Normale Sup\'erieure \textbf{2} (1969), no.~4,
  521--560 (eng).

\bibitem[Wen03]{weng:genus2}
A.~Weng, \emph{Constructing hyperelliptic curves of genus 2 suitable for
  cryptography}, Math. Comp. \textbf{72} (2003), no.~241, 435--458
  (electronic).

\end{thebibliography}

\end{document}